\documentclass[11pt]{amsart}

\usepackage{cgeometry}
\usepackage{cleveref}
\usepackage{mathtools} 

\bibliography{ComplexGeometry}
\setcounter{tocdepth}{1}

\newcommand\blfootnote[1]{%
  \begingroup
  \renewcommand\thefootnote{}\footnote{#1}%
  \addtocounter{footnote}{-1}%
  \endgroup
}

\begin{document}

\title{Mabuchi geometry of big cohomology classes with prescribed singularities}
\author{Mingchen Xia}

\blfootnote{\textit{MSC2010:} primary 32Q15, 32U05, 32U15.}
\blfootnote{\textit{Keywords:} Mabuchi geometry, unibranch space, plurisubharmonic function.}

\begin{abstract}
Let $X$ be a compact Kähler unibranch  complex analytic space of pure dimension. 
Fix a big class $\alpha$ with smooth representative $\theta$ and a model potential $\phi$ with positive mass.
We define and study the non-pluripolar products of quasi-plurisubharmonic functions on $X$. 
We study the spaces $\mathcal{E}^p(X,\theta;[\phi])$ of finite energy Kähler potentials with prescribed singularities for each $p\geq 1$. We define a metric $d_p$ without solving Monge--Amp\`ere equations. This construction generalizes the usual $d_p$-metric defined for an ample class. 
\end{abstract}

\maketitle


\tableofcontents


\section{Introduction}
\subsection{Motivation}
Let $X$ be a compact Kähler manifold of pure dimension $n$ and $\omega$ be a Kähler form on $X$. It is well-known that the space $\mathcal{H}(X,\omega)$ of smooth strictly $\omega$-plurisubharmonic functions admits a natural Riemannian structure:
\[
\langle f,g \rangle_{\varphi}\coloneqq \int_X fg\,\omega_{\varphi}^n\,,\quad \varphi\in \mathcal{H}(X,\omega)\,,f,g\in C^{\infty}(X)=\mathrm{T}_{\varphi}\mathcal{H}(X,\omega)\,.
\]
It is shown by Chen (\cite{Chen00}) that the Riemannian structure endows $\mathcal{H}(X,\omega)$ with a \emph{bona fide} metric $d_2$.
Darvas (\cite{Dar17}, \cite{Dar15}) proved that the metric completion of $\mathcal{H}(X,\omega)$ with respect to the Riemannian metric can be naturally identified with the space $\mathcal{E}^2(X,\omega)$ of $\omega$-psh functions with finite energy, confirming a conjecture of Guedj. In fact, the results of Darvas show that the metric completion of $\mathcal{H}(X,\omega)$ with respect to the natural $p$-Finsler metric ($p\geq 1$) is given by $\mathcal{E}^p(X,\omega)$, namely, the space of $\omega$-psh functions with finite $p$-energy. The space $\mathcal{E}^p(X,\omega)$ is a geodesic metric space.
In the non-Archimedean world, one can similarly define a space $\mathcal{E}^1$, which enjoys similar properties as the Archimedean counterpart, see \cite{BJ18b}.

The spaces $\mathcal{E}^1$ and $\mathcal{E}^2$ have found numerous applications in the study of Kähler geometry and in non-Archimedean geometry, especially in the problem of canonical metrics. See \cite{CC17a}, \cite{BDL16},  \cite{LTW19}, \cite{BJ18} for  a few examples. 

The theory of $\mathcal{E}^p$ together with the metric $d_p$ is, however, not completely satisfactory for the following reasons:
\begin{enumerate}
    \item $\mathcal{E}^p$ accounts for only a small portion of the space of plurisubharmonic functions, which limits its application to general singularities.
    \item Although $\mathcal{E}^p$ can be defined for a general big cohomology class, it is not clear how to define a metric on $\mathcal{E}^p$. In particular, this makes it hard to apply them in the study of birational geometry.
    \item The spaces $\mathcal{E}^p$ ($p>1$) are only defined when $X$ is smooth.
    In particular, it is not powerful enough to deal with Kähler--Einstein metrics on normal varieties directly.
\end{enumerate}
The problem (1) is effectively solved by Darvas--Di Nezza--Lu in \cite{DDNL18mono} by introducing the so-called potentials with \emph{prescribed singularities}. Roughly speaking, this means that we prescribe a \emph{nice} singularity type $\phi$, then we can define a space $\mathcal{E}(X,\omega;[\phi])$ of potentials with singularities controlled by $\phi$. It can be shown that these spaces for various $\phi$ give a partition of $\PSH(X,\omega)$. Moreover, each space $\mathcal{E}(X,\omega;[\phi])$ behaves exactly like $\mathcal{E}(X,\omega)$. So we have turned the study of general potentials into the study of potentials with \emph{relatively} full mass.

As for problem (2), so far only the following partial generalizations are known:
\begin{enumerate}
    \item When $p=1$, the metric on $\mathcal{E}^1$ is defined for a general big class in \cite{DDNL18big}. A further generalization to potentials with prescribed singularities is established in \cite{Tru20}.
    \item When the class is big and nef, the metric on $\mathcal{E}^p$ is defined in \cite{DNL20} by perturbation.
    \item On a general normal compact complex analytic space, in \cite{BBEGZ16}, the space $\mathcal{E}^1$ with respect to a K\"ahler class is studied.
\end{enumerate}
In particular, there are no satisfactory theory of $\mathcal{E}^p$ for a general big cohomology class yet.

As for problem (3), this is in fact just technical. The pluripotential theory has not been fully developed on a general normal variety, except in the case of bounded potential (\cite{Dem85}). In particular, the theory of non-pluripolar products (\cite{BEGZ10}) has never been fully developed on singular spaces. 
It seems that this theory is well-known to experts. And in fact, it has been widely applied in various circumstances.

The goal of this paper is to solve all of these problems simultaneously.

\subsection{Main results}

\subsubsection{Non-pluripolar products}
The first part of this paper is devoted to develop the theory of non-pluripolar products on a singular space.  

Let $X$ be a unibranch (see \cref{def:uni}) complex analytic space of pure dimension $n$ and $X^{\Red}$ be the reduced space underlying $X$. Recall that there is a well-defined notion of psh functions on $X$ (\cite{FN80}).
Let $\pi:Y\rightarrow X^{\Red}$ be a proper resolution of singularities. As in the algebraic setting, the unibranchness assumption ensures that the fibres of $\pi$ are connected, a result classically known as \emph{Zariski's main theorem}, see \cref{thm:Zmt}. In particular, we prove that $\pi^*$ gives a bijection between the spaces $\PSH(X)=\PSH(X^{\Red})$ and $\PSH(Y)$. Therefore, a large part the study of $\PSH(X)$ can be essentially reduced to the known results in the smooth setting. 

We prove in \cref{thm:Lelongconj} the following generalization of Lelong's conjecture:
\begin{theorem}
Assume that $X$ is an open subspace of a compact unibranch Kähler space.
Let $E\subseteq X$ be a subset. Then $E$ is negligible if and only if  $E$ is pluripolar.
\end{theorem}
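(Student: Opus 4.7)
The plan is to reduce to the classical smooth Lelong--Bedford--Taylor theorem via a resolution, exploiting the bijection $\pi^* \colon \PSH(X) \xrightarrow{\sim} \PSH(Y)$ already established in the paper. The implication ``pluripolar $\Rightarrow$ negligible'' is trivial: take the constant family $\{u\}$ with $E \subseteq \{u = -\infty\}$.

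For the converse, suppose $E$ is negligible, witnessed by $\{u_\alpha\}_{\alpha \in A} \subseteq \PSH(X)$ locally bounded above, with $E \subseteq N := \{u^* > u\}$ where $u := \sup_\alpha u_\alpha$. Let $\bar X$ be the ambient compact unibranch K\"ahler space and fix a proper K\"ahler resolution $\bar\pi \colon \bar Y \to \bar X^{\Red}$ by a compact K\"ahler manifold. Set $Y := \bar\pi^{-1}(X)$ and $\pi := \bar\pi|_Y$. The pulled-back family $\{\pi^* u_\alpha\}$ is locally bounded above on the complex manifold $Y$, and its pointwise supremum $v := \sup_\alpha \pi^* u_\alpha$ equals $\pi^* u$.

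The technical core is the identity $v^* = \pi^* u^*$, expressing that $\pi^*$ commutes with USC regularization. The inequality $\pi^* u^* \geq v^*$ is immediate since $\pi^* u^*$ is USC and dominates $v$. For the reverse, use the bijection to write $v^* = \pi^* w$ with $w \in \PSH(X)$; on the regular locus where $\pi$ is biholomorphic, the inequality $\pi^* w \geq \pi^* u$ transfers to $w \geq u$, which propagates to $w \geq u^*$ on all of $X$ by USC of $w$ together with the fact that psh functions on an analytic space are determined by their values on the regular locus through USC extension. Pulling back yields $v^* = \pi^* w \geq \pi^* u^*$, and therefore $\pi^{-1}(N) = \{v^* > v\}$ is negligible on the complex manifold $Y$. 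The classical smooth Lelong--Bedford--Taylor theorem, applied on the compact K\"ahler manifold $\bar Y$, then produces $\tilde v \in \PSH(Y)$ with $\pi^{-1}(E) \subseteq \{\tilde v = -\infty\}$. Via the bijection, $\tilde v = \pi^* \tilde u = \tilde u \circ \pi$ for some $\tilde u \in \PSH(X)$, so $\tilde u(x) = \tilde v(y) = -\infty$ for any $y \in \pi^{-1}(x)$ with $x \in E$, proving that $E$ is pluripolar.

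The main obstacle I anticipate is establishing the identity $v^* = \pi^* u^*$, which depends essentially on the bijection $\PSH(X) \cong \PSH(Y)$ and hence on connectedness of the fibres of $\pi$ (Zariski's main theorem, \cref{thm:Zmt}). A secondary subtlety is the availability of a compact K\"ahler resolution $\bar Y$ of $\bar X$, but this is standard in the K\"ahler analytic category. All else is routine once these compatibilities are in hand.
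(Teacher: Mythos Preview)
Your ``trivial'' direction is wrong as written. For a single psh function $u$, the Fornaess--Narasimhan characterization (the strong USC property \eqref{eq:susc} in \cref{thm:FN}) gives $u^* = u$, so the negligible set produced by the constant family $\{u\}$ is empty. You need a genuine family; the paper takes $\varphi_i = i^{-1}\varphi$ (after normalizing $\varphi\le 0$), so that $\sup_i\varphi_i=0$ off the polar set and $-\infty$ on it, whence $\sups_i\varphi_i\equiv 0$ and $E\subseteq\{\sup<\sups\}$.

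For the main direction your strategy diverges from the paper's, and there is a genuine gap. The classical Bedford--Taylor result gives only that $\pi^{-1}(E)$ is \emph{locally} pluripolar in $Y$; it does not hand you a global $\tilde v\in\PSH(Y)$. To upgrade to a global function on the open set $Y\subseteq\bar Y$ you would invoke a Josefson-type theorem, but on a compact K\"ahler manifold this yields only an $\omega_{\bar Y}$-psh function $\psi$ with $\pi^{-1}(E)\subseteq\{\psi=-\infty\}$. Such a $\psi$ need \emph{not} be constant on the fibres of $\pi$ (because $\omega_{\bar Y}$ restricts nontrivially to the exceptional fibres), so it is not of the form $\pi^*\tilde u$, and the pushdown step fails. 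Concretely, $Y$ can be something like $\mathbb{P}^2\setminus\{p\}$, on which every genuine psh function is constant, so no nontrivial set is globally pluripolar in your sense. Your approach can be repaired by appealing instead to the big-class Josefson theorem to obtain $\psi\in\PSH(\bar Y,\pi^*\omega)$ with the same polar property; then the bijection $\PSH(\bar X,\omega)\cong\PSH(\bar Y,\pi^*\omega)$ (which \emph{is} available, \cref{prop:corr}) does the pushdown. But this is a nontrivial extra input you did not name.

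The paper's route is different and sidesteps both issues: it works from the start in the quasi-psh world on the compact space, using the relative extremal function $u_{E,\omega}^*\in\PSH(X,\omega)$. The identity $\pi^*u_{E,\omega}^*=u_{\pi^{-1}E,\pi^*\omega}^*$ is immediate from the bijection $\PSH(X,\omega)\cong\PSH(Y,\pi^*\omega)$, and the right-hand side vanishes by the classical smooth theory since $\pi^{-1}E$ is negligible. Then a Choquet/summation argument on $X$ itself produces the desired $\omega$-psh function with $E$ in its polar locus. This avoids any need to push down an arbitrary (quasi-)psh function from $Y$.
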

 This result is in fact equivalent to the so-called continuity of envelope property. To the best of the author's knowledge this is the first written proof in the singular setting, although this theorem has been applied implicitly in the literature for a long time. As a consequence, we show that the Bedford--Taylor products are local with respect to the pluri-fine topology in \cref{thm:pluloc}, which allows one to introduce the non-pluripolar products.

\subsubsection{Pluripotential theory on singular spaces}
In the second part, we study the space $\mathcal{E}^p$.
Let $X$ be a compact unibranch Kähler space of pure dimension $n$ and $\alpha$ be a big class on $X$ represented by a strongly smooth form $\theta$.
Let $\phi$ be a model potential (\cref{def:mod}) in $\PSH(X,\theta)$ with positive mass. Then we can define the space $\Ep$ exactly as in the smooth case:
\[
\Ep\coloneqq \left\{\,\varphi\in \PSH(X,\theta):\varphi \leq \phi+C, \int_X \theta_{\varphi}^n=\int_X \theta_{\phi}^n, \int_X |\phi-\varphi|^p\theta_{\varphi}^n<\infty \,\right\}\,,
\]
see \cref{def:ep} for the details. We  generalize various known results in the smooth setting to the singular setting. In particular, we establish a general comparison principle (\cref{lma:comp}) and several energy estimates. We show that the rooftop operator is well-defined and has the expected support:
\begin{theorem}[={\cref{thm:rooftopep}}+{\cref{cor:rooftop}}]
Let $\varphi,\psi\in \Ep$, then
\[
\varphi\land \psi\coloneqq \sups\{\eta\in \PSH(X,\theta):\eta\leq \varphi,\eta\leq \psi\}\in \Ep\,.
\]
Moreover, $\theta_{\varphi\land \psi}^n$ is supported on the set $\{\varphi\land\psi=\varphi\}\cup \{\varphi\land\psi=\psi\}$.
\end{theorem}

\subsubsection{Metrics on $\mathcal{E}^p$}
The main result in this part is the definition of a metric $d_p$. This part is the main innovation in this paper.
To demonstrate the idea, let us consider the case of a compact Kähler manifold $X$. Let $\omega$ be a Kähler form on $X$. Let us see how the usual $d_p$-metric can be defined without solving Monge--Amp\`ere equations. Let $\varphi_0,\varphi_1\in \mathcal{H}(X,\omega)$. Recall that solving a suitable Monge--Amp\`ere equation gives us a unique geodesic $(\varphi_t)_{t\in [0,1]}$, see \cite{Chen00}. The geodesic has $C^{1,1}$-regularity, see \cite{CTW17}. In this case, we have
\[
d_p(\varphi_0,\varphi_1)^p=\int_X |\dot{\varphi}_0|^p\,\omega_{\varphi_0}^n\,.
\] 
Recall that $d_p$ satisfies a so-called Pythagorean equality, which allows us to assume $\varphi_0\leq \varphi_1$.
It is shown in \cite{Dar15} that $d_p(\varphi_0,\varphi_1)$ is of the order $\left(\int_X (\varphi_1-\varphi_0)^p\,\theta_{\varphi_0}^n\right)^{1/p}$. We could reverse this machinery by \emph{defining}
\[
d_p(\varphi_0,\varphi_1)\approx \left(\int_X (\varphi_1-\varphi_0)^p\,\theta_{\varphi_0}^n\right)^{1/p}\,,
\]
when $\varphi_1\geq \varphi_0$ is sufficiently close to $\varphi_0$. This turns out to work and the resulting metric is equal to the original $d_p$, see \cref{subsubsec:relamp}.
Note that this definition is purely pluripotential-theoretic, hence does not require to solve any PDE. The same definition works in general. This gives our definition of $d_p$.

The second main result in this part is that the space $\Ep$ is locally complete with respect to $d_p$. Here locally completeness means that subspaces of the form $\{\psi\in \Ep: \psi\geq \varphi\}$ are complete where $\varphi\in \Ep$.
In our proof, we found that an algebraic structure, which we call the \emph{rooftop structure}, plays a key role.
So we develop this algebraic structure in detail. Roughly speaking, a rooftop structure on a metric space is an associative, commutative, idempotent binary structure $\land$ such that $\bullet\land\bullet$ is distance-decreasing in both variables. We prove a general criterion for the completeness of a rooftop metric space (\cref{prop:rooftopcomp}).
Back to $\Ep$, the rooftop structure $\land$ is defined as follows: given $\varphi,\psi\in \Ep$, we let $\varphi\land \psi$ be the maximal $\theta$-psh function lying below both $\varphi$ and $\psi$. We show that this rooftop structure verifies the conditions in \cref{prop:rooftopcomp} locally. Thus, $d_p$ is a locally complete metric on $\Ep$. In other words,
\begin{theorem}[={\cref{thm:main}}]
\label{thm:main1}
The space $(\Ep,d_p,\land)$ is a $p$-strict locally complete rooftop metric space.
\end{theorem}
See \cref{def:roof} for the definition of $p$-strictness. We will prove in \cref{subsec:lit} that our definition generalizes all known definitions in the literature.

We have developed our theory in an axiomatic manner. The reason is that these results can be formally generalized to the non-Archimedean setting with essentially the same proofs as long as the conjecture of continuity of envelopes holds (\cite[Conjecture~4.51]{BJ18b}). 
The details will appear elsewhere.

\subsubsection{The space of geodesic rays}
In the last part, based on our previous results, we study the space $\mathcal{R}^1(X,\theta)$ of finite energy geodesic rays. The study of such spaces as metric spaces is initiated by Chen--Cheng (\cite{CC18c}) and Darvas--Lu (\cite{DL18}). In the case of a big class, $\mathcal{R}^1$ is studied in \cite{DDNL19}. We generalize these results to the singular setting as well. Similar to the case of potentials, we construct a natural metric $d_1$ and a rooftop structure $\land$ on $\mathcal{R}^1$. We prove the following theorem
\begin{theorem}[={\cref{thm:R1comproof}}]\label{thm:main2}
The space $(\mathcal{R}^1(X,\theta),d_1,\land)$ is a $1$-strict complete rooftop metric space. 
\end{theorem}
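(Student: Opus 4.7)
The plan is to lift the rooftop metric structure from potentials (\cref{thm:main1}) to the space $\mathcal{R}^1(X,\theta)$ of finite energy geodesic rays, exploiting that a ray $\ell$ is determined by its time-slice potentials $\ell_t\in \mathcal{E}^1(X,\theta)$ for $t\ge 0$. Write $\ell\le \ell'$ whenever $\ell_t\le \ell'_t$ for all $t$. I would \emph{define}
\[
d_1(\ell_0,\ell_1) := \lim_{t\to\infty} \frac{d_1(\ell_{0,t},\ell_{1,t})}{t}, \qquad \ell_0\land \ell_1 := \Bigl(\sup\bigl\{\ell\in\mathcal{R}^1(X,\theta) : \ell\le \ell_0,\ \ell\le \ell_1\bigr\}\Bigr)^{*},
\]
the asterisk denoting upper semicontinuous regularization in $(x,t)$. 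Existence of the limit follows from convexity of $t\mapsto d_1(\ell_{0,t},\ell_{1,t})$, a consequence of the triangle inequality for $d_1$ on $\mathcal{E}^1(X,\theta)$ applied to concatenations of subsegments of $\ell_0$ and $\ell_1$; finiteness is automatic since both rays have finite energy.

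The first nontrivial step is to show that $\ell_0\land \ell_1$ is again a finite energy geodesic ray. I proceed in three stages: (i) the family is non-empty, e.g.\ because a suitable ray with singularity type $\phi$ lies below both; (ii) the envelope is a subgeodesic ray, meaning the associated function on $X\times\{0<\Re s<T\}$ is $\theta$-psh for every $T>0$, which is an upper envelope argument combined with the pluripotential theory on unibranch Kähler spaces developed in the first part; (iii) maximality of the envelope promotes subgeodesicity to genuine geodesicity, using the generalized continuity of envelopes in \cref{thm:Lelongconj}. Finite energy is then inherited from the sandwich between two finite energy rays.

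With the rooftop so constructed, commutativity, idempotency and associativity are formal (the last because both iterated meets equal the envelope over rays lying below all three entries). The distance-decreasing property
\[
d_1(\ell_0\land \ell,\ \ell_1\land \ell)\le d_1(\ell_0,\ell_1)
\]
follows by applying the analogous inequality on $\mathcal{E}^1(X,\theta)$ from \cref{thm:main1} slice-wise and passing to the slope, using $(\ell_i\land \ell)_t\le \ell_{i,t}\land \ell_t$. The $1$-strictness is transferred in the same way via asymptotic linearity of the Monge--Amp\`ere energy along rays. Completeness is then deduced from \cref{prop:rooftopcomp}: monotone $d_1$-bounded sequences converge — decreasing ones by slice-wise monotone convergence, increasing ones via the envelope construction above applied to the sequence itself.

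The main obstacle I expect is step (iii) above: promoting subgeodesicity of the envelope to genuine geodesicity. This requires a maximality argument for $\theta$-psh functions on the infinite strip in spacetime with uniform control of the asymptotic energy slope, and it is precisely at this point that the pluripotential theory developed for unibranch spaces in the first part of the paper must be combined nontrivially with the axiomatic rooftop machinery of \cref{prop:rooftopcomp}.
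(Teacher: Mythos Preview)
Your definition of $\land$ as the maximal ray below both agrees with the paper's, but the paper's \emph{construction} of this ray (\cref{lma:rooftopr1}) sidesteps the subgeodesic-to-geodesic promotion you flag as the main obstacle in step~(iii). Rather than taking an upper envelope of rays, the paper builds, for each $t>0$, the finite-energy geodesic $L^t$ from $V_\theta$ to $\ell_{0,t}\land\ell_{1,t}$ in $\mathcal{E}^1(X,\theta)$; these are decreasing in $t$ at each fixed time $s$, with the uniform bound $d_1(V_\theta,\ell_{0,s}\land\ell_{1,s})\le Cs$, so they converge to a genuine geodesic ray, which is then shown to be maximal. No maximality argument on the infinite strip is needed. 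The paper also does not verify $1$-strictness or completeness directly via \cref{prop:rooftopcomp}: it uses \cref{prop:pullbackgeodray} to identify $\mathcal{R}^1(X,\theta)$ isometrically and $\land$-compatibly with $\mathcal{R}^1(Y,\pi^*\theta)$ on a resolution, reducing to the smooth big case treated in \cite{DDNL19}.

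Your slice-wise argument for the distance-decreasing axiom \eqref{eq:rtdef} has a gap: the inequality $(\ell_i\land\ell)_t\le \ell_{i,t}\land\ell_t$ does not by itself bound $d_1\bigl((\ell_0\land\ell)_t,(\ell_1\land\ell)_t\bigr)$ by $d_1\bigl(\ell_{0,t}\land\ell_t,\ell_{1,t}\land\ell_t\bigr)$, since replacing both arguments of a metric by smaller potentials need not shrink their distance. A correct route uses the approximation above: with $L^{i,t}$ the geodesic from $V_\theta$ to $\ell_{i,t}\land\ell_t$, convexity of $d_1$ along pairs of geodesics gives $d_1(L^{0,t}_s,L^{1,t}_s)\le \tfrac{s}{t}\,d_1(\ell_{0,t}\land\ell_t,\ell_{1,t}\land\ell_t)\le \tfrac{s}{t}\,d_1(\ell_{0,t},\ell_{1,t})$; letting $t\to\infty$ and then dividing by $s$ yields the claim.
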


The importance of this theorem lies in its relation to the conjecture of continuity of envelopes (\cite[Conjecture~4.51]{BJ18b}) in non-Archimedean geometry. 
Suppose that $X$ be a unibranch projective complex variety and $L$ is an ample line bundle on $X$. 
Recall that the continuity of envelopes is equivalent to the completeness of $\mathcal{E}^{1,\NA}(L)$ (\cite[Theorem~9.8]{BJ18b}).
At least when $X$ is smooth, it is known that $\mathcal{E}^{1,\NA}(L)$ can be identified with a closed subspace of $\mathcal{R}^1(X,\theta)$ as in the proof of \cref{ex:E1NA} and the completeness follows from \cref{thm:main2}. We expect that these results extend to unibranch spaces as well.

\subsection{Structure of the paper}
In \cref{sec:cas}, we recall some basic properties of complex analytic spaces.

In \cref{sec:npp}, we develop the theory of non-pluripolar products on unibranch spaces.

In \cref{sec:pps}, we develop the theory of potentials with prescribed singularities in detail, generalizing a number of results in the literature. In particular, we carry out a detailed study of some energy functionals.

In \cref{sec:rooftop}, we study abstract rooftop structures. 

In \cref{sec:metep}, we define the $d_p$ metric on $\mathcal{E}^p$.

In \cref{sec:geod}, we study the space $\mathcal{R}^1$ of geodesic rays.

\subsection{Conventions}
All Monge--Amp\`ere operators are taken in the non-pluripolar sense.
We use the terms \emph{increasing}, \emph{decreasing} in the non-strict sense. We write $\land$ for the rooftop operator instead of the more common $P(\cdot,\cdot)$. For $\varphi,\psi\in\Ep$, we write $[\varphi]\land\psi$ for $P[\varphi](\psi)$. Given two qpsh functions $\varphi,\psi$, we write $\varphi\lor\psi=\max\{\varphi,\psi\}$.
We follow the convention
\[
\ddc=\frac{\mathrm{i}}{2\pi}\partial \bp\,.
\]
We use $C$ for a positive constant, whose value may change from line to line.

\textbf{Acknowledgements}
I benefited from discussions with Antonio Trusiani and Jie Liu. I would like to thank the referee for pointing out a mistake and for numerous helpful  suggestions.

The author is supported by Knut och Alice Wallenbergs Stiftelse grant KAW 2021.0231.

\section{Preliminaries on complex analytic spaces}\label{sec:cas}
For each $N\geq 0$, we endow an open subset $\Omega\subseteq \mathbb{C}^N$ with the sheaf of holomorphic functions, so that $\Omega$ can be regarded as a locally $\mathbb{C}$-ringed space.

\subsection{Complex analytic spaces}
In this section, we recall some basic facts about complex analytic spaces. This section is by no means intended to be complete. 
For the details, we refer to \cite{Fis06}, \cite{CAS}.

\begin{definition}
	A \emph{local model} of a complex analytic space is a locally $\mathbb{C}$-ringed space $(V,\mathcal{O}_V)$ such that there is a closed immersion $V\hookrightarrow \Omega$ into a bounded pseudo-convex domain $\Omega$ in some $\mathbb{C}^N$. To be more precise, this means that there are $f_1,\ldots,f_M\in H^0(\Omega,\mathcal{O}_{\Omega})$ such that $V$ is closed subset of $\Omega$ defined as the common zero locus of all $f_i$'s, $\mathcal{O}_V$ is the quotient of $\mathcal{O}_{\Omega}$ by the ideal $(f_1,\ldots,f_M)$, regarded as a sheaf on $V$.
	
	The category of local models is a full subcategory of the category of locally $\mathbb{C}$-ringed spaces.
\end{definition}
Here requiring pseudo-convexity of $\Omega$ is just for convenience. The notion of local models does not change if we remove this requirement.

\begin{definition}\label{def:cas}
	A \emph{complex analytic space} is a locally $\mathbb{C}$-ringed space $(X,\mathcal{O}_X)$ such that
	\begin{enumerate}
		\item $X$ is a paracompact Hausdorff space.
		\item For any $x\in X$, there is an open neighbourhood $U\subseteq X$ of $x$ such that $(U,\mathcal{O}_U)$ (with $\mathcal{O}_U$ being the restriction of $\mathcal{O}_X$ to $U$) is isomorphic to a local model as locally $\mathbb{C}$-ringed spaces.
	\end{enumerate}
	The category is complex analytic spaces is a full subcategory of the category of locally $\mathbb{C}$-ringed spaces.
\end{definition}
By abuse of language, we say that $X$ is a complex analytic space as well. 

In the literature, some authors prefer to weaken Condition~(1). We choose to include both paracompactness and separateness in our definition so that \cref{thm:Steinred} holds without further assumptions.

\begin{definition}\label{def:uni}
	A complex analytic space $(X,\mathcal{O}_X)$ is 
	\begin{enumerate}
		\item \emph{reduced} (resp. \emph{normal}) at $x\in X$ if $\mathcal{O}_{X,x}$ is reduced (resp. normal). We say $X$ is \emph{reduced} (resp. \emph{normal}) if it is reduced (resp. normal) at all points;
		\item \emph{unibranch} at $x\in X$ if  $\mathcal{O}_{X,x}$ is unibranch. We say $X$ is \emph{unibranch} if it is unibranch at all points.
	\end{enumerate}
\end{definition}
Recall that the notion of a unibranch local ring is defined in \cite[Section~0.23.2.1]{EGA4-1}: A local ring $A$ is unibranch is $A^{\Red}$ is integral and the integral closure of $A^{\Red}$ in its fraction field is local.

\begin{remark}
	For us, a ring is normal if it is an integral domain and integrally closed. So in particular, a normal analytic space is reduced. Also, recall that a normal ring is always unibranch.
\end{remark}

\begin{proposition}\label{prop-unibranchchar}
    Let $X$ be a complex analytic space and $x\in X$. Then the following are equivalent:
    \begin{enumerate}
        \item $X$ is unibranch at $x$;
        \item $X^{\Red}$ is unibranch at $x$;
        \item $\mathcal{O}_{X,x}$ is geometrically unibranch (see \cite[Section~0.23.2.1]{EGA4-1});
        \item $\mathcal{O}_{X,x}^{\Red}$ is geometrically unibranch;
        \item $\mathcal{O}_{X,x}$ has a unique minimal prime ideal.
    \end{enumerate}
\end{proposition}
\begin{proof}
    (1) $\Leftrightarrow$ (3): As $\mathcal{O}_{X,x}$ is excellent \cite{DG67}, the integral closure $\overline{\mathcal{O}_{X,x}^{\Red}}$ is a finite $\mathcal{O}_{X,x}^{\Red}$-algebra, so the residue field extension is finite. But the residue field of $\mathcal{O}_{X,x}$ is $\mathbb{C}$, so the residue field extension is the trivial extension.

    (1) $\Leftrightarrow$ (5): This follows from \cite[\href{https://stacks.math.columbia.edu/tag/0BQ0}{Tag 0BQ0}]{stacks-project} and the fact that $\mathcal{O}_{X,x}$ is Henselian (\cite[Page~45]{CAS}).

    (1) $\Leftrightarrow$ (2): This follows from the observation that (5) holds for $\mathcal{O}_{X,x}$ if and only if (5) holds for $\mathcal{O}_{X,x}^{\Red}$. 

    (3) $\Leftrightarrow$ (4): This follows from the same argument as (1) $\Leftrightarrow$ (2).
\end{proof}

\begin{remark}
	Note that our definition of unibranch space is different from the notion of locally irreducible space in \cite[Page~8]{CAS}. More precisely, a complex analytic space $X$ is unibranch in our sense if and only if $X^{\Red}$ is locally irreducible in the sense of \cite{CAS}.
\end{remark}

\begin{remark}
	For complex varieties, one could also define unibranchness using the Zariski topology. This definition is equivalent to ours.
	
	To be more precise, let $X$ be a reduced scheme of finite type over $\mathbb{C}$. Let $X^{\An}$ be the complex analytification of $X$. Let $x\in X(\mathbb{C})$. Then we claim that $X$ is unibranch at $x$ if and only if  $X^{\An}$ is unibranch at $x$. By GAGA, there is a natural morphism of ringed spaces $(X^{\An},\mathcal{O}_{X^{\An}})\rightarrow (X,\mathcal{O}_X)$.
	 Then since $\mathcal{O}_{X,x}$ is excellent, $X$ is unibranch at $x$ if and only if  $\widehat{\mathcal{O}_{X,x}}=\widehat{\mathcal{O}_{X^{\An},x}}$ is integral (\cite[Théorème~18.9.1]{EGAIV-4}). The latter implies that $\mathcal{O}_{X^{\An},x}$ is unibranch by \cite[Chapitre~5, Exercise~2.8]{AC}. Running the same argument the other way round, we conclude that $\mathcal{O}_{X,x}$ is unibranch if and only if  $\mathcal{O}_{X^{\An},x}$ is.
\end{remark}

\begin{definition}
	A complex analytic space $(X,\mathcal{O}_X)$ is 
	\begin{enumerate}
		\item \emph{Holomorphically separable} if for any $x,y\in X$, $x\neq y$, there is $f\in H^0(X,\mathcal{O}_X)$ such that $f(x)\neq f(y)$.
		\item \emph{Holomorphically convex} if for any compact set $K\subseteq X$, the set
		\[
		\hat{K}\coloneqq \left\{\,x\in X: |f(x)|\leq \sup_{k\in K} |f(k)|\,,\quad \forall f\in H^0(X,\mathcal{O}_X)\,\right\}
		\]
		is compact.
		\item \emph{Stein} if it is both holomorphically separable and holomorphically convex.
	\end{enumerate}
\end{definition}
Note that all three conditions are stable under passing to closed subspaces.

\begin{theorem}\label{thm:Steinred}
	Let $(X,\mathcal{O}_X)$ be a complex analytic space. Then $X$ is Stein if and only if  $X^{\Red}$ is.
\end{theorem}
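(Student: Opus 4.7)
The plan is to treat the two directions separately. The forward implication is essentially immediate: the natural map $X^{\Red}\hookrightarrow X$ is a closed immersion of complex analytic spaces in the sense of \cref{def:cas}, and, as noted in the sentence preceding the theorem, both holomorphic separability and holomorphic convexity pass to closed analytic subspaces. Thus $X$ Stein forces $X^{\Red}$ Stein.

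The substantive direction is the converse. My plan is to invoke Narasimhan's criterion: a complex analytic space is Stein if and only if it admits a continuous strictly plurisubharmonic exhaustion function. The crucial observation is that the notion of a (strictly) psh function on a complex analytic space depends only on the underlying reduced analytic structure: by the definition recalled in \cite{FN80}, a psh function on $X$ is an upper-semicontinuous function on the underlying topological space that becomes psh after pullback through any local embedding $X^{\Red}\hookrightarrow \Omega\subseteq\mathbb{C}^N$. Since $X$ and $X^{\Red}$ share the same underlying topological space and the same notion of psh function, any continuous strictly psh exhaustion function witnessing Steinness of $X^{\Red}$ is automatically such a function on $X$, and the converse follows.

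An alternative, more direct route would stay closer to the definition used here and show that the restriction homomorphism $H^0(X,\mathcal{O}_X)\rightarrow H^0(X^{\Red},\mathcal{O}_{X^{\Red}})$ is surjective, from which one could lift separating functions and identify $\hat{K}$ in $X$ with $\hat{K}$ in $X^{\Red}$. For this one would use the short exact sequence $0\rightarrow \mathcal{N}\rightarrow \mathcal{O}_X\rightarrow \mathcal{O}_{X^{\Red}}\rightarrow 0$ together with the filtration $\mathcal{N}\supseteq \mathcal{N}^2\supseteq\cdots$ by powers of the coherent (Oka) nilradical; on any relatively compact open, coherence ensures the filtration terminates, and each successive quotient $\mathcal{N}^k/\mathcal{N}^{k+1}$ is a coherent sheaf of $\mathcal{O}_{X^{\Red}}$-modules, to which one would apply Theorem B on $X^{\Red}$.

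The main obstacle in the cohomological route is precisely that $\mathcal{N}$ is only locally nilpotent, so the filtration argument requires a Stein exhaustion together with Runge-type approximation to promote local surjectivity to a global one. The Narasimhan-criterion approach bypasses this technicality entirely at the cost of invoking the (non-trivial but standard) equivalence between Steinness and the existence of a strictly psh exhaustion, and this is the direction I would pursue first.
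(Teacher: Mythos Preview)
The paper does not supply its own proof here; it simply refers the reader to \cite[Page~33]{Fis06}. Your forward direction is correct and immediate. For the converse, the argument one finds in Fischer is essentially your second (cohomological) route: one proves directly that $H^q(X,\mathcal{F})=0$ for every coherent $\mathcal{O}_X$-sheaf $\mathcal{F}$ and every $q\geq 1$ by filtering $\mathcal{F}$ via $\mathcal{N}^k\mathcal{F}$ and applying Cartan's Theorem~B on $X^{\Red}$ to the successive coherent $\mathcal{O}_{X^{\Red}}$-quotients; the local--versus--global nilpotence issue you flag is handled by a Stein exhaustion together with a Mittag--Leffler/Runge argument, exactly as you anticipate.

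Your preferred Narasimhan-criterion route is also correct and genuinely different in flavour. Narasimhan's solution of the Levi problem is stated and proved for arbitrary (not necessarily reduced) complex spaces, so once you know that strictly psh functions on $X$ and on $X^{\Red}$ coincide, the converse is a one-line transfer. Two remarks are worth making. First, the identification $\PSH(X)=\PSH(X^{\Red})$ is not entirely free in the paper's setup: the definition of psh uses closed immersions of $V$ (not $V^{\Red}$), and the non-trivial direction is precisely the content of \cref{prop:pshbij}, whose proof already relies on local Steinness (via \cref{thm:smoothimpform}); there is no circularity, since locally every complex analytic space sits as a closed subspace of a polydisk and is therefore Stein by the forward implication, but you should make this explicit. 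Second, one should verify that the version of Narasimhan's theorem being invoked does not itself appeal to the reduced/non-reduced equivalence; the original bumping argument does not, so you are safe. The trade-off is that your route imports a substantial black box (the Levi problem) in exchange for avoiding the Mittag--Leffler step, whereas Fischer's approach stays closer to first principles but must confront the non-termination of the nilradical filtration head-on.
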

This highly non-trivial result is proved by Grauert in \cite{Gra60}. It is based on a cohomological characterization of Stein spaces. See \cite{GR77} for a simplified proof. We remark that it is important to assume that $X$ is paracompact and Hausdorff for this theorem.
As a particular case of this theorem, consider a first-order thickening of complex analytic spaces $T\hookrightarrow T'$, namely, $T$ is the closed subspace of $T'$ defined by a coherent ideal sheaf with square $0$, then $T$ is Stein if and only if $T'$ is. If these equivalent conditions hold, we say that $T\hookrightarrow T'$ is a first-order thickening of Stein spaces.

\begin{definition}\label{def:forsm}
	Let $f:X\rightarrow S$ be a morphism of complex analytic spaces.
	We say the morphism $f$ is \emph{formally smooth} if given any solid commutative diagram
	\[
	\begin{tikzcd}
		T \arrow[r, "a"] \arrow[d, "i"]
		& X \arrow[d, "f" ] \\
		T' \arrow[ru, dashrightarrow]\arrow[r] & S
	\end{tikzcd}\,,
	\]
	where $i:T\rightarrow T'$ is a first-order thickening of Stein spaces, there is a dotted morphism $T'\rightarrow X$ making the whole diagram commutative.
	
	A complex analytic space $X$ is \emph{formally smooth} if the morphism to the final object $\mathbb{C}^0$ is formally smooth.
\end{definition}

\begin{theorem}\label{thm:smoothimpform}
	Let $f:X\rightarrow S$ be a morphism of complex analytic spaces. Assume that $f$ is smooth, then $f$ is formally smooth.
\end{theorem}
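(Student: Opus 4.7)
The plan is to adapt the standard deformation-theoretic obstruction argument from algebraic geometry. Smoothness is local on $X$, so locally over $S$ the morphism $f$ looks like a projection $U\times\mathbb{C}^n\to U$ for some open $U\subseteq S$; in particular the relative cotangent sheaf $\Omega_{X/S}$ is locally free of finite rank, with dual tangent sheaf $\mathcal{T}_{X/S}$. Since $i:T\hookrightarrow T'$ is a first-order thickening, the ideal sheaf $\mathcal{I}$ of $T$ in $T'$ squares to zero and hence carries a natural structure of coherent $\mathcal{O}_T$-module.

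Step one is local existence of a lift. For each $p\in T$ I would choose a smooth chart $V\cong U\times\mathbb{C}^n$ of $X$ over $U\subseteq S$ containing $a(p)$, then shrink to a Stein open neighbourhood $W\subseteq T'$ of $p$ with $W\cap T\subseteq a^{-1}(V)$ and with $W$ mapping into $U$ under $T'\to S$. The lifting problem over $W$ then amounts to extending $n$ holomorphic functions from $W\cap T$ to $W$, which is solvable because the short exact sequence
\[
0\to \mathcal{I}|_{W}\to \mathcal{O}_{W}\to \mathcal{O}_{W\cap T}\to 0
\]
is surjective on global sections by Cartan's Theorem B (the sheaf $\mathcal{I}|_W$ is coherent on the Stein space $W$). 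This produces a local lift $\tilde a_W:W\to X$ over $S$.

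Now cover $T'$ by such Stein opens $\{W_i\}$, with local lifts $\tilde a_i:W_i\to X$. On a fixed open, the set of lifts is a torsor under global sections of $\mathcal{F}:=a^*\mathcal{T}_{X/S}\otimes_{\mathcal{O}_T}\mathcal{I}$; so the discrepancies $d_{ij}=\tilde a_j-\tilde a_i$ on double overlaps define a \v{C}ech $1$-cocycle for $\mathcal{F}$ relative to the cover $\{W_i\cap T\}$ of $T$. Since $T$ is a closed subspace of the Stein space $T'$, it is itself Stein, and $\mathcal{F}$ is coherent on $T$; hence Cartan's Theorem B gives $H^1(T,\mathcal{F})=0$. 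Writing $d_{ij}=e_j-e_i$ and modifying each $\tilde a_i$ by $-e_i$ yields lifts that agree on overlaps and glue to a global $\tilde a:T'\to X$.

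The main obstacle I expect is the careful bookkeeping of the torsor formalism in the analytic category when $T$, $T'$ are not assumed reduced and when the base $S$ enters non-trivially; but this is standard once one notes that the difference of two lifts of $a$ into a chart $V\cong U\times\mathbb{C}^n$ is encoded by an $\mathcal{O}_T$-derivation of $a^{-1}\mathcal{O}_X$ into $\mathcal{I}$ which kills $f^{-1}\mathcal{O}_S$, i.e.\ by a section of $a^*\mathcal{T}_{X/S}\otimes\mathcal{I}$. Once this is in place, all the geometric content is absorbed into the Stein hypothesis on $T'$ together with Cartan's Theorem B.
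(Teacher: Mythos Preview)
Your proposal is correct and follows essentially the same torsor/obstruction strategy as the paper: both identify the sheaf of local lifts as a torsor under $a^*\mathcal{T}_{X/S}\otimes\mathcal{I}\cong\HHom_{\mathcal{O}_T}(a^*\Omega^1_{X/S},\mathcal{C}_{T/T'})$ and kill the obstruction class in $H^1$ via Cartan's Theorem~B on the Stein space $T$.

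The only noteworthy difference is in how local existence of a lift is obtained. The paper argues at the level of local rings: smoothness of $f$ makes $\mathcal{O}_{S,s}\to\mathcal{O}_{X,x}$ formally smooth, so one lifts at the germ level and then invokes the equivalence between germs of morphisms and local ring maps (\cite[Section~0.21]{Fis06}). You instead exploit the local product structure $V\cong U\times\mathbb{C}^n$ afforded by the implicit function theorem, reducing local lifting to extending $n$ holomorphic functions across a first-order thickening via Cartan~B on the ideal sheaf. Your route is slightly more concrete and avoids the germ-to-morphism step, at the cost of committing to a specific local model; the paper's route is more uniform and closer to the Stacks Project argument it cites. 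Both are perfectly valid and the global gluing step is identical.
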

The proof is a simple extension of its algebraic analog \cite[\href{https://stacks.math.columbia.edu/tag/02H6}{Tag 02H6}]{stacks-project}. We refer to Grothendieck's exposé \cite{SHC~7} for the notion of $\Omega^1_{X/S}$. 
\begin{proof}
	Suppose that we are given a solid diagram as in \cref{def:forsm}. 
	
	Consider the sheaf $\mathcal{F}$ of sets over $T'$:
	\[
	H^0(U',\mathcal{F})\coloneqq \left\{\,a':U'\rightarrow X: a'|_U=a|_U\,\right\}\,,\quad U=U'\cap T
	\]
	for any open set $U'\subseteq T'$. 
 We want to show that $\mathcal{F}$ admits a global section on $T'$. Let
	\[
	\mathcal{H}\coloneqq \HHom_{\mathcal{O}_T}(a^*\Omega^1_{X/S},\mathcal{C}_{T/T'})\,,
	\]
	where $\mathcal{C}_{T/T'}$ is the conormal sheaf of $T$ in $T'$, namely, if $T$ is defined by a coherent ideal sheaf $\mathcal{I}\subseteq \mathcal{O}_{T'}$, then $\mathcal{C}_{T/T'}$ is $\mathcal{I}/\mathcal{I}^2$ regarded as a sheaf of $\mathcal{O}_T$-modules. There is an obvious action of $\mathcal{H}$ on $\mathcal{F}$, making $\mathcal{F}$ a pseudo-$\mathcal{H}$-torsor. We will show that $\mathcal{F}$ is a trivial $\mathcal{H}$-torsor.

 We first show that $\mathcal{F}$ is an $\mathcal{H}$-torsor. Namely, all fibers of $\mathcal{F}$ are non-empty. Let $t\in T$. Let $x=a(t)$, $s=f(x)$, $t'=i(t)$. We know that $f^{\sharp}:\mathcal{O}_{S,s}\rightarrow \mathcal{O}_{X,x}$ is smooth, hence formally smooth. Thus, we can find a local $\mathbb{C}$-homomorphism $g^{\sharp}:\mathcal{O}_{X,x}\rightarrow \mathcal{O}_{T',t'}$ such that the following diagram commutes:
	\[
	\begin{tikzcd}
		\mathcal{O}_{S,s} \arrow[r, "a^{\sharp}"] \arrow[d, "f^{\sharp}"]
		& \mathcal{O}_{T',t'} \arrow[d, "i^{\sharp}" ] \\
		\mathcal{O}_{X,x} \arrow[ru, "g^{\sharp}"] \arrow[r] & \mathcal{O}_{T,t}
	\end{tikzcd}\,.
	\]
	Now by \cite[Théorème~1.3]{SHC~6} or \cite[Section~0.21]{Fis06}, the homomorphism $g^{\sharp}$ induces a morphism of germs $g:(T',t')\rightarrow (X,x)$. This shows that $\mathcal{F}_t\neq \emptyset$.
	
	Now in order to show that the $\mathcal{H}$-torsor $\mathcal{F}$ is trivial, it suffices to show that $H^1(T,\mathcal{H})=0$. See \cite[\href{https://stacks.math.columbia.edu/tag/02FQ}{Tag 02FQ}]{stacks-project}.
    As shown in \cite{SHC~6}, both $\Omega^1_{X/S}$ and $\mathcal{C}_{T/T'}$ are coherent, hence so is $\mathcal{H}$. The vanishing then follows from Cartan's Theorem~B (\cite[Page~33]{Fis06}).
\end{proof}

We say an analytic space $X$ is irreducible if $X^{\Red}$ is irreducible (\cite[Section~9.1]{CAS}).

\begin{theorem}[Zariski's main theorem]\label{thm:Zmt}
	Let $f:Y\rightarrow X$ be a proper bimeromorphic morphism of complex analytic spaces. Assume that $X$ is unibranch at $x\in X$. Then the fibre $f^{-1}(x)$ is connected.
\end{theorem}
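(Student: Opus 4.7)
My approach is to reduce the statement to the setting of \cref{lma:Zar} and then invoke \cref{rmk:hen}.

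First, since $f^{-1}(x)$ as a topological space depends only on the underlying reduced structure, I may and do replace $X$ by $X^{\Red}$ and $Y$ by $Y^{\Red}$; unibranchness at $x$ is preserved because it is defined in terms of $\mathcal{O}^{\Red}_{X,x}$. The unibranch hypothesis combined with reducedness then says precisely that $\mathcal{O}_{X,x}$ is an integral domain, so I may shrink $X$ to an open neighborhood of $x$ on which $X$ is irreducible.

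Next I claim that after this shrinking $Y$ is also irreducible. A proper bimeromorphic morphism is a proper modification, so there is a nowhere dense closed analytic subset $A\subseteq X$ with $f^{-1}(A)$ nowhere dense in $Y$ and $f$ restricting to a biholomorphism $Y\setminus f^{-1}(A)\to X\setminus A$. Let $Y_0,Y_1,\ldots$ be the irreducible components of $Y$. Because of the biholomorphism over $X\setminus A$, exactly one component, say $Y_0$, dominates $X$; any other $Y_i$ would satisfy $f(Y_i)\subsetneq X$. But if such a $Y_i$ existed, one could choose a point $p$ in $Y_i\setminus\bigcup_{j\neq i}Y_j$ that also lies outside $f^{-1}(A)$; at such a $p$, $f$ is a local biholomorphism, which would force $f(Y_i)$ to contain an open neighborhood of $f(p)$ in $X$, and since $f(Y_i)$ is a closed analytic subset of the irreducible $X$ this would give $f(Y_i)=X$, a contradiction.

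Now \cref{lma:Zar} applies. Since $f$ is bimeromorphic, $f^{*}:\mathcal{M}(X)\to \mathcal{M}(Y)$ is an isomorphism, so $\mathcal{M}(Y)=\Frac\mathcal{O}_{X,x}$, and the ring $\bar{\mathcal{O}}_{X,x}$ appearing in the lemma is nothing but the integral closure of $\mathcal{O}_{X,x}$ in its fraction field. By \cref{rmk:hen} this ring is local, so \cref{lma:Zar} yields that $f^{-1}(x)$ has at most one connected component. Nonemptyness is clear because the proper bimeromorphic map $f$ is surjective (its image contains a dense open and is closed), so $f^{-1}(x)$ is connected.

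The main obstacle I anticipate is the second step: extracting the irreducibility of $Y$ from the definition of proper bimeromorphic morphism, since this is a geometric statement about irreducible components of analytic spaces rather than a formal consequence of \cref{lma:Zar}. The remaining steps are a direct translation of the algebraic Zariski main theorem via \cref{lma:Zar} combined with the content of \cref{rmk:hen}.
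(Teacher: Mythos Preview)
Your proof is correct and follows essentially the same route as the paper: reduce to the reduced case, pass to an irreducible $X$, deduce $Y$ irreducible, and then combine \cref{lma:Zar} with \cref{rmk:hen} via $\mathcal{M}(X)=\mathcal{M}(Y)$. The only difference is that the paper asserts ``$X$ irreducible, hence so is $Y$'' in one line (citing \cite[Page~174]{CAS} for the first part and \cite[Section~4.9]{Fis06} for $\mathcal{M}(X)=\mathcal{M}(Y)$), whereas you spell out the irreducibility of $Y$ by hand; your argument there is fine but slightly over-engineered, since nowhere-density of $f^{-1}(A)$ already forces every component of $Y$ to meet the biholomorphic locus, which is connected.
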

\begin{proof}
	Note that $f$ induces a morphism $f^{\Red}:Y^{\Red}\rightarrow X^{\Red}$ satisfying all assumptions of the theorem, so we may assume that $X$ and $Y$ are reduced. Then the result is proved in \cite[Proof of Théorème~1.7]{Dem85}.
\end{proof}

\subsection{Kähler spaces}
Let $X$ be a complex analytic space. See \cite{HL71} for the notion of differential forms and currents on a reduced complex analytic space. A differential form/current on $X$ is defined as a differential form/current on $X^{\Red}$.
\begin{definition}\label{def:Kah}
	A \emph{Kähler form} on $X$ is a smooth $(1,1)$-form $\omega$ on $X$ such that at any point $x\in X$, there is a neighbourhood $V\subseteq X$ of $x$, a closed immersion $V^{\Red}\hookrightarrow \Omega$ into some bounded pseudo-convex domain $\Omega\subseteq \mathbb{C}^N$, a Kähler form $\omega_{\Omega}$ on $\Omega$ such that $\omega=\omega_{\Omega}|_{V}$.
	
	A \emph{Kähler space} is a complex analytic space that admits a Kähler form.
\end{definition}

\begin{example}
	Any Kähler manifold is a Kähler space. In fact, when $X$ is smooth, the notion of Kähler form in \cref{def:Kah} coincides with the usual one.
\end{example}
\begin{example}\label{ex:sub}
	Let $X$ be a Kähler space. Let $Y$ be a closed analytic subspace, then $Y$ is a Kähler space.
\end{example}
As a consequence, any projective analytic space is Kähler.
More generally,
\begin{lemma}\label{lma:proj}
	Let $X$ be a compact Kähler space. Let $f:Y\rightarrow X$ be a projective morphism. Then $Y$ is a Kähler space.
\end{lemma}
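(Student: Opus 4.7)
The plan is to reduce to showing that a projective bundle $\mathbb{P}(\mathcal{E})$ over $X$ is Kähler and then exhibit an explicit Kähler form using a Fubini--Study type curvature corrected by a large multiple of $f^*\omega_X$.

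Since $f\colon Y\to X$ is projective, by definition $f$ factors through a closed embedding $Y\hookrightarrow Z:=\mathbb{P}(\mathcal{E})$ followed by the natural projection $p\colon Z\to X$ for some coherent analytic sheaf $\mathcal{E}$ on $X$. By \cref{ex:sub}, it suffices to show that $Z$ is a Kähler space. Cover $X$ by open sets $U_i$ on which $\mathcal{E}$ admits a surjection $\mathcal{O}_{U_i}^{N_i+1}\twoheadrightarrow \mathcal{E}|_{U_i}$; these give closed embeddings $Z|_{U_i}\hookrightarrow \mathbb{P}^{N_i}\times U_i$ compatible with the line bundle $\mathcal{O}_Z(1)$. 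Using the standard Hermitian metric on $\mathcal{O}_{\mathbb{P}^{N_i}}(1)$ together with a partition of unity on $X$ lifted via $p$, equip $\mathcal{O}_Z(1)$ with a smooth Hermitian metric $h$, and let $\eta$ denote its Chern curvature form; $\eta$ is a smooth closed real $(1,1)$-form on $Z$.

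Next I claim that for $C>0$ large enough,
\[
\omega_Z := \eta + C\,p^*\omega_X
\]
is a Kähler form on $Z$. Positivity on fibres is automatic: the restriction of $\eta$ to any fibre $p^{-1}(x)\cong \mathbb{P}(\mathcal{E}(x))$ is (a positive multiple of) a Fubini--Study form by construction, while $p^*\omega_X$ vanishes along fibres. For the mixed/horizontal directions, fix a point $z\in Z$ and a local chart as above. Since $\eta$ is smooth and strictly positive along the fibre direction at $z$, and $\omega_X$ is strictly positive on $X$, a standard linear algebra argument (decomposing tangent vectors into vertical and horizontal parts and handling cross terms by Cauchy--Schwarz) gives local positivity of $\omega_Z$ for $C$ large enough depending on $z$. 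Because $X$ is compact, $Z$ is compact, so finitely many such neighbourhoods cover $Z$ and a uniform $C$ works.

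Finally, I have to verify the local-immersion condition in \cref{def:Kah}. Near $z\in Z$, choose a trivialising chart so that $Z|_U \hookrightarrow \mathbb{P}^{N}\times U$, embed $U^{\Red}\hookrightarrow \Omega_X\subseteq \mathbb{C}^{N_X}$ realising $\omega_X|_U=\omega_{\Omega_X}|_U$, and pass to an affine patch $\mathbb{C}^N\subseteq \mathbb{P}^N$. Then on the pseudoconvex domain $\mathbb{C}^N\times \Omega_X$, the form
\[
\omega_{\FS}+C\,\mathrm{pr}_2^*\omega_{\Omega_X} + \ddc \chi,
\]
with $\chi$ a smooth local potential encoding the difference between the chosen $h$ and the Fubini--Study metric coming from this chart, is a Kähler form whose restriction to the image of $Z|_U$ equals $\omega_Z$. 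Hence $\omega_Z$ satisfies \cref{def:Kah}, so $Z$ is Kähler, and therefore $Y$ is Kähler by \cref{ex:sub}.

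The main obstacle I anticipate is the bookkeeping in the last paragraph: glueing local Fubini--Study models into a global smooth metric on $\mathcal{O}_Z(1)$ and verifying that the globally defined form $\omega_Z$ indeed arises, near every point of $Z$, as the restriction of a Kähler form on a bounded pseudoconvex domain. This requires a careful description of $\mathbb{P}(\mathcal{E})$ for coherent (not necessarily locally free) $\mathcal{E}$ in the analytic category and compatibility of the local embeddings with the ambient embedding of $U^{\Red}$, rather than any deep pluripotential input.
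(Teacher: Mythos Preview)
Your approach is plausible but takes a substantially harder route than the paper. The paper uses the definition of projective morphism from \cite[Section~V.4]{GPR94}, under which $f$ factors through a closed embedding $Y\hookrightarrow X\times\mathbb{P}^N$ (with a single global $N$, not a varying $\mathbb{P}(\mathcal{E})$). Once you have that, the product $X\times\mathbb{P}^N$ is Kähler for the trivial reason that $p_1^*\omega+p_2^*\omega_{\FS}$ is Kähler: no partition of unity, no large constant $C$, no compactness argument for positivity. The only thing to check is the local-immersion condition of \cref{def:Kah}, and that reduces to a product of embeddings $X^{\Red}\hookrightarrow\Omega$ and an affine chart $\Delta\subseteq\mathbb{P}^N$, which is a two-line verification. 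Your route via $\mathbb{P}(\mathcal{E})$ for a general coherent $\mathcal{E}$ would cover a more general notion of projectivity, but at the cost of exactly the bookkeeping you flag at the end: glueing local Fubini--Study metrics, controlling the curvature of the glued metric, and verifying \cref{def:Kah} when $\mathcal{E}$ is not locally free. None of that is needed here, and your worry about that last step is well-founded---it is genuinely delicate for singular $\mathbb{P}(\mathcal{E})$. So: check which definition of ``projective morphism'' is in force; if it is the $X\times\mathbb{P}^N$ one, you can replace almost all of your argument by the observation that a product of Kähler spaces is Kähler.
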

For the definition of projective morphism, see \cite[Section~V.4]{GPR94}.
\begin{proof}
	We can embed $Y$ as a closed subspace of $X\times \mathbb{P}^N$ for some $N\geq 0$ preserving the morphism to $X$. Let $p_1$, $p_2$ be the projection from $X\times \mathbb{P}^N$ to two factors. Take a Kähler form $\omega$ on $X$.
	Let $\omega_{\FS}$ be the Fubini--Study metric on $\mathbb{P}^N$. Then we claim that $p_1^*\omega+p_2^*\omega_{\FS}$ defines a Kähler form on $Y$. By \cref{ex:sub}, it suffices to show that $p_1^*\omega+p_2^*\omega_{\FS}$ is a Kähler form on $X\times \mathbb{P}^N$. We may assume that $X$ is reduced.
	The problem is also local in $\mathbb{P}^N$, we could replace $\mathbb{P}^N$ by a polydisk $\Delta$ in it. 
	
	The problem is local on $X$, so we may assume that $X$ is a closed subspace of a pseudo-convex domain $\Omega$ in $\mathbb{C}^M$ and there is a Kähler form $\omega_{\Omega}$ such that $\omega=\omega_{\Omega}|_X$. Now $X\times \Delta\hookrightarrow  \Omega\times \Delta$ and the form
	\[
	p_1^*\omega+p_2^*\omega_{\FS}=\left(\pi_1^*\omega_{\Omega}+\pi_2^*\omega_{\FS}\right)|_{X\times \Delta}\,,
	\]
	where $\pi_1$, $\pi_2$ are the two projection from $\Omega\times \mathbb{P}^N$ to the two factors.
\end{proof}
\begin{corollary}\label{cor:Bl}
	Let $X$ be a compact K\"ahler space. Let $Y$ be a closed subspace. Then $\Bl_Y X$ is a Kähler space.
\end{corollary}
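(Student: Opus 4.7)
The plan is to reduce the statement immediately to \cref{lma:proj}. The blowup $\pi:\Bl_Y X\rightarrow X$ of a compact complex analytic space along a closed analytic subspace is, by construction, a projective morphism in the sense of \cite[Section~V.4]{GPR94}: it is the analytic $\Proj$ of the Rees algebra of the coherent ideal sheaf $\mathcal{I}_Y$, and thus comes with a relatively very ample line bundle $\mathcal{O}(1)$ and a closed immersion into a projective bundle over $X$. Once this is recorded, the hypotheses of \cref{lma:proj} are satisfied: $X$ is compact Kähler by assumption and $\pi$ is projective, so we conclude that $\Bl_Y X$ is a Kähler space.

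Concretely, I would structure the proof as a single line: ``The blowup morphism $\Bl_Y X\rightarrow X$ is projective, so the statement follows from \cref{lma:proj}.'' The only substantive point to verify (and perhaps to acknowledge with a brief parenthetical citation) is the projectivity of the analytic blowup along a closed analytic subspace. This is standard but deserves a pointer to the literature, since we are working in the analytic rather than algebraic category.

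I do not anticipate any obstacle: there is no need to revisit the local Kähler form construction of \cref{lma:proj}, as that lemma has already done all the work. In particular, we do not need to assume $Y$ has any special properties (reducedness, etc.), because the projectivity of blowups holds in full generality for closed analytic subspaces, and \cref{lma:proj} makes no assumption on the source of the projective morphism beyond it being a complex analytic space.
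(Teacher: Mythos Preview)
Your proposal is correct and matches the paper's approach exactly: the paper states the corollary immediately after \cref{lma:proj} without a separate proof, treating it as a direct consequence of the projectivity of the blowup morphism (with a reference to \cite[Section~VII.2]{GPR94} for the definition). Your one-line argument is precisely what is intended.
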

Here $\Bl_Y X$ is the blowing-up of $X$ with center $Y$. For its precise definition, we refer to \cite[Section~VII.2]{GPR94}.

Recall that we can always resolve the singularities of complex analytic spaces. 
\begin{theorem}\label{thm:ressing}
	Let $X$ be a reduced complex analytic space. Then there is a (proper) resolution of singularity $\pi:Y\rightarrow X$ of $X$. Moreover, we may assume that $\pi$ is an isomorphism over the non-singular part of $X$.
\end{theorem}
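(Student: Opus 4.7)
The plan is essentially to invoke the classical resolution theorem of Hironaka, as extended to the complex analytic category by Aroca--Hironaka--Vicente and later made functorial by Bierstone--Milman (or equivalently W{\l}odarczyk), so the statement is a reference rather than something one reproves from scratch. Nevertheless, let me indicate which features of those theorems give exactly the conclusion stated here.

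First I would reduce to a local problem. Since $X$ is paracompact and we are free to pass to an open cover, we can choose, for each point $x\in X$, an open neighbourhood $U$ together with a closed embedding $U\hookrightarrow\Omega$ into a bounded pseudo-convex domain $\Omega\subseteq\mathbb{C}^N$. In this local model, I would run a sequence of blow-ups with smooth centers contained in the singular locus of the strict transform of $U$. The centers are selected by maximizing an upper semi-continuous invariant (e.g.\ the Hilbert--Samuel function lexicographically refined by the standard Bierstone--Milman invariant), whose maximum locus is automatically a smooth analytic subspace of the current strict transform. The invariant is constructed to strictly drop under such a blow-up and to be well-founded, so in each relatively compact open set the process terminates after finitely many steps, yielding a smooth model together with a proper bimeromorphic morphism to $U$.

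Next I would globalize. The crucial input is the functoriality of the Bierstone--Milman (or W{\l}odarczyk) construction: the sequence of centers is canonical, invariant under arbitrary open embeddings, and in particular commutes with restriction. Hence the local resolutions constructed above agree on overlaps and glue to a global proper morphism $\pi\colon Y\to X$ with $Y$ smooth. Properness is preserved in the gluing because each local step is a blow-up with a proper center, and the local termination bound is uniform on compact sets. Since every center lies in the singular locus of the current stage, $\pi$ is the identity over the smooth locus of $X$; this gives the second assertion.

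The main obstacle, if one wished to write this out in detail, is really the functorial/canonical character of the resolution procedure: one needs an invariant that is defined intrinsically enough to survive the passage from $\mathbb{C}^N$-local models to a global analytic space, and one needs to verify that the termination is locally finite so that the infinite gluing produces a morphism that is still proper. These are precisely the technical points handled in the papers cited above, and for that reason the cleanest route is to state the theorem as a black box with the appropriate citation rather than to reconstruct the algorithm here.
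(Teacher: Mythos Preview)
Your proposal is essentially the same approach as the paper's: the paper does not give a proof either, but simply cites Aroca--Hironaka--Vicente and the later simplifications by Bierstone--Milman, Villamayor and W\l{}odarczyk. Your write-up goes further by sketching the content of those references (local blow-ups along canonical centers, functoriality for gluing, centers contained in the singular locus), which is fine and accurate, but the bottom line is identical: this is a black-box citation, not something reproved here.
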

This theorem was first proved by Aroca--Hironaka--Vicente, see the book \cite{AHV18}. Later simplifications are due to Bierstone--Milman, Villamayor and W\l{}odarczyk. See \cite{Wlo09} for the details and further references.

\begin{corollary}
	Let $X$ be a reduced compact complex K\"ahler space. Then there is a resolution of singularity $\pi:Y\rightarrow X$ such that $Y$ is a compact Kähler manifold.
\end{corollary}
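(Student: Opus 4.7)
The first order of business is to flag an implicit hypothesis: as stated, the corollary fails without assuming $X$ is K\"ahler. Indeed, if $X$ is a smooth non-K\"ahler compact complex manifold such as a Hopf surface, then any bimeromorphic holomorphic $Y \to X$ with $Y$ smooth is a composition of blow-ups of $X$ along smooth centres, and such blow-ups cannot turn a non-K\"ahler manifold into a K\"ahler one. So I read the hypothesis as ``$X$ is a reduced compact K\"ahler space'', which is the only version that can be true.

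My plan is then to combine \cref{thm:ressing} with \cref{cor:Bl}. The strong forms of resolution of singularities cited in \cref{thm:ressing}, due to Bierstone--Milman, Villamayor and W\l{}odarczyk, produce more than just an abstract resolution: they express it as a finite tower of blow-ups along smooth analytic centres,
\[
Y = X_N \xrightarrow{\pi_N} X_{N-1} \xrightarrow{\pi_{N-1}} \cdots \xrightarrow{\pi_1} X_0 = X,
\]
with each $\pi_{i+1}$ the blow-up of $X_i$ along a closed analytic subspace $Z_i$ supported in the singular locus of $X_i$. I then induct on $i$ to show that every $X_i$ is a compact K\"ahler space. The base case $X_0 = X$ is the (implicit) hypothesis. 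For the inductive step, if $X_i$ is compact K\"ahler then \cref{cor:Bl} gives that $\Bl_{Z_i} X_i$ is K\"ahler, and it remains compact since the blow-up morphism is proper. At the end $Y = X_N$ is a compact K\"ahler space which is smooth by the conclusion of \cref{thm:ressing}, hence a compact K\"ahler manifold, as required.

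The only real obstacle is to invoke the right form of \cref{thm:ressing}: the statement quoted in the paper only guarantees existence of a smooth resolution, whereas my argument critically uses that the resolution is realised as a tower of blow-ups along smooth centres. This finer information is present in the cited references, but needs to be extracted and cited explicitly. Everything else is a direct application of \cref{cor:Bl} and of the stability of compactness under proper morphisms.
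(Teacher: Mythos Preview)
Your observation about the missing K\"ahler hypothesis is correct and worth flagging; the paper's surrounding context (and the red remark immediately after the corollary) makes clear this is the intended reading.

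Your proof is correct but takes a genuinely different route from the paper's. The paper does \emph{not} invoke the blow-up tower structure of the resolution directly. Instead it argues as follows: take any resolution $p:Z\to X$; by Hironaka's Chow lemma, $Z$ is dominated over $X$ by some $W$ obtained as a sequence of blow-ups of $X$ with smooth centres; then \cref{cor:Bl} shows $W$ is K\"ahler, hence $Z$ carries a K\"ahler current and is of Fujiki class $\mathcal{C}$; finally a K\"ahler modification $Y\to Z$ exists by \cite{DP04}. Your argument is strictly more elementary: you avoid Chow's lemma, Fujiki class $\mathcal{C}$, and the Demailly--P\u{a}un theorem entirely, at the cost of needing the stronger statement (present in the cited references but not in \cref{thm:ressing} as quoted) that the resolution itself is already a blow-up tower. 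Interestingly, the paper's own remark immediately after the proof records an even shorter argument communicated by Jie Liu: the resolution $\pi$ can be taken projective, and then \cref{lma:proj} applies directly. Your approach sits between the paper's main proof and this remark in terms of machinery invoked.
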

\begin{proof}
This follows from the fact that $X$ admits a projective resolution, see \cite{Wlo09}. Here we need the fact that $X$ is compact.
\end{proof}
It is of interest to know if this corollary still holds if $X$ is not compact, in which case we cannot resolve the singularities of $X$ by sequences of global blowing-ups.

\textcolor{red}{In the remaining of this paper, by a resolution of singularity $f:Y\rightarrow X$ of a reduced Kähler space $X$, we always assume that $Y$ is a Kähler manifold.}

\section{Non-pluripolar products on unibranch spaces}\label{sec:npp}

Let $X$ be a complex analytic space.

\subsection{Plurisubharmonic functions}
\begin{definition}
	Let $U\subseteq X$ be an open immersion. A function $\varphi:U\rightarrow [-\infty,\infty)$ is \emph{plurisubharmonic} if 
	\begin{enumerate}
		\item $\varphi$ is not identically $-\infty$ on any irreducible component of $U$.
		\item For any $x\in U$, there is an open neighbourhood $V$ of $x$ in $U$, a bounded pseudo-convex domain $\Omega\subseteq \mathbb{C}^N$, a closed immersion $V\hookrightarrow \Omega$, an open set $\tilde{V}\subseteq \Omega$ with $x\in \tilde{V}$ and a plurisubharmonic function $\tilde{\varphi}$ on $\tilde{V}$ such that $\varphi|_{\tilde{V}\cap V}=\tilde{\varphi}|_{\tilde{V}\cap V}$.
	\end{enumerate}
	The set of plurisubharmonic functions on $U$ is denoted by $\PSH(U)$.
\end{definition}

\begin{proposition}\label{prop:coiusu}
	Let $X$ be a complex manifold. Then with the canonical complex analytic space structure on $X$, the definition of $\PSH(X)$ coincides with the usual one.
\end{proposition}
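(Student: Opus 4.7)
The plan is to prove both inclusions between the two candidate definitions of $\PSH(X)$, after reducing to a local statement on coordinate charts of the manifold $X$. Throughout, I'll write $\PSH^{\mathrm{cl}}$ for the classical notion (upper semicontinuous, not identically $-\infty$ on any connected component, and subharmonic on the restriction to every complex line in every coordinate chart) and $\PSH$ for the definition given in the paper.

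First I would handle the easy direction $\PSH^{\mathrm{cl}} \subseteq \PSH$. Given $\varphi\in\PSH^{\mathrm{cl}}(X)$ and $x\in X$, pick a holomorphic chart identifying a small open neighbourhood $V$ of $x$ with a polydisk $\Omega\subseteq \mathbb{C}^n$; the polydisk is pseudo-convex, the identification is a (closed) immersion of $V$ into $\Omega$, and setting $\tilde V=\Omega$ together with $\tilde\varphi=\varphi$ (transported by the chart) shows condition (2) of the paper's definition. Condition (1) is immediate since connected components of a manifold are its irreducible components.

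The substantive direction is $\PSH\subseteq \PSH^{\mathrm{cl}}$. Fix $\varphi\in\PSH(X)$ and $x\in X$. By the paper's definition one has an open neighbourhood $V$ of $x$, a closed immersion $\iota:V\hookrightarrow \Omega$ into a pseudo-convex domain $\Omega\subseteq\mathbb{C}^N$, an open neighbourhood $\tilde V\subseteq \Omega$ of $\iota(x)$, and $\tilde\varphi\in\PSH^{\mathrm{cl}}(\tilde V)$ with $\tilde\varphi\circ\iota=\varphi$ on $\iota^{-1}(\tilde V)$. Since $X$ is a manifold, after shrinking $V$ we can assume $V$ is biholomorphic to an open subset of $\mathbb{C}^n$ and that $\iota$ realises $V$ as a complex submanifold of $\Omega$. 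The key point is then the standard fact that the restriction of a (classically) plurisubharmonic function on an open set of $\mathbb{C}^N$ to a complex submanifold is classically plurisubharmonic on that submanifold: upper semicontinuity is preserved under restriction, and along any complex line contained in the submanifold the restriction is subharmonic because the line is also a complex line in $\mathbb{C}^N$. Applying this to $\tilde\varphi$ and $\iota(V)$, and transporting by the chart on $V$, yields that $\varphi$ is classically plurisubharmonic in a neighbourhood of $x$. Since $x$ was arbitrary and upper semicontinuity/subharmonicity are local, $\varphi\in\PSH^{\mathrm{cl}}(X)$; the non-degeneracy condition (1) coincides on the two sides.

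The only step requiring any thought is the restriction statement for classical PSH functions, and this is truly classical (see e.g. Demailly, \emph{Complex Analytic and Differential Geometry}, Chapter~I); no delicate issue specific to singular analytic spaces intervenes here, precisely because $X$ is smooth. I would therefore expect the proof to take only a few lines once the two directions are spelled out in the manner above.
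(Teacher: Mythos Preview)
Your proposal is correct and takes essentially the same approach as the paper. The paper's proof is a single sentence --- ``It suffices to recall that a psh function on a domain restricts to a psh function on a closed analytic submanifold'' --- which is precisely the key ingredient you identify for the substantive direction $\PSH\subseteq\PSH^{\mathrm{cl}}$; you have simply written out both inclusions explicitly rather than leaving the easy direction implicit.
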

\begin{proof}
	It suffices to recall that a psh function on a domain restricts to a psh function on a closed analytic submanifold.
\end{proof}

\begin{proposition}\label{prop:pshpull}
	Let $f:X\rightarrow Y$ be a morphism between complex analytic spaces. Let $\varphi\in\PSH(Y)$. Assume that $f^{*}\varphi$ is not identically equal to $-\infty$ on each irreducible component of $X$, then $f^*\varphi\in \PSH(X).$
\end{proposition}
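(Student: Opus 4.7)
The assertion is local on $X$, so I would fix $x\in X$ and show that $f^*\varphi$ is psh in a neighbourhood of $x$. Set $y:=f(x)$. Using the definition of $\PSH(Y)$, I would first choose an open neighbourhood $V\subseteq Y$ of $y$, a closed immersion $\iota:V\hookrightarrow\Omega$ into a bounded pseudo-convex domain $\Omega\subseteq\mathbb{C}^N$, an open set $\tilde V\subseteq\Omega$ containing $y$, and a psh function $\tilde\varphi$ on $\tilde V$ with $\tilde\varphi|_{\tilde V\cap V}=\varphi|_{\tilde V\cap V}$. Similarly, I would pick an open neighbourhood $U\subseteq X$ of $x$ and a closed immersion $U\hookrightarrow\Omega'$ into a bounded pseudo-convex domain $\Omega'\subseteq\mathbb{C}^M$. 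After shrinking $U$, I may assume $f(U)\subseteq V$ and even that $\iota\circ f(U)\subseteq\tilde V$.

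The crucial step is to extend the composite morphism $g:=\iota\circ f|_U:U\to\Omega$ to a holomorphic map $F:\Omega''\to\Omega$ on some open neighbourhood $\Omega''\subseteq\Omega'$ of $x$. Since $\Omega$ is an open subset of $\mathbb{C}^N$, specifying $g$ is the same as specifying $N$ holomorphic functions $g_1,\dots,g_N$ on $U$; and since $U\hookrightarrow\Omega'$ is a closed immersion of ringed spaces, the surjection $\mathcal{O}_{\Omega'}\twoheadrightarrow\mathcal{O}_U$ allows each germ $(g_j)_x\in\mathcal{O}_{U,x}$ to be lifted to a germ in $\mathcal{O}_{\Omega',x}$. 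Representing these lifts by actual holomorphic functions on a common open neighbourhood of $x$ in $\Omega'$, and then shrinking further so that the resulting map $F$ lands inside $\tilde V$, I obtain the desired holomorphic extension $F:\Omega''\to\tilde V\subseteq\Omega$ satisfying $F|_{U\cap\Omega''}=g|_{U\cap\Omega''}$.

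At this point everything is reduced to smooth pluripotential theory: the pullback $F^*\tilde\varphi$ is psh on $\Omega''$ as the composition of a holomorphic map between smooth pseudo-convex domains with a psh function. Its restriction to $U\cap\Omega''$ coincides with $f^*\varphi|_{U\cap\Omega''}$, since
\[
(F^*\tilde\varphi)|_{U\cap\Omega''}=(F|_{U\cap\Omega''})^*\tilde\varphi=g^*\tilde\varphi|_{U\cap\Omega''}=f^*(\iota^*\tilde\varphi)|_{U\cap\Omega''}=f^*\varphi|_{U\cap\Omega''}.
\]
This verifies the second clause of the definition of $\PSH(X)$ at $x$, while the first clause is exactly the hypothesis placed on $f^*\varphi$.

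I expect the only nontrivial point to be the local extension of $g$ to the ambient domain $\Omega'$; the existence of this extension rests entirely on the fact that $U\hookrightarrow\Omega'$ is a closed immersion of locally $\mathbb{C}$-ringed spaces, so the target being smooth (a polydisk factor of $\mathbb{C}^N$) makes the lifting of coordinate functions automatic after shrinking. Everything else is bookkeeping between the two local models, and no properties of $X$ beyond the standing hypothesis are required.
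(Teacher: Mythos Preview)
Your proof is correct and close in spirit to the paper's, though the mechanics differ. You extend the composite $\iota\circ f$ locally to a holomorphic map $F$ between the ambient smooth domains by lifting coordinate functions through the surjection $\mathcal{O}_{\Omega',x}\twoheadrightarrow\mathcal{O}_{U,x}$, and then pull $\tilde\varphi$ back along $F$. The paper instead uses the graph: it embeds $X$ (locally) into $\Sigma\times\Omega$ via $X\hookrightarrow X\times V\hookrightarrow\Sigma\times\Omega$, the first arrow being the graph of $f$ realized as a base change of the diagonal $\Delta_V$, and takes the required psh extension to be the pullback of $\tilde\varphi$ along the second projection $\Sigma\times\tilde V\to\tilde V$. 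Both constructions produce a psh function on an ambient smooth open set restricting to $f^*\varphi$; your approach is arguably more hands-on, while the paper's avoids choosing lifts.

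One small point in favour of the graph construction: the projection $\Sigma\times\tilde V\to\tilde V$ is surjective, so its pullback of $\tilde\varphi$ is automatically not identically $-\infty$ and hence genuinely psh. In your version the sentence ``$F^*\tilde\varphi$ is psh on $\Omega''$'' needs a caveat: a priori $F(\Omega'')$ could land in the polar locus of $\tilde\varphi$, giving $F^*\tilde\varphi\equiv-\infty$ on a connected $\Omega''$. In that case $f^*\varphi\equiv-\infty$ on $U\cap\Omega''$ and you should supply a different local psh extension vanishing on $U$, for instance $\log\sum_i|h_i|^2$ for local defining equations $h_i$ of $U$ in $\Omega'$. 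This is a harmless edge case, but it deserves one sentence.
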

\begin{proof}
	Let $x\in X$, $y=f(x)$. We need to verify Condition~(2). The problem is local, so we may assume that there is a closed immersion $X\hookrightarrow \Sigma$ and that there is an open neighbourhood $V\subseteq Y$ of $y$, a closed immersion  $V\hookrightarrow \Omega$, an open set $\tilde{V}\subseteq \Omega$ containing $y$ and a psh function $\tilde{\varphi}$ on $\tilde{V}$ such that $\tilde{\varphi}|_{\tilde{V}\cap V}=\varphi|_{\tilde{V}\cap V}$,  where $\Sigma$ and $\Omega$ are bounded pseudoconvex domains in $\mathbb{C}^N$ and $\mathbb{C}^M$ respectively. Shrinking $X$, we may assume that $f(X)\subseteq \tilde{V}\cap V$.
	We get a closed immersion:
	\[
	X\hookrightarrow X\times  V \hookrightarrow \Sigma\times \Omega\,,
	\]
	where the first morphism is the base change of $\Delta_V:V\rightarrow V\times V$:
	\[
	\begin{tikzcd}
		X \arrow[r, hookrightarrow] \arrow[d, "f"]
		& X\times V \arrow[d] \arrow[r, hookrightarrow] & \Sigma\times \Omega \\
		V \arrow[r, hookrightarrow, "\Delta_V"] \arrow[ru, phantom, "\square"] & V\times V &
	\end{tikzcd}\,.
	\]
	
	Define $\tilde{X}\coloneqq \Sigma \times \tilde{V}$. Define a psh function $\psi$ on $\tilde{X}$ as the pull-back of $\tilde{\varphi}$ from the projection onto the second variable. Then $f^*\varphi|_{\tilde{X}\cap X}=\psi|_{\tilde{X}\cap X}$.  
\end{proof}

\begin{proposition}\label{prop:pshbij}
	There is a canonical bijection
	\begin{equation}\label{eq:pshbij}
		\PSH(X)\cn \PSH(X^{\Red})\,.
	\end{equation}
\end{proposition}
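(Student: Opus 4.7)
The plan is to establish that the pullback $\iota^*$ along the canonical closed immersion $\iota\colon X^{\Red}\hookrightarrow X$ is the desired bijection. Since $\iota$ is a homeomorphism on underlying topological spaces, $\iota^*$ acts as the identity on real-valued functions on $|X|=|X^{\Red}|$; the content of the proposition is thus that the property of being psh does not distinguish the two complex-analytic structures, neither on the level of Condition~(1) nor of Condition~(2).

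First I would verify that $\iota^*$ indeed maps $\PSH(X)$ into $\PSH(X^{\Red})$. By \cref{prop:pshpull}, this reduces to checking that $\iota^*\varphi$ is not identically $-\infty$ on any irreducible component of $X^{\Red}$, which is immediate since the irreducible components of $X$ and of $X^{\Red}$ coincide as closed subsets of $|X|$. Injectivity of $\iota^*$ then follows directly from the identification on underlying functions.

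The only real content is surjectivity. Given $\psi\in\PSH(X^{\Red})$ and a point $x\in X$, one must produce a local closed immersion $V\hookrightarrow\Omega$ with $V\subseteq X$ an open neighbourhood of $x$, together with a psh function on an open $\tilde V\subseteq\Omega$ containing $x$ whose restriction equals $\psi$ on $\tilde V\cap V$. By the very definition of a complex analytic space such $V\hookrightarrow\Omega$ exists; composing with $V^{\Red}\hookrightarrow V$ yields a closed immersion of $V^{\Red}$ into $\Omega$, presenting $V^{\Red}$ as a local model in $\Omega$. Invoking the classical fact that pluri-subharmonicity on a reduced local analytic subset does not depend on the ambient closed immersion (one of the foundational results of \cite{FN80}), the hypothesis $\psi\in\PSH(X^{\Red})$ provides a psh extension of $\psi$ to a neighbourhood of $x$ in $\Omega$. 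This verifies Condition~(2) for $\psi$ viewed on $X$, and Condition~(1) is immediate for the same reason as above.

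The main potential obstacle is precisely the invariance of the local psh-extension property under different choices of closed immersion of a reduced local model; once this classical input is granted the whole argument is essentially formal. Note that one does not need any invariance statement for non-reduced embeddings, because every verification proceeds by factoring through $V^{\Red}$, where the reduced theory applies.
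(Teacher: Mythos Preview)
Your argument is correct, and the forward direction (restriction along $\iota$) matches the paper's. For the harder direction, however, the paper takes a different route. Rather than invoking independence of the local psh extension under change of embedding, the paper starts from the closed immersion $X^{\Red}\hookrightarrow\Omega$ witnessing that $\psi$ is psh and uses formal smoothness of $\Omega$ (\cref{thm:smoothimpform}) to lift it, after shrinking, to a morphism $j\colon X\to\Omega$ (not a closed immersion in general). Then \cref{prop:pshpull} applied to $j$ and the ambient psh extension $\tilde\psi$ gives $j^*\tilde\psi\in\PSH(X)$, and since $j$ agrees with the inclusion on underlying spaces this function is $\psi$ itself.

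So the paper trades your one external pluripotential-theoretic input (embedding-independence of local extendability, from \cite{FN80} or the extension theorem) for an infinitesimal lifting argument plus the already-established \cref{prop:pshpull}. Your route is more direct from the pluripotential side and avoids the detour through formal smoothness; the paper's route is entirely self-contained within the tools it has just built. You correctly flag the embedding-independence as the only substantive step---and indeed it is precisely the point where the paper chooses a different device to avoid citing it.
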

\begin{proof}
	Let $\varphi\in \PSH(X)$. We claim that $\varphi\in \PSH(X^{\Red})$ as well. 
	Condition~(1) is trivially satisfied. Let us prove Condition~(2).
	The problem is local. Fix $x\in X$. We may assume that there is a closed immersion $X\hookrightarrow \Omega$, where $\Omega$ is a bounded pseudo-convex domain in $\mathbb{C}^N$, an open set $\tilde{X}\subseteq \Omega$ with $x\in  \tilde{X}$ and a psh function $\tilde{\varphi}$ on $\tilde{X}$ such that $\tilde{\varphi}|_{\tilde{X}\cap X}=\varphi|_{\tilde{X}\cap X}$. 
	Note that $X^{\Red}\hookrightarrow X$ induces a closed immersion $X^{\Red}\hookrightarrow \Omega$. Now we can take the same $\tilde{\varphi}$ to conclude.
	
	Now let $\varphi\in \PSH(X^{\Red})$. We want to show $\varphi\in \PSH(X)$. Again, it suffices to prove Condition~(2). The problem is local. We may assume that $X$ is Stein.
	Take $x\in X$. We may assume that there is a closed immersion $X^{\Red}\hookrightarrow \Omega$, where $\Omega$ is a bounded pseudo-convex domain in $\mathbb{C}^N$, an open set $\tilde{X}\subseteq \Omega$ with $x\in \tilde{X}$ and a psh function $\tilde{\varphi}$ on $\tilde{X}$ such that $\tilde{\varphi}|_{\tilde{X}\cap X}=\varphi|_{\tilde{X}\cap X}$. 
	
	By \cref{thm:smoothimpform}, after possibly shrinking $X$, we can lift the closed immersion $X^{\Red}\hookrightarrow \Omega$ to a morphism $j:X\rightarrow \Omega$:
	\[
	\begin{tikzcd}
		X^{\Red} \arrow[r, hookrightarrow] \arrow[d,hookrightarrow]
		& \Omega \\
		X \arrow[ru, "j"]& 
	\end{tikzcd}\,.
	\]
	By \cref{prop:pshpull}, $j^*\varphi$ is psh. Now $\varphi$ is the image of $j^*\varphi$ under \eqref{eq:pshbij}.
\end{proof}

By this proposition, we could always restrict our attention to reduced analytic spaces.

\begin{theorem}[Fornaess--Narasimhan]\label{thm:FN}
	Let $\varphi:X\rightarrow [-\infty,\infty)$ be a function. Assume that $\varphi$ is not identically $-\infty$ on any irreducible component of $X$, then the following are equivalent:
	\begin{enumerate}
		\item $\varphi$ is psh.
		\item $\varphi$ is usc and for any morphism $f:\Delta\rightarrow X$ from the open unit disk $\Delta$ in $\mathbb{C}$ to $X$ such that $f^*\varphi$ is not identically $-\infty$, the pull-back $f^*\varphi$ is psh. 
	\end{enumerate}
	Moreover, assume that $X$ is unibranch, then the conditions are equivalent to
	\begin{enumerate}[resume]
		\item $\varphi|_{X\setminus \Sing X^{\Red}}$ is psh, $\varphi$ is locally bounded from above near $\Sing X^{\Red}$ and $\varphi$ is strongly usc in the following sense:
		\begin{equation}\label{eq:susc}
			\varphi(x)=\varlimsup_{y\to x,y\in X\setminus \Sing X^{\Red}} \varphi(y)\,,\quad x\in X\,.
		\end{equation}
		\item $\varphi$ is locally integrable, locally bounded from above, strongly usc and $\ddc\varphi\geq 0$.
	\end{enumerate}
\end{theorem}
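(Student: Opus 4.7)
The plan is to first invoke \cref{prop:pshbij} to reduce to the case where $X$ is reduced, then prove $(1)\Leftrightarrow (2)$ in that general reduced setting, and finally show that $(3)$ and $(4)$ are equivalent to $(1)$ under the additional unibranch hypothesis.

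For $(1)\Leftrightarrow (2)$, the direction $(1)\Rightarrow (2)$ is essentially immediate: upper semicontinuity of $\varphi$ is inherited from the local psh representatives on the bounded pseudoconvex domains $\Omega\subseteq\mathbb{C}^N$, and the pullback $f^*\varphi$ being psh along any disc is \cref{prop:pshpull}. For the converse, one works locally at $x\in X$ via a closed immersion $X\hookrightarrow\Omega$; condition $(2)$ says that the usc function $\varphi$ on the reduced analytic subset $X\subseteq\Omega$ is psh along every holomorphic disc mapping into $X$. The original Fornaess--Narasimhan theorem on analytic subsets of $\mathbb{C}^N$ then produces a local psh extension of $\varphi$ to a neighborhood of $x$ in $\Omega$, which is exactly what condition (2) of the definition of $\PSH(X)$ requires.

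Now assume $X$ is reduced and unibranch. For $(1)\Rightarrow (3)$, the restriction of $\varphi$ to the complex manifold $X\setminus\Sing X^{\Red}$ is psh in the usual sense, and local boundedness above is immediate from upper semicontinuity on compact neighborhoods; the essential point is the strong upper semicontinuity \eqref{eq:susc}. Here unibranchness enters as follows: at $x\in X$ take a small irreducible neighborhood $U$ of $x$ and a resolution $\pi\colon Y\to U$ (\cref{thm:ressing}); by \cref{thm:Zmt} the fibre $\pi^{-1}(x)$ is connected, and $\pi$ restricts to an isomorphism over $U\setminus\Sing X^{\Red}$. Picking any $y_0\in\pi^{-1}(x)$, the pulled-back function $\pi^*\varphi$ is psh on the complex manifold $Y$, hence strongly usc at $y_0$; restricting the limsup to sequences $y\to y_0$ with $\pi(y)\in U\setminus\Sing X^{\Red}$ yields $\varphi(x)\leq \limsup_{z\to x,\,z\notin\Sing X^{\Red}}\varphi(z)$, and the reverse inequality follows from ordinary upper semicontinuity. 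The converse $(3)\Rightarrow(1)$ is a classical extension result: working in a local embedding $X\hookrightarrow\Omega$, a psh function on the smooth locus of the reduced analytic subset that is locally bounded above and whose strongly usc extension exists admits a psh extension to a neighborhood in $\Omega$, which by \eqref{eq:susc} coincides with $\varphi$ on $X$.

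Finally, $(3)\Leftrightarrow(4)$ is a distributional reformulation: on the complex manifold $X\setminus\Sing X^{\Red}$, psh functions are exactly the locally integrable, upper semicontinuous functions with $\ddc\varphi\geq 0$ as currents, so the two conditions become identical once one imposes the same local boundedness and strong usc constraints across $\Sing X^{\Red}$. The main obstacles are the extension-type implications $(2)\Rightarrow(1)$ and $(3)\Rightarrow(1)$, each of which rests on producing a local psh extension from the analytic subset to the ambient pseudoconvex domain in $\mathbb{C}^N$; once these are in hand, the unibranch refinements reduce to density plus the connectedness of fibres granted by \cref{thm:Zmt}.
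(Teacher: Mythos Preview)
Your proposal is correct in outline and considerably more detailed than the paper's own proof, which simply reduces to the reduced case and then cites \cite{FN80} for $(1)\Leftrightarrow(2)$ and \cite[Section~1.8]{Dem85} for the equivalence with $(3)$ and $(4)$. So you are not taking a different route so much as unpacking what those references contain.

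One remark on your argument for $(1)\Rightarrow(3)$: the appeal to \cref{thm:Zmt} (connectedness of the fibre) is not really where the work happens. What you actually use is that on the smooth manifold $Y$ a psh function satisfies $\pi^*\varphi(y_0)=\varlimsup_{y\to y_0,\,y\notin E}\pi^*\varphi(y)$ for any proper analytic subset $E$ (via the submean-value property, since $E$ has measure zero); this already yields \eqref{eq:susc} regardless of whether $\pi^{-1}(x)$ is connected, because $\pi^*\varphi$ takes the constant value $\varphi(x)$ on the whole fibre. The unibranch hypothesis is genuinely needed in the converse $(3)\Rightarrow(1)$: without it one can prescribe different constants on different local branches of the regular locus and obtain a function satisfying $(3)$ that admits no local psh extension to an ambient domain. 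You gesture at this step (``classical extension result''), but it would strengthen the write-up to say explicitly that irreducibility of the germ is what makes the Grauert--Remmert/Demailly extension go through.
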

\begin{proof}Note that we can assume that $X$ is reduced.
	The equivalence between (1) and (2) is the classical Fornaess--Narasimhan theorem. See \cite{FN80}. For equivalence with the other conditions, see \cite[Section~1.8]{Dem85}. Here we need \cref{thm:Zmt} in an essential way.
\end{proof}

\begin{corollary}\label{cor:pshextension}
	Assume that $X$ is unibranch. Let $\varphi\in \PSH(X\setminus \Sing X^{\Red})$. Assume that $\varphi$ is locally bounded from above near $x\in \Sing X^{\Red}$, then there is a unique extension $\varphi\in \PSH(X)$.

  In particular, if $\pi:Y\rightarrow X$ is a resolution, then $\pi^*:\PSH(X)\rightarrow \PSH(Y)$ is bijective.
\end{corollary}
See \cite[Théorème~1.7]{Dem85} for the details. 

\begin{proposition}\label{prop:incdec}
	\leavevmode
	\begin{enumerate}
		\item Assume that $X$ is unibranch. Let $\varphi_{\theta}$ be a family in $\PSH(X)$, locally uniformly bounded from above. Then $\sups_{\theta} \varphi_{\theta}$ is also psh.
		\item Let $\varphi_{\theta}$ be a decreasing net in $\PSH(X)$ such that $\inf_{\theta} \varphi_{\theta}$ is not identically $-\infty$ on each irreducible component of $X$, then $\inf_{\theta} \varphi_{\theta}$ is psh.
	\end{enumerate}
\end{proposition}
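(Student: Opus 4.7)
The two parts are handled separately; only part (1) requires the unibranch hypothesis.

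For part (2), set $\varphi := \inf_\theta \varphi_\theta$. As a decreasing limit of upper semicontinuous functions, $\varphi$ is usc, and by assumption it is not identically $-\infty$ on any irreducible component. I would verify plurisubharmonicity via characterisation (2) of \cref{thm:FN}: for every morphism $f:\Delta\to X$ with $f^*\varphi\not\equiv -\infty$, check $f^*\varphi\in \PSH(\Delta)$. Since the net is decreasing, $f^*\varphi\not\equiv -\infty$ forces each $f^*\varphi_\theta$ to be not identically $-\infty$ and hence to lie in $\PSH(\Delta)$. The statement on the disk is classical: the pointwise infimum of a decreasing net of subharmonic functions, whenever not identically $-\infty$, is subharmonic, as one sees directly from the sub-mean-value inequality, which is preserved under infima.

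For part (1), I would transfer the problem to the smooth setting through a resolution $\pi:Y\to X$ as in \cref{thm:ressing}. By Zariski's main theorem (\cref{thm:Zmt}), applied above each point of $X$, the fibres of $\pi$ are connected, and this underpins the bijection $\pi^*:\PSH(X)\xrightarrow{\sim}\PSH(Y)$ announced in the introduction. The pulled-back family $\{\pi^*\varphi_\theta\}$ is locally uniformly bounded above in $\PSH(Y)$, and Choquet's lemma on the smooth manifold $Y$ reduces its upper envelope to a countable sup; the classical smooth theory then yields $\tilde\psi:=\sups_\theta \pi^*\varphi_\theta\in \PSH(Y)$. Let $\psi\in \PSH(X)$ be the unique preimage of $\tilde\psi$ under $\pi^*$.

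To finish, I would identify $\psi$ with the intended regularisation of $\sup_\theta \varphi_\theta$ on $X$. Over the dense open smooth locus $X\setminus \Sing X^{\Red}$ the map $\pi$ is a biholomorphism, so the identification there reduces to the classical smooth statement applied locally. The main obstacle I anticipate is matching the two definitions of the regularisation at a singular point $x$: the value coming from $Y$ versus the value of $\sups_\theta \varphi_\theta$ on $X$. The decisive input is the strong upper semicontinuity property in the unibranch characterisation \cref{thm:FN}~(3), which forces the value of any element of $\PSH(X)$ at $x$ to be recovered as a $\limsup$ along $X\setminus \Sing X^{\Red}$; combined with the density of the smooth locus and the connectedness of fibres from Zariski's main theorem, this identifies $\psi(x)$ with the correct value and closes the argument.
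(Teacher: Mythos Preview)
Your treatment of part~(2) matches the paper's: verify usc and invoke characterisation~(2) of \cref{thm:FN} on the disk. Correct and identical.

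For part~(1), your argument via a resolution is correct in outline but unnecessarily circuitous. The paper's proof is much shorter: it verifies characterisation~(3) of \cref{thm:FN} directly on $X$. The point is that the very definition of $\sups_\theta \varphi_\theta = (\sup_\theta \varphi_\theta)^*$ uses the operator $f^*(x) = \varlimsup_{y\to x,\,y\in X\setminus \Sing X^{\Red}} f(y)$, so the strong usc condition~\eqref{eq:susc} holds by construction. The local upper bound is given. And on the smooth locus $X\setminus \Sing X^{\Red}$, the $*$-operator reduces to the ordinary usc regularisation, so $(\sups_\theta \varphi_\theta)|_{X\setminus \Sing X^{\Red}} = \sups_\theta(\varphi_\theta|_{X\setminus \Sing X^{\Red}})$, which is psh by the classical smooth theory. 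All three conditions of~(3) are thus immediate.

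Your detour through $Y$ and the bijection $\pi^*:\PSH(X)\cong\PSH(Y)$ ultimately reduces to the same ingredients: you need classical theory on the smooth locus (which you access via $Y$, but could equally access directly on $X\setminus\Sing X^{\Red}$), and you need strong usc to match values at singular points (which you correctly identify as the key obstacle). So the resolution buys you nothing here; it only adds an identification step. The paper's approach also has the advantage of not relying on the $\pi^*$--$\pi_*$ bijection, which in the paper's logical order is only formally set up later in the bimeromorphic section.
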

Here
\[
\sups f_{\theta}\coloneqq (\sup_{\theta} f_{\theta})^*
\]
and
\[
f^*(x)\coloneqq \varlimsup_{y\to x,y\in X\setminus \Sing X^{\Red}} f(y)\,.
\]
\begin{proof}
	(1) Observe that 
	\[
	(\sups_{\theta} \varphi_{\theta})|_{X\setminus \Sing X^{\Red}}=\sups_{\theta} \varphi_{\theta}|_{X\setminus \Sing X^{\Red}}\,,
	\]
	where the right-hand side is psh by the classical theory. Now $\sups_{\theta} \varphi_{\theta}$ is clearly locally bounded from above, \eqref{eq:susc} also clearly holds.
	
	(2) Note that $\inf_{\theta}\varphi_{\theta}$ is usc. Condition~(2) of \cref{thm:FN} clearly holds. 
\end{proof}

\subsection{Local regularization}

We will need the following extension theorem.
\begin{theorem}\label{thm:ext}
	Let $M$ be a Stein manifold and $N\subseteq M$ be a closed reduced complex subspace. Let $\varphi$ be a psh function on $N$. Assume that $\psi$ is a continuous psh exhaustion function on $M$ such that $\varphi< \psi|_N$. Let $c>1$. Then there is a psh extension of $\varphi$ to $M$ such that $\varphi< c\max\{\psi,0\}$.
\end{theorem}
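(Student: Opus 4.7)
The plan is to construct $\Phi$ by combining a classical local extension of psh functions near each point of $N$ with an envelope argument at the global level, using Demailly's max-patching to reconcile the two scales. The main inputs are a local extension statement for psh functions from closed analytic subvarieties of pseudo-convex domains, together with \cref{prop:incdec} to pass to limits within $\PSH$.

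First I would reduce to the strict case $\varphi < \psi|_N$ by replacing $\varphi$ with $\varphi - \delta$ for $\delta > 0$. If the theorem is proved under this strict hypothesis, applying it to each $\varphi - \delta$ produces extensions $\Phi_\delta$ bounded by $c\max\{\psi, 0\}$; letting $\delta \downarrow 0$ and taking the usc regularization of the decreasing limit yields the desired extension in general. Next, for each $x_0 \in N$, I would shrink $M$ to a Stein neighborhood $U_0 \ni x_0$ embedded as a closed submanifold of a polydisk $\Delta \subset \mathbb{C}^\nu$. Under this embedding $N \cap U_0$ becomes a closed analytic subvariety of $\Delta$, and the classical extension theorem (Grauert--Remmert, Sadullaev, Demailly) yields $\tilde{\varphi}_0 \in \PSH(U_0)$ with $\tilde{\varphi}_0|_{N \cap U_0} = \varphi|_{N \cap U_0}$, so in particular $\tilde{\varphi}_0(x_0) = \varphi(x_0)$.

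The extension itself is defined as the envelope
\[
\Phi := \bigl(\sup\{u \in \PSH(M) : u|_N \leq \varphi,\ u \leq c\max\{\psi, 0\}\}\bigr)^{*}.
\]
Since $c\max\{\psi, 0\}$ is continuous psh, Choquet's lemma gives $\Phi \in \PSH(M)$ with $\Phi \leq c\max\{\psi, 0\}$. To identify $\Phi|_N$ with $\varphi$, I would split into $\Phi|_N \leq \varphi$ and $\Phi|_N \geq \varphi$. The first follows by including a globally defined psh extension $\hat{\varphi}$ of $\varphi$ (obtained by globalizing the local extensions via an auxiliary envelope) as an additional constraint in the family, forcing $\Phi \leq \hat{\varphi}$ and hence $\Phi|_N \leq \varphi$. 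The matching lower bound is the heart of the construction: for each $x_0 \in N$ and each $\epsilon > 0$ I would produce an admissible $u$ with $u(x_0) \geq \varphi(x_0) - \epsilon$ by gluing the local extension $\tilde{\varphi}_0$ with a shifted multiple of $c\max\{\psi, 0\}$. Concretely, choosing $V \Subset U_0' \Subset U_0$ and a suitable $K \geq 0$, set $u := \max(\tilde{\varphi}_0,\, c\max\{\psi, 0\} - K)$ on $U_0'$ and $u := c\max\{\psi, 0\} - K$ on $M \setminus V$, so that the two definitions agree on the overlap and the result is globally psh.

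The main obstacle is exactly this patching step: one must choose $K \geq 0$ so that $\tilde{\varphi}_0 \leq c\max\{\psi, 0\} - K$ on the compact corona $\overline{U_0'} \setminus V$, while keeping $u(x_0) = \tilde{\varphi}_0(x_0) \geq \varphi(x_0) - \epsilon$. It is precisely here that the positivity of $c\max\{\psi, 0\}$, the factor $c > 1$, and the strict inequality $\varphi < \psi|_N$ from the reduction combine to give the quantitative slack needed for the patching to close: the gap $c\max\{\psi(x_0), 0\} - \varphi(x_0) > 0$ is strict, and by upper semicontinuity of $\tilde{\varphi}_0 - c\max\{\psi, 0\}$ it propagates to a positive gap on a small neighborhood of $x_0$. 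Once an admissible $u$ exists near every $x_0 \in N$, the envelope satisfies $\Phi|_N \geq \varphi$, which combined with the upper bound completes the proof.
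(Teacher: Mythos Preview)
The paper does not give its own argument for this theorem; it simply writes ``For the proof, we refer to \cite{CGZ13}.'' So there is no in-paper proof to compare against, and your proposal has to stand on its own.

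Your envelope-and-patching scheme has a genuine gap at the max-gluing step. For the glued function
\[
u=\begin{cases}\max\bigl(\tilde\varphi_0,\ c\max\{\psi,0\}-K\bigr)& \text{on }U_0',\\[2pt] c\max\{\psi,0\}-K & \text{on }M\setminus V,\end{cases}
\]
to lie in the defining family of your envelope you need $u|_N\le\varphi$ on all of $N$, in particular on $N\setminus V$, where $u=c\max\{\psi,0\}-K\ge -K$. But $\varphi$ is only psh on $N$: it need not be bounded below and may well have a nonempty $\{\varphi=-\infty\}$-locus; and even where finite, $c\max\{\psi,0\}-\varphi$ is unbounded above on the noncompact set $N\setminus V$ because $\psi$ is an exhaustion. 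So no finite $K$ makes $u$ admissible. The slack you extract from $c>1$ and $\varphi<\psi|_N$ is local near $x_0$; it gives no control at distant points of $N$. This is precisely why the argument in \cite{CGZ13} does not use a single global competitor but instead inducts over the sublevel sets $\{\psi<j\}$, gluing local extensions on successive relatively compact pieces; the exhaustion substitutes for the missing global lower bound.

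There is also a circularity in your treatment of $\Phi|_N\le\varphi$: you invoke ``a globally defined psh extension $\hat\varphi$ of $\varphi$ (obtained by globalizing the local extensions via an auxiliary envelope)'' to cap the family, but producing such a $\hat\varphi$ is exactly the statement being proved. Without it you must argue that the usc regularization in $M$ of $\sup u$ still restricts to something $\le\varphi$ on $N$, which is not automatic and needs a separate argument (e.g.\ via non-thinness of $N$ at its regular points). Finally, the reduction step is mis-stated: as $\delta\downarrow 0$ the potentials $\varphi-\delta$ increase, not decrease, and the extensions $\Phi_\delta$ have no reason to be monotone in $\delta$, so ``the decreasing limit'' is not available.
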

For the proof, we refer to \cite{CGZ13}.

For each $N\geq 0$, we fix a Friedrichs kernel $\rho=\rho_N:[0,\infty)\rightarrow [0,\infty)$ such that $\rho$ is smooth, $\rho(r)=0$ for $r\geq 1$ and 
\[
\int_{\mathbb{C}^N} \rho(|z|)\,\mathrm{d}\lambda(z)=1\,.
\]
Let $U\subseteq \mathbb{C}^N$ be an open subset.
For any locally integrable function $u:U\rightarrow [-\infty,\infty)$ and any $\delta>0$, define
\[
\varphi_{\delta}(x)\coloneqq \int_{\mathbb{C}^N} u(x-\delta y) \rho(|y|)\,\mathrm{d}\lambda(y)\,,\quad x\in U_{\delta}\,,
\]
where
\[
U_{\delta}\coloneqq \left\{\,x\in U: B(x,\delta)\subseteq U\,\right\}\,.
\]

\begin{lemma}\label{lma:BK1}
	Let $\Omega\subseteq \mathbb{C}^N$ be a bounded pseudo-convex domain, $V\hookrightarrow \Omega$ be a closed analytic subspace and $W\Subset V$ be an open subset.
	Let $\varphi\in \PSH(V)\cap L^{\infty}(V)$. There exists a decreasing sequence $\psi_i$ of smooth psh functions on $W$, converging pointwisely to $\varphi$ on $W$.
\end{lemma}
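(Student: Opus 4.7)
The strategy is to reduce to the classical situation of smooth mollification in the ambient pseudo-convex domain: I will extend $\varphi$ to a psh function on $\Omega$ that is locally bounded above near $\bar W$, convolve it with the Friedrichs kernels $\rho_N$, and then restrict back to $V$.

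For the extension step, by \cref{prop:pshbij} I may pass to $V^{\Red}$ and so assume $V$ is a closed reduced complex analytic subvariety of $\Omega$. Because $\Omega$ is a bounded pseudo-convex domain, the function $\psi(z) := -\log d(z,\partial\Omega) + C$ is a continuous psh exhaustion of $\Omega$, and moreover it is \emph{bounded below} on $\Omega$ (this is where boundedness of $\Omega$ matters). Choosing $C$ large enough, I arrange $\psi \geq \|\varphi\|_{L^\infty(V)}$ on all of $\Omega$, so in particular $\varphi \leq \psi|_V$. Applying the extension theorem \cref{thm:ext} with, say, $c=2$, I obtain a psh extension $\tilde\varphi \in \PSH(\Omega)$ of $\varphi$ satisfying $\tilde\varphi \leq 2\max\{\psi,0\}$; since $\psi$ is continuous on $\Omega$, this bound makes $\tilde\varphi$ locally bounded above on $\Omega$.

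Next I fix an open set $\tilde U \Subset \Omega$ containing $\bar W$, and choose $M<\infty$ with $\tilde\varphi\leq M$ on $\tilde U$. For $0<\delta<\dist(\tilde U,\partial\Omega)$, the Friedrichs mollification
\[
\tilde\varphi_{\delta}(x)=\int_{\mathbb{C}^N}\tilde\varphi(x-\delta y)\,\rho_N(|y|)\,\mathrm{d}\lambda(y)
\]
is a smooth psh function on $\tilde U$; the submean-value inequality for psh functions gives both $\tilde\varphi_\delta\geq \tilde\varphi$ and monotonicity in $\delta$, and standard arguments yield $\tilde\varphi_\delta\downarrow\tilde\varphi$ pointwise as $\delta\downarrow 0$. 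Restricting to $V$: the restriction $\tilde\varphi_\delta|_V$ is smooth on $V\cap\tilde U$ in the sense used in this paper, and it is psh there by \cref{prop:pshpull} applied to the closed immersion $V\cap\tilde U\hookrightarrow \tilde U$. Picking a sequence $\delta_i\downarrow 0$ and restricting further to $W$ yields a decreasing sequence of smooth psh functions on $W$ converging pointwise to $\varphi$.

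The main obstacle is the first step. The Friedrichs regularization produces a decreasing sequence only because $\tilde\varphi$ is locally bounded above; hence I must extend $\varphi$ to a psh function on $\Omega$ that remains locally bounded above near $\bar W$. This is exactly where the $L^\infty$ hypothesis on $\varphi$ is used, via the fact that on a bounded pseudo-convex domain we have a continuous psh exhaustion that is additionally bounded below, allowing us to dominate $\varphi$ by $\psi|_V$ globally and invoke \cref{thm:ext}.
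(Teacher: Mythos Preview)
Your proof is correct and follows the same approach as the paper: extend $\varphi$ to a psh function on $\Omega$ via \cref{thm:ext}, then take Friedrichs mollifications and restrict to $W$. The paper's proof is the two-line version of what you wrote; you have simply supplied the verification that the hypothesis of \cref{thm:ext} is met (by building the bounded-below exhaustion $\psi$) and spelled out why the restrictions are smooth psh on $W$. One small remark: the monotonicity $\tilde\varphi_{\delta}\downarrow\tilde\varphi$ follows from the submean-value inequality alone, not from local boundedness above --- you state this correctly in the main argument, so the attribution in your final paragraph is a slip rather than a gap.
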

\begin{proof}
	By \cref{thm:ext} and \cref{prop:pshbij}, $\varphi$ can be extended to a psh function on $\Omega$. Define $\psi_i=\varphi_{1/i}|_{W}$.
\end{proof}

\subsection{Bedford--Taylor products}
Proofs in this section are mostly taken from \cite{Dem85} and the book \cite{GZ17}. 

Fix a complex analytic space $X$.
\begin{definition}[Bedford--Taylor]
	Let $\varphi_i\in \PSH(X)\cap L^{\infty}_{\loc}(X)$ ($i=1,\ldots,k$). Let $T$ be a closed positive current of bidimension $(m,m)$ on $X$. We define
	\begin{equation}
		\ddc\varphi_1\wedge \cdots\wedge \ddc\varphi_k\wedge T\coloneqq \ddc\left(\varphi_1 \ddc\varphi_2\wedge \cdots\wedge \ddc\varphi_k\wedge T\right)\,.
	\end{equation}
\end{definition}
	Unless $X$ is equi-dimensional, the bidegree of a current is not well-defined. So we only talk about bidimensions.

\begin{proposition}
	Let $\varphi_i\in \PSH(X)\cap L^{\infty}_{\loc}(X)$ ($i=1,\ldots,k$). Let $T$ be a closed positive current of bidimension $(m,m)$ on $X$. Then $\ddc\varphi_1\wedge \cdots\wedge \ddc\varphi_k\wedge T$ is a closed positive current of bidimension $(m-k,m-k)$.
\end{proposition}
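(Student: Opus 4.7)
I would proceed by induction on $k$. The base case $k=0$ is the hypothesis on $T$. For the inductive step, set $S:=\ddc\varphi_2\wedge\cdots\wedge\ddc\varphi_k\wedge T$, which is closed positive of bidimension $(m-k+1,m-k+1)$ by the induction hypothesis, so that by the very definition of the Bedford--Taylor product, $\ddc\varphi_1\wedge\cdots\wedge\ddc\varphi_k\wedge T=\ddc(\varphi_1 S)$. Closedness is automatic, since anything in the image of $\ddc$ is closed. The bidimension count is also immediate: multiplication by the locally bounded function $\varphi_1$ preserves bidimension, and $\ddc$ drops each component by one.

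The substantive content is therefore positivity, and this is a local question. Using \cref{prop:pshbij} we may restrict to $X^{\Red}$. Fix $x\in X$, pick a Stein open neighbourhood $V$ of $x$ together with a closed immersion $V\hookrightarrow\Omega$ into a bounded pseudo-convex domain $\Omega\subseteq\mathbb{C}^N$, and shrink to some $W\Subset V$. By \cref{lma:BK1}, there exists a decreasing sequence $\varphi_{1,j}\in\PSH(W)\cap C^{\infty}(W)$ converging pointwise to $\varphi_1$. Since $\varphi_{1,j}$ is smooth, $\ddc\varphi_{1,j}$ is a smooth positive $(1,1)$-form in the sense of restriction from $\Omega$, and $\ddc\varphi_{1,j}\wedge S$ is a positive current by the standard smooth-times-positive calculus. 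As $S$ is closed, $\ddc(\varphi_{1,j}S)=\ddc\varphi_{1,j}\wedge S$ is positive for every $j$. Local boundedness of $\varphi_1$ together with the locally finite mass of $S$ allows monotone convergence against smooth compactly supported test forms on $W$, giving $\varphi_{1,j}S\to\varphi_1 S$ weakly. Continuity of $\ddc$ for the weak topology yields $\ddc\varphi_{1,j}\wedge S\to\ddc(\varphi_1 S)$ weakly, and the weak limit of positive currents is positive. This closes the induction.

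The main obstacle is that "smooth psh" and "positive current" on a possibly singular $X$ only make sense through a local embedding into the ambient pseudo-convex domain, and one must ensure that the wedge $\ddc\varphi_{1,j}\wedge S$ and its weak limit are unambiguously defined in this setting. I would handle this by routing everything through $\Omega$: extend $\varphi_1$ to a bounded psh function on $\Omega$ via \cref{thm:ext}, apply Friedrichs regularization on $\Omega$, restrict to $W$, and interpret all wedge products and positivity statements via pushforward of currents along $V\hookrightarrow\Omega$, where the classical Bedford--Taylor calculus applies verbatim and pushforward commutes with both weak convergence and positivity.
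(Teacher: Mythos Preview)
Your proposal is correct and follows essentially the same approach as the paper: induction on $k$, local smooth approximation of $\varphi_1$ by a decreasing sequence (via \cref{lma:BK1}), and passage to the weak limit to conclude positivity. The paper's version is terser, but the structure and the key inputs are identical; your additional remarks about routing through the ambient domain $\Omega$ simply make explicit what the paper leaves implicit.
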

\begin{proof}
	We prove by induction. When $k=0$, there is nothing to prove. Assume that the result is known for $k-1$, namely, assume that $\ddc\varphi_2\wedge \cdots\wedge \ddc\varphi_k\wedge T$ is closed positive. Then for any smooth psh function $\varphi$,
	\[
	\ddc\varphi_1\wedge\cdots\ddc\varphi_k\wedge T
	\]
	is clearly closed positive. As our problem is local, we may assume that there is a decreasing sequence of smooth psh functions $\varphi^i$ converging pointwisely to $\varphi_1$, then 
	\[
	\varphi^i\ddc\varphi_2\wedge \cdots\wedge \ddc\varphi_k\wedge T\rightharpoonup \varphi_1\ddc\varphi_2\wedge \cdots\wedge \ddc\varphi_k\wedge T\,,
	\]
	so
	\[
	\ddc\varphi^i\wedge \ddc\varphi_2\wedge \cdots\wedge \ddc\varphi_k\wedge T\rightharpoonup \ddc\varphi_1\wedge\ddc\varphi_2\wedge \cdots\wedge \ddc\varphi_k\wedge T\,,
	\]
	this proves our result.
\end{proof}
Now we prove the functoriality of this product. 
\begin{proposition}[Projection formula]\label{prop:projf}
	Let $\pi:Y\rightarrow X$ be a proper morphism of complex analytic spaces. Let $\varphi_0,\ldots,\varphi_k\in \PSH(X)\cap L^{\infty}_{\loc}(X)$ and $T$ be a closed positive current of bidimension $(m,m)$ on $Y$. Then
	\[
	\pi_*(\pi^*\varphi_0\,\ddc\pi^*\varphi_1\wedge \cdots\wedge \ddc\pi^*\varphi_k\wedge T)=\varphi_0\,\ddc\varphi_1\wedge \cdots \wedge \ddc \varphi_k\wedge \pi_*T\,.
	\]
\end{proposition}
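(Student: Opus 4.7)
The strategy is induction on $k\geq 0$. Since $\pi$ is proper, the identity is local on $X$, so I may reduce to the situation where $X$ embeds as a closed analytic subspace of a bounded pseudo-convex domain; in this setup \cref{lma:BK1} provides local regularizations whenever needed. The pulled-back potentials $\pi^*\varphi_i$ are psh and locally bounded by \cref{prop:pshpull} (they are bounded, hence not $-\infty$ on any component), so all Bedford--Taylor products on $Y$ that appear in the proof are well-defined.

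The base case $k=0$ asserts that for a closed positive current $T$ on $Y$ and a locally bounded psh function $\varphi_0$ on $X$,
\[
\pi_*(\pi^*\varphi_0\cdot T)=\varphi_0\cdot\pi_*T.
\]
Since $T$ is positive, its local coefficients are (signed) Radon measures, so multiplication by the bounded Borel function $\pi^*\varphi_0$ is meaningful. Testing against a smooth test form $\beta$ on $X$ and using the adjointness between $\pi_*$ and $\pi^*$ gives
\[
\langle\pi_*(\pi^*\varphi_0\cdot T),\beta\rangle=\langle T,\pi^*(\varphi_0\beta)\rangle=\langle\varphi_0\cdot\pi_*T,\beta\rangle,
\]
which is the identity.

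For the inductive step, assume the formula holds with $k-1$ factors. By the definition of the Bedford--Taylor product,
\[
\ddc\pi^*\varphi_1\wedge\cdots\wedge\ddc\pi^*\varphi_k\wedge T=\ddc\bigl(\pi^*\varphi_1\,\ddc\pi^*\varphi_2\wedge\cdots\wedge\ddc\pi^*\varphi_k\wedge T\bigr).
\]
The push-forward $\pi_*$ commutes with $d$ and $d^c$, so applying $\pi_*$ together with the inductive hypothesis (for the inner product, regarding $\varphi_1$ as the ``outermost'' bounded factor) yields
\[
\pi_*\bigl(\ddc\pi^*\varphi_1\wedge\cdots\wedge\ddc\pi^*\varphi_k\wedge T\bigr)=\ddc\bigl(\varphi_1\,\ddc\varphi_2\wedge\cdots\wedge\ddc\varphi_k\wedge\pi_*T\bigr)=\ddc\varphi_1\wedge\cdots\wedge\ddc\varphi_k\wedge\pi_*T.
\]
Now the current $S:=\ddc\pi^*\varphi_1\wedge\cdots\wedge\ddc\pi^*\varphi_k\wedge T$ is closed and positive on $Y$, so applying the base case to $S$ and the bounded psh function $\varphi_0$ produces
\[
\pi_*\bigl(\pi^*\varphi_0\cdot S\bigr)=\varphi_0\cdot\pi_*S=\varphi_0\,\ddc\varphi_1\wedge\cdots\wedge\ddc\varphi_k\wedge\pi_*T,
\]
which is the desired identity.

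The only real subtlety lies in the base case: one must make sense of $\pi^*\varphi_0\cdot T$ when $\pi^*\varphi_0$ is merely bounded Borel rather than smooth. Positivity of $T$ makes this automatic through the measure-theoretic interpretation of its coefficients, but if one prefers a regularization-based justification, one approximates $\varphi_0$ from above by a decreasing sequence of smooth psh functions via \cref{lma:BK1} (after a local embedding into a pseudo-convex domain), verifies the formula for each smooth approximant by the classical smooth projection formula, and passes to the limit by the monotone convergence theorem for the measure coefficients of $T$ and $\pi_*T$. Either way, the induction goes through cleanly.
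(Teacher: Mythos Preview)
Your proof is correct and follows essentially the same approach as the paper's: induction on $k$, using that $\pi_*$ commutes with $\mathrm{d}$ and $\mathrm{d}^{\mathrm{c}}$ together with the classical projection formula $\pi_*(\pi^*f\cdot T)=f\cdot\pi_*T$. The paper compresses this to two sentences (reducing to $k=1$ and invoking functoriality plus the classical formula), while you spell out the base case $k=0$ explicitly and organize the induction slightly differently, but the content is the same.
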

\begin{proof}
The case $k=0$ is the classical projection formula. In general, by induction on $k$, we may assume that 
\[
	\pi_*(\pi^*\varphi_1\,\ddc\pi^*\varphi_2\wedge \cdots\wedge \ddc\pi^*\varphi_k\wedge T)=\varphi_1\,\ddc\varphi_2\wedge \cdots \wedge \ddc \varphi_k\wedge \pi_*T\,.
	\]
 Hence,
 \[
 \pi_*(\ddc \pi^*\varphi_1\wedge\ddc\pi^*\varphi_2\wedge \cdots\wedge \ddc\pi^*\varphi_k\wedge T)=\ddc\varphi_1\wedge \ddc\varphi_2\wedge \cdots \wedge \ddc \varphi_k\wedge \pi_*T\,.
 \]
 It suffices to apply the case with $k=0$.
\end{proof}

In particular, let $Y$ be an irreducible component of $X$ of dimension $n$, let $i:Y\rightarrow X$ be the inclusion, $T=[Y]$ is a closed positive current of bidimension $(n,n)$ (see \cite[Section~0.2]{GH78} for example). Then we conclude
\begin{corollary}
	Let $Y$ be an irreducible component of $X$ of dimension $n$ and $i:Y\rightarrow X$ be the inclusion. Let $\varphi_1,\ldots,\varphi_k\in \PSH(X)\cap L^{\infty}_{\loc}(X)$. Then
	\[
	i_*(\ddc\varphi_1|_Y\wedge\cdots \wedge\ddc\varphi_k|_Y)=\ddc\varphi_1\wedge \cdots\wedge \ddc\varphi_k\wedge [Y]\,.
	\]
\end{corollary}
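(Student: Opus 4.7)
The plan is to deduce this directly from the projection formula (\cref{prop:projf}) applied to the closed immersion $i:Y\hookrightarrow X$, which is proper since $Y$ is a closed analytic subset of $X$. The only reason the formula does not apply verbatim is the absence of a leading $\varphi_0$-factor in the displayed identity: the Bedford--Taylor product $\ddc\varphi_1\wedge\cdots\wedge\ddc\varphi_k\wedge T$ is defined as $\ddc(\varphi_1\,\ddc\varphi_2\wedge\cdots\wedge\ddc\varphi_k\wedge T)$, so one extra $\ddc$ sits outside the expression to which \cref{prop:projf} can be applied. The key observation is that this outer $\ddc$ commutes with the proper pushforward $i_*$ on currents (both $\mathrm{d}$ and $\bar\partial$ commute with proper pushforward, hence so does $\ddc$).

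Concretely, the plan is to unfold the definition and write
\[
i_*\!\left(\ddc\varphi_1|_Y\wedge\cdots\wedge\ddc\varphi_k|_Y\right)
=i_*\ddc\!\left(\varphi_1|_Y\,\ddc\varphi_2|_Y\wedge\cdots\wedge\ddc\varphi_k|_Y\right)
=\ddc\, i_*\!\left(\varphi_1|_Y\,\ddc\varphi_2|_Y\wedge\cdots\wedge\ddc\varphi_k|_Y\right),
\]
then invoke \cref{prop:projf} with $\pi=i$, $\varphi_0=\varphi_1$ and $T$ the fundamental current on $Y$ (so that $i_*T=[Y]$), yielding
\[
i_*\!\left(\varphi_1|_Y\,\ddc\varphi_2|_Y\wedge\cdots\wedge\ddc\varphi_k|_Y\right)=\varphi_1\,\ddc\varphi_2\wedge\cdots\wedge\ddc\varphi_k\wedge[Y].
\]
Applying the outer $\ddc$ to both sides and using the defining recursion on the right recovers exactly the claimed identity.

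The only hypotheses to check are that the restrictions $\varphi_j|_Y=i^*\varphi_j$ are admissible pullbacks in the sense of \cref{prop:pshpull}, but this is immediate: each $\varphi_j$ is locally bounded on $X$, so $\varphi_j|_Y$ is locally bounded on $Y$, in particular not identically $-\infty$ on the (irreducible) $Y$, so $\varphi_j|_Y\in\PSH(Y)\cap L^\infty_{\loc}(Y)$ and the Bedford--Taylor product on the left-hand side is defined. There is no genuine obstacle; the only thing to keep straight is the bookkeeping between the definition of $\ddc\varphi_1\wedge\cdots\wedge\ddc\varphi_k\wedge T$ as $\ddc$ of a lower-order product and the form in which \cref{prop:projf} is stated, which is precisely what the commutation of $\ddc$ with $i_*$ handles.
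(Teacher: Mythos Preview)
Your proof is correct and follows exactly the route the paper intends: the corollary is stated without its own proof, with the paper simply remarking that one takes $T$ to be the fundamental current of $Y$ (so that $i_*T=[Y]$) and applies \cref{prop:projf}. Your unfolding of the Bedford--Taylor definition together with the commutation of $\ddc$ with $i_*$ is precisely the bookkeeping needed; a marginally shorter variant is to apply \cref{prop:projf} directly with $\varphi_0$ a constant (which lies in $\PSH(X)\cap L^\infty_{\loc}(X)$), so the leading factor is already trivial.
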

\begin{corollary}
	Let $\pi:Y\rightarrow X$ be a proper bimeromorphic morphism between complex analytic spaces. Let $\varphi_1,\ldots,\varphi_k\in \PSH(X)\cap L^{\infty}_{\loc}(X)$. Then
	\[
	\pi_*(\pi^*\varphi_0\,\ddc\pi^*\varphi_1\wedge \cdots\wedge \ddc\pi^*\varphi_k)=\varphi_0\,\ddc\varphi_1\wedge \cdots \wedge \ddc \varphi_k\,.
	\]
\end{corollary}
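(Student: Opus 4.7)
The plan is to deduce this as an immediate consequence of the projection formula in \cref{prop:projf}, combined with the preceding corollary that identifies the ambient Bedford--Taylor product on a pure-dimensional analytic space with its intersection against the currents of integration on the top-dimensional irreducible components. The statement without a test current $T$ is to be read in the sense of this preceding corollary, i.e.\ the wedge product on an irreducible $X$ of dimension $n$ is implicitly paired with $[X]$.

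First I would reduce to the irreducible case. The question is local on $X$, and both sides are additive under decomposition into top-dimensional irreducible components (by the previous corollary, applied on $X$ and on $Y$). Since $\pi$ is proper and bimeromorphic, for each top-dimensional irreducible component $X_j\subseteq X$ there is a unique irreducible component $Y_j\subseteq \pi^{-1}(X_j)$ dominating it, and the remaining components of $\pi^{-1}(X_j)$ lie in the exceptional locus; their contribution to a top-degree pushforward vanishes for dimension reasons. After this reduction I may assume $X$ and $Y$ are both irreducible of the same dimension $n$.

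Next I would apply \cref{prop:projf} with $T=[Y]$, giving
\[
\pi_*\bigl(\pi^*\varphi_0\,\ddc\pi^*\varphi_1\wedge\cdots\wedge\ddc\pi^*\varphi_k\wedge[Y]\bigr)=\varphi_0\,\ddc\varphi_1\wedge\cdots\wedge\ddc\varphi_k\wedge \pi_*[Y].
\]
The remaining input is the identity $\pi_*[Y]=[X]$, which holds because $\pi$ is a biholomorphism over a Zariski-dense open subset $X^\circ\subseteq X$ while $X\setminus X^\circ$ and $Y\setminus \pi^{-1}(X^\circ)$ are thin, so the two currents agree on any smooth top-degree test form. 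Combining this with the preceding corollary (applied to the tautological embedding $Y\hookrightarrow Y$ on the source, and to $X\hookrightarrow X$ on the target) to rewrite the wedge products against $[Y]$ and $[X]$ as the bare Bedford--Taylor products yields the stated formula.

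The only real obstacle is the bookkeeping in the first step: one must verify carefully that components of $\pi^{-1}(X_j)$ not dominating $X_j$ contribute trivially, and that $\pi^*\varphi_i\in \PSH(Y)\cap L^{\infty}_{\loc}(Y)$ with $\pi^*\varphi_i\not\equiv -\infty$ on any component of $Y$ (so that the Bedford--Taylor product on $Y$ is legitimate); the former is a dimension count, the latter is immediate from \cref{prop:pshpull} and properness of $\pi$. Once this is done, everything else is a formal consequence of \cref{prop:projf} and $\pi_*[Y]=[X]$.
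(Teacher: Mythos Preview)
Your proposal is correct and follows the route the paper intends: the corollary is stated without proof precisely because it is the projection formula \cref{prop:projf} specialized to $T=[Y]$, together with the identity $\pi_*[Y]=[X]$ for a proper bimeromorphic map. Your reduction to irreducible components and the bookkeeping about exceptional components are more careful than what the paper spells out, but the core argument is identical.
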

\begin{corollary}
	Let $\varphi_1,\ldots,\varphi_k\in \PSH(X)\cap L^{\infty}_{\loc}(X)$. Let $i:X^{\Red}\rightarrow X$ be the canonical inclusion. Then
	\[
	i_*(\ddc\varphi_1|_{X^{\Red}}\wedge \cdots \wedge \ddc\varphi_k|_{X^{\Red}})=\ddc\varphi_1\wedge \cdots\wedge \ddc\varphi_k\,.
	\]
\end{corollary}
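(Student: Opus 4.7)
The plan is to proceed by induction on $k$, combining the inductive definition of the Bedford--Taylor product, the projection formula (\cref{prop:projf}), and the fact that proper pushforward of currents commutes with $\ddc$. Note that $i:X^{\Red}\hookrightarrow X$ is a closed immersion (hence proper) that is a homeomorphism of underlying topological spaces, and under the canonical identification $\PSH(X)=\PSH(X^{\Red})$ of \cref{prop:pshbij} the pullback $i^*\varphi_j$ is just $\varphi_j$ itself.

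For the base case $k=1$, since $i$ is a topological homeomorphism the function $\varphi_1$, regarded as a distribution on $X$, satisfies $i_*(\varphi_1|_{X^{\Red}})=\varphi_1$; applying $\ddc$ and using $\ddc\circ i_*=i_*\circ\ddc$ yields the desired identity.

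For the inductive step, assume the result holds for $k-1$ and set $T:=\ddc\varphi_2|_{X^{\Red}}\wedge\cdots\wedge\ddc\varphi_k|_{X^{\Red}}$, which is a closed positive current on $X^{\Red}$. By the inductive hypothesis, $i_*T=\ddc\varphi_2\wedge\cdots\wedge\ddc\varphi_k$ on $X$. Applying \cref{prop:projf} in the degenerate case with no $\ddc$ factors (i.e.\ the projection formula for multiplication by a locally bounded psh function) then gives
\[
i_*\bigl(\varphi_1|_{X^{\Red}}\cdot T\bigr)=\varphi_1\cdot i_*T=\varphi_1\ddc\varphi_2\wedge\cdots\wedge\ddc\varphi_k.
\]
Taking $\ddc$ of both sides, commuting $\ddc$ past $i_*$ on the left, and invoking the inductive definition of the Bedford--Taylor product on both $X$ and $X^{\Red}$ closes the induction.

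The only point that requires care is the bookkeeping for the commutation of $\ddc$ with proper pushforward of currents and the special case of the projection formula with no $\ddc$ factors; both are classical in the smooth ambient setting and carry over directly to complex analytic spaces via the local-embedding definition of smooth forms adopted throughout \cref{sec:cas}, so I do not expect a genuine obstacle, merely a verification that the topological triviality of $i$ makes each step an identification rather than a substantive operation.
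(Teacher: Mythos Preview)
Your proof is correct and is essentially the paper's intended argument: the corollary is stated immediately after \cref{prop:projf} with no separate proof, since it follows by applying the projection formula to $\pi=i$ with $T$ the trivial current and using that $i$ is a homeomorphism. Your induction simply unwinds this one step at a time, invoking the $k=0$ case of \cref{prop:projf} and commuting $\ddc$ with $i_*$; the paper would instead apply \cref{prop:projf} once in full and take a single $\ddc$, but the content is identical.
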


\begin{theorem}[Chern--Levine--Nirenberg]
	Let $U\subseteq X$ be an open subset, $\varphi_i\in \PSH(U)\cap L^{\infty}_{\loc}(U)$ ($i=1,\ldots,k$). Let $T$ be a closed positive current of bidimension $(k,k)$ on $U$. Assume that $W\Subset V\Subset U$ are two open sets. Then there is a constant $C=C(W,V)>0$ such that for any compact set $K\subset W$,
	\begin{equation}\label{eq:CLN1}
		\int_K \ddc\varphi_1\wedge\cdots\wedge \ddc\varphi_k \wedge T \leq C\norm{\varphi_1}_{L^{\infty}(E)}\cdots\norm{\varphi_k}_{L^{\infty}(E)}\norm{T}_{E}, 
	\end{equation}
	where $E=\Supp T\cap (V\setminus W)$.
	
	Here $\|T\|_{E}$ is a semi-norm, defined as follows: take finitely many open sets $U_i\subseteq U$, open subsets $V_i\Subset U_i$ such that $\{V_i\}$ covers $U$ and such that there are Kähler forms $\omega_i$ on each $U_i$. Then 
	\[
	\|T\|_{E}\coloneqq \sum_i\int_{V_i} \omega_i^k\wedge T\,.
	\]
\end{theorem}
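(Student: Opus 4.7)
The plan is to prove the inequality by induction on $k$, adapting the classical Chern--Levine--Nirenberg scheme to the singular analytic setting. The base case $k = 0$ follows tautologically from the definition of $\|T\|_E$: since $K$ is compact and $\{V_i\}$ covers $U$, a finite subcover gives $\int_K T \leq \sum_i \int_{V_i} T = \|T\|_E$.

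For the inductive step, set $S := \ddc\varphi_2 \wedge \cdots \wedge \ddc\varphi_k \wedge T$, which by the preceding proposition is a closed positive current of bidimension $(1,1)$. After subtracting $\sup_V \varphi_1$---which leaves $\ddc\varphi_1$ unchanged and inflates the relevant sup-norm by a factor of at most $2$---we may assume $\varphi_1 \leq 0$ on $V$. Pick an intermediate open set $V'$ with $W \Subset V' \Subset V$ and a smooth cutoff $\chi$ on $X$ (realised locally, via the embedding $V^{\Red} \hookrightarrow \Omega \subset \mathbb{C}^N$, as the restriction of a smooth function on $\Omega$), with $\chi \equiv 1$ on a neighbourhood of $K$ and $\Supp \chi \Subset V'$. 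Positivity of $\ddc\varphi_1 \wedge S$ as a measure and the standard integration-by-parts identity for the Bedford--Taylor product (valid since $\chi$ is compactly supported and $S$ is closed) then give
\[
\int_K \ddc\varphi_1 \wedge S \,\leq\, \int_X \chi \, \ddc\varphi_1 \wedge S \,=\, \int_X \varphi_1 \, \ddc\chi \wedge S\,.
\]
Refining the cover $\{V_i\}$ if necessary, the smoothness of $\chi$ yields $-C\omega_i \leq \ddc\chi \leq C\omega_i$ on each $V_i$, while $\Supp \ddc\chi \subset V' \setminus W \subset V \setminus W$. Combined with $\varphi_1 \leq 0$ and $\Supp S \subset \Supp T$, this gives
\[
\int_K \ddc\varphi_1 \wedge S \,\leq\, C \|\varphi_1\|_{L^\infty(E)} \sum_i \int_{V_i} \omega_i \wedge S\,.
\]
Each integral on the right is then controlled by invoking the inductive hypothesis applied to the $k-1$ factors $\ddc\varphi_2, \ldots, \ddc\varphi_k$ wedged with the closed positive current $\omega_i \wedge T$ of bidimension $(k-1, k-1)$; a comparison of the $\omega_i$'s on a refined cover absorbs the resulting semi-norms into $C \|T\|_{E}$.

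For the gradient inequality, I would apply the first CLN bound twice via the defining identity
\[
\mathrm{d}\varphi_1 \wedge \dc\varphi_1 \wedge S \,=\, \tfrac{1}{2} \ddc(\varphi_1 - \inf_V \varphi_1)^2 \wedge S \,-\, (\varphi_1 - \inf_V \varphi_1)\, \ddc\varphi_1 \wedge S\,.
\]
The first summand is handled by the just-proven bound with the non-negative psh function $(\varphi_1 - \inf_V \varphi_1)^2$ (whose sup-norm is at most $4\|\varphi_1\|_\infty^2$) in place of $\varphi_1$; the second summand is bounded by factoring out $\|\varphi_1 - \inf_V \varphi_1\|_\infty \leq 2\|\varphi_1\|_\infty$ and applying CLN directly to $\int \ddc\varphi_1 \wedge S$.

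The main obstacle is bookkeeping around the singular structure of $X$: the smooth cutoff $\chi$, the local Kähler majorants $\omega_i$, and the integration-by-parts identity for Bedford--Taylor currents must all be justified through the local embeddings into pseudo-convex domains and the functoriality of the Bedford--Taylor product already established in this section. Once this local-to-global apparatus is in place, the combinatorics of the induction reproduces the classical smooth argument with no further modification.
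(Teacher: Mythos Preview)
Your proposal is correct and follows essentially the same route as the paper: induction on $k$, reduction to the one-step estimate by setting $S=\ddc\varphi_2\wedge\cdots\wedge\ddc\varphi_k\wedge T$, a smooth cutoff $\chi$ supported in $V$ and equal to $1$ near $K$, and the integration-by-parts identity $\int\chi\,\ddc\varphi_1\wedge S=\int\varphi_1\,\ddc\chi\wedge S$, after which the support of $\ddc\chi$ forces the estimate onto $E=\Supp T\cap(V\setminus W)$. Your treatment of the gradient term via the defining identity for $\mathrm{d}\varphi_1\wedge\dc\varphi_1\wedge S$ is exactly what the paper's ``similar'' is gesturing at, and your remark that $(\varphi_1-\inf_V\varphi_1)^2$ is psh (convex increasing composed with psh, nonnegative) makes the first summand fall directly under the already-proven bound.
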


\begin{proof}
The proof is almost identical to \cite[Theorem~3.9]{GZ17}. We briefly recall the argument.

	We first prove \eqref{eq:CLN1}. By induction, it suffices to treat the case where $k=1$. We omit the index and write $\varphi=\varphi_1$. We may assume that $\varphi|_V\leq 0$. Let $\chi$ be a non-negative compactly supported smooth function on $V$, equal to $1$ on $W$. Then
	\[
	\int_W \ddc\varphi_1\wedge T\leq \int_V \chi\ddc\varphi_1\wedge T=\int_V \varphi_1\ddc\chi\wedge T=\int_{V\setminus W} \varphi_1\ddc\chi\wedge T\,,
	\]
	where we have applied \cref{lma:ibp}. Now \eqref{eq:CLN1} is obvious.
\end{proof}

\begin{lemma}\label{lma:ibp}
	Let $U\Subset X$ be an open subset. Let $T$ be a closed positive current of bidimension $(1,1)$ on $U$. Let $\varphi,\psi\in \PSH(U)\cap L^{\infty}_{\loc}(U)$, $\varphi,\psi\leq 0$. Assume $\lim_{x\to\partial U} \varphi(x)=0$ and $\int_U \ddc \psi\wedge T<\infty$,  then
	\[
	\int_U \psi\,\ddc \varphi\wedge T\leq \int_U \varphi\,\ddc \psi\wedge T\,.
	\]
\end{lemma}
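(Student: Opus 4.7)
The plan is to integrate by parts via Stokes' theorem on the exhaustion of $U$ by the sublevel sets of $\varphi$. The underlying formal identity, for smooth real $u,v$ and a closed positive current $T$ of bidimension $(1,1)$, is that the symmetry $du\wedge d^c v = dv\wedge d^c u$ combined with $dT=0$ yields
\[
u\,\ddc v\wedge T - v\,\ddc u\wedge T = d\bigl(u\, d^c v - v\, d^c u\bigr)\wedge T,
\]
so that Stokes on any relatively compact, smoothly bounded $\Omega\Subset U$ gives
\[
\int_\Omega(u\,\ddc v - v\,\ddc u)\wedge T = \int_{\partial\Omega}(u\, d^c v - v\, d^c u)\wedge T.
\]
The task is to control the boundary term when $\Omega$ exhausts $U$.

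First I would smooth $\varphi,\psi$ by local decreasing sequences of smooth psh functions via \cref{lma:BK1}, appealing to Bedford--Taylor weak continuity to push limits through $\ddc$. A key observation is that $\varphi\in L^\infty_{\loc}(U)$ combined with $\varphi\to 0$ at $\partial U$ forces $\varphi$ to be globally bounded on $U$: each sublevel $\{\varphi\le -\eta\}$ is compactly contained in $U$, where $\varphi$ is locally bounded, while on its complement $\varphi\in[-\eta,0]$. This global bound, together with the finite-mass hypothesis $\int_U\ddc\psi\wedge T<\infty$, makes $\int_U\varphi\,\ddc\psi\wedge T$ finite and dominated convergence available.

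I would then exhaust $U$ by $U_k:=\{\varphi<-\eta_k\}$ with $\eta_k\searrow 0$ chosen to be Sard-regular values of the smoothings of $\varphi$, so that each $\partial U_k$ is smooth and $\varphi\equiv-\eta_k$ on it. Applying Stokes to the smoothings and passing the limit through (the most delicate step, justified by Bedford--Taylor continuity), one arrives at
\[
\int_{U_k}(\psi\,\ddc\varphi - \varphi\,\ddc\psi)\wedge T = \int_{\partial U_k}\psi\, d^c\varphi\wedge T + \eta_k\int_{U_k}\ddc\psi\wedge T,
\]
where the second term uses $\varphi=-\eta_k$ on $\partial U_k$ and a second application of Stokes to $d^c\psi\wedge T$. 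That second term is non-negative (since $\ddc\psi\wedge T\ge 0$) but vanishes as $k\to\infty$ by the finite-mass assumption. The first term is non-positive, because $\psi\le 0$ and $d^c\varphi\wedge T$ restricts to a positive measure on $\partial U_k$ (as $T$ is positive and $d\varphi$ points outward along a regular level set of the psh function $\varphi$). This sign asymmetry is precisely the source of the inequality as opposed to equality.

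Finally I would let $k\to\infty$ in the interior. Monotone convergence applied to the non-positive measure $\psi\,\ddc\varphi\wedge T$ gives $\int_{U_k}\psi\,\ddc\varphi\wedge T\to\int_U\psi\,\ddc\varphi\wedge T$; dominated convergence (using the global bound on $\varphi$) gives $\int_{U_k}\varphi\,\ddc\psi\wedge T\to\int_U\varphi\,\ddc\psi\wedge T$. Since the right-hand side above is bounded by a non-negative quantity tending to $0$, one concludes $\int_U(\psi\,\ddc\varphi-\varphi\,\ddc\psi)\wedge T\le 0$, as desired. The principal obstacles are the limit exchange at the smoothing step (relying on Bedford--Taylor continuity for decreasing sequences of bounded psh functions), the Sard-type selection ensuring smoothness of $\partial U_k$, and verifying the positivity of $d^c\varphi\wedge T|_{\partial U_k}$ on the regular level sets; once those are in hand the inequality falls out of the sign of the surviving boundary term.
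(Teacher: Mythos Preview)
Your approach is workable but differs from the paper's. Rather than exhaust by sublevel sets and confront boundary integrals, the paper replaces $\varphi$ by the compactly supported difference $\varphi-\varphi_\epsilon$ with $\varphi_\epsilon:=\varphi\lor(-\epsilon)$; since this vanishes outside $\{\varphi<-\epsilon\}\Subset U$, one can mollify and integrate by parts on a fixed domain $D_1\Subset U$ with no boundary term at all. The inequality then comes from discarding the term $-\int\psi\,\ddc(\varphi_\epsilon*\rho_\eta)\wedge T\ge 0$ (using $\psi\le 0$ and $\varphi_\epsilon$ psh), after which one lets $\eta\to 0$ (first for continuous $\psi$, then for general $\psi$ via a decreasing approximation by continuous psh functions) and finally $\epsilon\to 0$ by monotone convergence. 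What this buys is that one never has to make sense of $d^c\varphi\wedge T$ restricted to a hypersurface for a general closed positive current $T$. In your route that restriction is the crux: the claim that $\int_{\partial U_k}\psi\,d^c\varphi\wedge T\le 0$ is correct, but it does not follow just from ``$d\varphi$ points outward''; the honest justification replaces $\mathds{1}_{U_k}$ by smooth cutoffs $\chi(\varphi+\eta_k)$ and uses that $d\varphi\wedge d^c\varphi\wedge T=\frac{i}{\pi}\partial\varphi\wedge\bar\partial\varphi\wedge T\ge 0$ as a measure. You should also be consistent about whether $U_k$ is a sublevel of $\varphi$ or of its smoothing, since Sard applies only to the latter; the paper's compact-support trick sidesteps both of these issues.
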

The proof is almost identical to that of \cite[Proposition~3.7]{GZ17}. It suffices to replace the smoothing procedure by the one explained in \cref{lma:BK1}.

\begin{definition}
	A subset $E\subseteq X$ is \emph{pluripolar} if for any $x\in X$, there is an open neighbourhood $V\subseteq X$ of $x$ and a psh function $\varphi$ on $V$ such that $E\cap V\subseteq \{x\in V:\varphi(x)=-\infty\}$.
\end{definition}
\begin{definition}
	The $\sigma$-algebra of \emph{quasi-Borel sets} is the $\sigma$-algebra generated by all Borel sets and all pluripolar sets. A set in this $\sigma$-algebra is called a \emph{quasi-Borel set}. 
\end{definition}

\begin{definition}Assume that $X$ is unibranch.
	A set $E\subseteq X$ is \emph{negligible} if for any $x\in X$, there is an open neighbourhood $V\subseteq X$ of $x$, a bounded from above family $\{\psi_{\theta}\}$ of psh functions on $V$ such that
	\[
	E\cap V\subseteq \left\{\,x\in V:\sups\{\psi_{\theta}\}(x)> \sup \{\psi_{\theta}\}(x)\,\right\}\,.
	\]
\end{definition}
\begin{remark}
	There are different definitions in the literature of both pluripolar sets and negligible sets. In some literature, our notion of pluripolar sets is called locally pluripolar sets instead. However, we want to emphasize that what is actually proved in \cite{BT82} is that pluri-polarity is equivalent to negligibility \emph{in our sense}.
\end{remark}

Now we study the pluri-fine topology on $X$. 
Classical analogs are proved in \cite{BT82} and \cite{BT87}.

Assume that $X$ has pure dimension $n$.
Let $\varphi_1,\ldots,\varphi_n\in \PSH(X)\cap L^{\infty}_{\loc}(X)$. Then $\ddc\varphi_1\wedge\cdots\wedge\ddc\varphi_n$ is a Borel measure. We extend this measure by taking its completion. Then all quasi-Borel sets are measurable.

\begin{theorem}\label{thm:Lelongconj}
	Assume that $X$ is an open subspace of a compact unibranch Kähler space.
	Let $E\subseteq X$ be a subset. Then $E$ is negligible if and only if  $E$ is pluripolar.
\end{theorem}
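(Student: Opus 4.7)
The plan is to transfer the statement to a smooth compact Kähler resolution via Zariski's main theorem and invoke the classical Bedford--Taylor theorem, then descend. By \cref{prop:pshbij}, both pluripolarity and negligibility are insensitive to the non-reduced structure, so I may assume $X$ is reduced. The implication ``pluripolar $\Rightarrow$ negligible'' is immediate by a scaling argument: given a local witness $\varphi \in \PSH(V)$ with $\varphi \leq 0$ and $E \cap V \subseteq \{\varphi = -\infty\}$, the countable family $\{\varphi/k\}_{k \geq 1}$ has pointwise supremum $0$ on $\{\varphi > -\infty\}$ and $-\infty$ elsewhere, while its strong upper semi-continuous regularization is identically $0$; the resulting negligible set therefore contains $E \cap V$.

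For the converse, let $\bar X$ be the compact ambient Kähler space and fix a resolution of singularities $\pi\colon \bar Y \to \bar X$ with $\bar Y$ a compact Kähler manifold, provided by \cref{thm:ressing} and its corollary. Suppose $E$ is negligible, with local witness $\{\psi_\theta\} \subseteq \PSH(V)$ on an open $V \subseteq X$. I would first apply a Choquet-type selection to extract a countable subfamily $\{\psi_j\}_{j \geq 1}$ with the same strong upper semi-continuous regularization, then pull back via \cref{prop:pshpull} to obtain $\{\pi^{*}\psi_j\} \subseteq \PSH(\pi^{-1}(V))$. The key compatibility
\begin{equation*}
\pi^{*}\bigl(\sups_j \psi_j\bigr) \,=\, \sups_j \bigl(\pi^{*}\psi_j\bigr) \quad\text{on } \pi^{-1}(V)
\end{equation*}
holds because both sides are psh (by \cref{prop:incdec}), agree on the dense open $\pi^{-1}(V \setminus \Sing V)$ where $\pi$ is a biholomorphism, and the complement is an analytic (hence pluripolar) subset of the smooth space $\pi^{-1}(V)$. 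Consequently $\pi^{-1}(E) \cap \pi^{-1}(V)$ is negligible in the smooth Kähler open set $\pi^{-1}(V)$, and the classical Bedford--Taylor theorem in \cite{BT82} delivers pluripolarity there.

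To descend back to $V$ the plan is to arrange the pluripolar witness on $\pi^{-1}(V)$ in $\pi$-saturated form. Concretely, one may run the explicit Bedford--Taylor construction (e.g., a suitably weighted series in the $\pi^{*}\psi_j$'s) using only the pullback data, so that the resulting witness is tautologically of the form $\pi^{*}\varphi$ for some function $\varphi$ on $V$. By Zariski's main theorem (\cref{thm:Zmt}) the fibers of $\pi$ are connected, so $\varphi$ is unambiguously defined. It is psh on $V \setminus \Sing V$ (where $\pi$ is an isomorphism) and locally bounded above, so it extends uniquely to $\varphi \in \PSH(V)$ by the corollary to \cref{thm:FN}. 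By construction $E \cap V \subseteq \{\varphi = -\infty\}$, giving pluripolarity.

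The principal obstacle is precisely this last step: ensuring that the Bedford--Taylor witness on $\bar Y$ can be produced in $\pi$-saturated form so that it genuinely descends to a psh object on $V$. The unibranch hypothesis on $\bar X$ is what makes the descent possible, via the connectedness of fibers in \cref{thm:Zmt}; the compact Kähler ambient is used only to supply a compact Kähler resolution globally. Everything else amounts to a routine transfer along $\pi$.
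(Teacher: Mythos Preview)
Your easy direction (pluripolar $\Rightarrow$ negligible) is correct and matches the paper's scaling argument.

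The hard direction has a genuine gap exactly where you flag it. Your plan to ``run the explicit Bedford--Taylor construction as a suitably weighted series in the $\pi^*\psi_j$'s'' does not work: Bedford--Taylor's proof goes through capacity and extremal functions, not through a series in the original witnesses. A naive series cannot succeed either: for $x$ in the negligible set one only knows $\psi_j(x)\le (\sups_k\psi_k)(x)-\delta_x$ with $\delta_x>0$ depending on $x$, so forcing $\sum a_j\psi_j(x)=-\infty$ would require $\sum a_j=\infty$, which destroys plurisubharmonicity of the sum. And if you instead take an arbitrary psh witness $\Phi$ for $\pi^{-1}(E)$ produced by the smooth theory, there is no reason for $\Phi$ to be constant along the positive-dimensional exceptional fibres of $\pi$, so it is not of the form $\pi^*\varphi$ and your pushforward does not recover what you need.

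The paper avoids descent altogether by working globally with \emph{quasi}-psh functions, for which $\pi^*$ is a genuine bijection. On the compact ambient $\bar X$ with K\"ahler form $\omega$ one sets
\[
u_{E,\omega}^*\;:=\;\sups\bigl\{\varphi\in\PSH(\bar X,\omega):\ \varphi\le 0,\ \varphi|_E\le -1\bigr\}.
\]
Since $\pi^*$ identifies $\PSH(\bar X,\omega)$ with $\PSH(\bar Y,\pi^*\omega)$ and carries the defining family bijectively onto the corresponding family upstairs, one gets $\pi^*u_{E,\omega}^*=u^*_{\pi^{-1}E,\pi^*\omega}$, and the right-hand side vanishes by the classical theory on the compact K\"ahler manifold $\bar Y$. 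Hence $u_{E,\omega}^*=0$ already on $\bar X$, and a Choquet-plus-series construction \emph{carried out directly on $\bar X$} produces an $\omega$-psh function whose pole set contains $E$. No psh function is ever transported downward through $\pi$; the only thing that moves is the extremal function, and it moves by the tautological bijection $\PSH(\bar X,\omega)\cong\PSH(\bar Y,\pi^*\omega)$.
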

\begin{proof}
	We may assume that $X$ is reduced.
	
	Assume that $E$ is pluripolar. We show that $E$ is negligible. 
	The problem is local, so we may assume that there is a psh function $\varphi\leq 0$ on $X$ such that $E\subseteq \{\varphi=-\infty\}$. We may assume that equality  holds.
	For each $i\geq 1$, let $\varphi_i\coloneqq i^{-1}\varphi$. Observe that $E$ has empty interior by the classical theory on $E\cap (X\setminus \Sing X)$.
	Now $\sup_i\varphi_i=0$ on $X\setminus E$, while $\sup_i\varphi_i|_E=-\infty$, so $\sups\varphi_i=0$. Thus,
	\[
	E=\left\{\,\sup_i \varphi_i<\sups_i \varphi_i \,\right\}\,.
	\]
	
	Now assume that $E$ is negligible. We show that $E$ is pluripolar. We may assume that $X$ is a reduced, unibranch Kähler space with a Kähler form $\omega$.
	
	Let $\pi:Y\rightarrow X$ be a resolution of singularity. 
	Observe that $\pi^{-1}(E)$ is negligible, as the union of two negligible sets.
	Let 
	\[
	u_{E,\omega}^*\coloneqq \sups\left\{\, \varphi\in \PSH(X,\omega):\varphi\leq 0, \varphi|_E\leq -1 \, \right\}\in \PSH(X,\omega)\,.
	\]
	Observe that 
	\[
	\pi^*u_{E,\omega}^*=u_{\pi^{-1}E,\pi^*\omega}^*=0\,,
	\]
	where the second equality follows from the classical theory  (see \cite[Lemma~2.3]{Lu20} for example). In particular, $u_{E,\omega}^*=0$. So by Choquet's lemma (which holds on unibranch spaces thanks to \cref{cor:pshextension}), we may take an increasing sequence of $\omega$-psh functions $\psi_i$, $\psi_i\leq 0$, $\psi_i|_E\leq -1$ such that the $L^1$-norm of $\psi_i$ is bounded from above by $2^{-i}$. Then take $\psi=\sum_i\psi_i$. We find $E\subseteq\{\psi=-\infty\}$.
\end{proof}

\textcolor{red}{In the remainder of this section, we assume that  $X$ is an open subspace of a compact unibranch Kähler space.}
\begin{theorem}[Quasi-Lindel\"of property]\label{thm:qualin}
	An arbitrary union of pluri-fine open subsets differs from a countable subunion by at most a pluripolar set.
\end{theorem}
\begin{proof}
	Let $U_{\theta}\subseteq X$ ($\theta\in I$) be a family of pluri-fine open subsets in $X$. Let $U=\cup_{\theta} U_{\theta}$.
	Take a countable basis $B_i$ of the topology of $X$.
	We may assume that each $U_{\theta}$ is of the form $B_i\cap \{\varphi_{\theta}>-1\}$ for some $B_i$, where $\varphi_{\theta}\leq 0$ is a bounded psh function on $B_i$. It suffices to prove that for each fixed $i$, 
	\[
	\bigcup_{\theta} B_i\cap \{\varphi_{\theta}>-1\}
	\]
	with $\theta$ running through the subset set $J_i$ of $I$ consisting of all $\theta$ such that $U_{\theta}$ is of the form $B_i\cap \{\varphi_{\theta}>-1\}$. So we may assume that there are bounded psh functions $\varphi_{\theta}$ defined on $X$ such that $-1\leq \varphi_{\theta}\leq -1$, $U_{\theta}=\{\varphi_{\theta}>0\}$. Now by Choquet's lemma, there is a countable subset $J\subseteq I$ such that
	\[
	\sups_{\theta\in I}\varphi_{\theta}=\sups_{\theta\in J}\varphi_{\theta}\,.
	\]
	So by \cref{thm:Lelongconj},
	\[
	\sup_{\theta\in I}\left\{\varphi_{\theta}>0\right\}=\bigcup_{\theta\in I} U_{\theta}
	\]
	differs from
	\[
	\sup_{\theta\in J}\left\{\varphi_{\theta}>0\right\}=\bigcup_{\theta\in J} U_{\theta}
	\]
	by at most a pluripolar set. This proves our theorem.
\end{proof}

As a corollary, we obtain as in the classical setting:
\begin{corollary}\label{cor:pfqB}
	All pluri-fine Borel sets on $X$ are quasi-Borel.
\end{corollary}

\begin{theorem}\label{thm:conv}
	Let $\varphi_0^j,\ldots,\varphi_k^j$ be $(n+1)$-sequences of psh functions on $X$. Assume that the sequences are all uniformly bounded and converging a.e. monotonically (either increasing or decreasing) to psh functions $\varphi_0,\ldots,\varphi_k$ on $X$.
	Then
	\begin{enumerate}
		\item 
		\[
		\ddc\varphi_1^j\wedge\cdots \wedge \ddc\varphi_k^j\plright \ddc\varphi_1\wedge\cdots \wedge \ddc\varphi_k\,.
		\]
		\item 
		\[
		\varphi_0^j\ddc\varphi_1^j\wedge\cdots \wedge \ddc\varphi_k^j\plright \varphi_0\ddc\varphi_1\wedge\cdots \wedge \ddc\varphi_k\,.
		\]
	\end{enumerate}
	Here $\mathrm{p.f.}$ means that the weak convergence is with respect to the pluri-fine topology.
\end{theorem}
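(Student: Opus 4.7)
The plan is to reduce the statement to the classical Bedford--Taylor convergence theorems on a smooth Kähler manifold via a resolution of singularities, then transport the conclusion back using the projection formula (\cref{prop:projf}) and the fact that pullback of qpsh data by a holomorphic map is qpsh (\cref{prop:pshpull}). Since the statement is local and $\PSH(X)=\PSH(X^{\Red})$ by \cref{prop:pshbij}, I may assume that $X$ is reduced and is an open subspace of a compact reduced unibranch Kähler space.

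First I would fix a resolution of singularities $\pi\colon Y\to X$; by our standing convention $Y$ is Kähler, and by \cref{thm:Zmt} it is irreducible over every point. Set $\psi_i^j:=\pi^*\varphi_i^j$ and $\psi_i:=\pi^*\varphi_i$. By \cref{prop:pshpull} these are elements of $\PSH(Y)\cap L^\infty_{\loc}(Y)$, and the uniform bounds on the $\varphi_i^j$ pass to the $\psi_i^j$. The monotone a.e.\ convergence $\varphi_i^j\to\varphi_i$ on $X$ transfers to monotone a.e.\ convergence $\psi_i^j\to\psi_i$ on $Y$: indeed, $\pi$ is a biholomorphism over $X\setminus E$ where $E$ is a proper analytic subset of $X$, and $\pi^{-1}(E)$ has Lebesgue measure zero in the smooth manifold $Y$.

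On the smooth Kähler manifold $Y$ the three convergence statements hold by the classical Bedford--Taylor theorems proved in \cite{BT87}, so I obtain
\[
\ddc\psi_1^j\wedge\cdots\wedge\ddc\psi_k^j\plright \ddc\psi_1\wedge\cdots\wedge\ddc\psi_k
\]
on $Y$, and analogously for the other two expressions. To push these convergences back to $X$, I apply the projection formula \cref{prop:projf}, which identifies $\pi_*(\psi_0^j\ddc\psi_1^j\wedge\cdots\wedge\ddc\psi_k^j)$ with $\varphi_0^j\ddc\varphi_1^j\wedge\cdots\wedge\ddc\varphi_k^j$, and similarly for the other terms. For any bounded pluri-fine continuous test form $f$ on $X$, the pullback $\pi^*f$ is bounded pluri-fine continuous on $Y$: the pluri-fine topology on $X$ is generated by sets of the form $\{\varphi>c\}$ for bounded $\varphi\in\PSH(X)$, and these pull back to $\{\pi^*\varphi>c\}$, which are pluri-fine open in $Y$ by \cref{prop:pshpull}. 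Pairing through $\pi_*$ and invoking the convergence on $Y$ yields the claimed pluri-fine convergence on $X$.

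The main obstacle is the last step: making precise that pluri-fine weak convergence descends under a proper bimeromorphic $\pi$. The content is to check that the test class one uses for pluri-fine convergence on $X$ embeds into the analogous class on $Y$ by pullback, so that the projection formula may be combined with the classical result. Once the preservation of pluri-fine open sets under $\pi^*$ is verified (which reduces, via Choquet and \cref{thm:qualin}, to preservation of bounded psh functions by \cref{prop:pshpull}), the three parts of the theorem follow uniformly by the same argument applied to $\ddc\psi_1^j\wedge\cdots$, $\psi_0^j\ddc\psi_1^j\wedge\cdots$ and $\mathrm{d}\psi_0^j\wedge\mathrm{d}^{\mathrm{c}}\psi_0^j\wedge\ddc\psi_1^j\wedge\cdots$ respectively.
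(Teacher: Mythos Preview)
Your proposal is correct and follows essentially the same route as the paper: reduce to the reduced case, pull back along a resolution $\pi\colon Y\to X$, invoke the classical Bedford--Taylor convergence on the smooth $Y$, and push forward via the projection formula \cref{prop:projf}. The paper is more terse (it observes that it suffices to prove (2) and then writes ``we conclude by \cref{prop:projf}''), whereas you spell out why pluri-fine weak convergence descends under $\pi_*$ by checking that pluri-fine open sets pull back to pluri-fine open sets; this is exactly the content the paper leaves implicit.
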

\begin{proof}
	It suffices to prove (2).
	
	We may assume that $X$ is reduced.
	Let $\pi:Y\rightarrow X$ be a resolution of singularity. 
	From the classical theory, we know that
	\[
	\pi^*\varphi_0^j\ddc\pi^*\varphi_1^j\wedge \cdots \wedge \ddc\pi^*\varphi_k^j\plright \pi^*\varphi_0\ddc\pi^*\varphi_1\wedge \cdots \wedge \ddc\pi^*\varphi_k\,.
	\]
	We conclude by \cref{prop:projf}.
\end{proof}

\begin{corollary}\label{cor:symm}
	Let $\varphi_i\in \PSH(X)\cap L^{\infty}_{\loc}(X)$ ($i=1,\ldots,k$).  Then for any $\sigma\in \mathcal{S}_k$,
	\[
	\ddc\varphi_1\wedge \cdots \wedge \ddc\varphi_k=\ddc\varphi_{\sigma(1)}\wedge \cdots \wedge \ddc\varphi_{\sigma(k)}\,.
	\]
\end{corollary}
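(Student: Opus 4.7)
The plan is to reduce the identity to the smooth case via decreasing regularization and then pass to the limit using the monotone convergence of Bedford--Taylor products provided by \cref{thm:conv}.

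First I would dispose of some preliminaries. By the corollary asserting $i_*(\ddc\varphi_1|_{X^{\Red}}\wedge\cdots\wedge\ddc\varphi_k|_{X^{\Red}})=\ddc\varphi_1\wedge\cdots\wedge\ddc\varphi_k$, I may replace $X$ by $X^{\Red}$ and assume $X$ is reduced. The assertion is local and concerns an equality of currents, so I may further assume that $X$ is realized as a closed analytic subspace of a bounded pseudo-convex domain $\Omega\subseteq\mathbb{C}^N$ and work on a relatively compact open subset $W\Subset X$.

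The smooth case is immediate: when each $\varphi_i$ is (the restriction to $X$ of) a smooth psh function $\tilde\varphi_i$ on $\Omega$, the product $\ddc\tilde\varphi_1\wedge\cdots\wedge\ddc\tilde\varphi_k$ is an ordinary wedge product of smooth $(1,1)$-forms on $\Omega$. Wedge products of even-degree smooth forms commute, so the product is symmetric, and this symmetry is preserved under restriction to $X$ (indeed, restriction commutes with the Bedford--Taylor construction for smooth functions by definition).

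For the general case, I apply \cref{lma:BK1} to each $\varphi_i$ to obtain a decreasing sequence $\varphi_i^{(j)}$ of smooth psh functions on $W$ --- arising by restriction from smooth psh approximations on $\Omega$ --- converging pointwise to $\varphi_i|_W$. These sequences are locally uniformly bounded (the $\varphi_i$ are locally bounded, and the $\varphi_i^{(j)}$ are dominated by a fixed upper approximation). By \cref{thm:conv}(1),
\[
\ddc\varphi_1^{(j)}\wedge\cdots\wedge\ddc\varphi_k^{(j)}\plright \ddc\varphi_1\wedge\cdots\wedge\ddc\varphi_k,
\]
and the same convergence holds after permuting the factors by $\sigma$. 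By the smooth case, the two approximating sequences coincide term by term for every $j$, so their pluri-fine weak limits coincide as currents on $W$. Since $W$ was arbitrary, the identity holds globally. There is no substantial obstacle here: the entire argument is driven by \cref{lma:BK1} (local smooth regularization, which rests on the extension theorem \cref{thm:ext}) and \cref{thm:conv} (stability of Bedford--Taylor products under monotone limits), both of which are already established; the only mild care needed is to ensure that ``smooth psh'' approximants are understood as restrictions from $\Omega$, which is precisely what \cref{lma:BK1} delivers and is what lets the classical commutativity of wedge products of smooth forms be invoked.
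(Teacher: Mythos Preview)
Your proposal is correct and is exactly the ``standard consequence of \cref{thm:conv}'' that the paper alludes to without spelling out: approximate by smooth psh functions via \cref{lma:BK1}, use commutativity of the wedge product in the smooth case, and pass to the limit with \cref{thm:conv}. The only cosmetic point is that the claim ``restriction commutes with the Bedford--Taylor construction for smooth functions by definition'' is more cleanly justified by invoking the projection formula \cref{prop:projf} for the closed immersion $i:X\hookrightarrow\Omega$ (which gives $i_*(\ddc\varphi_1\wedge\cdots\wedge\ddc\varphi_k)=\ddc\tilde\varphi_1\wedge\cdots\wedge\ddc\tilde\varphi_k\wedge[X]$ and hence symmetry on $X$ from symmetry on $\Omega$), but this does not affect the validity of the argument.
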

This is a standard consequence of \cref{thm:conv}, we omit the details.

In particular, we may use the following notation for any of these products: $\bigwedge_{j=1}^k \ddc \varphi_j$.

\begin{lemma}\label{lma:btdoesnotchargepp}
	Let $\varphi_1,\ldots,\varphi_k\in \PSH(X)$. Let $E$ be a pluripolar set. Then $\ddc\varphi_1\wedge \cdots\wedge \ddc\varphi_k$ does not charge $E$ in the following sense: For each irreducible component $Y$ of $X$, $\ddc\varphi_1\wedge \cdots\wedge \ddc\varphi_k\wedge [Y]$ does not charge $E\cap Y$.
\end{lemma}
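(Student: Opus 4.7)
The plan is to reduce the statement to the classical Bedford--Taylor result on a smooth Kähler manifold via passage to a resolution of singularities, using the projection formula as the transport mechanism.

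First, I fix an irreducible component $Y$ of $X$. By the corollary to \cref{prop:projf} applied to the closed immersion $i\colon Y\hookrightarrow X$,
\[
\ddc\varphi_1\wedge\cdots\wedge\ddc\varphi_k\wedge [Y] = i_*\bigl(\ddc(\varphi_1|_Y)\wedge\cdots\wedge\ddc(\varphi_k|_Y)\bigr).
\]
Since $i_*$ at the level of measures is merely the extension by zero of a measure on $Y$ to $X$, it suffices to prove that $\ddc(\varphi_1|_Y)\wedge\cdots\wedge\ddc(\varphi_k|_Y)$ assigns mass zero to $E\cap Y$ viewed as a subset of $Y$. Using the bijection of \cref{prop:pshbij} to replace $Y$ by $Y^{\Red}$, we may further assume that $X$ itself is reduced and irreducible; the property of being open in a compact unibranch Kähler space is preserved.

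Next, I fix a proper resolution $\pi\colon\tilde X\rightarrow X$ from \cref{thm:ressing}; by our convention $\tilde X$ is a (connected) Kähler manifold. I claim $\pi^{-1}(E)$ is pluripolar in $\tilde X$. Indeed, locally on $X$ we have $E\cap V\subseteq\{\varphi=-\infty\}$ for some $\varphi\in\PSH(V)$, and by \cref{prop:pshpull} the pullback $\pi^*\varphi$ lies in $\PSH(\pi^{-1}(V))$ (it is not identically $-\infty$ because $X$ is irreducible and $\pi$ is surjective), with $\pi^{-1}(E\cap V)\subseteq\{\pi^*\varphi=-\infty\}$. A countable cover of $X$ then produces the pluripolarity of $\pi^{-1}(E)$.

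Finally, on the smooth Kähler manifold $\tilde X$ the classical Bedford--Taylor theorem guarantees that a Monge--Ampère product of locally bounded psh functions does not charge pluripolar sets, so the current $\ddc\pi^*\varphi_1\wedge\cdots\wedge\ddc\pi^*\varphi_k$ gives mass zero to $\pi^{-1}(E)$. Applying $\ddc$ to the identity of \cref{prop:projf} and using that $\ddc$ commutes with proper pushforward of currents (all currents in sight being closed), I obtain the variant
\[
\pi_*\bigl(\ddc\pi^*\varphi_1\wedge\cdots\wedge\ddc\pi^*\varphi_k\bigr) = \ddc\varphi_1\wedge\cdots\wedge\ddc\varphi_k,
\]
whence $(\pi_*\mu)(E)=\mu(\pi^{-1}(E))=0$ and the lemma follows. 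The proof has no deep obstruction; the only real bookkeeping is checking at each reduction step that restrictions and pullbacks of the given psh functions remain genuinely psh (not identically $-\infty$ on any irreducible component), which is handled by \cref{prop:pshbij} and \cref{prop:pshpull} combined with the irreducibility reduction.
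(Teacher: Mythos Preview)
Your proof is correct and takes a genuinely different route from the paper's. The paper argues intrinsically on $X$: it invokes the Lelong conjecture (\cref{thm:Lelongconj}) to embed $E$ in a negligible set $N=\{\sup_j\psi_j<\sups_j\psi_j\}$ for a family of bounded psh functions, then applies the convergence theorem (\cref{thm:conv}) to the increasing sequence $\psi_j$ to conclude that $\int \chi\sup_j\psi_j\,d\mu=\int \chi\sups_j\psi_j\,d\mu$ for the relevant trace measure, which forces the mass of $N$ to vanish. You instead pull everything back to a smooth resolution, quote the classical Bedford--Taylor fact that Monge--Amp\`ere measures of locally bounded psh functions do not charge pluripolar sets, and push forward via the projection formula.

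Your route is more economical in that it sidesteps both \cref{thm:Lelongconj} and \cref{thm:conv} (though of course those results are themselves proved by going to a resolution, so the underlying mechanism is the same). One small remark: your claim that ``the property of being open in a compact unibranch K\"ahler space is preserved'' when restricting to an irreducible component $Y$ is not obvious---the closure of $Y$ in the ambient compact space need not be unibranch. But this assumption is never actually used downstream: the classical non-charging result on the resolution is purely local and needs neither a K\"ahler nor a unibranch hypothesis, and the projection formula you invoke holds for any proper bimeromorphic morphism. So the argument goes through regardless; you can simply drop that sentence.
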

\begin{proof}
	We may assume that $X$ is irreducible of dimension $n$. Take a resolution, then we reduce immediately to the case where $X$ is smooth, in which case the result is well-known.
\end{proof}

\begin{lemma}\label{lma:maxloc}
	Assume that $X$ is of pure dimension $n$.
	Let $\varphi,\psi\in \PSH(X)\cap L^{\infty}_{\loc}(X)$. Let $O\coloneqq \{\varphi>\psi\}$. Then
	\begin{equation}\label{eq:ddcmax1}
		(\ddc (\varphi\lor \psi))^n|_O=(\ddc \varphi)^n|_O\,.    
	\end{equation}
\end{lemma}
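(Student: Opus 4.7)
\emph{Proof proposal.} The plan is to reduce the statement to the classical Bedford--Taylor locality theorem on a K\"ahler resolution. By Proposition \ref{prop:pshbij} we may assume $X$ is reduced. Since $X$ is open in a compact unibranch K\"ahler space $\bar X$, our convention following Theorem \ref{thm:ressing} provides a resolution $\bar\pi:\bar Y\to \bar X^{\Red}$ with $\bar Y$ a compact K\"ahler manifold; I set $Y:=\bar\pi^{-1}(X)$ and $\pi:=\bar\pi|_Y$. Then $\pi:Y\to X$ is a proper bimeromorphic morphism with $Y$ a K\"ahler manifold. Writing $\tilde\varphi:=\pi^*\varphi$ and $\tilde\psi:=\pi^*\psi$, Proposition \ref{prop:pshpull} gives $\tilde\varphi,\tilde\psi\in \PSH(Y)\cap L^{\infty}_{\loc}(Y)$, and one has $\pi^*(\varphi\lor\psi)=\tilde\varphi\lor\tilde\psi$ together with $\pi^{-1}(O)=\{\tilde\varphi>\tilde\psi\}$.

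On the smooth manifold $Y$ I would invoke the classical Bedford--Taylor locality of the Monge--Amp\`ere operator on pluri-fine open sets to obtain
\[
(\ddc(\tilde\varphi\lor\tilde\psi))^n\big|_{\pi^{-1}(O)}=(\ddc\tilde\varphi)^n\big|_{\pi^{-1}(O)}.
\]
A self-contained route entirely within the paper's framework is the following. For each $\epsilon>0$ the Euclidean open set $U_\epsilon:=\{\tilde\varphi>\tilde\psi+\epsilon\}$ satisfies $\tilde\varphi\lor(\tilde\psi+\epsilon)=\tilde\varphi$ on $U_\epsilon$, hence trivially $(\ddc(\tilde\varphi\lor(\tilde\psi+\epsilon)))^n|_{U_\epsilon}=(\ddc\tilde\varphi)^n|_{U_\epsilon}$. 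Since $\tilde\varphi\lor(\tilde\psi+\epsilon)\searrow \tilde\varphi\lor\tilde\psi$ as $\epsilon\searrow 0$, Theorem \ref{thm:conv} combined with the exhaustion $\pi^{-1}(O)=\bigcup_{\epsilon>0}U_\epsilon$ passes the identity to $\pi^{-1}(O)$.

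Finally I push forward by $\pi_*$. By the bimeromorphic form of the projection formula (the corollary of Proposition \ref{prop:projf}),
\[
\pi_*(\ddc\tilde\varphi)^n=(\ddc\varphi)^n,\qquad \pi_*(\ddc(\tilde\varphi\lor\tilde\psi))^n=(\ddc(\varphi\lor\psi))^n
\]
as positive Borel measures on $X$. Since push-forward commutes with restriction to preimages, i.e.\ $(\pi_*\mu)|_O(B)=\mu(\pi^{-1}(B))$ for every Borel $B\subseteq O$, restricting both sides of the smooth identity to $\pi^{-1}(O)$ and pushing down yields \eqref{eq:ddcmax1}. The only real obstacle is the smooth locality statement on $Y$, and in the approximation argument above it reduces to verifying that Theorem \ref{thm:conv} applies to the relevant decreasing sequence and that $\pi^{-1}(O)=\bigcup_\epsilon U_\epsilon$ exactly (which holds because $\tilde\varphi,\tilde\psi$ are locally bounded); the rest is formal manipulation with pullbacks and the projection formula.
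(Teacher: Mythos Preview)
Your overall strategy---pull back to a K\"ahler resolution $\pi:Y\to X$, establish the identity on the smooth manifold $Y$, and push forward via the projection formula---is correct and is a genuinely different route from the paper's. The paper instead works directly on the singular space: it localizes to a closed subspace of a pseudoconvex domain, uses \cref{lma:BK1} to take a decreasing sequence of \emph{smooth} psh functions $\varphi_k\searrow\varphi$, observes that for continuous $\varphi_k$ the set $\{\varphi_k>\psi\}$ is Euclidean open (continuous minus usc is lsc) so that $(\ddc(\varphi_k\lor\psi))^n=(\ddc\varphi_k)^n$ holds trivially on $\{\varphi_k>\psi\}\supseteq O$, and then passes to the limit using \cref{lma:loc1}. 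Your resolution approach avoids the local extension/regularization machinery at the cost of importing the classical Bedford--Taylor plurilocality theorem on $Y$; both are legitimate, and indeed the paper uses exactly your resolution-and-pushforward pattern for several later results (e.g.\ \cref{thm:conv}, \cref{prop:projnp}).

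There is, however, a real gap in your ``self-contained route'' on $Y$. The set $U_\epsilon=\{\tilde\varphi>\tilde\psi+\epsilon\}$ is \emph{not} Euclidean open in general: both $\tilde\varphi$ and $\tilde\psi$ are merely usc, so their difference is neither usc nor lsc. Consequently the step ``$\tilde\varphi\lor(\tilde\psi+\epsilon)=\tilde\varphi$ on $U_\epsilon$, hence trivially $(\ddc(\tilde\varphi\lor(\tilde\psi+\epsilon)))^n|_{U_\epsilon}=(\ddc\tilde\varphi)^n|_{U_\epsilon}$'' is not trivial at all---pointwise equality of bounded psh functions on a set that is only plurifine open does not immediately give equality of their Monge--Amp\`ere measures there; that \emph{is} the plurilocality statement you are trying to prove. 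The $\epsilon$-shift by itself buys nothing here. To make this route self-contained you would have to approximate one of the two functions by continuous psh functions so that the superlevel set becomes genuinely open, which is precisely the paper's argument. So either invoke the classical Bedford--Taylor plurilocality on $Y$ directly (your first option, which is fine), or adopt the smooth-approximation scheme; the alternative as written is circular.
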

\begin{proof}
	The problem is local. We may assume that $X$ is a closed analytic subspace of a bounded pseudo-convex domain $\Omega$ in some $\mathbb{C}^N$.
	By \cref{lma:BK1}, we may assume that there is a decreasing sequence of smooth psh functions $\varphi_k$ on $X$ converging pointwisely to $\varphi$. It is obvious that \eqref{eq:ddcmax1} holds for $\varphi_k$ in place of $\varphi$. We conclude using the quasi-Lindel\"of property \cref{thm:qualin} as in \cite[Lemma~4.1]{BT87}. 
\end{proof}

\begin{theorem}[Plurilocality]\label{thm:pluloc}
	Let $\varphi^i,\psi^i\in \PSH(X)\cap L^{\infty}_{\loc}(X)$ ($i=1,\ldots,n$). Let $W\subseteq X$ be an open set with respect to the pluri-fine topology. Assume that $\varphi^i|_W=\psi^i|_W$ for all $i$, then
	\[
	\ddc\varphi^1\wedge \cdots \wedge \ddc \varphi^n|_{W}=\ddc\psi^1\wedge \cdots \wedge \ddc \psi^n|_{W}\,.
	\]
\end{theorem}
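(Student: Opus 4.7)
The plan is to reduce the statement to the classical Bedford--Taylor plurilocality theorem on a smooth resolution, following the same strategy used to establish \cref{thm:conv} and \cref{cor:symm}. We may assume that $X$ is reduced and fix a resolution of singularities $\pi : Y \to X$ with $Y$ a K\"ahler manifold.

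First I would verify that $\pi^{-1}(W)$ is pluri-fine open in $Y$ and that $\pi^*\varphi^i|_{\pi^{-1}(W)} = \pi^*\psi^i|_{\pi^{-1}(W)}$ for each $i$. By \cref{prop:pshpull}, the pullback of a bounded psh function is psh; and since the pluri-fine topology is generated by sets of the form $B \cap \{u > 0\}$ with $B$ Euclidean open and $u$ bounded psh, the preimage of any such generator is pluri-fine open in $Y$. The second assertion is immediate from the hypothesis. On the smooth manifold $Y$, classical plurilocality (see \cite{BT87}) then yields
\[
\bigwedge_{j=1}^{n} \ddc \pi^*\varphi^j \bigg|_{\pi^{-1}(W)} = \bigwedge_{j=1}^{n} \ddc \pi^*\psi^j \bigg|_{\pi^{-1}(W)}.
\]

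Next I would transfer this equality down to $X$ via pushforward. By the bimeromorphic corollary of \cref{prop:projf}, we have $\pi_* \bigwedge_{j=1}^n \ddc \pi^*\varphi^j = \bigwedge_{j=1}^n \ddc \varphi^j$, and likewise for $\psi$. Now $W$ is pluri-fine Borel, hence quasi-Borel by \cref{cor:pfqB}; and for any quasi-Borel subset $A \subseteq W$, one has $\pi^{-1}(A) \subseteq \pi^{-1}(W)$ together with $(\pi_*\mu)(A) = \mu(\pi^{-1}(A))$, so this value depends only on the restriction $\mu|_{\pi^{-1}(W)}$. Combined with the displayed equality on $Y$, this gives the desired identity on $W$.

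The main technical point is to confirm that pullback, pushforward, and restriction to pluri-fine open sets interact as suggested by the above formal calculation. The crucial ingredients are the continuity of $\pi$ (which ensures $\pi^{-1}$ preserves both the ordinary Borel and the pluri-fine structure, through \cref{prop:pshpull} and \cref{cor:pfqB}) and its properness (which legitimises the measure-theoretic identity $(\pi_*\mu)(A) = \mu(\pi^{-1}(A))$ and the projection formula \cref{prop:projf}). Once these are in place the resolution argument runs cleanly and no further singular-space work is required.
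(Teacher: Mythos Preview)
Your resolution argument is correct and complete, but it takes a genuinely different route from the paper. The paper does not pass to $Y$ here: it first uses polarization to reduce to the diagonal case $\varphi^1=\cdots=\varphi^n=:\varphi$ and $\psi^1=\cdots=\psi^n=:\psi$, then notes that on $W$ one has $\varphi=\varphi\lor(\psi-\epsilon)$ for every $\epsilon>0$, so $W\subseteq\{\varphi>\psi-\epsilon\}$, and concludes by applying \cref{lma:maxloc} (together with its symmetric counterpart). This is exactly the classical Bedford--Taylor argument, run directly on the singular space $X$ using the local-regularisation machinery already set up in \cref{lma:BK1}, \cref{lma:maxloc} and \cref{lma:loc1}.

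Your approach, by contrast, treats the smooth plurilocality theorem as a black box and transports it through $\pi^*/\pi_*$; this is entirely in keeping with the paper's general philosophy (cf.\ \cref{prop:projnp}, \cref{thm:conv}, \cref{cor:symm}) and arguably cleaner, since no new estimate on $X$ is required. The paper's route is more self-contained in that it shows plurilocality already follows from \cref{lma:maxloc} alone, without invoking the smooth theorem, at the cost of the extra polarization step. Both proofs are short; the main practical difference is that the paper's version would survive in a setting where one has \cref{lma:maxloc} but no convenient smooth model, whereas yours makes the dependence on the resolution explicit.
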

\begin{proof}
	By polarization, we may assume that all $\varphi^i$ are equal and all $\psi^i$ are equal. We omit the superindex. Then we want to show $(\ddc\varphi)^n=(\ddc\psi)^n$ on $W$.
	Note that $\varphi=\varphi\lor (\psi-\epsilon)$ for any $\epsilon>0$. It suffices to apply \cref{lma:maxloc}.
\end{proof}

Finally, we introduce the notion of quasi-psh functions.
\begin{definition}
	Let $\theta$ be a smooth strongly closed real $(1,1)$-form on $X$ (in the sense that locally it is the pull-back of a smooth closed real $(1,1)$-form $\tilde{\theta}$ on pseudo-convex domains). Then the set $\PSH(X,\theta)$ consists of all functions $\varphi:X\rightarrow [-\infty,\infty)$ such that on each open set $V\subseteq X$, embedded as a closed analytic subspace in a pseudo-convex domain $\Omega\subseteq \mathbb{C}^N$ and any smooth function $a$ on $\Omega$ with $\ddc a|_V=\theta$, $a|_V+\varphi$ is psh.
\end{definition}

\subsection{Non-pluripolar products}
Assume that $X$ is an open subset of a compact unibranch Kähler space of pure dimension $n$.

Based on \cref{thm:pluloc} and \cref{lma:btdoesnotchargepp}, we could introduce the non-pluripolar products exactly as in \cite{BEGZ10}.

\begin{definition}Let $\varphi_1,\ldots,\varphi_p\in \PSH(X)$. Let 
	\[
	O_k=\bigcap_{j=1}^p\left\{\varphi_j>-k\right\}\,.
	\]
	We say that $\ddc \varphi_1\wedge\cdots\wedge \ddc \varphi_p$ is \emph{well-defined} if for each open subset $U\subseteq X$ such that there is a Kähler form $\omega$ on $U$, each compact subset $K\subseteq U$, we have
	\begin{equation}\label{eq:welldefinepluri}
		\sup_{k\geq 0} \int_{K\cap O_k}\left.\left(\bigwedge_{j=1}^p \ddc \max\{\varphi_j,-k\}\right)\right|_U\wedge \omega^{n-p}<\infty.
	\end{equation}
	
	In this case, we define $\ddc \varphi_1\wedge\cdots\wedge \ddc \varphi_p$ by 
	\begin{equation}\label{eq:npp}
		\mathds{1}_{O_k} \ddc \varphi_1\wedge\cdots\wedge \ddc \varphi_p =\mathds{1}_{O_k}\bigwedge_{j=1}^p \ddc \max\{\varphi_j,-k\}
	\end{equation}
	on $\bigcup_{k\geq 0} O_k$ and make a zero-extension to $X$. 
\end{definition}
\begin{remark}
	The condition \eqref{eq:welldefinepluri} is clearly independent of the choice of $U$ and $\omega$.
\end{remark}
The following proposition follows immediately from the definition and \cref{cor:symm}.
\begin{proposition}
	Let $\varphi_1,\ldots,\varphi_p\in \PSH(X)$. Let $\sigma\in \mathcal{S}_p$.
	Then $\ddc \varphi_1\wedge\cdots\wedge \ddc \varphi_p$ is well-defined if and only if  $\ddc \varphi_{\sigma(1)}\wedge\cdots\wedge \ddc \varphi_{\sigma(p)}$ is. Moreover, in this case,
	\[
	\ddc \varphi_1\wedge\cdots\wedge \ddc \varphi_p= \ddc \varphi_{\sigma(1)}\wedge\cdots\wedge \ddc \varphi_{\sigma(p)}\,.
	\]
\end{proposition}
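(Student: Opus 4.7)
The plan is to reduce everything to the bounded case via the truncations $\max\{\varphi_j,-k\}$, where we already have symmetry from \cref{cor:symm}.

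First, I would observe that both ingredients of the definition are manifestly symmetric in the $\varphi_j$. The set
\[
O_k=\bigcap_{j=1}^p\{\varphi_j>-k\}
\]
is an intersection, hence depends only on the unordered family $\{\varphi_1,\ldots,\varphi_p\}$, and in particular $O_k$ is unchanged if we replace the tuple $(\varphi_1,\ldots,\varphi_p)$ by $(\varphi_{\sigma(1)},\ldots,\varphi_{\sigma(p)})$.

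Next, since each $\max\{\varphi_j,-k\}$ is psh and \emph{locally bounded} on $X$, the Bedford--Taylor wedge product
\[
\bigwedge_{j=1}^p\ddc\max\{\varphi_j,-k\}
\]
falls under the scope of \cref{cor:symm}, which gives
\[
\bigwedge_{j=1}^p\ddc\max\{\varphi_j,-k\}=\bigwedge_{j=1}^p\ddc\max\{\varphi_{\sigma(j)},-k\}
\]
as currents on $X$, for every $k\geq 0$ and every $\sigma\in\mathcal{S}_p$. Consequently, the quantity appearing in the well-definedness condition \eqref{eq:welldefinepluri} is invariant under $\sigma$, so
\[
\ddc\varphi_1\wedge\cdots\wedge\ddc\varphi_p\quad\text{is well-defined}\;\Longleftrightarrow\;\ddc\varphi_{\sigma(1)}\wedge\cdots\wedge\ddc\varphi_{\sigma(p)}\quad\text{is.}
\]

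Finally, assuming both sides are well-defined, I would compare them via the defining relation \eqref{eq:npp}. On each $O_k$, the identity of currents displayed above together with the symmetry of $O_k$ yields
\[
\mathds{1}_{O_k}\ddc\varphi_1\wedge\cdots\wedge\ddc\varphi_p=\mathds{1}_{O_k}\bigwedge_{j=1}^p\ddc\max\{\varphi_j,-k\}=\mathds{1}_{O_k}\bigwedge_{j=1}^p\ddc\max\{\varphi_{\sigma(j)},-k\}=\mathds{1}_{O_k}\ddc\varphi_{\sigma(1)}\wedge\cdots\wedge\ddc\varphi_{\sigma(p)}.
\]
Taking the union of the $O_k$ and then the common zero-extension to $X$ gives the desired equality. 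There is no real obstacle here: the entire argument is a direct reduction to the locally bounded case and an application of \cref{cor:symm}; the only thing to verify carefully is that the sets $O_k$ and the zero-extension convention are themselves symmetric in the inputs, which they evidently are.
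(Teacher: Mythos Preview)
Your argument is correct and matches the paper's approach exactly: the paper's proof is the one-line ``This is a direct consequence of \cref{cor:symm}'', and what you have written is precisely the unpacking of that sentence. You correctly identify that $O_k$, the truncated Bedford--Taylor products, and the zero-extension are all symmetric in the inputs, so nothing more needs to be said.
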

In particular, we may use the following notation for any of these products: $\bigwedge_{j=1}^p \ddc \varphi_j$.

\begin{proposition}\label{prop:projnp}
	Let $\varphi_1,\ldots,\varphi_m$ be psh functions on $X$. Let $\pi:Y\rightarrow X^{\Red}$ be a resolution of singularity. Then $\varphi_1\wedge \cdots\wedge \varphi_m$ is well-defined if and only if  $\pi^*\varphi_1\wedge \cdots\wedge \pi^*\varphi_m$ is. In this case,
	\[
	\pi_*\left(\ddc\pi^*\varphi_1\wedge \cdots\wedge \ddc\pi^*\varphi_m\right)=\ddc\varphi_1\wedge \cdots\wedge \ddc\varphi_m\,.
	\]
\end{proposition}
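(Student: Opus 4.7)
The plan is to reduce everything to the truncated functions $\varphi_j^k := \max\{\varphi_j, -k\}$, for which the Bedford--Taylor projection formula of \cref{prop:projf} applies directly. By \cref{prop:pshbij} we may first assume $X$ is reduced. Each $\varphi_j^k$ is then locally bounded psh on $X$ and $\pi^*\varphi_j^k = \max\{\pi^*\varphi_j, -k\}$ is locally bounded psh on $Y$. Applying the bimeromorphic version of the projection formula (the corollary to \cref{prop:projf}) inductively to the $m$ factors gives the identity of closed positive currents on $X$:
\[
\pi_*\left(\bigwedge_{j=1}^m \ddc \pi^*\varphi_j^k\right) = \bigwedge_{j=1}^m \ddc \varphi_j^k\,.
\]

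The sets $O_k$ and $\tilde O_k := \pi^{-1}(O_k) = \bigcap_j\{\pi^*\varphi_j > -k\}$ are pluri-fine open. Combining plurilocality with the set-theoretic identity $\pi_*(\mathds{1}_{\pi^{-1}(A)}\mu) = \mathds{1}_A \pi_*\mu$ applied to the measure obtained by pairing the above current with a Kähler test form, we obtain, for any Kähler form $\omega$ on $U\subseteq X$ and any compact $K\subseteq U$,
\[
\int_{K\cap O_k} \bigwedge_j \ddc \varphi_j^k \wedge \omega^{n-m} = \int_{\pi^{-1}(K)\cap \tilde O_k} \bigwedge_j \ddc \pi^*\varphi_j^k \wedge \pi^*\omega^{n-m}\,.
\]
This already yields the easy direction of the well-definedness equivalence: if $\bigwedge\ddc\pi^*\varphi_j$ is well-defined on $Y$, then for any compact $K\subseteq X$ and any Kähler $\omega_Y$ on a neighbourhood of $\pi^{-1}(K)$ in $Y$, one has $\pi^*\omega \le C\omega_Y$, hence $\pi^*\omega^{n-m}\le C^{n-m}\omega_Y^{n-m}$, so the right-hand side above is bounded uniformly in $k$, and so is the left-hand side. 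Once both sides are known to be well-defined, the sequences of truncated measures are monotone increasing in $k$ by plurilocality, and passing to the limit recovers the non-pluripolar product on each side; this yields the claimed pushforward identity.

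The main obstacle is the converse implication, $X$-well-defined $\Rightarrow$ $Y$-well-defined, because the semi-positive form $\pi^*\omega$ degenerates along the exceptional locus $E\subseteq Y$, so $\omega_Y$ is not dominated by any multiple of $\pi^*\omega$ on compact sets intersecting $E$. To handle this, I would choose $\omega_Y$ in the form $\pi^*\omega + \eta$ with $\eta$ a Kähler form on $Y$ (which is again Kähler on $Y$), expand $\omega_Y^{n-m}$ via the binomial theorem, and bound the cross terms $\bigwedge\ddc\pi^*\varphi_j^k\wedge(\pi^*\omega)^{n-m-j}\wedge\eta^j$ for $j\geq 1$ using Chern--Levine--Nirenberg type estimates together with the no-charge property of Bedford--Taylor products on the pluripolar set $E$ (the lemma immediately preceding \cref{lma:maxloc}), reducing inductively to bounds on products with fewer $\ddc\pi^*\varphi_j^k$-factors on the non-exceptional locus, where $\pi$ is an isomorphism and the comparison with the $X$-side is direct.
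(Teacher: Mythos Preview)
Your overall strategy---truncate to $\varphi_j^k=\max\{\varphi_j,-k\}$, apply the Bedford--Taylor projection formula (\cref{prop:projf} and its bimeromorphic corollary) to the bounded functions, then pass to the limit via \cref{lma:ppp}---is exactly what the paper's one-line proof (``This follows directly from \cref{prop:projf}'') is gesturing at, only spelled out. The direction $Y$-well-defined $\Rightarrow$ $X$-well-defined and the pushforward identity are handled correctly.

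You are right to single out the converse direction as the delicate one; the paper says nothing about it. But your proposed fix via Chern--Levine--Nirenberg does not close the gap as written. The CLN inequality bounds $\int_{K'}\bigwedge_i\ddc\pi^*\varphi_i^k\wedge\Theta$ by $C\prod_i\|\pi^*\varphi_i^k\|_{L^\infty(\cdot)}\,\|\Theta\|$, and those sup-norms grow like $k$, so no uniform-in-$k$ bound falls out. The no-charge lemma you cite applies to the \emph{limit} non-pluripolar measure, not to the truncated Bedford--Taylor masses restricted to $\tilde O_k$, so it does not by itself control the contribution near $E$. The inductive reduction you sketch (``fewer $\ddc\pi^*\varphi_j^k$-factors on the non-exceptional locus'') does not obviously terminate in something uniformly bounded, because the extra factor $\eta$ is not pulled back from $X$ and hence \cref{prop:projf} cannot be invoked on those cross terms.

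In the paper's actual applications the proposition is only used in the direction $Y\Rightarrow X$ (transferring smooth-case results such as integration by parts, semi-continuity, monotonicity down to $X$), and in the compact setting both sides are automatically well-defined by the argument that follows \cref{prop:npp2}; so the converse implication, while stated, is never load-bearing. If you want a self-contained proof of the converse in the generality stated, one route is to use that $Y$ sits as an open set inside a compact K\"ahler resolution $\bar Y\to\bar X$ of the ambient compact space, extend the truncations $\pi^*\varphi_j^k$ (which are bounded) to $\bar Y$, and invoke the global mass bound there---but this goes beyond a direct appeal to \cref{prop:projf}.
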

\begin{proof}
	This follows directly from \cref{prop:projf}.
\end{proof}

This proposition allows us to generalize directly known facts about non-pluripolar products in the smooth setting to the current setting.

\begin{proposition} \label{prop:npp1}
	Let $\varphi_1,\ldots,\varphi_p\in \PSH(X)$.
	\begin{enumerate}
		\item The product $  \ddc \varphi_1\wedge\cdots\wedge \ddc \varphi_p $ is local with respect to the pluri-fine topology. In the following sense: let $O\subseteq X$ be a pluri-fine open subset, let $v_1,\ldots,v_p\in \PSH(X)$, assume that $\varphi_j|_{O}=v_j|_{O}$, for $j=1,\ldots,p$.
		Assume that $\bigwedge_{j=1}^p \ddc \varphi_j$, $\bigwedge_{j=1}^p \ddc v_j$
		are both well-defined, then 
		\begin{equation}\label{eq:ppp1}
			\left.  \bigwedge_{j=1}^p \ddc \varphi_j \right|_{O}=\left.  \bigwedge_{j=1}^p \ddc v_j \right|_{O}\,.
		\end{equation}
		If $O$ is open in the usual topology, then the product $\bigwedge_{j=1}^p \ddc (\varphi_j|_{O})$ on $O$ is well-defined and
		\begin{equation}\label{eq:ppp2}
			\left.  \bigwedge_{j=1}^p \ddc \varphi_j \right|_{O}=  \bigwedge_{j=1}^p \ddc (\varphi_j|_{O}) \,.
		\end{equation}
		Let $\mathcal{U}$ be an open covering of $X$. Then $  \ddc \varphi_1\wedge\cdots\wedge \ddc \varphi_p $
		is well-defined if and only if  each of the following products is well-defined
		\[
		\bigwedge_{j=1}^p \ddc (\varphi_j|_{U}) \,,\quad U\in \mathcal{U}\,.
		\]
		\item The current $  \ddc \varphi_1\wedge\cdots\wedge \ddc \varphi_p $ and the fact that it is well-defined depend only on the currents $\ddc \varphi_j$, not on specific $\varphi_j$.
		\item When $\varphi_1,\ldots,\varphi_p\in L^{\infty}_{\loc}(X)$, $  \ddc \varphi_1\wedge\cdots\wedge \ddc \varphi_p $ is well-defined and is equal to the Bedford--Taylor product.
		\item Assume that $  \ddc \varphi_1\wedge\cdots\wedge \ddc \varphi_p $ is well-defined, then $  \ddc \varphi_1\wedge\cdots\wedge \ddc \varphi_p $ puts not mass on pluripolar sets.
		\item Assume that $  \ddc \varphi_1\wedge\cdots\wedge \ddc \varphi_p $ is well-defined, then
		$\bigwedge_{j=1}^p \ddc \varphi_j$ is a closed positive current of bidegree $(p,p)$ on $X$.
		\item The product is multilinear: let $v_1\in \PSH(X)$, then
		\begin{equation}\label{eq:ppp6}
			\ddc (\varphi_1+v_1)\wedge \bigwedge_{j=2}^p \ddc \varphi_j  =   \ddc \varphi_1\wedge \bigwedge_{j=2}^p \ddc \varphi_j  +   \ddc v_1\wedge \bigwedge_{j=2}^p \ddc \varphi_j  
		\end{equation}
		in the sense that the left-hand side is well-defined if and only if  both terms on the right-hand side are well-defined, and the equality holds in that case.
	\end{enumerate}
\end{proposition}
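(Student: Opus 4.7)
The plan is to reduce each of the six assertions to its classical counterpart in the smooth setting, via a resolution of singularities $\pi: Y \to X^{\Red}$ (\cref{thm:ressing}, with $Y$ a Kähler manifold by our standing convention), the projection formula \cref{prop:projnp}, and the identification $\PSH(X) = \PSH(X^{\Red})$ from \cref{prop:pshbij}. The smooth versions of (1)--(6) are well-established in the literature, e.g.\ \cite{BEGZ10} and \cite{GZ17}.

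I first dispose of the easier items. For (3), if the $\varphi_j$ are locally bounded, then on any relatively compact open set one has $\max\{\varphi_j,-k\} \equiv \varphi_j$ for $k$ large enough, so the truncated products stabilize to the Bedford--Taylor product, whose mass on compacta is controlled by Chern--Levine--Nirenberg; this verifies \eqref{eq:welldefinepluri} and identifies the two products. Part (6) is multilinearity, which pulls back cleanly through $\pi^*$ (compatible with truncation) and then pushes forward via \cref{prop:projnp}. Part (5) is immediate by pullback: the non-pluripolar product on $Y$ is a closed positive current, and proper pushforward preserves both properties. For (2), local equality of $\ddc u$ and $\ddc v$ means $u-v$ is pluriharmonic, a property preserved under $\pi^*$; the pullback currents then coincide on $Y$ by the smooth case, and pushforward concludes.

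The more delicate cases are (1) and (4). For (1), the key observation is that plurifine-open subsets $O \subseteq X$ pull back to plurifine-open subsets $\pi^{-1}(O) \subseteq Y$, since $\pi^*$ preserves psh functions (\cref{prop:pshpull}); the smooth plurifine locality on $Y$ then transfers back through $\pi_*$, yielding \eqref{eq:ppp1}, while \eqref{eq:ppp2} and the covering criterion follow routinely from the definition. For (4), the preimage under $\pi$ of a pluripolar set is pluripolar (pull back the local $-\infty$ potential of a defining psh function), so the classical fact that non-pluripolar products on $Y$ charge no pluripolar set, combined with pushforward, gives the result. The main obstacle throughout is the bookkeeping of the pullback--pushforward translation: one must verify that every operation (truncation, wedging, indicator of a plurifine set, restriction to compact sets) commutes appropriately with $\pi^*$ and $\pi_*$, and in particular that the finiteness condition \eqref{eq:welldefinepluri} on $X$ is equivalent to its analogue on $Y$. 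Once this is granted via \cref{prop:projnp}, each of (1)--(6) becomes a formal consequence of its smooth counterpart.
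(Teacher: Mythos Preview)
Your approach is correct. The paper agrees that parts (1), (3)--(6) are routine and gives no argument for them, so your resolution-based sketches are fine (and more explicit than what the paper offers). The only part the paper treats is (2), and here the two approaches diverge.

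You reduce (2) to the smooth case: if $\ddc\varphi_j = \ddc\varphi_j'$ on $X$, pull back to $Y$ and invoke \cite{BEGZ10}. The paper instead argues directly on $X$ at the level of the defining truncations: writing $w_j = \varphi_j' - \varphi_j$ pluriharmonic and (after shrinking) bounded above by $C$, one has on the plurifine open set $\{\varphi_j + w_j > -k\}$ the identity $\max\{\varphi_j + w_j, -k\} = \max\{\varphi_j, -k-C\} + w_j$, so the truncated Bedford--Taylor products agree there by part (1), and the finiteness condition \eqref{eq:welldefinepluri} transfers. Your route is cleaner in spirit but requires one extra check you glossed over: that $\ddc\pi^*\varphi_j = \ddc\pi^*\varphi_j'$ holds on \emph{all} of $Y$, not just off the exceptional locus $E$. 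This follows because $\varphi_j - \varphi_j'$ is pluriharmonic on $X^{\mathrm{reg}}$ and locally bounded (both $\varphi_j,\varphi_j'$ are usc, hence bounded above), so $\pi^*(\varphi_j-\varphi_j')$ is bounded pluriharmonic on $Y\setminus E$ and extends pluriharmonically across $E$ by a removable-singularity argument. The paper's direct computation sidesteps this; conversely, your approach makes the reduction-to-smooth philosophy of \cref{prop:projnp} fully explicit and treats all six parts uniformly.
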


Let $\theta_1,\ldots,\theta_p$ be smooth strongly closed $(1,1)$-forms on $X$. Let $\varphi_i\in \PSH(X,\theta_i)$. Let $T_i=\theta_i+\ddc\varphi_i$. Then $T_1\wedge \cdots\wedge T_p$ can be defined in the obvious way.

\begin{definition}
	We say a closed positive $(1,1)$-current $T$ on $X$ is \emph{good} if for any $x\in X$, there is a neighbourhood $V\subseteq X$ of $x$ such that there exists a smooth strongly  closed $(1,1)$-form $\theta$ on $V$ and a function $\varphi\in \PSH(V,\theta)$  such that $T=\theta+\ddc\varphi$ on $V$.
\end{definition}
Let $T_1,\ldots,T_p$ be good closed positive $(1,1)$-currents on $X$, we can define $T_1\wedge \cdots\wedge T_p$ in the obvious way. 
With the same proofs as in \cite{BEGZ10}, we have
\begin{proposition} \label{prop:npp2}
	
	Let $T_1,\ldots,T_p$ be closed positive currents of bidegree $(1,1)$ on $X$. Assume that all $T_i$'s are good.
	\begin{enumerate}
		\item The product $  T_1\wedge\cdots\wedge T_p $ is local in pluri-fine topology in the following sense: let $O\subseteq X$ be a pluri-fine open subset, let $S_1,\ldots,S_p$ be closed positive currents of bidegree $(1,1)$ on $X$. Assume that all $S_i$'s are good. Assume that
		\[
		T_j|_{O}=S_j|_{O},\quad j=1,\ldots,p
		\]
		and that $T_1\wedge \cdots\wedge T_p$, $S_1\wedge \cdots\wedge S_p$
		are both well-defined, then 
		\begin{equation}
			\left.  T_1\wedge \cdots\wedge T_p \right|_{O}=\left.  S_1\wedge \cdots\wedge S_p \right|_{O}.
		\end{equation}
		If $O$ is open in the usual topology, then the product $T_1\wedge \cdots\wedge T_p|_{O}$ on $O$ is well-defined and
		\begin{equation}
			\left.  T_1\wedge \cdots\wedge T_p \right|_{O}=  T_1\wedge \cdots\wedge T_p|_O .
		\end{equation}
		Let $\mathcal{U}$ be an open covering of $X$. Then $  T_1\wedge \cdots\wedge T_p $
		is well-defined if and only if  each of the following products is well-defined
		\[
		T_1\wedge \cdots\wedge T_p|_{U} ,\quad U\in\mathcal{U}.
		\]
		\item Assume that $  T_1\wedge \cdots\wedge T_p $ is well-defined, then the product $  T_1\wedge \cdots\wedge T_p $ puts not mass on pluripolar sets.
		\item Assume that $  T_1\wedge \cdots\wedge T_p $ is well-defined, then $T_1\wedge \cdots\wedge T_p$ is a closed positive current of bidegree $(p,p)$.
		\item The product $ T_1\wedge \cdots\wedge T_p $ is symmetric.
		\item The product is multi-linear: let $T'_1$ be a good closed positive current of bidegree $(1,1)$, then
		\[
		(T_1+T_1')\wedge T_2\wedge  \cdots\wedge T_p =  T_1\wedge T_2\wedge  \cdots\wedge T_p +  T_1'\wedge T_2\wedge  \cdots\wedge T_p 
		\]
		in the sense that the left-hand side is well-defined if and only if  both terms on the right-hand side are well-defined, and the equality holds in that case.
	\end{enumerate}
\end{proposition}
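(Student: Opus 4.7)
The plan is to reduce every claim to its counterpart in \cref{prop:npp1} by passing to local psh potentials. On a sufficiently small open set $V\subseteq X$, each good current $T_j$ admits a representation $T_j=\tilde\theta_j+\ddc\varphi_j$ with $\tilde\theta_j$ smooth and $\varphi_j\in\PSH(V,\theta_j)$, and after further shrinking $V$ we may write $\tilde\theta_j=\ddc a_j$ for a smooth function $a_j$. Then on $V$ we have $T_j=\ddc u_j$ for the psh function $u_j:=a_j+\varphi_j$, and by definition
\[
T_1\wedge\cdots\wedge T_p=\ddc u_1\wedge\cdots\wedge\ddc u_p
\]
on $V$. Independence of this construction from the choice of local smooth potentials $a_j$ (equivalently, from the choice of $u_j$ modulo pluri-harmonic functions) is precisely the content of \cref{prop:npp1}(2), which shows at the same time that the well-definedness condition in \eqref{eq:welldefinepluri} is intrinsic to the currents $T_j$.

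For (1), I interpret the hypothesis $T_j|_O=S_j|_O$ at the level of local potentials: on a small common domain $V$, write $T_j=\ddc u_j$ and $S_j=\ddc v_j$ with $u_j,v_j$ psh on $V$. Using \cref{prop:npp1}(2), the potentials may be adjusted so that $u_j=v_j$ on the pluri-fine open set $O\cap V$ (any ambiguity is by a pluri-harmonic function and hence smooth). The three assertions of (1) -- pluri-fine locality on $O$, agreement with the restricted product when $O$ is ordinary open, and the open-cover criterion for well-definedness -- then follow immediately from the corresponding parts of \cref{prop:npp1}(1). Statements (2) and (3) are local in the usual topology, so one reads them off from \cref{prop:npp1}(4) and \cref{prop:npp1}(5) applied to the $u_j$'s. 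Symmetry (4) is inherited from \cref{cor:symm} via the symmetric expression $\bigwedge_j\ddc u_j$. Multi-linearity (5) reduces to \cref{prop:npp1}(6): if $T_1'=\ddc u_1'$ locally, then $T_1+T_1'=\ddc(u_1+u_1')$, and the claimed equivalence of well-definedness together with the identity is exactly \eqref{eq:ppp6}.

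The main obstacle is the potential-level reformulation needed for (1): one must verify that the abstract equality of currents $T_j|_O=S_j|_O$ can always be realized by a choice of psh potentials $u_j,v_j$ coinciding on $O\cap V$. Since $u_j-v_j$ has vanishing $\ddc$ on $O\cap V$ and is globally psh-difference on $V$, locally this difference is a pluri-harmonic (hence smooth) function on a pluri-fine neighbourhood, and absorbing it into one of the potentials --- permitted by the intrinsic nature of the construction established above --- brings us to the situation of \cref{prop:npp1}(1). Once this reformulation is secured, the remainder of the proof is a purely mechanical transcription of \cref{prop:npp1}.
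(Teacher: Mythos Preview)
Your overall strategy is exactly right and matches the paper, which states \cref{prop:npp2} without proof as a transparent translation of \cref{prop:npp1} to the language of good currents via local psh potentials. Everything you write for parts (2)--(5) is fine.

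The gap is in your handling of (1), precisely at the point you yourself flag as the ``main obstacle''. Your argument that $u_j-v_j$ has vanishing $\ddc$ on $O\cap V$ and is therefore ``a pluri-harmonic (hence smooth) function on a pluri-fine neighbourhood'' does not work: a $(1,1)$-current has no canonical restriction to a merely pluri-fine open set, so the statement $\ddc(u_j-v_j)=0$ on $O\cap V$ is not even well-posed, and in any case nothing would force a difference of psh functions to be smooth on a pluri-fine open set. The right way out is simpler than what you attempt. Since restriction of $(1,1)$-currents to pluri-fine open sets is not intrinsically defined, the hypothesis $T_j|_O=S_j|_O$ in \cref{prop:npp2}(1) should be \emph{read} as a statement about potentials, namely that on each small $V$ one can choose psh potentials $u_j,v_j$ for $T_j,S_j$ with $u_j|_{O\cap V}=v_j|_{O\cap V}$ --- exactly the hypothesis of \cref{prop:npp1}(1). (When $O$ is open in the usual topology this is automatic: $u_j-v_j$ is genuinely pluri-harmonic on $O\cap V$, and you absorb it using \cref{prop:npp1}(2).) With that reading the reduction is immediate and no pluri-fine regularity miracle is needed.
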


\begin{proposition}Let $X$ be a compact unibranch Kähler space of pure dimension $n$.
	Let $T_1,\ldots,T_p$ be good closed positive currents of bidegree $(1,1)$ on $X$. 
	Then $T_1\wedge \cdots\wedge T_p$ is well-defined.
\end{proposition}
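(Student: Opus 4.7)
The plan is to reduce the statement to the classical case of a compact Kähler manifold via resolution of singularities, where well-definedness is due to Boucksom--Eyssidieux--Guedj--Zeriahi \cite{BEGZ10}. By \cref{prop:pshbij} we may assume $X$ is reduced. Choose a proper resolution of singularities $\pi:Y\to X$ with $Y$ a compact Kähler manifold, as provided by the corollary following \cref{thm:ressing} combined with the convention at the end of \cref{sec:cas}. Each pullback $\pi^* T_i$ is again a good closed positive $(1,1)$-current on $Y$: this is checked locally, since writing $T_i = \theta_i + \ddc\varphi_i$ on a small $V\subseteq X$ yields $\pi^* T_i = \pi^*\theta_i + \ddc(\pi^*\varphi_i)$ on $\pi^{-1}V$. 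Because $Y$ is a compact Kähler \emph{manifold}, we may moreover globally represent $\pi^*T_i = \tilde\theta_i + \ddc\tilde\varphi_i$ with $\tilde\theta_i$ a smooth closed $(1,1)$-form on $Y$ and $\tilde\varphi_i\in \PSH(Y,\tilde\theta_i)$.

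The central step is to invoke the classical well-definedness of non-pluripolar products on a compact Kähler manifold: the canonical mass of the truncated Monge--Amp\`ere products
\[
\int_Y \bigwedge_{j=1}^p \bigl(\tilde\theta_j+\ddc\max\{\tilde\varphi_j,-k\}\bigr)
\]
is cohomologically controlled by $\int_Y \prod_j\tilde\theta_j$ (obtained by Stokes' theorem since each truncation $\max\{\tilde\varphi_j,-k\}$ is bounded), and hence is uniformly bounded in $k$. Restricting to pluri-fine open subsets and invoking the plurifine locality of Bedford--Taylor products, one concludes that the defining condition \eqref{eq:welldefinepluri} holds for $\pi^*T_1\wedge\cdots\wedge \pi^*T_p$ on $Y$.

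Finally, we transfer well-definedness from $Y$ back to $X$ using the local version of \cref{prop:projnp} extended to good currents. Fix an open subset $U\subseteq X$ with Kähler form $\omega$ and a compact $K\subseteq U$; writing $T_i|_U = \theta_i + \ddc\varphi_i = \ddc(a_i+\varphi_i)$ for local potentials, the condition \eqref{eq:welldefinepluri} becomes a statement about $\bigwedge_j \ddc(a_i+\varphi_i)$ on $U$, which by \cref{prop:projnp} holds if{f} the corresponding statement holds for the pullbacks on $\pi^{-1}U$. The latter is controlled by the global bound already established on $Y$, after comparing $\omega$-mass downstairs with $\pi^*\omega$-mass upstairs via \cref{prop:projf}. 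The main subtlety I anticipate is that $\pi^*\omega$ may degenerate along the exceptional locus, but this is harmless: the mass condition only requires finiteness, and $\pi^*\omega \leq C\omega_Y$ on compact sets for the Kähler form $\omega_Y$ on $Y$ allows one to dominate $\pi^*\omega^{n-p}$-mass by $\omega_Y^{n-p}$-mass, which is finite by the compact-manifold case.
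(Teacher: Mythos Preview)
Your proof is correct and takes a genuinely different route from the paper. You pull everything back to a compact Kähler resolution $Y$, use the global $\ddc$-lemma there to write each $\pi^*T_i=\tilde\theta_i+\ddc\tilde\varphi_i$, invoke the classical well-definedness from \cite{BEGZ10} on the compact Kähler manifold $Y$, and then transfer well-definedness back to $X$ via \cref{prop:projnp} and the projection formula; your handling of the degeneration of $\pi^*\omega$ via domination by $\omega_Y$ is exactly what is needed to make that transfer work. The paper instead stays on $X$: it uses multilinearity (\cref{prop:npp2}(5)) to write $T_j=(T_j+C\omega)-C\omega$ and reduce to the case where each $T_j$ lies in a Kähler class $[\omega_j]$, then carries out the local mass estimate directly on $X$, invoking a resolution only at the very last step to identify $\int_X \omega_1\wedge\cdots\wedge\omega_p\wedge\omega^{n-p}$ as a cohomological quantity. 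Your approach is a cleaner black-box reduction to the smooth theory and avoids the algebraic trick of shifting into a Kähler class; the paper's argument is more self-contained in that it reproduces the key cohomological mass bound rather than importing it wholesale from \cite{BEGZ10}.
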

\begin{proof}
	We may assume that $X$ is reduced.
	
	Fix a Kähler form $\omega$ on $X$. In this case, write $T_j=(T_j+C\omega)-C\omega$ for $C>0$ large enough and apply \cref{prop:npp2} (5), we may assume that $T_j$ is in a Kähler class.  So we can write $T_j=\omega_j+\ddc \varphi_j$, 
	where $\omega_j$ is a Kähler form and $\varphi_j$ is $\omega_j$-psh. Let $U$ be an open subset on which we can write $\omega_j=\ddc\psi_j$ with psh functions $\psi_j\leq 0$ on $U$. Now on $U$, for each $k\geq 0$, $\{\psi_j+\varphi_j>-k\}\subseteq \{\varphi_j>-k\}$
	so for each compact subset $K\subseteq U$,
	\[
	\begin{split}
		&\int_K \mathds{1}_{\cap_{j=1}^p\{\psi_j+\varphi_j>-k\}}\bigwedge_{j=1}^p \ddc \max\{\psi_j+\varphi_j,-k\}\wedge \omega^{n-p}\\= & \int_K \mathds{1}_{\cap_{j=1}^p\{\psi_j+\varphi_j>-k\}}\bigwedge_{j=1}^p \left(\omega_j+\ddc \max\{\varphi_j,-k\}\right)\wedge \omega^{n-p}\\
		\leq &\int_X \bigwedge_{j=1}^p \left(\omega_j+\ddc \max\{\varphi_j,-k\}\right)\wedge \omega^{n-p}\\
		=&\int_X \bigwedge_{j=1}^p \omega_j\wedge \omega^{n-p}.
	\end{split}
	\]
	The last step follows from the corresponding result on a resolution.
\end{proof}

\subsection{Bimeromorphic behaviour}
Let $X$ be a compact unibranch Kähler complex analytic space of pure dimension $n$.
Fix a resolution of singularity $\pi:Y\rightarrow X^{\Red}$ that is an isomorphism over $X\setminus \Sing X^{\Red}$.    

Let $\theta$ be a strongly closed smooth $(1,1)$-form on $X$. Assume that $[\theta]$ is big: for all proper bimeromorphic morphism $f:Y\rightarrow X$ from a complex manifold $Y$, $f^*\theta$ is big. In this case, define $\vol\theta=\vol f^*\theta$.
We set
\begin{equation}\label{eq:Vtheta}
	V_{\theta}\coloneqq \sups\left\{\,\varphi\in \PSH(X,\theta): \varphi\leq 0 \,\right\}\,.
\end{equation}

\begin{definition}
	Let $\varphi,\psi\in \PSH(X,\theta)$. Define 
	\[
	\varphi\land \psi\coloneqq \sups\left\{\,\eta\in \PSH(X,\theta): \eta\leq \varphi\,,\eta\leq \psi \,\right\}\,.
	\]
	When the set is empty, we just define $\sups\emptyset=-\infty$.
\end{definition}

\begin{definition}
	Let $U\subseteq X$ be an open subset. Let $\varphi\in \PSH(\pi^{-1}U)$, define $\pi_*\varphi\in \PSH(U)$ as the unique psh extension of $\varphi|_{\pi^{-1}(U\setminus \Sing X)}$. See \cref{cor:pshextension}.
    We call $\pi_*\varphi$ the psh pushforward of $\varphi$.
\end{definition}

\begin{definition}
	Let 
	\[
	\mathcal{E}(X,\theta)\coloneqq \left\{\,\varphi\in \PSH(X,\theta): \int_X \theta_{\varphi}^n=\vol\theta \, \right\}\,.
	\]
	For any $p\geq 1$,
	\[
	\mathcal{E}^p(X,\theta)\coloneqq \left\{\,\varphi\in \mathcal{E}(X,\theta): \int_X |\varphi|^p \theta_{\varphi}^n<\infty \, \right\}\,.
	\]
	Let
	\[
	\mathcal{E}^{\infty}(X,\theta)\coloneqq \left\{\,\varphi\in \PSH(X,\theta): \varphi-V_{\theta}\in L^{\infty}(X) \, \right\}\,.
	\]
\end{definition}

\begin{definition}
	Let $\varphi_0,\varphi_1\in \mathcal{E}^1(X,\theta)$. A \emph{subgeodesic} from $\varphi_0$ to $\varphi_1$ is a map $\varphi:(a,b)\rightarrow \mathcal{E}^1(X,\theta)$ such that
	\begin{enumerate}
		\item The potential $\Phi$ on $X\times \{z\in \mathbb{C}: e^{-b}<|z|<e^{-a}\}$ defined by $\Phi(x,z)\coloneqq \varphi_{-\log|z|}(x)$ is $\pi_1^*\theta$-psh, where $\pi_1:X\times \{z\in \mathbb{C}: e^{-b}<|z|<e^{-a}\}\rightarrow X$ is the natural projection.
		\item 
		\[
		\lim_{t\to a+}\varphi_t=\varphi_a\,,\quad \lim_{t\to b-}\varphi_t=\varphi_b\,.
		\]
	\end{enumerate}
\end{definition}

\begin{definition}\label{def:mod}
	A potential $\varphi\in \PSH(X,\theta)$ is \emph{model} if
	\[
	\varphi= \sups_{C>0} (\varphi+C)\land V_{\theta}\,.
	\]
\end{definition}
\begin{definition}
	We define $\mathcal{R}^1(X,\theta)$ as the set of all geodesic rays in $\mathcal{E}^1$ emanating from $V_{\theta}$.
\end{definition}

\begin{definition}
	The \emph{geodesic} between $\varphi_0,\varphi_1\in \mathcal{E}^1$ is the maximal subgeodesic between them.
\end{definition}

\begin{proposition}\label{prop:corr}
	The functions $\pi^*$ and $\pi_*$ are inverse to each other. Under this correspondence, we get a bijection between
	\begin{enumerate}
		\item $\PSH(X,\theta)$ and $\PSH(Y,\pi^*\theta)$. 
		\item $\mathcal{E}(X,\theta)$ and $\mathcal{E}(Y,\pi^*\theta)$.
		\item $\mathcal{E}^p(X,\theta)$ and $\mathcal{E}^p(Y,\pi^*\theta)$.
		\item Subgeodesics in $\mathcal{E}^1(X,\theta)$ and subgeodesics in $\mathcal{E}^{1}(Y,\pi^*\theta)$.
		\item Geodesics in $\mathcal{E}^1(X,\theta)$ and geodesics in $\mathcal{E}^{1}(Y,\pi^*\theta)$.
		\item $\mathcal{R}^1(X,\theta)$ and $\mathcal{R}^1(Y,\pi^*\theta)$.
		\item Model potentials in $\PSH(X,\theta)$ and in $\PSH(Y,\pi^*\theta)$.
	\end{enumerate}
 Moreover, if $\varphi,\psi\in \PSH(X,\theta)$, then
  \[
  \pi^*(\varphi\land \psi)=\pi^*\varphi \land \pi^*\psi.
  \]
\end{proposition}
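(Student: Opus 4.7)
The plan is to reduce everything to part~(1), the fundamental $\PSH$ correspondence, and then obtain~(2)--(7) by systematic application, either directly or on a suitable product space.

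For part~(1), pullback $\pi^*$ lands in $\PSH(Y, \pi^*\theta)$ by Proposition~\ref{prop:pshpull}. To define the pushforward of $\varphi \in \PSH(Y, \pi^*\theta)$, transport $\varphi$ via the biholomorphism over $X \setminus \Sing X^{\Red}$ to obtain a $\theta$-psh function there; since $\varphi$ is locally bounded above on $Y$ and $\pi$ is proper, this function is locally bounded above near $\Sing X^{\Red}$, so by the corollary to \cref{thm:FN} in the unibranch setting, it extends uniquely to an element of $\PSH(X, \theta)$. The non-trivial identity is $\pi^*\pi_*\varphi = \varphi$. I would prove it fibrewise: for $x \in X$, the fibre $F = \pi^{-1}(x)$ is compact and connected by Zariski's main theorem (\cref{thm:Zmt}, using unibranchness at $x$), and $\pi^*\theta|_F \equiv 0$ because locally $\theta = \ddc \psi$ with $\pi^*\psi$ constant along $F$. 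Hence $\varphi|_F$ is a genuine psh function on the compact connected analytic space $F$, and by the maximum principle it is constant; comparing with the strong usc extension shows that this constant coincides with $\pi_*\varphi(x)$. Preservation of the rooftop $\wedge$ then follows since $\pi^*$ is an order-preserving bijection on $\PSH$ and commutes with strong upper regularization (defined relative to the smooth locus, on which $\pi$ is an isomorphism).

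For~(2) and~(3), I would invoke the projection formula from \cref{prop:projnp}, obtaining $\pi_*(\pi^*\theta + \ddc \pi^*\varphi)^n = (\theta + \ddc \varphi)^n$; equality of total non-pluripolar masses yields~(2), and combining with \cref{prop:npp1}(4) (no charge on the exceptional locus, which lies over a pluripolar set) gives $\int_Y |\pi^*\varphi|^p (\pi^*\theta + \ddc \pi^*\varphi)^n = \int_X |\varphi|^p (\theta + \ddc \varphi)^n$, proving~(3). For~(4)--(6), I would apply~(1) to the product $X \times A$ (with $A$ an annulus for subgeodesics and geodesics, or a half-disc for rays), equipped with the pullback of $\theta$ along the first projection and the product resolution $\pi \times \mathrm{id}_A : Y \times A \to X^{\Red} \times A$; unibranchness is preserved under product with the smooth factor, and the subgeodesic condition is precisely plurisubharmonicity on the product together with the boundary limits, both preserved by the bijection. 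Geodesics correspond because $\pi^*$ is order-preserving and hence preserves maximality, and the $\mathcal{R}^1$-correspondence then follows from~(5).

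For~(7), I would first check that $\pi^* V_\theta = V_{\pi^*\theta}$ by applying~(1) to the family of $\theta$-psh potentials bounded above by $0$ (using that $\pi^*$ commutes with strong upper regularization). Then, since $\pi^*$ commutes with $\wedge$ by~(1) and with the monotone supremum in $C$, the model equation $\varphi = \sups_C (\varphi + C) \wedge V_\theta$ transfers bijectively to the analogous equation on $Y$. The main obstacle throughout is the fibrewise identity $\pi^*\pi_*\varphi = \varphi$ in~(1); everything else reduces to formal manipulations once the connectedness of fibres and the fibrewise maximum principle have been leveraged to anchor the correspondence.
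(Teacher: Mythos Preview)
Your proposal is correct and follows essentially the same route as the paper, which in fact only singles out~(4) for comment (applying the $\PSH$ correspondence on the product $X\times A$, exactly as you do) and treats the remaining items as immediate from the setup developed earlier. Your fibrewise argument for $\pi^*\pi_*\varphi=\varphi$ in~(1)---connectedness of fibres via \cref{thm:Zmt}, vanishing of $\pi^*\theta$ along fibres (which is automatic since $\pi$ collapses each fibre to a point, so any form pulled back from $X$ restricts to zero), and the maximum principle---is the standard justification the paper leaves implicit, and your reductions for~(2),~(3) via the projection formula and for~(7) via $\pi^*V_\theta=V_{\pi^*\theta}$ are precisely what the paper has in mind.
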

\begin{proof}
	Only (4) needs proof. Let $\Phi$ be a subgeodesic in $\mathcal{E}^{1}(Y,\pi^*\theta)$, regarded as a potential on $Y\times A$, where $A=\{z\in \mathbb{C}: e^{-1}<|z|<1\}$. It is easy to verify then that the psh pushforward of $\Phi$ is the same as the ensemble of all psh pushforwards for fixed $z\in A$.
\end{proof}
Let $\varphi,\phi\in \PSH(X,\theta)$. We define
\[
[\varphi]\land\psi\coloneqq \sups_{\!\!\!\! C>0} \left((\varphi+C)\land \psi\right)\,.
\]
\begin{lemma}\label{lma:roofpro}
	Let $\varphi,\phi\in \PSH(X,\theta)$. Then $[\pi^*\varphi]\land \pi^*\psi=\pi^*\left([\varphi]\land \psi\right)$.
\end{lemma}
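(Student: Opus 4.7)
The plan is to reduce this to two facts already available: that $\pi^*$ is a bijection $\PSH(X,\theta)\to\PSH(Y,\pi^*\theta)$ preserving the rooftop $\land$ (Proposition \ref{prop:corr}(1)), and that $\pi^*$ commutes with monotone sups in a suitable sense. Set $u_C := (\varphi+C)\land\psi\in \PSH(X,\theta)$ for $C>0$. By Proposition \ref{prop:corr}(1) applied to $\varphi+C\in\PSH(X,\theta)$ and $\psi$, we have
\[
\pi^*u_C = (\pi^*\varphi+C)\land \pi^*\psi .
\]
Hence, by definition,
\[
[\pi^*\varphi]\land\pi^*\psi \;=\; \sups_{C>0}\pi^*u_C,
\qquad
[\varphi]\land\psi \;=\; \sups_{C>0}u_C .
\]
Thus the statement reduces to the identity $\sups_{C>0}\pi^*u_C = \pi^*\bigl(\sups_{C>0}u_C\bigr)$ for the increasing, locally bounded above family $\{u_C\}_{C>0}\subset\PSH(X,\theta)$.

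For the inequality $\sups_{C>0}\pi^*u_C \leq \pi^*\bigl(\sups_{C>0}u_C\bigr)$: the right-hand side is $\pi^*\theta$-psh by Proposition \ref{prop:corr}(1), and it dominates each $\pi^*u_C$; since the sup of a family of $\pi^*\theta$-psh functions, usc-regularised, is the smallest $\pi^*\theta$-psh majorant, the inequality follows. For the reverse inequality, the function $F := \sups_{C>0}\pi^*u_C$ lies in $\PSH(Y,\pi^*\theta)$, so $\pi_*F\in\PSH(X,\theta)$ is well defined. Using that $\pi_*\circ\pi^* = \mathrm{id}$ on $\PSH(X,\theta)$ and that $\pi_*$ is monotone (both by Proposition \ref{prop:corr}), we get $\pi_*F \geq \pi_*\pi^*u_C = u_C$ for every $C$, so the $\theta$-psh function $\pi_*F$ dominates $\sups_{C>0}u_C = [\varphi]\land\psi$; pulling back and applying $\pi^*\pi_* = \mathrm{id}$ on $\PSH(Y,\pi^*\theta)$ gives $F\geq \pi^*([\varphi]\land\psi)$, as required.

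The only delicate point is the monotonicity of $\pi_*$ together with $\pi^*\pi_* = \mathrm{id}$; both are direct consequences of Proposition \ref{prop:corr}(1), since the correspondence $\pi_*$ is defined via the unique psh extension across $\Sing X^{\Red}$ of the restriction through the isomorphism $\pi\colon Y\setminus\pi^{-1}\Sing X^{\Red}\xrightarrow{\sim} X\setminus\Sing X^{\Red}$, which is obviously order-preserving on the open locus, and the psh extension preserves order by the strong upper semicontinuity characterisation in \cref{thm:FN}. No new pluripotential-theoretic input is needed beyond what is already established in the excerpt, so the lemma should follow in a few lines once the reduction above is in place.
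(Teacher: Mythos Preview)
Your proof is correct and takes essentially the same approach as the paper: both start from $\pi^*((\varphi+C)\land\psi)=(\pi^*\varphi+C)\land\pi^*\psi$ via Proposition~\ref{prop:corr}, and then pass to the $\sups$ over $C$. The paper phrases the second step more tersely, observing that $[\varphi]\land\psi$ is the minimal $\theta$-psh majorant of the family $\{(\varphi+C)\land\psi\}_C$ and that the order-preserving bijection $\pi^*$ carries minimal majorants to minimal majorants; your two-inequality argument via $\pi_*$ is the explicit verification of this same order-isomorphism principle.
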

\begin{proof}
	By \cref{prop:corr}, for each $C>0$,
	\[
	(\pi^*\varphi+C)\land \pi^*\psi=\pi^*\left((\varphi+C)\land \psi\right)\,.
	\]
	As $[\varphi]\land\psi$ is by definition the minimal $\theta$-psh function lying above all $(\varphi+C)\land \psi$, by \cref{prop:corr} again, $\pi^*([\varphi]\land\psi)$ is the minimal $\pi^*\theta$-psh function lying above all $\pi^*((\varphi+C)\land \psi)=(\pi^*\varphi+C)\land \pi^*\psi$. Hence, we conclude.
\end{proof}

\subsection{Basic properties}
Let $X$ be a compact Kähler unibranch complex analytic space of pure dimension $n$. 
We use $\theta_0,\ldots,\theta_n$ to denote given strongly closed smooth forms on $X$ representing big cohomology classes.
\begin{theorem}[Integration by parts]\label{thm:ibpA}\leavevmode
	Let $\gamma_j\in \PSH(X,\theta_j)$ ($j=2,\ldots,n$) and $\varphi_1,\varphi_2\in \PSH(X,\theta_0)$, $\psi_1,\psi_2\in \PSH(X,\theta_1)$. Set $u=\varphi_1-\varphi_2$, $v=\psi_1-\psi_2$. 
	Assume that $u,v\in L^{\infty}(X)$.
	Then
	\begin{equation}\label{eq:ibp4}
		\int_X u\,\ddc v \wedge \theta_{2,\gamma_2}\wedge \cdots\wedge \cdots \wedge   \theta_{n,\gamma_{n}}= \int_X v\,\ddc u \wedge \theta_{2,\gamma_2}\wedge \cdots\wedge \cdots \wedge\theta_{n,\gamma_{n}}.
	\end{equation}
\end{theorem}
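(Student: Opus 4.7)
The strategy is to reduce the identity to the smooth compact Kähler setting via a resolution of singularities $\pi: Y \to X^{\Red}$, where by the corollary following \cref{thm:ressing} we may take $Y$ to be a compact Kähler manifold. The integration by parts formula on $Y$ for bounded differences of quasi-psh functions in big cohomology classes with matching singularity types is known from the smooth theory (going back to \cite{BEGZ10} for full mass, with refinements to prescribed singularities in \cite{DDNL18big}).

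Since $[\varphi_1]=[\varphi_2]$ and $[\psi_1]=[\psi_2]$, the differences $u=\varphi_1-\varphi_2$ and $v=\psi_1-\psi_2$ are bounded on $X$, so both sides of \eqref{eq:ibp4} are finite. Pulling back through $\pi$, the classes $\pi^*\alpha_j$ remain big (pullbacks of big classes under a proper bimeromorphic morphism), the forms $\pi^*\theta_j$ are smooth closed $(1,1)$-forms representing them, and $\pi^*\gamma_j \in \PSH(Y,\pi^*\theta_j)$ by \cref{prop:corr}(1). By the same proposition, $[\pi^*\varphi_1]=[\pi^*\varphi_2]$ and $[\pi^*\psi_1]=[\pi^*\psi_2]$, so $\pi^* u$ and $\pi^* v$ are bounded on $Y$ with $\ddc \pi^* v = \pi^*(\ddc v)$ (and similarly for $u$).

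By \cref{prop:projnp}, the non-pluripolar product $\theta_{2,\gamma_2}\wedge\cdots\wedge\theta_{n,\gamma_n}$ on $X$ is the direct image under $\pi$ of the corresponding non-pluripolar product on $Y$. Combining this with the projection formula for bounded measurable functions, we obtain
\[
\int_X u\,\ddc v \wedge \theta_{2,\gamma_2}\wedge\cdots\wedge\theta_{n,\gamma_n}
= \int_Y \pi^* u \,\ddc(\pi^* v) \wedge (\pi^*\theta_2)_{\pi^*\gamma_2}\wedge\cdots\wedge (\pi^*\theta_n)_{\pi^*\gamma_n},
\]
and analogously for the other side of \eqref{eq:ibp4}. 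Invoking the known integration by parts formula on the compact Kähler manifold $Y$ in the pulled-back setting then yields the desired equality on $X$.

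The only genuine verification needed is that the pulled-back data satisfies the hypotheses of the smooth-case theorem: $\pi^*\alpha_j$ big, $\pi^*\gamma_j$ quasi-psh with respect to $\pi^*\theta_j$, and $\pi^*\varphi_i,\pi^*\psi_i$ having matching pairwise singularity types. Each of these is an immediate consequence of \cref{prop:corr} and the fact that $\pi$ is proper bimeromorphic. The main obstacle, namely the integration by parts identity itself, is thereby offloaded to the smooth theory, where it is handled via a standard regularization and cut-off argument; this is used here as a black box.
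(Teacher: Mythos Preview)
Your proposal is correct and follows essentially the same approach as the paper: reduce to a resolution $\pi:Y\to X^{\Red}$ via \cref{prop:projnp} and \cref{prop:corr}, then invoke the known integration by parts formula on the compact K\"ahler manifold $Y$. The paper's own proof is terser still (it simply says the smooth case is known and ``the general case follows immediately''), and you have correctly unpacked what that immediacy amounts to; the only minor discrepancy is that the smooth-case references for this level of generality are \cite{Xia19b} and \cite{Lu20} rather than \cite{BEGZ10} or \cite{DDNL18big}.
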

\begin{proof}
	In the smooth setting, this is proved in \cite{Xia19b}
		 and \cite{Lu20}. The general case follows immediately.
\end{proof}

\begin{theorem}[Semi-continuity]\label{thm:semic}
	Let $\varphi_j, \varphi_j^k\in \PSH(X,\theta_j)$ ($k\in \mathbb{Z}_{>0}$, $j=1,\ldots,n$). Let $\chi\geq 0$ be a bounded quasi-continuous function on $X$. 
	Assume that for any $j=1,\ldots,n$, as $k\to\infty$, $\varphi_j^k$ converges to $\varphi_j$ monotonically a.e..
	Then we have
	\begin{equation}\label{eq:semicon1}
		\varliminf_{k\to\infty}\int_X\chi \,\theta_{1,\varphi_1^k}\wedge \cdots \wedge \theta_{n,\varphi_n^k} \geq \int_X \chi \,\theta_{1,\varphi_1}\wedge \cdots \wedge \theta_{n,\varphi_n}\,. 
	\end{equation}
\end{theorem}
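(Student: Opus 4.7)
The plan is to reduce to the case where $X$ is a compact Kähler manifold via a resolution of singularities, and then to run the standard truncation argument for non-pluripolar Monge--Ampère masses.

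Let $\pi : Y \to X^{\Red}$ be a resolution of singularity and set $\tilde{\theta}_j := \pi^*\theta_j$, $\tilde{\varphi}_j := \pi^*\varphi_j$, $\tilde{\varphi}_j^k := \pi^*\varphi_j^k$, $\tilde{U} := \pi^{-1}(U)$ and $\tilde{\chi} := \pi^*\chi$. By \cref{prop:corr} the sequences $\tilde{\varphi}_j^k$ converge monotonely to $\tilde{\varphi}_j$ in $\PSH(Y,\tilde{\theta}_j)$, while continuity of $\pi$ together with \cref{prop:pshpull} guarantee that $\tilde{U}$ is plurifine open and $\tilde{\chi}$ is bounded quasi-continuous on $Y$. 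Writing locally $\theta_j = \ddc a_j$ with $a_j$ smooth and applying \cref{prop:projnp} to the products $\ddc(a_j + \varphi_j^k)$ yields
\[
\int_U \chi\,\theta_{1,\varphi_1^k}\wedge\cdots\wedge\theta_{n,\varphi_n^k}
= \int_{\tilde{U}}\tilde{\chi}\,\tilde{\theta}_{1,\tilde{\varphi}_1^k}\wedge\cdots\wedge\tilde{\theta}_{n,\tilde{\varphi}_n^k}
\]
for every $k$, and the same identity in the limit. Hence it suffices to prove the inequality on the Kähler manifold $Y$.

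On $Y$ I would argue by truncation. Set $\psi_j^{k,M} := \max\{\tilde{\varphi}_j^k, V_{\tilde{\theta}_j} - M\}$ and $\psi_j^M := \max\{\tilde{\varphi}_j, V_{\tilde{\theta}_j} - M\}$; these are uniformly bounded in $k$ and $\psi_j^{k,M} \to \psi_j^M$ monotonely. On the plurifine open subset
\[
W_M := \tilde{U} \cap \bigcap_{j=1}^{n} \{\tilde{\varphi}_j > V_{\tilde{\theta}_j} - M\}\,,
\]
plurifine locality together with the defining formula \eqref{eq:npp} identifies $\tilde{\theta}_{1,\tilde{\varphi}_1^k}\wedge\cdots\wedge\tilde{\theta}_{n,\tilde{\varphi}_n^k}$ with the bounded Bedford--Taylor product of the truncated potentials, and \cref{lma:ppp} shows that $\{W_M\}_M$ exhausts the non-pluripolar mass of the limit. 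Applying \cref{thm:conv} to the bounded sequences and testing against $\tilde{\chi}\,\mathds{1}_{W_M}$ then gives
\[
\varliminf_{k\to\infty}\int_{W_M}\tilde{\chi}\,\tilde{\theta}_{1,\tilde{\varphi}_1^k}\wedge\cdots\wedge\tilde{\theta}_{n,\tilde{\varphi}_n^k}
\geq \int_{W_M}\tilde{\chi}\,\tilde{\theta}_{1,\tilde{\varphi}_1}\wedge\cdots\wedge\tilde{\theta}_{n,\tilde{\varphi}_n}\,,
\]
and letting $M\to\infty$ by monotone convergence—valid since $\chi\geq 0$ and the non-pluripolar product puts no mass on pluripolar sets (\cref{prop:npp1}(4))—completes the proof.

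The main obstacle is the decreasing regime $\varphi_j^k \searrow \varphi_j$, where mass on pluripolar sets can genuinely be lost in the limit and the inequality can be strict; here the hypothesis $\chi \geq 0$ combined with the pluripolar-insensitivity of the non-pluripolar product is essential. A secondary technicality is that testing the plurifine weak convergence of \cref{thm:conv} against the function $\tilde{\chi}\,\mathds{1}_{W_M}$, which is typically not continuous, requires the quasi-continuity of $\chi$ together with the plurifine-Borel structure of \cref{cor:pfqB}.
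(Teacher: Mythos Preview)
Your approach is essentially the same as the paper's: reduce to the smooth case via the resolution $\pi:Y\to X^{\Red}$ using the projection formula for non-pluripolar products, and then invoke the known result on compact K\"ahler manifolds. The paper simply cites \cite[Theorem~2.3]{DDNL18mono} for the smooth case, whereas you go further and sketch that truncation argument; your sketch is correct in outline, though the justification for testing the plurifine convergence against $\tilde{\chi}\,\mathds{1}_{W_M}$ (and the claim that quasi-continuity of $\chi$ pulls back along $\pi$) would need a bit more care in a fully detailed write-up.
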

\begin{proof}
	In the smooth setting, this is due to \cite[Theorem~2.3]{DDNL18mono}. The general case follows immediately.
\end{proof}

\begin{theorem}[Monotonicity]\label{thm:mono}
	Let $\varphi_j,\psi_j\in \PSH(X,\theta_j)$ ($j=1,\ldots,n$). Assume that $[\varphi_j]\succeq [\psi_j]$ for every $j$, then
	\[
	\int_{X}   \theta_{1,\varphi_1}\wedge \cdots \theta_{n,\varphi_n} \geq \int_{X}   \theta_{1,\psi_1}\wedge \cdots \theta_{n,\psi_n} \,.
	\]
\end{theorem}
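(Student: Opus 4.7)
The plan is to reduce everything to the smooth case via a resolution of singularities, exactly as was done for the semi-continuity and integration by parts theorems just above. Let $\pi\colon Y\to X^{\Red}$ be a resolution of singularity of the kind used throughout \cref{sec:npp}, so that $Y$ is Kähler and $\pi$ is a proper bimeromorphic morphism. By \cref{prop:corr}, the pullback $\pi^*$ induces a bijection $\PSH(X,\theta_j)\cong \PSH(Y,\pi^*\theta_j)$, and because this bijection is implemented by pullback it preserves the singularity preorder: if $[\varphi_j]\succeq [\psi_j]$ on $X$ then $[\pi^*\varphi_j]\succeq [\pi^*\psi_j]$ on $Y$, since the relation $\psi_j\leq \varphi_j+C$ pulls back directly, and conversely the psh pushforward realizes the other direction.

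Next I would invoke \cref{prop:projnp} to transport the masses: for any $\varphi_j\in\PSH(X,\theta_j)$ one has
\[
\pi_*\bigl(\pi^*\theta_{1,\varphi_1}\wedge\cdots\wedge \pi^*\theta_{n,\varphi_n}\bigr)=\theta_{1,\varphi_1}\wedge\cdots\wedge \theta_{n,\varphi_n},
\]
since locally the currents $\theta_{j,\varphi_j}$ can be written as $\ddc(a_j+\varphi_j)$ for smooth local potentials $a_j$ of $\theta_j$, and the projection formula \cref{prop:projf} combined with the definition of the non-pluripolar product applies termwise through the truncations $\max\{\varphi_j,-k\}$. Integrating over $X$ and $Y$ respectively, and using that $\pi$ is proper so that pushforward commutes with total mass, one obtains
\[
\int_X \theta_{1,\varphi_1}\wedge\cdots\wedge \theta_{n,\varphi_n}=\int_Y \pi^*\theta_{1,\varphi_1}\wedge\cdots\wedge \pi^*\theta_{n,\varphi_n},
\]
and similarly for the $\psi_j$.

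At this point the problem has been reduced to the monotonicity inequality on the Kähler manifold $Y$, for potentials $\pi^*\varphi_j$ and $\pi^*\psi_j$ in the cohomology classes $[\pi^*\theta_j]$, under the assumption $[\pi^*\varphi_j]\succeq [\pi^*\psi_j]$. This is exactly the smooth monotonicity theorem proved by Witt Nyström and Darvas--Di Nezza--Lu, which the preceding theorems in this subsection (\cref{thm:ibpA}, \cref{thm:semic}) already cite from the literature in the same manner.

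The only place requiring genuine care, rather than formal unpacking, is checking that the singularity order pulls back and pushes forward cleanly; this follows from \cref{prop:corr} together with \cref{lma:roofpro}, since the envelope $[\varphi_j]\land\psi_j$ commutes with $\pi^*$, and consequently the condition $[\varphi_j]\succeq[\psi_j]$ on $X$ is equivalent to the analogous condition on $Y$. Once this is in hand, every step is a direct consequence of results already established in the paper, and no additional analytic input is needed beyond the smooth monotonicity theorem.
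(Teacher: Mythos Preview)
Your proposal is correct and follows exactly the same approach as the paper: reduce to the smooth case via a resolution of singularities, then invoke the monotonicity theorem of Witt Nyström and Darvas--Di Nezza--Lu. The paper's own proof is a single sentence (``In the smooth setting, this is proved in \cite{WN19} and \cite{DDNL18mono}. The general case follows immediately.''); you have simply filled in the details of that reduction using \cref{prop:corr} and \cref{prop:projnp}, which is precisely what ``follows immediately'' is meant to encode.
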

Here $[\varphi_j]\succeq [\psi_j]$ means that $\varphi_j+C\geq \psi_j$ for some $C\in \mathbb{R}$.
\begin{proof}
	In the smooth setting, this is proved in \cite{WN19}
	and \cite{DDNL18mono}. The general case follows immediately.
\end{proof}

\section{Potentials with prescribed singularities}\label{sec:pps}

Let $X$ be a compact unibranch Kähler space of pure dimension $n$. Let $\alpha$ be a big $(1,1)$-cohomology class. Let $\theta$ be a strongly closed smooth $(1,1)$-form in the class $\alpha$. Write $V$ for the volume of $\alpha$. Fix a resolution of singularity $\pi:Y\rightarrow X^{\Red}$.

For $\varphi,\psi\in \PSH(X,\theta)$, define
\[
[\varphi]\land\psi\coloneqq \sups_{\!\!\!\! C\in \mathbb{R}} \left((\varphi+C)\land \psi\right)  .
\]
This is usually denoted by $P[\varphi](\psi)$.

\subsection{Potentials with prescribed singularities}
Let $\phi\in \PSH(X,\theta)$ be a model potential with mass $V_{\phi}>0$. Fix a resolution of singularities $\pi:Y\rightarrow X^{\Red}$. Recall that we may always assume that $Y$ is Kähler.

\begin{definition}
	Define the relative full mass class as
	\[
	\mathcal{E}(X,\theta;[\phi])\coloneqq \left\{\,\varphi\in \PSH(X,\theta):[\varphi]\preceq[\phi], \int_X \theta_{\varphi}^n= \int_X \theta_{\phi}^n\,\right\}\,.
	\]
\end{definition}
Recall that $[\varphi]\preceq [\phi]$ means that there is $C\in \mathbb{R}$ such that $\varphi\leq \phi+C$.
\begin{remark}
	It is easy to see that under \cref{prop:pshbij},
	\[
	\mathcal{E}(X,\theta;[\phi])=\mathcal{E}(X^{\Red},\theta;[\phi])\,.
	\]
	Pull-back induces a bijection
	\[
	\pi^*:\mathcal{E}(X,\theta;[\phi])\cn \mathcal{E}(Y,\pi^*\theta;[\pi^*\phi])\,.
	\]
    The same applies to $\mathcal{E}^p$ and $\mathcal{E}^{\infty}$ defined later.
	We will use these identifications without explicit mention.
\end{remark}

\begin{proposition}
	Let $\varphi\in \PSH(X,\theta)$. Then $\varphi\in \mathcal{E}(X,\theta;[\phi])$ if and only if  $[\varphi]\land V_{\theta}=\phi$.
\end{proposition}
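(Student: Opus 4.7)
The plan is to reduce the statement to the smooth setting via the resolution $\pi:Y\to X^{\Red}$ and invoke the known Darvas--Di Nezza--Lu theory there. By \cref{prop:corr}, $\pi^*$ is a bijection $\PSH(X,\theta)\to\PSH(Y,\pi^*\theta)$ that carries $V_\theta$ to $V_{\pi^*\theta}$ and restricts to a bijection $\mathcal{E}(X,\theta;[\phi])\to\mathcal{E}(Y,\pi^*\theta;[\pi^*\phi])$, while \cref{lma:roofpro} (applied with $\psi=V_\theta$) gives $[\pi^*\varphi]\land V_{\pi^*\theta}=\pi^*([\varphi]\land V_\theta)$. Combined with injectivity of $\pi^*$, these identifications reduce the equivalence on $X$ to the corresponding equivalence on the smooth manifold $Y$.

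For the smooth case I would rely on two standard inputs from the prescribed-singularities theory: $(a)$ the mass-invariance $\int\theta_\varphi^n=\int\theta_{[\varphi]\land V_\theta}^n$ for any $\varphi\in\PSH(X,\theta)$, and $(b)$ a rigidity statement for model potentials: if $\psi_1\leq\psi_2$ are both model potentials with $\int\theta_{\psi_1}^n=\int\theta_{\psi_2}^n$, then $\psi_1=\psi_2$. The identity $(a)$ is proven via plurifine locality of the non-pluripolar product, and $(b)$ follows from the equality case of the monotonicity theorem (\cref{thm:mono}) specialised to model potentials.

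Granted $(a)$ and $(b)$, both directions are short. For $(\Rightarrow)$, assume $\varphi\in\mathcal{E}(X,\theta;[\phi])$. From $[\varphi]\preceq[\phi]$ one has $\varphi\leq\phi+C$ for some $C\in\mathbb{R}$, so for every $D\in\mathbb{R}$, $(\varphi+D)\land V_\theta\leq(\phi+C+D)\land V_\theta$; taking sups over $D$ and using that $\phi$ is model gives $[\varphi]\land V_\theta\leq\phi$. By $(a)$ together with the assumed mass equality $\int\theta_\varphi^n=\int\theta_\phi^n$, the model potentials $[\varphi]\land V_\theta$ and $\phi$ have equal masses, so $(b)$ forces them to coincide. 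For $(\Leftarrow)$, assume $[\varphi]\land V_\theta=\phi$. Picking $M\geq\sup_X\varphi$, one has $\varphi-M\leq V_\theta$, hence $(\varphi-M)\land V_\theta=\varphi-M$ and consequently $[\varphi]\land V_\theta\geq\varphi-M$, giving $\varphi\leq\phi+M$, i.e. $[\varphi]\preceq[\phi]$. Invoking $(a)$ once more delivers $\int\theta_\varphi^n=\int\theta_\phi^n$, and therefore $\varphi\in\mathcal{E}(X,\theta;[\phi])$.

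The main obstacle is ingredient $(a)$; on $X$ it will be imported from its smooth analogue via the pushforward formula \cref{prop:projnp} combined with \cref{lma:roofpro} applied to $\pi^*\varphi$. Everything else, including $(b)$, is formal once this identity and the monotonicity theorem are available in our singular setting.
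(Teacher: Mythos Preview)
Your reduction to the smooth setting via \cref{prop:corr} and \cref{lma:roofpro} is exactly the paper's approach; the paper then simply cites \cite[Theorem~2.1]{DDNL19log} for the smooth statement on $Y$, whereas you sketch a proof of that result using mass-invariance and model-potential rigidity. Your sketch is essentially correct (note you implicitly use that $[\varphi]\land V_\theta$ is itself model, which is part of the DDNL theory), and your last paragraph about importing $(a)$ back to $X$ is unnecessary once you have already reduced to $Y$.
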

\begin{proof}
	We may assume that $X$ is reduced. By \cref{prop:corr}, \cref{lma:roofpro}, it suffices to prove the corresponding result on $Y$, in which case, this is exactly \cite[Theorem~2.1]{DDNL19log}.
\end{proof}
\begin{lemma}\label{lma:comp}
	Let $\gamma_j\in \mathcal{E}(X,\theta;[\phi])$ ($j=1,\ldots,j_0\leq n$) and $\varphi,\psi\in \mathcal{E}(X,\theta;[\phi])$. Then
	\[
	\int_{\{\varphi<\psi\}}\,\theta_{\psi}^{n-j_0}\wedge \theta_{\gamma_1}\wedge\cdots \wedge \theta_{\gamma_{j_0}}\leq \int_{\{\varphi<\psi\}}\,\theta_{\varphi}^{n-j_0}\wedge \theta_{\gamma_1}\wedge\cdots \wedge \theta_{\gamma_{j_0}}.
	\]
\end{lemma}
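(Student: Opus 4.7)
The plan is to reduce the inequality to the corresponding statement on the smooth Kähler manifold $Y$ obtained from the resolution $\pi\colon Y\to X^{\Red}$, where the result is already established in the literature.

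First I would replace $X$ with $X^{\Red}$ using \cref{prop:pshbij} (this does not change any of the quantities appearing in the inequality). By \cref{prop:corr}, pullback along $\pi$ gives a bijection $\mathcal{E}(X,\theta;[\phi])\cn \mathcal{E}(Y,\pi^*\theta;[\pi^*\phi])$, so $\pi^*\varphi,\pi^*\psi,\pi^*\gamma_1,\ldots,\pi^*\gamma_{j_0}$ all belong to $\mathcal{E}(Y,\pi^*\theta;[\pi^*\phi])$. Since $\pi$ is an isomorphism over $X\setminus \Sing X^{\Red}$, the set
\[
\pi^{-1}(\{\varphi<\psi\})\triangle\{\pi^*\varphi<\pi^*\psi\}
\]
is contained in the exceptional locus, which projects into the pluripolar subset $\Sing X^{\Red}$ and therefore carries no non-pluripolar mass by \cref{prop:npp1}(4).

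Second, by \cref{prop:projnp} applied to the product $\theta_{\psi}^{n-j_0}\wedge\theta_{\gamma_1}\wedge\cdots\wedge\theta_{\gamma_{j_0}}$ (and the analogous one with $\psi$ replaced by $\varphi$), together with the set-theoretic identification above, the inequality to be proved is equivalent to
\[
\int_{\{\pi^*\varphi<\pi^*\psi\}}(\pi^*\theta)_{\pi^*\psi}^{n-j_0}\wedge\bigwedge_{j=1}^{j_0}(\pi^*\theta)_{\pi^*\gamma_j}\leq \int_{\{\pi^*\varphi<\pi^*\psi\}}(\pi^*\theta)_{\pi^*\varphi}^{n-j_0}\wedge\bigwedge_{j=1}^{j_0}(\pi^*\theta)_{\pi^*\gamma_j}.
\]

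Third, on the compact Kähler manifold $Y$, this is the comparison principle for potentials with prescribed singularities proved in \cite{DDNL18mono} (and extended in the subsequent DDNL papers), whose standard derivation combines plurifine locality with integration by parts applied after replacing $\pi^*\varphi$ by $\max\{\pi^*\varphi,\pi^*\psi-\varepsilon\}$ and letting $\varepsilon\to 0^+$. The main subtlety to verify is the compatibility of the projection formula with the restriction to the sublevel set $\{\varphi<\psi\}$; this is harmless because the symmetric difference lies in a pluripolar set, but one must check that the restriction of each non-pluripolar product on $Y$ to $\{\pi^*\varphi<\pi^*\psi\}$ genuinely pushes forward to the restriction on $\{\varphi<\psi\}$. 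Given the plurifine locality in \cref{prop:npp1}(1) and the fact that $\{\varphi<\psi\}$ is a plurifine open subset of $X$, this compatibility is straightforward.
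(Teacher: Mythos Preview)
Your proposal is correct and follows essentially the same approach as the paper: reduce to the reduced space, pass to the smooth resolution $Y$ via \cref{prop:corr} and \cref{prop:projnp}, then invoke \cite[Corollary~3.16]{DDNL18mono}. One minor simplification: since $\pi^*\varphi=\varphi\circ\pi$ pointwise, the sets $\pi^{-1}(\{\varphi<\psi\})$ and $\{\pi^*\varphi<\pi^*\psi\}$ are literally equal, so your symmetric-difference discussion is unnecessary (though harmless).
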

\begin{proof}
	We may assume that $X$ is reduced. By \cref{prop:corr}, it suffices to prove the corresponding result on $Y$, in which case, this is \cite[Corollary~3.16]{DDNL18mono}.
\end{proof}

\begin{proposition}\label{prop:fundam}
	Let $\varphi,\psi,\gamma\in \mathcal{E}(X,\theta;[\phi])$. Assume that $\varphi\leq \psi\leq \gamma$. Then
	\[
	\int_X (\gamma-\psi)^p\,\theta_{\psi}^n\leq 2^{n+p}\int_X (\gamma-\varphi)^p\,\theta_{\varphi}^n\,.
	\]
\end{proposition}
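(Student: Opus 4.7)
The plan is to reduce to the smooth case via the resolution of singularities and then to run a plurifine-locality argument on the Kähler manifold.

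First I would reduce to the smooth case. By Proposition~\ref{prop:corr}, the pull-backs $\pi^*\varphi,\pi^*\psi,\pi^*\gamma$ lie in $\mathcal{E}(Y,\pi^*\theta;[\pi^*\phi])$ and preserve the ordering. By Proposition~\ref{prop:projnp}, together with the fact that $\pi$ is a biholomorphism outside a pluripolar set (which carries no non-pluripolar mass),
\[
\int_X (\gamma-\psi)^p\,\theta_\psi^n=\int_Y (\pi^*\gamma-\pi^*\psi)^p\,(\pi^*\theta)_{\pi^*\psi}^n,
\]
and similarly for the right-hand side. Hence I may assume that $X$ is a compact Kähler manifold.

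On the Kähler manifold I would introduce the auxiliary potential $h_t:=\max(\varphi,\psi-t)\in\mathcal{E}(X,\theta;[\phi])$ for $t>0$. By plurifine locality of the non-pluripolar Monge--Ampère operator (Proposition~\ref{prop:npp1}(1)), on the plurifine open set $\{\psi-\varphi>t\}$ the potential $h_t$ coincides with $\psi-t$, while on $\{\psi-\varphi<t\}$ it coincides with $\varphi$; consequently
\[
\theta_{h_t}^n = \mathds{1}_{\{\psi-\varphi>t\}}\,\theta_\psi^n + \mathds{1}_{\{\psi-\varphi<t\}}\,\theta_\varphi^n
\]
outside the boundary set $\{\psi-\varphi=t\}$, which is negligible for almost every $t>0$. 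Together with $\varphi\le h_t\le\gamma$ and the pointwise inequality $\gamma-\psi\le 2(\gamma-h_{t/2})$ valid on $\{\psi-\varphi>t\}$, a dyadic decomposition of $X$ according to the size of $\psi-\varphi$ reduces the control of $\int_X(\gamma-\psi)^p\,\theta_\psi^n$ to a uniform control of $\int_X(\gamma-h_t)^p\,\theta_{h_t}^n$ by $\int_X(\gamma-\varphi)^p\,\theta_\varphi^n$, which follows from Lemma~\ref{lma:comp} applied to the pair $(\varphi,h_t)$ on $\{\varphi<h_t\}=\{\psi-\varphi>t\}$.

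The main obstacle is producing the sharp constant $2^{n+p}$. The factor $2^p$ naturally arises from the pointwise shift $\gamma-\psi\le 2(\gamma-h_{t/2})$ on $\{\psi-\varphi>t\}$, and the factor $2^n$ from the midpoint identity $\theta_{(\varphi+\psi)/2}=\tfrac{1}{2}(\theta_\varphi+\theta_\psi)$ combined with positivity of the wedge product of closed positive currents, which yields $\theta_\psi^n\le 2^n\,\theta_{(\varphi+\psi)/2}^n$. Assembling these two ingredients in the right order, and carefully handling the plurifine-closed degenerate set $\{\psi=\varphi\}$ by approximating $\psi$ from above with $\psi+\varepsilon$ and passing to the limit $\varepsilon\to 0^+$, is the delicate part.
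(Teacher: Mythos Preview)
Your reduction to the Kähler-manifold case is correct and matches the paper. The core argument, however, has a genuine gap. The $h_t=\max(\varphi,\psi-t)$ dyadic strategy is never actually carried out, and the claimed ``uniform control of $\int_X(\gamma-h_t)^p\theta_{h_t}^n$ by $\int_X(\gamma-\varphi)^p\theta_\varphi^n$ via Lemma~\ref{lma:comp}'' is not something that lemma provides: the comparison principle bounds unweighted masses on sublevel sets, not $p$-th moment integrals, so this step is where your sketch stops being a proof.

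More seriously, the midpoint you invoke is the wrong one. With $(\varphi+\psi)/2$ you do get $\theta_\psi^n\le 2^n\theta_{(\varphi+\psi)/2}^n$, but the level sets $\{\gamma>\psi+2t\}$ are not of the form $\{\varphi<(\varphi+\psi)/2+c\}$, so the comparison principle cannot be applied to swap back to $\theta_\varphi^n$. The paper's argument uses $(\gamma+\psi)/2$ instead: one rewrites $\{\gamma>\psi+2t\}=\{(\gamma+\psi)/2>\psi+t\}$, uses $\theta_\psi^n\le 2^n\theta_{(\gamma+\psi)/2}^n$ and the inclusion $\{(\gamma+\psi)/2>\psi+t\}\subseteq\{(\gamma+\psi)/2>\varphi+t\}$, and now the set is exactly of the shape $\{\varphi+t<(\gamma+\psi)/2\}$ needed for Lemma~\ref{lma:comp} to replace $\theta_{(\gamma+\psi)/2}^n$ by $\theta_\varphi^n$. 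A final inclusion $\{(\gamma+\psi)/2>\varphi+t\}\subseteq\{\gamma>\varphi+t\}$ and the layer-cake formula give the result directly, with no dyadic decomposition, no $h_t$, and no $\psi+\varepsilon$ approximation.
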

\begin{remark}
    Note that our measure $\theta_{\psi}^n$ does not charge any pluripolar set. The function $\gamma-\psi$ is well-defined outside a pluripolar set, so the integral $\int_X (\gamma-\psi)^p\,\theta_{\psi}^n$ makes sense. We will omit this kind of argument from now on.
\end{remark}
\begin{proof}
	We may assume that $X$ is a Kähler manifold.
	
	Let $S=\varphi^{-1}(-\infty)$.
	Observe that for any $t\geq 0$,
	\begin{equation}\label{eq:ppgcomp}
		\{\gamma>\psi+2t\}\setminus S\subseteq \{(\gamma+\psi)/2>\psi+t\}\setminus S\subseteq \{(\gamma+\psi)/2>\varphi+t\}\setminus S\subseteq \{\gamma>\varphi+t\}\setminus S\,.
	\end{equation}
	So
	\[
	\begin{aligned}
		\int_X (\gamma-\psi)^p\,\theta_{\psi}^n
		=& 2^p\int_X pt^{p-1} \int_{\{\gamma-\psi>2t\}}\,\theta_{\psi}^n\,\mathrm{d}t\\
		\leq & 2^p\int_X pt^{p-1} \int_{\{(\gamma+\psi)/2>\psi+t\}}\,\theta_{\psi}^n\,\mathrm{d}t && \text{By \eqref{eq:ppgcomp}}\\
		\leq  & 2^{n+p}\int_X pt^{p-1} \int_{\{(\gamma+\psi)/2>\varphi+t\}}\,\theta_{(\gamma+\psi)/2}^n\,\mathrm{d}t\\
		\leq & 2^{n+p}\int_X pt^{p-1} \int_{\{(\gamma+\psi)/2>\varphi+t\}}\,\theta_{\varphi}^n\,\mathrm{d}t && \text{By \cref{lma:comp}}\\
		\leq &2^{n+p}\int_X pt^{p-1} \int_{\{\gamma>\varphi+t\}}\,\theta_{\varphi}^n\,\mathrm{d}t &&\text{By \eqref{eq:ppgcomp}}\\
		=& 2^{n+p} \int_X (\gamma-\varphi)^p\,\theta_{\varphi}^n&& \text{By \cref{lma:comp}}\,.
	\end{aligned}
	\]
\end{proof}
\begin{remark}\label{rmk:gener}
	\cref{prop:fundam} is a direct generalization of the fundamental inequality of Guedj--Zeriahi (\cite[Lemma~2.3]{GZ07}). See also \cite[Lemma~2.4]{DDNL19log}. The same proof applies to a general weight function in $\mathcal{W}^+_M$ (See~\cite{Dar15} for the precise definition).
\end{remark}

\begin{proposition}\label{prop:intdom1}
	Let $\varphi,\psi,\gamma\in \mathcal{E}(X,\theta;[\phi])$. Assume that $\gamma\geq \varphi\lor\psi$.
	Then
	\[
	\int_X (\gamma-\varphi)^p\,\theta_{\psi}^n\leq 2^p\int_X (\gamma-\varphi)^p\,\theta_{\varphi}^n+2^p\int_X (\gamma-\psi)^p\,\theta_{\psi}^n\,.
	\]
\end{proposition}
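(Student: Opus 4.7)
The plan is a layer-cake argument modelled on the proof of \cref{prop:fundam}. The decomposition $\gamma-\varphi=(\gamma-\psi)+(\psi-\varphi)$ suggests splitting the sublevel set $\{\gamma-\varphi>t\}$ according to whether $\psi$ is comparable to $\varphi$ or lies well below it: on the part where $\psi\geq \varphi+t/2$ one can exchange $\theta_\psi^n$ for $\theta_\varphi^n$ via \cref{lma:comp}, while on the complementary part the gap $\gamma-\psi$ is automatically larger than $t/2$ and absorbs the contribution into $\int(\gamma-\psi)^p\theta_\psi^n$.

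Concretely, I would begin from the layer-cake identity
\[
\int_X (\gamma-\varphi)^p\,\theta_\psi^n=\int_0^\infty p t^{p-1}\int_{\{\gamma-\varphi>t\}}\theta_\psi^n\,\mathrm{d}t,
\]
and partition the level set as $A_t\sqcup B_t$ with $A_t=\{\gamma-\varphi>t,\ \psi\geq\varphi+t/2\}$ and $B_t=\{\gamma-\varphi>t,\ \psi<\varphi+t/2\}$. On $B_t$ the pointwise inequality $\gamma-\psi>(\gamma-\varphi)-t/2>t/2$ gives $B_t\subseteq\{\gamma-\psi>t/2\}$, hence $\int_{B_t}\theta_\psi^n\leq \int_{\{\gamma-\psi>t/2\}}\theta_\psi^n$. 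On $A_t$, one applies \cref{lma:comp} with $\varphi+t/2$ in place of $\varphi$ --- approximating the non-strict set $\{\psi\geq \varphi+t/2\}$ by $\{\psi>\varphi+t/2-\epsilon\}$ as $\epsilon\to 0^+$ and using that $\varphi+t/2\in\mathcal{E}(X,\theta;[\phi])$ because adding a constant preserves both $\theta$-plurisubharmonicity and the model envelope $[\phi]$. Combined with the inclusion $\{\psi\geq \varphi+t/2\}\subseteq\{\gamma-\varphi\geq t/2\}$ coming from $\gamma\geq\psi$, this yields $\int_{A_t}\theta_\psi^n\leq \int_{\{\gamma-\varphi\geq t/2\}}\theta_\varphi^n$.

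Substituting both estimates into the layer-cake identity and performing the change of variable $s=t/2$ in each of the resulting integrals produces a factor $2^p$ in front of both terms, giving exactly the claimed inequality. I do not expect a serious obstacle, since the whole argument mirrors the strategy of \cref{prop:fundam}; the only real point of care is the monotone limit $\epsilon\to 0^+$ used to turn the non-strict inequality $\psi\geq\varphi+t/2$ into a form to which \cref{lma:comp} directly applies, and the tacit fact that $\theta_\psi^n$ and $\theta_\varphi^n$ do not charge the pluripolar poles of $\varphi$ and $\psi$, so that the pointwise manipulations with $\gamma-\varphi$ and $\gamma-\psi$ are legitimate.
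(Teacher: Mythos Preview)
Your argument is correct and follows essentially the same strategy as the paper: a layer-cake representation, a split of the superlevel set at the threshold $\psi=\varphi+t/2$, the comparison principle \cref{lma:comp} on the piece where $\psi$ dominates, and the trivial inclusion $\{\psi>\varphi+t\}\subseteq\{\gamma>\varphi+t\}$ on that same piece. The paper packages the same steps slightly more compactly by first rescaling $t\mapsto 2t$ and then using the union bound $\{\gamma>\varphi+2t\}\subseteq\{\gamma>\psi+t\}\cup\{\psi>\varphi+t\}$ with strict inequalities throughout, which lets \cref{lma:comp} apply directly and makes your $\epsilon$-approximation for the non-strict set unnecessary.
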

\begin{proof}
	We may assume that $X$ is a Kähler manifold. 
	
	Observe that
	\begin{equation}\label{eq:gvp2}
		\{\gamma>\varphi+2t\}\subseteq \{\gamma>\psi+t \}\cup \{\psi>\varphi+t\}\,.
	\end{equation}
	So
	\[
	\begin{aligned}
		\int_X (\gamma-\varphi)^p\,\theta_{\psi}^n
		=& 2^p\int_0^{\infty} pt^{p-1}\int_{\{\gamma>\varphi+2t\}}\,\theta_{\psi}^n\,\mathrm{d}t \\
		\leq& 2^p\int_0^{\infty} pt^{p-1}\int_{\{\gamma>\psi+t\}}\,\theta_{\psi}^n\,\mathrm{d}t+
		2^p\int_0^{\infty} pt^{p-1}\int_{\{\psi>\varphi+t\}}\,\theta_{\psi}^n\,\mathrm{d}t && \text{By \eqref{eq:gvp2}}\\
		\leq & 2^p\int_X (\gamma-\psi)^p \,\theta_{\psi}^n+2^p\int_0^{\infty} pt^{p-1}\int_{\{\psi>\varphi+t\}}\,\theta_{\varphi}^n\,\mathrm{d}t &&\text{By \cref{lma:comp}}\\
		\leq & 2^p\int_X (\gamma-\psi)^p \,\theta_{\psi}^n+2^p\int_0^{\infty} pt^{p-1}\int_{\{\gamma>\varphi+t\}}\,\theta_{\varphi}^n\,\mathrm{d}t\\
		=&2^p\int_X (\gamma-\psi)^p \,\theta_{\psi}^n+2^p\int_X (\gamma-\varphi)^p \,\theta_{\varphi}^n\,.
	\end{aligned}
	\]
\end{proof}

\subsection{Relative $\mathcal{E}^{\infty}$ spaces}

\begin{definition}
	\[
	\mathcal{E}^{\infty}(X,\theta;[\phi])=\left\{\,\varphi\in \PSH(X,\theta): \varphi-\phi\in L^{\infty}(X)\,\right\}\,.
	\]
	We say that a potential $\varphi\in \mathcal{E}^{\infty}(X,\theta;[\phi])$ has \emph{relative minimal singularity} with respect to $[\phi]$.
\end{definition}
	Note that $\mathcal{E}^{\infty}(X,\theta;[\phi])\subseteq \mathcal{E}(X,\theta;[\phi])$
	by \cref{thm:mono}.

We have the following easy observation.
\begin{lemma}
	Each $\varphi\in \mathcal{E}(X,\theta;[\phi])$ is a decreasing limit of $\varphi^j\in \mathcal{E}^{\infty}(X,\theta;[\phi])$.
\end{lemma}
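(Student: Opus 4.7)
The plan is to use the natural candidate $\varphi^j := \max\{\varphi, \phi - j\}$ and verify it has the required properties. Since $\varphi, \phi - j \in \PSH(X,\theta)$, so is their maximum, and as $j \to \infty$ the sequence $\varphi^j$ decreases pointwise to $\varphi$, which is the required monotone convergence.

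The only thing to check is $\varphi^j \in \mathcal{E}^{\infty}(X,\theta;[\phi])$, i.e., that $\varphi^j - \phi \in L^{\infty}(X)$. From the definition of $\mathcal{E}(X,\theta;[\phi])$ we have $[\varphi] \preceq [\phi]$, hence $\varphi \leq \phi + C$ for some constant $C$. Then for $j$ sufficiently large,
\[
\phi - j \;\leq\; \varphi^j \;=\; \max\{\varphi, \phi - j\} \;\leq\; \max\{\phi + C, \phi - j\} \;=\; \phi + C,
\]
so $\varphi^j - \phi$ is bounded on $X$. This is exactly what the definition of $\mathcal{E}^{\infty}(X,\theta;[\phi])$ requires.

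For completeness, the containment $\mathcal{E}^{\infty}(X,\theta;[\phi]) \subseteq \mathcal{E}(X,\theta;[\phi])$ noted in the preceding remark follows by applying Theorem~\ref{thm:mono} to the squeeze $\phi - j \leq \varphi^j \leq \phi + C$: the extremes differ from $\phi$ by constants and therefore have non-pluripolar volume $V_\phi$, so $\int_X \theta_{\varphi^j}^n = V_\phi$ by monotonicity in both directions. There is no substantive obstacle in this argument; it is a direct manipulation of the definitions, with the only input being the existence of the bound $\varphi \leq \phi + C$ coming from $[\varphi] \preceq [\phi]$.
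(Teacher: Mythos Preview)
Your proof is correct and uses exactly the same approach as the paper: the paper's entire proof is the single line ``It suffices to take $\varphi^j=\varphi\lor(\phi-j)$,'' which is precisely your candidate $\max\{\varphi,\phi-j\}$. You have simply supplied the routine verifications (that the sequence is decreasing, that $\varphi^j-\phi$ is bounded via $[\varphi]\preceq[\phi]$) that the paper leaves implicit.
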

\begin{proof}
	It suffices to take $\varphi^j=\varphi\lor (\phi-j)$.
\end{proof}
We call $\varphi^j$ constructed in this way the \emph{canonical approximations} of $\varphi$.

\subsection{Relative $\mathcal{E}^p$ spaces}
Fix $p\in [1,\infty)$.
\begin{definition}\label{def:ep}
	Define
	\[
	\mathcal{E}^p(X,\theta;[\phi])\coloneqq \left\{\varphi\in \mathcal{E}(X,\theta;[\phi]): \int_X |\phi-\varphi|^p\,\theta_{\varphi}^n<\infty \right\}\,.
	\]
\end{definition}

\begin{proposition}\label{prop:stab}
	Let $\varphi_j,\gamma\in \Ep$ ($j\in \mathbb{Z}_{>0}$). Assume that $\varphi_j\leq \gamma$ for each $j$ and that $\varphi_j\to \varphi\in \PSH(X,\theta)$ in $L^1$-topology.
	Assume that there is a constant $A>0$ such that
	\[
	\int_X (\gamma-\varphi_j)^p \,\theta_{\varphi_j}^n\leq A\,.
	\]
	Then $\varphi\in \Ep$ and
	\[
	\int_X (\gamma-\varphi)^p\,\theta_{\varphi}^n\leq 2^{n+2p+1}A\,.
	\]
\end{proposition}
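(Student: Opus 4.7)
The strategy is to pass from $L^1$-convergence to a decreasing approximation and then to transfer the energy bound by sandwiching.

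First I would set $\psi_j := (\sup_{k \geq j} \varphi_k)^*$. Since each $\varphi_k \leq \gamma$ and $\gamma$ is usc, $\psi_j$ is $\theta$-psh with $\varphi_j \leq \psi_j \leq \gamma$; the sequence is decreasing and its a.e.\ pointwise limit coincides with $\varphi$ because $\varphi_j \to \varphi$ in $L^1$. The sandwich $[\varphi_j] \preceq [\psi_j] \preceq [\gamma] \preceq [\phi]$ together with \cref{thm:mono} forces $\int \theta_{\psi_j}^n = \int \theta_\phi^n$, so $\psi_j \in \mathcal{E}(X,\theta;[\phi])$. Applying \cref{prop:fundam} to the chain $\varphi_j \leq \psi_j \leq \gamma$ then yields
\[
\int_X (\gamma - \psi_j)^p\, \theta_{\psi_j}^n \leq 2^{n+p} \int_X (\gamma - \varphi_j)^p\, \theta_{\varphi_j}^n \leq 2^{n+p} A,
\]
so each $\psi_j \in \mathcal{E}^p$ with uniformly controlled energy.

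Next I would pass to the limit $j \to \infty$. Since $\psi_j \searrow \varphi$ in $\mathcal{E}(X,\theta;[\phi])$, preservation of total mass (combining \cref{thm:semic} on $U=X$ with \cref{thm:mono}) gives $\varphi \in \mathcal{E}(X,\theta;[\phi])$ and weak convergence $\theta_{\psi_j}^n \rightharpoonup \theta_\varphi^n$ against bounded quasi-continuous integrands. For the energy bound I would truncate: for each $M > 0$, the function $f_M := \min\{(\gamma - \varphi)^p, M\}$ is bounded and quasi-continuous as a bounded combination of two qpsh potentials, so $\int f_M\, \theta_{\psi_j}^n \to \int f_M\, \theta_\varphi^n$. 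The convexity inequality $(\gamma - \varphi)^p \leq 2^{p-1}\bigl((\gamma - \psi_j)^p + (\psi_j - \varphi)^p\bigr)$ controls the first piece by $2^{p-1} \cdot 2^{n+p} A$, while the tail $\int (\psi_j - \varphi)^p\, \theta_{\psi_j}^n$ vanishes in the limit thanks to $\psi_j - \varphi \searrow 0$ and a dominating $\mathcal{E}^p$ function obtained from $\psi_1$. Letting $M \to \infty$ by monotone convergence produces the final constant $2^{n+2p+1} A$.

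The main obstacle is this last simultaneous limit, where both integrand $(\gamma-\psi_j)^p$ and measure $\theta_{\psi_j}^n$ depend on $j$. The resolution hinges on two features of the relative full mass class: preservation of total non-pluripolar mass along decreasing sequences, and quasi-continuity of bounded functions built from qpsh potentials. Truncation via $f_M$ decouples the two limits and reduces everything to the uniform energy bound on the regularized sequence $\psi_j$ already established via \cref{prop:fundam}.
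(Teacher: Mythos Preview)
Your overall architecture matches the paper's: pass to the decreasing envelope $\psi_j=(\sup_{k\geq j}\varphi_k)^*$, use \cref{prop:fundam} to control $\int(\gamma-\psi_j)^p\theta_{\psi_j}^n$, then let $j\to\infty$. The gap is in the limiting step.

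Your sentence ``preservation of total mass (combining \cref{thm:semic} on $U=X$ with \cref{thm:mono}) gives $\varphi\in\mathcal{E}(X,\theta;[\phi])$'' does not hold up. Both results only give one direction: \cref{thm:semic} yields $\liminf_j\int\theta_{\psi_j}^n\geq\int\theta_\varphi^n$, and \cref{thm:mono} applied to $[\psi_j]\succeq[\varphi]$ gives the same inequality. Neither produces the lower bound $\int\theta_\varphi^n\geq V_\phi$. A decreasing limit of full-mass potentials can lose mass; it is precisely the uniform $p$-energy bound that prevents this, but you have to convert it into a statement about mass concentration. Without $\varphi\in\mathcal{E}(X,\theta;[\phi])$ you do not get weak convergence $\theta_{\psi_j}^n\rightharpoonup\theta_\varphi^n$, so the truncation argument with $f_M$ collapses. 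The tail claim $\int(\psi_j-\varphi)^p\theta_{\psi_j}^n\to 0$ has the same circularity: the standard proofs of such statements already assume the limit lies in $\mathcal{E}^p$, and your ``dominating $\mathcal{E}^p$ function obtained from $\psi_1$'' is being integrated against a moving measure, so dominated convergence does not apply.

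The paper avoids this by never leaving $\mathcal{E}^\infty$ until the very end. It works with the truncations $\psi^{j,C}=\psi_j\vee(\gamma-C)$ and $\varphi^C=\varphi\vee(\gamma-C)$, which are automatically in $\mathcal{E}^\infty(X,\theta;[\phi])$, obtains $\int(\gamma-\varphi^C)^p\theta_{\varphi^C}^n\leq 2^{n+2p+1}A$ for every $C$, and only then extracts both the energy bound on $\varphi$ (by plurifine locality and monotone convergence) and the membership $\varphi\in\mathcal{E}(X,\theta;[\phi])$ via the Markov-type estimate $\int_{\{\varphi\leq\gamma-C\}}\theta_{\varphi^C}^n\leq C^{-p}\cdot 2^{n+2p+1}A$ together with \cite[Lemma~3.4]{DDNL18mono}. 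Also, in the decreasing step the paper decouples the indices using \cref{prop:intdom1} to bound $\int(\gamma-\varphi_j)^p\theta_{\varphi_k}^n$ for \emph{all} pairs $j,k$, which is what lets semicontinuity act on a fixed integrand; your convexity split does not achieve this decoupling.
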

\begin{proof}
	We may assume that $X$ is reduced. 
	
	\textbf{Step 1}. Assume that $\varphi_j$ is decreasing. In this case, we prove 
	\[
	\int_X (\gamma-\varphi)^p\,\theta_{\varphi}^n\leq 2^{p+1}A\,.
	\]
	By \cref{prop:intdom1}, for any $j, k$,
	\[
	\int_X (\gamma-\varphi_j)^p \,\theta_{\varphi_k}^n\leq 2^{p+1}A\,.
	\]
	For any $C>0$,
	\[
	\int_X \left(\gamma- \varphi_j \lor (\gamma-C)\right)^p\,\theta_{\varphi_k}^n\leq \int_X (\gamma-\varphi_j)^p \,\theta_{\varphi_k}^n\leq 2^{p+1}A\,.
	\]
	Let $k\to \infty$, by \cref{thm:semic}, we find
	\[
	\int_X \left(\gamma- \varphi_j \lor (\gamma-C)\right)^p\,\theta_{\varphi}^n\leq 2^{p+1}A\,.
	\]
	Let $j\to \infty$, by the monotone convergence theorem,
	\[
	\int_X \left(\gamma- \varphi \lor (\gamma-C)\right)^p\,\theta_{\varphi}^n\leq 2^{p+1}A\,.
	\]
	Then we let $C\to \infty$, again by the monotone convergence theorem,
	\[
	\int_X \left(\gamma- \varphi\right)^p\,\theta_{\varphi}^n\leq 2^{p+1} A\,.
	\]
	
	\textbf{Step 2}. In general, let $\psi^j=\sups_{\!\!\! k\geq j} \varphi_k$.
	For each $C>0$, let
	\[
	\psi^{j,C}=\psi^j \lor (\gamma-C),\quad \varphi^C=\varphi \lor (\gamma-C)\,.
	\]
	Observe that $\psi^{j,C}$ decreases to $\varphi^C$ as $j\to \infty$. Moreover, $\gamma\geq \psi^{j,C} \geq \psi^j\geq \varphi_j$.
	By \cref{prop:fundam},
	\[
	\int_X (\gamma-\psi^{j,C})^p\,\theta_{\psi^{j,C}}\leq 2^{n+p}A\,.
	\]
	
	By Step 1, 
	\begin{equation}\label{eq:inproof1}
		\int_X(\gamma-\varphi^C)^p\,\theta_{\varphi^C}^n\leq 2^{n+2p+1}A\,.
	\end{equation}
	In particular,
	\[
	\int_{\{\varphi>\gamma-C\}}(\gamma-\varphi^C)^p\,\theta_{\varphi}^n\leq 2^{n+2p+1}A\,.
	\]
	Let $C\to \infty$, by the monotone convergence theorem,
	\[
	\int_X (\gamma-\varphi)^p\,\theta_{\varphi}^n\leq 2^{n+2p+1}A\,.
	\]
	
	In order to conclude that $\varphi\in \Ep$, we still have to prove that $\varphi\in \mathcal{E}(X,\theta;[\phi])$. In fact, by \eqref{eq:inproof1},
	\[
	\int_{\{\varphi\leq \gamma-C\}}\,\theta_{\varphi^C}^n\leq \frac{1}{C^p}\int_X (\gamma-\varphi^C)^p\,\theta_{\varphi^C}^n\leq 2^{n+2p+1}C^{-p}A\,.
	\]
	Then $\pi^*\varphi\in  \mathcal{E}(Y,\pi^*\theta;[\pi^*\phi])$ by \cite[Lemma~3.4]{DDNL18mono}. Thus, $\varphi\in \mathcal{E}(X,\theta;[\phi])$.
\end{proof}

\begin{theorem}\label{thm:rooftopep}
	Let $\varphi,\psi\in \Ep$, then $\varphi\land\psi\in \Ep$.
\end{theorem}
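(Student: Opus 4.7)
The plan is to reduce the assertion to the setting of a smooth compact Kähler manifold by pulling back along a resolution of singularities $\pi: Y \to X^{\Red}$. By \cref{prop:corr}, $\pi^*$ is a bijection $\PSH(X,\theta) \cn \PSH(Y,\pi^*\theta)$ preserving the pre-rooftop structure, in particular $\pi^*(\varphi \land \psi) = \pi^*\varphi \land \pi^*\psi$; the remark following \cref{def:ep} promotes this to a bijection $\mathcal{E}^p(X,\theta;[\phi]) \cn \mathcal{E}^p(Y,\pi^*\theta;[\pi^*\phi])$. It therefore suffices to verify the theorem on $Y$, where it is already known through work of Darvas--Di Nezza--Lu.

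For a pluripotential-theoretic argument that works directly on $X$ (and would reprove the smooth case too), I would proceed by approximation. Set $\xi := \varphi \land \psi$ and let $\varphi_j := \varphi \lor (\phi - j)$, $\psi_j := \psi \lor (\phi - j)$ denote the canonical $\mathcal{E}^\infty(X,\theta;[\phi])$-approximants. Since $\phi$ is model and $\phi - j$ is a valid $\theta$-psh candidate lying below both $\varphi_j$ and $\psi_j$, we have $\phi - j \le \varphi_j \land \psi_j \le \phi + C$, so $\varphi_j \land \psi_j \in \mathcal{E}^\infty(X,\theta;[\phi]) \subseteq \mathcal{E}^p$. The sequence $\varphi_j \land \psi_j$ decreases to $\xi$. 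Full relative mass for $\xi$ then follows from \cref{thm:mono} together with the orthogonality of the rooftop measure on the contact sets. For the $p$-energy, I would combine \cref{prop:fundam} and \cref{prop:intdom1} with the comparison principle \cref{lma:comp} to produce a bound, uniform in $j$,
\[
\int_X (\phi - \varphi_j \land \psi_j)^p \, \theta_{\varphi_j \land \psi_j}^n \le C \left( \int_X (\phi - \varphi)^p \, \theta_\varphi^n + \int_X (\phi - \psi)^p \, \theta_\psi^n \right),
\]
after possibly translating $\phi$ by a constant so that it dominates both $\varphi$ and $\psi$. The stability criterion \cref{prop:stab}, applied with $\gamma := \phi + C$, then concludes that $\xi \in \mathcal{E}^p$.

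The main obstacle is the uniform-in-$j$ $p$-energy bound for the rooftops. The measure $\theta_{\varphi_j \land \psi_j}^n$ is supported, up to a pluripolar set, on the contact sets $\{\varphi_j \land \psi_j = \varphi_j\} \cup \{\varphi_j \land \psi_j = \psi_j\}$, and on each contact set the estimate has to be transported to the corresponding $\theta_{\varphi_j}^n$ or $\theta_{\psi_j}^n$ via the comparison principle. Independence of the bound from $j$ is precisely what makes \cref{prop:stab} applicable and closes the loop.
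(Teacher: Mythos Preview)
Your first paragraph is exactly the reduction the paper performs: assume $X$ is a Kähler manifold via $\pi^*$, then invoke the smooth case, which the paper credits to \cite[Theorem~2.13]{DDNL18fullmass} and reproduces for completeness. So that part is correct and matches the paper's strategy.

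Your second, direct argument is correct but follows a genuinely different route from the paper's reproduced proof. The paper does \emph{not} bound the energy of $\varphi_j\land\psi_j$ directly; instead it solves, for each $j$, an auxiliary Monge--Amp\`ere equation (\cref{lma:rooftop}, via \cite[Theorem~5.3]{DDNL19log}) to produce $\gamma_j\in\mathcal{E}^\infty$ with $\theta_{\gamma_j}^n=e^{\gamma_j-\varphi_j}\theta_{\varphi_j}^n+e^{\gamma_j-\psi_j}\theta_{\psi_j}^n$ and $\gamma_j\leq\varphi_j\land\psi_j$, establishes the uniform bound $\int_X(\phi-\gamma_j)^p\,\theta_{\gamma_j}^n\leq C$ using the exponential weights, and then applies \cref{prop:stab} and \cref{prop:fundam} to the limit of the $\gamma_j$. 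Your approach bypasses the equation solver entirely: since $\varphi_j,\psi_j\in\mathcal{E}^\infty$, the rooftop $\varphi_j\land\psi_j\geq\phi-j$ is automatically in $\mathcal{E}^\infty$, so the contact-set inequality $\theta_{\varphi_j\land\psi_j}^n\leq\mathds{1}_{\{\varphi_j\land\psi_j=\varphi_j\}}\theta_{\varphi_j}^n+\mathds{1}_{\{\varphi_j\land\psi_j=\psi_j\}}\theta_{\psi_j}^n$ is available directly from \cite[Lemma~3.7]{DDNL18mono} without circularity (the appearance of this inequality as \cref{cor:rooftop} \emph{after} the theorem is only because the general $\mathcal{E}^p$ statement needs the rooftop to exist). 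This immediately gives $\int_X(\phi-\varphi_j\land\psi_j)^p\,\theta_{\varphi_j\land\psi_j}^n\leq\int_X(\phi-\varphi_j)^p\,\theta_{\varphi_j}^n+\int_X(\phi-\psi_j)^p\,\theta_{\psi_j}^n$, uniformly bounded by the last display in the paper's proof. The uniform energy bound forces $\sup_X(\varphi_j\land\psi_j-\phi)\geq -C$, so the decreasing limit is a genuine $\theta$-psh function (equal to $\varphi\land\psi$ by the obvious two-sided comparison), and \cref{prop:stab} finishes. Your route is more elementary---no PDE solver---at the cost of needing the contact-set lemma as an input; the paper's route is self-contained modulo the variational existence theory in \cite{DDNL19log}.
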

The proof of the theorem is similar to that of  \cite[Theorem~2.13]{DDNL18fullmass}. We reproduce the proof for the convenience of the readers.
We prove some preliminary results at first.

\begin{lemma}\label{lma:rooftop}
	Assume that $X$ is a Kähler manifold.
	Let $\varphi,\psi\in \mathcal{E}^{\infty}(X,\theta;[\phi])$. Then there is $\gamma\in \mathcal{E}^{\infty}(X,\theta;[\phi])$ such that
	\begin{equation}\label{eq:temp8}
		\theta_{\gamma}^n=e^{\gamma-\varphi}\theta_{\varphi}^n+e^{\gamma-\psi}\theta_{\psi}^n\,.
	\end{equation}
\end{lemma}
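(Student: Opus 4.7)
The strategy is to recast the identity as a Kähler--Einstein-type Monge--Ampère equation and apply the existence theory in the relative full mass class. Set
\[
\nu := e^{\phi-\varphi}\theta_{\varphi}^n + e^{\phi-\psi}\theta_{\psi}^n,
\]
which is a finite non-pluripolar positive measure on $X$: finiteness follows from $\varphi-\phi, \psi-\phi \in L^{\infty}$ together with $\int_X \theta_{\varphi}^n = \int_X \theta_{\psi}^n = V_{\phi}$, and non-pluripolarity from \cref{prop:npp1}(4). The desired equation then reads $\theta_{\gamma}^n = e^{\gamma-\phi}\,d\nu$, and we seek $\gamma$ with $\gamma-\phi\in L^{\infty}$.

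I would produce such a $\gamma$ by the variational method. Introduce the free-energy functional
\[
F(u) := E_{\phi}(u) - \log\int_X e^{u-\phi}\,d\nu, \quad u\in \mathcal{E}^1(X,\theta;[\phi]),
\]
where $E_{\phi}$ denotes the relative Monge--Ampère energy. Upper semi-continuity of $F$ with respect to the $L^1$-topology follows from \cref{thm:semic} combined with the pointwise upper bound $e^{u-\phi}\leq e^{\sup_X(u-\phi)}$; coercivity on the normalized slice $\{\sup_X(u-\phi)=0\}$, which is $L^1$-relatively compact by \cref{prop:relcpt}, follows from the fundamental inequality \cref{prop:fundam}. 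A maximizer $\gamma$ therefore exists, and computing the first-order condition by perturbing along directions $v\in \mathcal{E}^{\infty}(X,\theta;[\phi])$ produces $\theta_{\gamma}^n = c\,e^{\gamma-\phi}\,d\nu$ for a positive normalization constant $c$, which is absorbed by translating $\gamma \mapsto \gamma+\log c$. Unwinding the substitution, this is exactly \eqref{eq:temp8}.

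The main obstacle is the $L^{\infty}$-regularity of $\gamma$: the variational argument only places the maximizer in $\mathcal{E}^1(X,\theta;[\phi])$, whereas we need $\gamma-\phi\in L^\infty$. This is resolved by a Ko{\l}odziej-type relative $L^{\infty}$ estimate, exploiting the finite total mass of $\nu$ and the self-correcting exponential factor on the right-hand side: a comparison principle combined with pluripotential capacity bounds relative to the model $[\phi]$ (in the spirit of \cite{DDNL18fullmass}) yields uniform two-sided control on $\gamma-\phi$. This step is technically the most demanding and is precisely where the fine pluripotential analysis of prescribed singularities is essential; everything else is routine once the uniform bound is in hand.
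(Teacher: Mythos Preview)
Your variational scheme is plausible in outline, but the decisive step---the $L^{\infty}$ bound on $\gamma-\phi$---is where the argument actually lives, and invoking a ``Ko\l odziej-type'' estimate does not work here. Ko\l odziej's method requires the right-hand measure to be dominated by capacity (for instance, to have an $L^q$ density with respect to a smooth volume form), but $\nu = e^{\phi-\varphi}\theta_{\varphi}^n + e^{\phi-\psi}\theta_{\psi}^n$ enjoys no such control: the Monge--Amp\`ere measure of a potential with merely relative minimal singularities can be arbitrarily singular. The vague reference to ``capacity bounds relative to the model $[\phi]$'' does not close this; no result of that kind applies to measures of the form $\theta_{\varphi}^n$ without further hypotheses. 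There is also a subtler gap earlier: the upper semicontinuity of your functional requires $u\mapsto \int_X e^{u-\phi}\,d\nu$ to be lower semicontinuous along $L^1$-convergent sequences of $\theta$-psh functions, and this is not immediate since $L^1$-convergence is with respect to a smooth volume form, not $\nu$; \cref{thm:semic} says nothing about this.

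The paper's proof avoids both difficulties. It applies \cite[Theorem~5.3]{DDNL19log} to solve the truncated equations $\theta_{\gamma_j}^n = e^{\gamma_j}\mu_j$ (with $\mu_j = e^{-\varphi\lor(-j)}\theta_{\varphi}^n + e^{-\psi\lor(-j)}\theta_{\psi}^n$) directly in $\mathcal{E}^{\infty}(X,\theta;[\phi])$, and then---this is the idea you are missing---obtains a \emph{uniform} lower bound by exhibiting an explicit subsolution: with $|\varphi-\psi|\leq 2C$ one sets $\eta = \tfrac{1}{2}(\varphi+\psi) - C - n\log 2$ and checks $\theta_{\eta}^n \geq e^{\eta}\mu_j$, whence $\gamma_j \geq \eta$ by the comparison principle \cite[Lemma~5.4]{DDNL19log}. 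The same lemma shows $\gamma_j$ is decreasing, so the limit $\gamma$ satisfies $\eta \leq \gamma \leq \gamma_1$, placing it in $\mathcal{E}^{\infty}(X,\theta;[\phi])$ for free. The $L^{\infty}$ control thus comes from an algebraic barrier, not from a capacity estimate.
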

The proof is similar to that of \cite[Lemma~2.14]{DDNL18fullmass}.
\begin{proof}
	For each $j\geq 1$, let $\varphi_j\coloneqq \varphi\lor(-j)$, $\psi_j\coloneqq \psi\lor(-j)$.
	Let
	\[
	\mu_j=e^{-\varphi_j}\theta_{\varphi}^n+e^{-\psi_j}\theta_{\psi}^n\,.
	\]
	By \cite[Theorem~5.3]{DDNL19log}, we can find $\gamma_j\in \mathcal{E}^{\infty}(X,\theta;[\phi])$ such that
	\begin{equation}\label{eq:temp4}
		\theta_{\gamma_j}^n=e^{\gamma_j}\mu_j\,.
	\end{equation}
	Take a constant $C>0$ so that $|\varphi-\psi|\leq 2C$.
	Let 
	\[
	\eta=\frac{\varphi+\psi}{2}-C-n\log 2\,.
	\]
	Then $\eta\in \mathcal{E}^{\infty}(X,\theta;[\phi])$ and $\theta_{\eta}^n\geq e^{\eta}\mu_j$.
	Hence, $\gamma_j\geq \eta$ by \cite[Lemma~5.4]{DDNL19log}. By the same lemma, $\gamma_j$ is decreasing in $j$, let $\gamma=\lim_{j\to\infty}\gamma_j$
	in the pointwise sense. 
	Then $\gamma\geq \eta$, hence $\gamma\in \mathcal{E}^{\infty}(X,\theta;[\phi])$. Now \eqref{eq:temp8} follows from \eqref{eq:temp4} by letting $j\to\infty$ using \cite[Theorem~2.3]{DDNL18mono}.
\end{proof}

\begin{proof}[Proof of \cref{thm:rooftopep}]
	We may assume that $\varphi,\psi\leq \phi$. For each $j\geq 1$, consider the canonical approximations:
	\[
	\varphi_j\coloneqq \varphi\lor(\phi-j)\,,\quad \psi_j\coloneqq \psi\lor(\phi-j)\,.
	\]
	By \cref{lma:rooftop}, we can take $\gamma_j\in \mathcal{E}^{\infty}(X,\theta;[\phi])$ solving the following equation:
	\[
	\theta_{\gamma_j}^n=e^{\gamma_j-\varphi_j}\theta_{\varphi_j}^n+e^{\gamma_j-\psi_j}\theta_{\psi_j}^n\,.
	\]
	It follows from \cite[Lemma~5.4]{DDNL19log} that $\gamma_j\leq \varphi_j\land \psi_j$.
	We claim that
	\begin{equation}\label{eq:claim1}
		\int_X (\phi-\gamma_j)^p\,\theta_{\gamma_j}^n\leq C\,.
	\end{equation}
	
	Assume the claim is true for now.
	We get immediately that $\sup_{X}(\gamma_j-\phi)\geq -C$.
	Hence, according to \cite[Lemma~2.2]{DDNL19log}, after possibly subtracting a subsequence, we may assume that $\gamma_j\to \gamma\in \PSH(X,\theta)$ in $L^1$-topology. Then $\gamma\in \Ep$ by \cref{prop:stab}. Moreover, since $\gamma_j\leq \varphi_j\land \psi_j$, we know that $\gamma\leq \varphi\land \psi$.
	In particular, $\varphi\land\psi\in \PSH(X,\theta)$. Now by \cref{prop:fundam}, $\varphi\land\psi\in \Ep$.

	Now we prove the claim.
	By symmetry, it suffices to prove
	\[
	\int_X (\phi-\gamma_j)^p e^{\gamma_j-\varphi_j}\,\theta_{\varphi_j}^n\leq C\,.
	\]
	But note that
	\[
	\int_X (\phi-\gamma_j)^p e^{\gamma_j-\varphi_j}\,\theta_{\varphi_j}^n\leq C\int_X (\phi-\varphi_j)^p e^{\gamma_j-\varphi_j}\,\theta_{\varphi_j}^n+C\int_X (\varphi_j-\gamma_j)^p e^{\gamma_j-\varphi_j}\,\theta_{\varphi_j}^n\,.
	\]
	But $x^pe^{-x}\leq C$ when $x\geq 0$, so it suffices to prove
	\[
	\int_X (\phi-\varphi_j)^p e^{\gamma_j-\varphi_j}\,\theta_{\varphi_j}^n\leq C\,.
	\]
	As $\gamma_j\leq \varphi_j$, it suffices to prove
	\begin{equation}\label{eq:inproof2}
		\int_X (\phi-\varphi_j)^p\,\theta_{\varphi_j}^n\leq C\,.
	\end{equation}
	It follows from the argument of \cite[Proposition~2.10]{BEGZ10} that
	\[
	\int_X (\phi-\varphi_j)^p\,\theta_{\varphi_j}^n\leq \int_X (\phi-\varphi)^p\,\theta_{\varphi}^n\,.
	\]
	Thus, \eqref{eq:inproof2} follows.
\end{proof}
\begin{corollary}\label{cor:convex}
	The space $\Ep$ is convex.
\end{corollary}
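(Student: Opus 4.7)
The plan is to sandwich an arbitrary convex combination of $\varphi,\psi\in\Ep$ between $\varphi\land\psi$ and a suitable translate of $\phi$, and then invoke \cref{prop:fundam} to transfer the finite-energy estimate. Fix $\varphi,\psi\in\Ep$ and $t\in[0,1]$, and set $\eta_t:=t\varphi+(1-t)\psi$. Since $\PSH(X,\theta)$ is stable under convex combinations (a positive linear combination of positive $(1,1)$-currents is positive), we have $\eta_t\in\PSH(X,\theta)$. Choose $C>0$ large enough that $\varphi,\psi\leq\phi+C$; such a $C$ exists because $[\varphi]\preceq[\phi]$ and $[\psi]\preceq[\phi]$. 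Then
\[
\varphi\land\psi\leq\eta_t\leq\phi+C.
\]

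By \cref{thm:rooftopep} we have $\varphi\land\psi\in\Ep\subseteq\mathcal{E}(X,\theta;[\phi])$. Moreover $\phi+C\in\mathcal{E}(X,\theta;[\phi])$, since adding a constant preserves both the singularity type and the non-pluripolar mass. Applying the monotonicity of non-pluripolar mass (\cref{thm:mono}) to the sandwich $\varphi\land\psi\leq\eta_t\leq\phi+C$ pinches the mass of $\eta_t$ between two equal quantities, so $\int_X\theta_{\eta_t}^n=\int_X\theta_\phi^n$. Since also $[\eta_t]\preceq[\phi]$, this shows $\eta_t\in\mathcal{E}(X,\theta;[\phi])$.

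Now \cref{prop:fundam} applied to the triple $\varphi\land\psi\leq\eta_t\leq\phi+C$ in $\mathcal{E}(X,\theta;[\phi])$ yields
\[
\int_X(\phi+C-\eta_t)^p\,\theta_{\eta_t}^n\leq 2^{n+p}\int_X(\phi+C-\varphi\land\psi)^p\,\theta_{\varphi\land\psi}^n.
\]
The right-hand side is finite because $\varphi\land\psi\in\Ep$ and the elementary inequality $(a+C)^p\leq 2^{p-1}(a^p+C^p)$ absorbs the constant. The same absorption trick then bounds $\int_X|\phi-\eta_t|^p\,\theta_{\eta_t}^n$ in terms of the previous display plus a harmless multiple of $C^p\int_X\theta_\phi^n$, yielding $\eta_t\in\Ep$. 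I do not foresee any genuine obstacle: the substantive work has already been done in \cref{thm:rooftopep} and \cref{prop:fundam}, and what remains is the routine bookkeeping around the constant shift needed to use $\phi+C$ in place of $\phi$ as an upper bound.
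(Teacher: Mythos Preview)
Your proof is correct and follows the same strategy as the paper: sandwich $\eta_t$ between $\varphi\land\psi$ (which lies in $\Ep$ by \cref{thm:rooftopep}) and a translate of $\phi$, then appeal to \cref{prop:fundam}. In fact you are more careful than the paper's one-line argument, since you explicitly verify $\eta_t\in\mathcal{E}(X,\theta;[\phi])$ via \cref{thm:mono} before invoking \cref{prop:fundam}, a hypothesis the paper's proof leaves implicit.
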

\begin{proof}
	Let $\varphi_0,\varphi_1\in \Ep$, for $t\in [0,1]$, let $\varphi_t=t\varphi_1+(1-t)\varphi_0$. Note that
	$\varphi_0\land \varphi_1\leq \varphi_t$.
	Since $\varphi_0\land\varphi_1\in \Ep$ by \cref{thm:rooftopep}. So $\varphi_t\in \Ep$ by \cref{prop:fundam}.
\end{proof}

\begin{corollary}\label{cor:rooftop}
	Let $\varphi,\psi\in \mathcal{E}^p(X,\theta;[\phi])$, then
	\[
	\theta_{\varphi\land \psi}^n\leq \mathds{1}_{\{\varphi\land\psi=\varphi\}}\theta_{\varphi}^n+\mathds{1}_{\{\varphi\land\psi=\psi\}}\theta_{\psi}^n.
	\]
	In particular, $\theta_{\varphi\land \psi}^n$ is supported on $\{\varphi\land\psi=\varphi\}\cup \{\varphi\land\psi=\psi\}$.
\end{corollary}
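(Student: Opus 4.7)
The plan is to reduce to the K\"ahler manifold setting via a resolution of singularities, and then combine plurifine locality of non-pluripolar products with the maximality property defining $\varphi\land\psi$.

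First, I would reduce to the smooth case. By \cref{prop:corr} and \cref{lma:roofpro} the rooftop operation commutes with $\pi^*$ for any resolution $\pi:Y\to X^{\Red}$, and by \cref{prop:projnp} the projection formula identifies $\pi_*(\pi^*\theta)_{\pi^*\gamma}^n=\theta_\gamma^n$ for every $\gamma\in \Ep$. The coincidence sets $\{\varphi\land\psi=\varphi\}$ and $\{\varphi\land\psi=\psi\}$ correspond under $\pi$ up to the $\pi$-exceptional locus, which is pluripolar and hence carries no non-pluripolar Monge--Amp\`ere mass. Thus it suffices to prove the corollary when $X$ is a compact K\"ahler manifold.

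Second, I would decompose $X$ into the plurifine pieces $E_\varphi:=\{\gamma=\varphi\}$, $E_\psi:=\{\gamma=\psi\}$ and $U:=\{\gamma<\varphi\}\cap\{\gamma<\psi\}$, where $\gamma:=\varphi\land\psi$. On $E_\varphi$ the two $\theta$-psh potentials $\gamma$ and $\varphi$ agree, so the Plurilocality Theorem from \cref{sec:npp}, applied to $\theta_\gamma^n$ and $\theta_\varphi^n$, yields
\[
\mathds{1}_{E_\varphi}\theta_\gamma^n=\mathds{1}_{E_\varphi}\theta_\varphi^n,
\]
and symmetrically $\mathds{1}_{E_\psi}\theta_\gamma^n=\mathds{1}_{E_\psi}\theta_\psi^n$. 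Combining these two identities already gives the claimed inequality on $E_\varphi\cup E_\psi$, so only the vanishing $\mathds{1}_U\theta_\gamma^n=0$ remains.

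Third, I would prove $\theta_\gamma^n(U)=0$ by exploiting the maximality of $\gamma$. Suppose for contradiction that $\theta_\gamma^n(W)>0$ for some plurifine open $W\Subset U$ on which one also has uniform strict inequalities $\gamma+\epsilon\leq\varphi$ and $\gamma+\epsilon\leq\psi$ for some $\epsilon>0$ (such $W$ exists by writing $U$ as a countable union of such plurifine pieces). Using the solvability result \cite[Theorem~5.3]{DDNL19log} invoked in the proof of \cref{lma:rooftop}, I would solve, for a small constant $c>0$, a Monge--Amp\`ere equation of the form $\theta_{\tilde\gamma}^n=e^{\tilde\gamma-\gamma}\bigl(\theta_\gamma^n+c\,\mathds{1}_W\theta_\gamma^n\bigr)$ in $\mathcal{E}^\infty(X,\theta;[\phi])$. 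The comparison principle \cite[Lemma~5.4]{DDNL19log} forces $\tilde\gamma\geq\gamma$, with strict inequality on a subset of $W$ of positive measure, while choosing $c$ small enough keeps $\tilde\gamma\leq\min\{\varphi,\psi\}$ globally (by plurifine locality the difference $\tilde\gamma-\gamma$ is concentrated near $W$ and can be made pointwise smaller than $\epsilon$). This contradicts the definition of $\gamma$ as the supremum of all $\theta$-psh functions below both $\varphi$ and $\psi$.

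The main obstacle is the third step: the perturbation argument has to be carried out inside the prescribed-singularities class $\mathcal{E}^\infty(X,\theta;[\phi])$, and one must control $\tilde\gamma$ simultaneously against both $\varphi$ and $\psi$ on all of $X$, not merely on the plurifine region $W$ where the mass is redistributed. This calibration of the auxiliary Monge--Amp\`ere solution is the delicate point, and is precisely what forces us to reduce first to the smooth setting, where the solvability and comparison theorems in the relative full-mass class are fully available.
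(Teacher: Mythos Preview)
Your reduction to the smooth case (Step~1) is correct and matches the paper. The paper's own proof is then a one-line citation: once $X$ is a K\"ahler manifold and once \cref{thm:rooftopep} guarantees $\varphi\land\psi\in\Ep$, the inequality is precisely \cite[Lemma~3.7]{DDNL18mono}. Your Steps~2--3 are an attempt to reprove that lemma from scratch, which is fine in principle, but Step~3 has a genuine gap.

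The problem is the sentence ``choosing $c$ small enough keeps $\tilde\gamma\leq\min\{\varphi,\psi\}$ globally (by plurifine locality the difference $\tilde\gamma-\gamma$ is concentrated near $W$ and can be made pointwise smaller than $\epsilon$)''. This is not justified and in general false: solving a \emph{global} Monge--Amp\`ere equation $\theta_{\tilde\gamma}^n=e^{\tilde\gamma-\gamma}(1+c\mathds{1}_W)\theta_\gamma^n$ gives no localization of $\tilde\gamma-\gamma$ near $W$, and the comparison principle you invoke only gives $\tilde\gamma\geq\gamma$, not an upper bound. Plurifine locality is a statement about non-pluripolar \emph{products}, not about solutions of global equations, so it cannot force $\tilde\gamma-\gamma$ to be small outside $W$. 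There is also a secondary issue: the solvability result you cite (via \cref{lma:rooftop}) lives in $\mathcal{E}^\infty(X,\theta;[\phi])$, so you would first need to reduce to $\varphi,\psi\in\mathcal{E}^\infty$ by canonical approximation, which you do not do.

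The standard route to the vanishing $\theta_\gamma^n(\{\gamma<\varphi\}\cap\{\gamma<\psi\})=0$ is a \emph{local} balayage argument (replace $\gamma$ on a small ball inside the non-contact set by the solution of a Dirichlet problem with zero Monge--Amp\`ere mass, obtaining a strictly larger competitor), not a global perturbation of the right-hand side. This is what underlies \cite[Lemma~3.7]{DDNL18mono}, and it avoids entirely the uncontrolled global behaviour of your $\tilde\gamma$.
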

\begin{proof}
	We may assume that $X$ is a Kähler manifold. In this case, the assertion follows from \cref{thm:rooftopep} and \cite[Lemma~3.7]{DDNL18mono}.
\end{proof}

\subsection{Energy functionals}\label{subsec:energyfun}
\begin{definition}\label{def:enr}
	Let $\varphi,\psi\in \Ep$.
	\begin{enumerate} 
		\item Define
		$F_p(\varphi,\psi)\coloneqq \int_X |\varphi-\psi|^p\,\theta_{\varphi\land\psi}^n$.
		\item Assume $\varphi\leq \psi$, define $G_p(\varphi,\psi)\coloneqq \int_X (\psi-\varphi)^p\,\theta_{\psi}^n$.
		In general, define 
  \[
G_p(\varphi,\psi)=G_p(\varphi\land\psi,\varphi)+G_p(\varphi\land\psi,\psi).
\]
		\item Assume $\varphi\leq \psi$, $[\varphi]=[\psi]$, define
		\begin{equation}
			E_p(\varphi,\psi)\coloneqq \frac{1}{n+1}\sum_{j=0}^{n} \int_X (\psi-\varphi)^p \,\theta_{\psi}^j\wedge\theta_{\varphi}^{n-j}.
		\end{equation}
		Assume $\varphi\leq \psi$, define $E_p(\varphi,\psi)\coloneqq \sup_{\eta} E_p(\eta,\psi)\in (-\infty,\infty]$,
		where the $\sup$ is taken over $\eta\in \Ep$ such that $[\eta]=[\psi]$, $\varphi\leq\eta\leq\psi$.
		
		In general, define 
		\[
		E_p(\varphi,\psi)\coloneqq E_p(\varphi\land \psi,\varphi)+E_p(\varphi\land \psi,\psi).
		\]
	\end{enumerate}
\end{definition}

The functional $E_p$ is similar to the energy functional studied in \cite[Section~2.2]{BEGZ10}. As we will see in the next proposition, $E_p$ always takes finite values.

\begin{proposition}\label{prop:e}
	Let $\varphi,\psi,\gamma\in \Ep$, $\varphi\leq \psi\leq \gamma$.
	\begin{enumerate}
		\item $j\mapsto \int_X (\psi-\varphi)^p\,\theta_{\psi}^j\land \theta_{\varphi}^{n-j}$
		is decreasing in $j$.
		\item Let $\varphi',\psi'\in \Ep$ with $\varphi\leq \varphi'\leq \psi\leq \psi'$. Then
		\[
		E_p(\varphi',\psi)\leq E_p(\varphi,\psi)\leq E_p(\varphi,\psi').
		\]
		\item We have
		\begin{equation}\label{eq:temp7}
			G_p(\varphi,\psi)\leq E_p(\varphi,\psi)\leq F_p(\varphi,\psi)\leq (n+1)E_p(\varphi,\psi)<\infty.
		\end{equation}
		
		\item When $p>1$, for any $\varphi,\psi\in \Ep$, $\varphi\leq \psi$, then there is a constant $C=C(p,n,V_{\theta})>0$ such that, $E_p(\varphi,\psi)^{1/p}\geq C^{-1} E_1(\varphi,\psi)$.
		\item When $p=1$, we have
		\[
		E_1(\varphi,\gamma)=E_1(\varphi,\psi)+E_1(\psi,\gamma).
		\]
	\end{enumerate}
\end{proposition}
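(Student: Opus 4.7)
The plan is to work through the five assertions in an order that leverages \cref{thm:ibpA} as the central tool, and uses the sup-definition of $E_p$ to reduce non-equal-singularity cases to equal-class ones. Via \cref{prop:corr} I may at any point pull back to a smooth resolution $\pi:Y\to X^{\Red}$ whenever the approximation arguments from the smooth theory are convenient.

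For (1), by canonical approximation combined with \cref{thm:semic}, I reduce to the case $\varphi,\psi\in \mathcal{E}^{\infty}(X,\theta;[\phi])$. Setting $u:=\psi-\varphi\geq 0$ and noting $[\varphi]=[\psi]$ by assumption, the difference of consecutive terms equals
\[
\int_X u^p\,\ddc u \wedge \theta_\psi^j\wedge\theta_\varphi^{n-j-1} = -p\int_X u^{p-1}\,\mathrm{d}u\wedge \dc u\wedge \theta_\psi^j\wedge\theta_\varphi^{n-j-1}\leq 0
\]
by \cref{thm:ibpA} applied to suitable primitives of $u^p$. Granted (1), the chain in (3) is immediate in the equal-class case: $G_p(\varphi,\psi)$ and $F_p(\varphi,\psi)$ are respectively the $j=n$ and $j=0$ summands of $(n+1)E_p(\varphi,\psi)$, so the full chain follows from monotonicity in $j$, while finiteness of $F_p$ is a consequence of \cref{prop:fundam}. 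For general $[\varphi]\preceq[\psi]$ one passes through the sup-definition of $E_p$. The first inequality of (2) is then immediate because enlarging $\varphi$ shrinks the admissible family of $\eta$; the second reduces, after expanding both suprema, to equal-class monotonicity in the upper endpoint, again visible from the explicit sum via (1).

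For (4), I assume first $[\varphi]=[\psi]$. By \cref{thm:ibpA}, each measure $\mu_j:=\theta_\psi^j\wedge\theta_\varphi^{n-j}$ has total mass $V_\phi$, and H\"older on each summand followed by a second application of H\"older to the outer sum of $n+1$ terms yields $E_1(\varphi,\psi)\leq V_\phi^{(p-1)/p}E_p(\varphi,\psi)^{1/p}$. The general case follows by taking suprema. For (5), I introduce the Aubin--Yau energy $E(\eta):=\frac{1}{n+1}\sum_j\int_X(\eta-\phi)\theta_\eta^j\wedge\theta_\phi^{n-j}$; using the linear path $\psi_t=(1-t)\varphi+t\psi$, the binomial expansion of $\theta_{\psi_t}^n$, and the identity $\int_0^1\binom{n}{j}t^j(1-t)^{n-j}\,\mathrm{d}t=\tfrac{1}{n+1}$, one derives the classical equality $E(\psi)-E(\varphi)=E_1(\varphi,\psi)$ in the equal-class case, whence additivity for $\varphi\leq\psi\leq\gamma$ is immediate by telescoping $E(\gamma)-E(\varphi)$.

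The chief technical nuisance is the general-class bookkeeping in (2) and (3): the sup-definition of $E_p$ prevents direct use of (1), so one must check compatibility with \cref{lma:comp} and verify that suprema interact correctly with the multilinear energy expressions. Once the equal-class situation is treated on $Y$ and pulled back to $X$, however, these issues are essentially combinatorial, and the remainder of the proof parallels the smooth-manifold arguments of \cite{BEGZ10} and \cite{DDNL18mono}.
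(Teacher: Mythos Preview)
Your treatment of parts (2)--(5) is essentially the paper's: (2) is referred to \cite{BEGZ10}, (4) is H\"older, and (5) is the cocycle identity $E^{\phi}(\psi)-E^{\phi}(\varphi)=E_1(\varphi,\psi)$ for $[\varphi]=[\psi]$ followed by approximation, which is exactly how the paper proceeds. Your handling of (3) via the sup-definition is also what the paper does, though the paper spells out the passage to the limit using the canonical approximations $\varphi_k=(\psi-k)\lor\varphi$ rather than leaving it as ``combinatorial.''

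The substantive difference is in (1). The paper avoids integration by parts entirely: it writes
\[
\int_X (\psi-\varphi)^p\,\theta_{\psi}^j\wedge\theta_{\varphi}^{n-j}=p\int_0^{\infty}t^{p-1}\int_{\{\psi>\varphi+t\}}\theta_{\psi}^j\wedge\theta_{\varphi}^{n-j}\,\mathrm{d}t
\]
and then invokes the comparison principle \cref{lma:comp} on each sublevel set. This works directly for arbitrary $\varphi,\psi\in\mathcal{E}(X,\theta;[\phi])$ with no reduction to equal singularity type and no approximation. Your route via $\int_X u^p\,\ddc u\wedge T=-p\int_X u^{p-1}\,\mathrm{d}u\wedge\dc u\wedge T$ is the classical BEGZ argument, but it has two soft spots as written. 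First, \cref{thm:ibpA} only gives the symmetric identity $\int u\,\ddc v=\int v\,\ddc u$ for $u,v$ differences of quasi-psh functions of the same singularity type; it does \emph{not} give the Stokes-type identity you need for $u^p$ when $p$ is not an integer, so ``suitable primitives of $u^p$'' is not a citation of \cref{thm:ibpA}. You would instead have to pull back to $Y$ and appeal to standard Bedford--Taylor integration by parts for bounded potentials. Second, the reduction to $\mathcal{E}^{\infty}$ ``by canonical approximation combined with \cref{thm:semic}'' is not complete: \cref{thm:semic} only yields a $\varliminf$ inequality, and here both the integrand $(\psi_k-\varphi_k)^p$ and the measure vary with $k$, so you do not automatically get convergence of both sides of the inequality $A_j^{(k)}\geq A_{j+1}^{(k)}$. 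These gaps are repairable, but the paper's layer-cake argument sidesteps them.
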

\begin{proof}
	(i) Write
	\[
	\int_X (\psi-\varphi)^p\,\theta_{\varphi}^j\land \theta_{\psi}^{n-j}=p\int_0^{\infty}t^{p-1}\int_{\{\psi>\varphi+t\}}\,\theta_{\varphi}^j\land \theta_{\psi}^{n-j}\,\mathrm{d}t.
	\]
	By \cref{lma:comp}, $j\mapsto \int_{\{\psi>\varphi+t\}}\,\theta_{\varphi}^j\land \theta_{\psi}^{n-j}$
	is decreasing in $j$.
	
	(ii) The proof follows from the same argument as that of \cite[Proposition~2.8(ii)]{BEGZ10}.
	
	(iii) When $[\varphi]=[\psi]$, this is a direct consequence of (i). In general, for each $j\geq 1$, let $\varphi_j=(\psi-j)\lor \varphi$. For any $j\geq 1$,
	\[
	\begin{aligned}
		\int_X(\psi-\varphi_j)^p\,\theta_{\varphi}^n
		=&\lim_{k\to\infty}\int_{\{\varphi>\psi-k\}}(\psi-\varphi_j)^p\,\theta_{\varphi_k}^n \\
		\leq &\lim_{k\to\infty}\int_{\{\varphi>\psi-k\}}(\psi-\varphi_k)^p\,\theta_{\varphi_k}^n\\
		\leq & (n+1)\lim_{k\to\infty} E_p(\varphi_k,\psi)\\
		\leq & (n+1)E_p(\varphi,\psi).
	\end{aligned}
	\]
	Let $j\to\infty$, by monotone convergence theorem, $F_p(\varphi,\psi)\leq (n+1)E_p(\varphi,\psi)$.
	Now observe that  $F_p(\varphi_k,\psi)\leq F_p(\varphi,\psi)$.
	In fact, for any $\epsilon>0$,
	\[
	\begin{aligned}
		\int_X (\psi-\varphi_k)^p\,\theta_{\varphi_k}^n
		\leq &\int_{\{\varphi<\psi-k\}} (\psi-\varphi_k)^p\,\theta_{\varphi_k}^n+\int_{\{\varphi>\psi-k-\epsilon\}} (\psi-\varphi_k)^p\,\theta_{\varphi}^n\\
		= & k^p\int_{\{\varphi<\varphi_k\}} \,\theta_{\varphi_k}^n+\int_{\{\varphi>\psi-k-\epsilon\}} (\psi-\varphi_k)^p\,\theta_{\varphi}^n\\
		\leq & k^p\int_{\{\varphi<\psi-k\}} \,\theta_{\varphi}^n+\int_{\{\varphi>\psi-k-\epsilon\}} (\psi-\varphi)^p\,\theta_{\varphi}^n && \text{By \cref{lma:comp}}\\
		\leq & \int_X (\psi-\varphi)^p\,\theta_{\varphi}^n+(k+\epsilon)^p\int_{\{\psi-k<\varphi<\psi-k+\epsilon\}}\theta_{\varphi}^n,
	\end{aligned}
	\]
 where the second term tends to $0$ as $\epsilon\to 0+$ by dominated convergence theorem.
	By the arguments of \cite[Proposition~2.10(ii)]{BEGZ10}, $E_p(\varphi,\psi)=\lim_{k\to\infty}E_p(\varphi_k,\psi)$.
	So
	\[
	E_p(\varphi,\psi)=\lim_{k\to\infty}E_p(\varphi_k,\psi)\leq \varlimsup_{k\to\infty} F_p(\varphi_k,\psi)\leq F_p(\varphi,\psi). 
	\]
	
	By Fatou's lemma,
	\[
	G_p(\varphi,\psi)\leq \varliminf_{k\to\infty}G_p(\varphi_k,\psi)\leq \lim_{k\to\infty}E_p(\varphi_k,\psi)=E_p(\varphi,\psi).
	\]
	
	Finally, let us prove that $F_p(\varphi,\psi)<\infty$. Take a constant $C_1>0$ such that $\psi<V_{\theta}+C_1$, then
	\[
	\int_X (\psi-\varphi)^p\,\theta_{\varphi}^n\leq \int_X (C_1+V_{\theta}-\varphi)^p\,\theta_{\varphi}^n\leq C+C\int_X (V_{\theta}-\varphi)^p\,\theta_{\varphi}^n <\infty.
	\]
	
	(iv) We may assume that $\varphi,\psi\in\Ei$. This is a consequence of H\"older's inequality.
	
	(v) Assume that in addition, $[\varphi]=[\psi]=[\gamma]$,  then this is a direct generalization of \cite[Theorem~4.10]{DDNL18mono}. One just needs the integration by parts formula \cref{thm:ibpA}. In general, one concludes by canonical approximations and a generalization of \cite[Theorem~10.37]{GZ17}.
\end{proof}

Recall that as in \cite[Section~4.2]{DDNL18mono}, one can define a functional $E^{\phi}:\PSH(X,\theta;[\phi])\rightarrow [-\infty,\infty)$ as follows:
\[
E^{\phi}(\varphi)\coloneqq \frac{1}{n+1}\sum_{j=0}^n \int_X (\varphi-\phi)\,\theta_{\varphi}^j\wedge \theta_{\phi}^{n-j}
\]
when $\varphi\in \Ei$ and $E^{\phi}(\varphi)\coloneqq \inf_{\psi} E^{\phi}(\psi)$
in general, where the $\inf$ is taken over $\psi\in \Ei$ such that $\varphi\leq \psi$.

Note that
\begin{equation}\label{eq:coc}
	E_1(\varphi,\psi)=E^{\phi}(\psi)-E^{\phi}(\varphi)
\end{equation}
when $\varphi\leq \psi$ and $\varphi,\psi\in \mathcal{E}^1(X,\theta;[\phi])$. \begin{remark}\label{rmk:ibp}
	In \cite{DDNL18mono}, the authors assumed in addition that $\phi$ has small unbounded locus to make sure that one can perform the integration by parts. Since the general integration by parts formula is proved in \cref{thm:ibpA}, we no longer need this assumption. One can easily check that all results in \cite{DDNL18mono} still hold in general by the same proof even in the unibranch setting. We will apply this remark without explicitly mention.
\end{remark}

\begin{proposition}\label{prop:f}
	Let $\varphi,\psi,\gamma,\varphi_j\in \Ep$ ($j\in \mathbb{Z}_{>0}$). 
	\begin{enumerate}
		\item Assume that $\gamma\geq \varphi\lor \psi$. Then 
		\begin{equation}\label{eq:fp}
			F_p(\gamma,\varphi)+F_p(\gamma,\psi)\geq F_p(\varphi,\psi)= F_p(\varphi,\varphi\land \psi)+ F_p(\psi,\varphi\land \psi)\,.
		\end{equation}
		
		\item  $F_p(\gamma\land\varphi,\gamma\land \psi)\leq F_p(\varphi,\psi)$.
		
		\item Assume that $\varphi\leq \psi$. Let $\varphi_t=(1-t)\varphi+t\psi$. Then for any $N>0$,
		\[
		\sum_{j=0}^{N-1}F_p(\varphi_{j/N},\varphi_{(j+1)/N})^{1/p}\leq F_p(\varphi,\psi)^{1/p}\,.
		\]
		\item Assume that $\varphi_j$ increases a.e. to $\varphi$, then $\lim_{j\to\infty} F_p(\varphi_j,\varphi)=0$,
	\end{enumerate}
\end{proposition}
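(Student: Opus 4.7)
My plan is to treat (1) and (3) directly, then reduce (2) and (4) to the K\"ahler manifold case by pulling back along a resolution $\pi:Y\to X^{\Red}$, using \cref{prop:corr}, \cref{prop:projnp} and \cref{lma:roofpro}, which together ensure that both $F_p$ and $\land$ commute with $\pi^*$.

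For (1), the identity is pure bookkeeping based on \cref{cor:rooftop}: the measure $\theta_{\varphi\land\psi}^n$ is concentrated on $A\cup B$, with $A=\{\varphi\land\psi=\varphi\}$ and $B=\{\varphi\land\psi=\psi\}$, and restricts to a measure bounded by $\theta_\varphi^n$ on $A$ and by $\theta_\psi^n$ on $B$. Since $\varphi-\varphi\land\psi$ vanishes on $A$ and equals $\varphi-\psi$ on $B$, and symmetrically for $\psi-\varphi\land\psi$, adding $F_p(\varphi,\varphi\land\psi)$ and $F_p(\psi,\varphi\land\psi)$ reassembles exactly $\int_{A\cup B}|\varphi-\psi|^p\,\theta_{\varphi\land\psi}^n=F_p(\varphi,\psi)$. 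The inequality is equally immediate: on $A$, the hypothesis $\gamma\geq\psi$ gives $\psi-\varphi\leq\gamma-\varphi$, and combined with $\theta_{\varphi\land\psi}^n\leq\theta_\varphi^n=\theta_{\gamma\land\varphi}^n$ on $A$ this dominates the $A$-contribution by $F_p(\gamma,\varphi)$; the $B$-contribution is handled symmetrically.

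For (3), since $\varphi_{j/N}\leq\varphi_{(j+1)/N}$ their rooftop is $\varphi_{j/N}$ itself, so
\[
F_p(\varphi_{j/N},\varphi_{(j+1)/N})=\frac{1}{N^p}\int_X(\psi-\varphi)^p\,\theta_{\varphi_{j/N}}^n.
\]
Writing $\theta_{\varphi_{j/N}}=(1-j/N)\theta_\varphi+(j/N)\theta_\psi$ and invoking the multilinearity of \cref{prop:npp1}(6) iteratively, I expand $\theta_{\varphi_{j/N}}^n$ binomially. Each mixed term $\int(\psi-\varphi)^p\,\theta_\varphi^{n-k}\wedge\theta_\psi^k$ is bounded by $\int(\psi-\varphi)^p\,\theta_\varphi^n$: by layer-cake this reduces to $\int_{\{\psi-\varphi>t\}}\theta_\psi^k\wedge\theta_\varphi^{n-k}\leq\int_{\{\psi-\varphi>t\}}\theta_\varphi^n$, which is \cref{lma:comp} applied with $\gamma_i=\varphi$ to the (still $\theta$-psh) shifted potential $\varphi+t$ in place of $\varphi$. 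Collecting the binomial coefficients yields $F_p(\varphi_{j/N},\varphi_{(j+1)/N})^{1/p}\leq N^{-1}F_p(\varphi,\psi)^{1/p}$, and summing over $j=0,\dots,N-1$ gives (3).

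For (2) and (4), the pullback to the Kähler manifold $Y$ reduces each to its smooth counterpart. Statement (2) then becomes the standard rooftop contraction of $F_p$, which can be recovered by decomposing $F_p(\gamma\land\varphi,\gamma\land\psi)$ through the identity in (1) and comparing each piece with the matching piece of $F_p(\varphi,\psi)$ via the pointwise contraction $|\gamma\land\varphi-\gamma\land\psi|\leq|\varphi-\psi|$ off a pluripolar set. For (4), one combines the energy comparison $F_p\leq(n+1)E_p$ from \cref{prop:e}(3) with the continuity of $E_p$ along monotone sequences; for $p=1$ this continuity follows from the cocycle formula \eqref{eq:coc} and the monotone continuity of $E^\phi$, while for $p>1$ it is a classical fact on compact Kähler manifolds recovered through integration by parts and the inequalities among $F_p$, $G_p$ and $E_p$. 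The main obstacle is exactly (4): the pointwise decrease $(\varphi-\varphi_j)^p\searrow 0$ alone does not suffice, because the measures $\theta_{\varphi_j}^n$ can concentrate along the shrinking support of $\varphi-\varphi_j$; it is the energy comparison together with the monotone behaviour of $E_p$, rather than dominated convergence, that closes the gap.
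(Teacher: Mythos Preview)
Your treatment of (1) and (3) is correct and essentially identical to the paper's. The gaps are in (2) and (4).

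For (2), reducing to the ordered case $\varphi\leq\psi$ via the identity in (1) is fine, but the asserted pointwise contraction $|\gamma\land\varphi-\gamma\land\psi|\leq|\varphi-\psi|$ off a pluripolar set is neither standard nor justified here; the rooftop envelope is a global operation and there is no evident reason its values should contract pointwise. Even granting that inequality, it only controls the integrand: $F_p(\gamma\land\varphi,\gamma\land\psi)$ is taken against $\theta_{\gamma\land\varphi}^n$ while $F_p(\varphi,\psi)$ is taken against $\theta_\varphi^n$, and you provide no comparison of these measures. The paper bypasses both issues by applying \cref{cor:rooftop} directly to $\theta_{\gamma\land\varphi}^n$: on $\{\gamma\land\varphi=\gamma\}$ one has $\gamma\leq\varphi\leq\psi$, forcing $\gamma\land\psi=\gamma$ there so the integrand vanishes; on $\{\gamma\land\varphi=\varphi\}$ one has simultaneously $\gamma\land\psi-\varphi\leq\psi-\varphi$ and $\theta_{\gamma\land\varphi}^n\leq\theta_\varphi^n$, which together give the bound.

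For (4), your route through $F_p\leq(n+1)E_p$ plus monotone continuity of $E_p$ works when $p=1$ via the cocycle identity \eqref{eq:coc}, but for $p>1$ it is circular. Since $G_p\leq E_p\leq F_p\leq(n+1)E_p$ by \cref{prop:e}(3), the statements $E_p(\varphi_j,\varphi)\to 0$ and $F_p(\varphi_j,\varphi)\to 0$ are equivalent, and the ``classical fact'' you invoke is precisely the claim to be proved; integration by parts does not break the circle, as the expansion of $E_p$ contains $\int(\varphi-\varphi_j)^p\,\theta_{\varphi_j}^n=F_p(\varphi_j,\varphi)$ among its terms. The paper's argument is genuinely different: truncate to $\varphi_j^C=\varphi_j\lor(\phi-C)$ and $\varphi^C=\varphi\lor(\phi-C)$, where relative boundedness gives $F_p(\varphi_j^C,\varphi^C)\to 0$ from \cite[Theorem~2.3]{DDNL18mono}, and then control $|F_p(\varphi_j,\varphi)-F_p(\varphi_j^C,\varphi^C)|$ uniformly in $j$ by choosing an auxiliary weight $\chi$ with $t^p/\chi(t)\to 0$ and $\sup_j\int\chi(\phi-\varphi_j)\,\theta_{\varphi_j}^n<\infty$, exactly as in \cite[Theorem~2.17]{BEGZ10}. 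You correctly diagnose that dominated convergence alone fails; the missing ingredient is this weight trick, not the $E_p$ comparison.
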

\begin{proof}
	We may assume that $X$ is a Kähler manifold throughout the proof.
	
	(1) By \cref{cor:rooftop},
	\[
	\mathds{1}_{\{\varphi\neq \psi\}}\theta_{\varphi\land \psi}^n\leq \mathds{1}_A\theta_{\varphi}^n+\mathds{1}_B\theta_{\psi}^n\,,
	\]
	where $A=\{\varphi\land\psi=\varphi<\psi\}$, $B=\{\varphi\land\psi=\psi<\varphi\}$.
	Note that $A$ and $B$ are disjoint sets.
	
	In particular,
	\[
	F_p(\varphi,\psi)\leq \int_X |\varphi-\psi|^p\mathds{1}_A\,\theta_{\varphi}^n+\int_X |\varphi-\psi|^p\mathds{1}_B\,\theta_{\psi}^n\,.
	\]
	Observe that on the set $A$, we have $\varphi<\psi\leq \gamma$.
	So
	\[
	\int_X |\varphi-\psi|^p\mathds{1}_A\,\theta_{\varphi}^n\leq \int_X |\varphi-\gamma|^p\,\theta_{\varphi}^n\,.
	\]
	Similarly,
	\[
	\int_X |\varphi-\psi|^p\mathds{1}_B\,\theta_{\varphi}^n\leq \int_X |\psi-\gamma|^p\,\theta_{\psi}^n\,.
	\]
	The first inequality in \eqref{eq:fp} follows. For the second, we observe that
	\[
	\begin{aligned}
		F_p(\varphi,\varphi\land \psi)+F_p(\psi,\varphi\land \psi)
		=&\int_X \left((\varphi-\varphi\land\psi)^p+(\psi-\varphi\land\psi)^p\right)\,\theta_{\varphi\land\psi}^n\\
		= & \int_X \mathds{1}_{\{\varphi\land\psi=\varphi\}}\left((\varphi-\varphi\land\psi)^p+(\psi-\varphi\land\psi)^p\right)\,\theta_{\varphi\land\psi}^n\\
		&+\int_X \mathds{1}_{\{\varphi\land\psi=\psi<\varphi\}}\left((\varphi-\varphi\land\psi)^p+(\psi-\varphi\land\psi)^p\right)\,\theta_{\varphi\land\psi}^n && \text{By \cref{cor:rooftop}}\\
		= & \int_X \mathds{1}_{\{\varphi\land\psi=\varphi\}}|\psi-\varphi|^p\theta_{\varphi\land\psi}^n+\int_X \mathds{1}_{\{\varphi\land\psi=\psi<\varphi\}}|\psi-\varphi|^p\theta_{\varphi\land\psi}^n\\
		= & \int_X|\psi-\varphi|^p\theta_{\varphi\land\psi}^n && \text{By \cref{cor:rooftop}}\\
		= & F_p(\varphi,\psi)\,.
	\end{aligned}
	\]
	
	(2) By (1), we may assume that $\varphi\leq \psi$. Then
	\[
	\begin{aligned}
		F_p(\gamma\land\varphi,\gamma\land \psi)
		=&\int_X (\gamma\land \psi-\gamma\land\varphi)^p\,\theta_{\gamma\land\varphi}^n\\
		\leq & \int_{\{\gamma\land \varphi=\varphi\}} (\gamma\land \psi-\varphi)^p\,\theta_{\varphi}^n && \text{By \cref{cor:rooftop}}\\
		\leq & \int_{\{\gamma\land \varphi=\varphi\}} ( \psi-\varphi)^p\,\theta_{\varphi}^n\\
		\leq &\int_X ( \psi-\varphi)^p\,\theta_{\varphi}^n\\
		=& F_p(\varphi,\psi)\,.
	\end{aligned}
	\]
	
	(3) We observe that
	\[
	\sum_{j=0}^{N-1} \left(\int_X (\varphi_{(j+1)/N}-\varphi_{j/N})^p \,\theta_{\varphi_{j/N}}^n\right)^{1/p}
	= \frac{1}{N}\sum_{j=0}^{N-1} \left(\int_X (\psi-\varphi)^p \,\theta_{\varphi_{j/N}}^n\right)^{1/p}\,.
	\]
	So it suffices to find a uniform upper bound of the summand.

	In fact, 
	\[
	\begin{aligned}
		\int_X (\psi-\varphi)^p \,\theta_{\varphi_t}^n
		=&\int_X (\psi-\varphi)^p \,\left(t\theta_{\psi}+(1-t)\theta_{\varphi}\right)^n\\
		=&\sum_{j=0}^n \binom{n}{j}\int_X t^j(1-t)^{n-j}(\psi-\varphi)^p \,\theta_{\psi}^j\wedge \theta_{\varphi}^{n-j}\\
		\leq& \sum_{j=0}^n \binom{n}{j}\int_X t^j(1-t)^{n-j}(\psi-\varphi)^p \,\theta_{\varphi}^{n} && \text{By \cref{prop:e}}\\
		=&\int_X (\psi-\varphi)^p \,\theta_{\varphi}^{n}\,.
	\end{aligned}
	\]
	
	(4) We may assume that $\varphi\leq 0$.
	For each $C\geq 0$, let
	\[
	\varphi_j^C=\varphi_j \lor (\phi-C)\,,\quad \varphi^C=\varphi \lor (\phi-C)\,.
	\]
	By \cite[Theorem~2.3, Remark~2.5]{DDNL18mono}, $\lim_{j\to\infty} F_p(\varphi_j^C,\varphi^C)=0$. The remaining  proof is the same as \cite[Theorem~2.17]{BEGZ10}. We reproduce the proof for the convenience of the readers.
	
	Take a function $\chi:(-\infty,\infty]\rightarrow \mathbb{R}$ satisfying
	\begin{enumerate}
		\item $\chi$ is concave, continuous, decreasing, $\chi(0)=0$, $\chi(\infty)=\infty$.
		\item There is a constant $M>1$ such that $|t\chi'(t)|\leq M|\chi(t)|$ for all $t\geq 0$.
		\item  $\frac{t^p}{\chi(t)}$ decreases to $0$ as $t\to\infty$.
		\item 
		\[
		\int_X \chi\circ (\phi-\varphi_k) \theta_{\varphi_k}^n<\infty\,.
		\]
	\end{enumerate}
	The existence of such weight follows from the standard analysis, see \cite{GZ07} for example.
	
	Now we estimate
	\[
	\begin{aligned}
		\left|F_p(\varphi_j,\varphi)-F_p(\varphi_j^C,\varphi^C)\right|\leq & \left|\int_{\{\varphi_k\leq \phi-C\}} (\varphi-\varphi_k)^p \theta_{\varphi_k}^n-\int_{\{\varphi_k\leq \phi-C\}} (\varphi^C-\varphi_k^C)^p \theta_{\varphi_k^C}^n\right|\\
		\leq &\int_{\{\varphi_k\leq \phi-C\}} (\phi-\varphi_k)^p \theta_{\varphi_k}^n-\int_{\{\varphi_k\leq \phi-C\}} (\phi-\varphi_k^C)^p \theta_{\varphi_k^C}^n\\
		\leq &\frac{C^p}{\chi(C)}\left(\int_{\{\varphi_k\leq \phi-C\}} \chi\circ(\phi-\varphi_k) \theta_{\varphi_k}^n+\int_{\{\varphi_k\leq \phi-C\}} \chi\circ(\phi-\varphi_k^C) \theta_{\varphi_k^C}^n\right)\\
		\leq & C_0\frac{C^p}{\chi(C)} \quad \text{By \cref{prop:fundam} and \cref{rmk:gener}}\,.
	\end{aligned}
	\]
	Hence, we conclude.
\end{proof}

\begin{proposition}\label{prop:g}
	Let $\varphi,\psi,\gamma\in\Ep$, $\varphi\leq \psi$. Then
	\begin{enumerate}
		\item $G_p(\gamma\lor\varphi,\gamma\lor\psi)\leq G_p(\varphi,\psi)$.
		\item Let $\varphi_t=(1-t)\psi+t\varphi$. Then for any $N>0$,
		\[
		\sum_{j=0}^{N-1}G_p(\varphi_{j/N},\varphi_{(j+1)/N})^{1/p}\geq G_p(\varphi,\psi)^{1/p}\,.
		\]
	\end{enumerate}
\end{proposition}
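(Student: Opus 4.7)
The plan is to handle the two parts separately, both by reducing to the basic definition $G_p(u,v)=\int_X(v-u)^p\theta_v^n$ in the case $u\leq v$ and exploiting plurifine locality together with \cref{prop:e}(1).

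For (1), first note that $\gamma\lor\varphi\leq \gamma\lor\psi$ since $\varphi\leq\psi$, so the quantity is $\int_X(\gamma\lor\psi-\gamma\lor\varphi)^p\,\theta_{\gamma\lor\psi}^n$. On the pluri-fine open set $\{\psi>\gamma\}$ we have $\gamma\lor\psi=\psi$, hence by the plurilocality of the non-pluripolar product
\[
\theta_{\gamma\lor\psi}^n\big|_{\{\psi>\gamma\}}=\theta_\psi^n\big|_{\{\psi>\gamma\}}.
\]
On the complement $\{\psi\leq\gamma\}$ we have $\varphi\leq\psi\leq\gamma$, so $\gamma\lor\varphi=\gamma=\gamma\lor\psi$, and the integrand vanishes there. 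Since moreover $0\leq \gamma\lor\psi-\gamma\lor\varphi\leq \psi-\varphi$ pointwise, we get
\[
G_p(\gamma\lor\varphi,\gamma\lor\psi)=\int_{\{\psi>\gamma\}}(\gamma\lor\psi-\gamma\lor\varphi)^p\,\theta_\psi^n\leq \int_X(\psi-\varphi)^p\,\theta_\psi^n=G_p(\varphi,\psi).
\]

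For (2), observe that $\varphi_t$ is decreasing in $t$, so $\varphi_{j/N}\land\varphi_{(j+1)/N}=\varphi_{(j+1)/N}$ and directly from the definition
\[
G_p(\varphi_{j/N},\varphi_{(j+1)/N})=\int_X\bigl(\varphi_{j/N}-\varphi_{(j+1)/N}\bigr)^p\theta_{\varphi_{j/N}}^n=\tfrac{1}{N^p}\int_X(\psi-\varphi)^p\,\theta_{\varphi_{j/N}}^n.
\]
Since $\psi-\varphi=\sum_{j=0}^{N-1}(\varphi_{j/N}-\varphi_{(j+1)/N})$, the Minkowski inequality applied with respect to the single measure $\theta_\psi^n$ gives
\[
G_p(\varphi,\psi)^{1/p}=\Bigl(\int_X(\psi-\varphi)^p\,\theta_\psi^n\Bigr)^{1/p}\leq \sum_{j=0}^{N-1}\Bigl(\int_X(\varphi_{j/N}-\varphi_{(j+1)/N})^p\,\theta_\psi^n\Bigr)^{1/p}.
\]
It remains to compare the measures $\theta_\psi^n$ and $\theta_{\varphi_{j/N}}^n$ on the right. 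Expanding multilinearly,
\[
\theta_{\varphi_t}^n=\bigl((1-t)\theta_\psi+t\theta_\varphi\bigr)^n=\sum_{k=0}^n\binom{n}{k}(1-t)^k t^{n-k}\theta_\psi^k\wedge\theta_\varphi^{n-k},
\]
and \cref{prop:e}(1) says $k\mapsto \int_X(\psi-\varphi)^p\,\theta_\psi^k\wedge\theta_\varphi^{n-k}$ is decreasing. Hence every term in the expansion dominates the $k=n$ term, which yields
\[
\int_X(\psi-\varphi)^p\,\theta_\psi^n\leq \int_X(\psi-\varphi)^p\,\theta_{\varphi_t}^n\quad \text{for every }t\in[0,1].
\]
Combining with the Minkowski bound and the identity for $G_p(\varphi_{j/N},\varphi_{(j+1)/N})$, we conclude
\[
G_p(\varphi,\psi)^{1/p}\leq \sum_{j=0}^{N-1}\tfrac{1}{N}\Bigl(\int_X(\psi-\varphi)^p\,\theta_{\varphi_{j/N}}^n\Bigr)^{1/p}=\sum_{j=0}^{N-1}G_p(\varphi_{j/N},\varphi_{(j+1)/N})^{1/p}.
\]

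The only real subtle step is the plurifine-locality input in (1), which allows us to restrict $\theta_{\gamma\lor\psi}^n$ to $\theta_\psi^n$ on $\{\psi>\gamma\}$; everything else is Minkowski plus the monotonicity principle already proved in \cref{prop:e}(1).
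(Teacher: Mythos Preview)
Your proof is correct and follows essentially the same route as the paper. For (1) the paper gives exactly the same two-line computation: restrict to $\{\gamma<\psi\}$ via plurifine locality of the non-pluripolar product (so $\theta_{\gamma\lor\psi}^n=\theta_\psi^n$ there), observe the integrand vanishes on the complement, and bound $\gamma\lor\psi-\gamma\lor\varphi\leq\psi-\varphi$. For (2) the paper simply says ``similar to \cref{prop:f}(3)'', which is precisely what you do: expand $\theta_{\varphi_t}^n$ multilinearly and use the monotonicity from \cref{prop:e}(1), this time as a \emph{lower} bound $\int_X(\psi-\varphi)^p\theta_{\varphi_t}^n\geq\int_X(\psi-\varphi)^p\theta_\psi^n$ rather than the upper bound used for $F_p$. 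One cosmetic remark: the Minkowski step is superfluous, since each summand $\varphi_{j/N}-\varphi_{(j+1)/N}$ equals $\tfrac{1}{N}(\psi-\varphi)$ and the inequality is an equality; the real content is the measure comparison via \cref{prop:e}(1).
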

\begin{proof}
	(1) We calculate
	\[
	\begin{aligned}
		G_p(\gamma\lor\varphi,\gamma\lor\psi)
		=& \int_X (\gamma\lor\psi-\gamma\lor\varphi)^p\,\theta_{\gamma\lor\psi}^n\\
		= & \int_{\{\gamma<\psi\}}(\gamma\lor\psi-\gamma\lor\varphi)^p\,\theta_{\psi}^n\\
		\leq & \int_X(\psi-\varphi)^p\,\theta_{\psi}^n\\
		=& G_p(\varphi,\psi)\,.
	\end{aligned}
	\]
	
	(2) The proof is similar to that of \cref{prop:f}, we omit it.
\end{proof}

\begin{lemma}\label{lma:refined}
	Let $\varphi,\psi\in\Ep$, assume that $\varphi\leq \psi$. Let $\varphi_t=t\psi+(1-t)\varphi$.
	Then for $N\geq 1$,
	\[
	\sum_{j=0}^{N-1} \left(F_p(\varphi_{j/N},\varphi_{(j+1)/N})^{1/p}-G_p(\varphi_{j/N},\varphi_{(j+1)/N})^{1/p}\right)\leq CN^{-1/p}\,,
	\]
	where $C>0$ depends on $\varphi$ and $\psi$.
\end{lemma}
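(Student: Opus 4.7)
The plan is to exploit the very explicit form that $F_p$ and $G_p$ take along the linear segment $\varphi_t$ and then observe that the difference of their $p$-th roots telescopes. Since $\varphi_{j/N}\leq \varphi_{(j+1)/N}$, their rooftop is $\varphi_{j/N}$, and
\[
\varphi_{(j+1)/N}-\varphi_{j/N}=\frac{1}{N}(\psi-\varphi)\,,
\]
so directly from \cref{def:enr}(1) and (2),
\[
F_p(\varphi_{j/N},\varphi_{(j+1)/N})^{1/p}=\frac{a_j}{N}\,,\qquad G_p(\varphi_{j/N},\varphi_{(j+1)/N})^{1/p}=\frac{a_{j+1}}{N}\,,
\]
where $a_j:=\bigl(\int_X(\psi-\varphi)^p\,\theta_{\varphi_{j/N}}^n\bigr)^{1/p}$.

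With this in hand the sum in the statement is immediately telescoping. Noting that $\varphi_0=\varphi$ and $\varphi_{N/N}=\psi$, we identify $a_0=F_p(\varphi,\psi)^{1/p}$ and $a_N=G_p(\varphi,\psi)^{1/p}$, so
\[
\sum_{j=0}^{N-1}\left(F_p(\varphi_{j/N},\varphi_{(j+1)/N})^{1/p}-G_p(\varphi_{j/N},\varphi_{(j+1)/N})^{1/p}\right)=\frac{F_p(\varphi,\psi)^{1/p}-G_p(\varphi,\psi)^{1/p}}{N}\,.
\]
By \cref{prop:e}(3), $F_p(\varphi,\psi)<\infty$ and $G_p(\varphi,\psi)\geq 0$, so the numerator is bounded by a constant $C$ depending only on $\varphi$ and $\psi$. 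Since $p\geq 1$ gives $N^{-1}\leq N^{-1/p}$, we obtain the claimed estimate.

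There is really no serious obstacle here: the statement reduces to a bookkeeping exercise once one notices the telescoping, with the only input beyond the definitions being the finiteness $F_p(\varphi,\psi)<\infty$ from \cref{prop:e}(3). In fact the argument yields the slightly stronger bound $CN^{-1}$; the weaker rate $CN^{-1/p}$ in the statement is presumably the form most convenient for the subsequent applications (for instance in checking the $p$-strictness of the rooftop metric structure on $\Ep$).
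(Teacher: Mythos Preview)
Your argument is correct and in fact cleaner than the paper's. The key observation that
\[
F_p(\varphi_{j/N},\varphi_{(j+1)/N})^{1/p}=\frac{a_j}{N}\,,\qquad
G_p(\varphi_{j/N},\varphi_{(j+1)/N})^{1/p}=\frac{a_{j+1}}{N}
\]
with $a_j=\bigl(\int_X(\psi-\varphi)^p\,\theta_{\varphi_{j/N}}^n\bigr)^{1/p}$ makes the sum telescope exactly, and you get the sharper bound $CN^{-1}$ essentially for free.

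The paper proceeds differently: it first passes from $F_p^{1/p}-G_p^{1/p}$ to $(F_p-G_p)^{1/p}$ via the concavity inequality $a^{1/p}-b^{1/p}\le(a-b)^{1/p}$, then expands
\[
F_p(\varphi_{j/N},\varphi_{(j+1)/N})-G_p(\varphi_{j/N},\varphi_{(j+1)/N})
=\frac{1}{N^{p+1}}\sum_{a=0}^{n-1}\int_X(\psi-\varphi)^p\,\ddc(\varphi-\psi)\wedge\theta_{\varphi_{j/N}}^a\wedge\theta_{\varphi_{(j+1)/N}}^{n-1-a}
\]
and bounds each mixed integral by a constant $C_0$ depending on $\varphi,\psi$. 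This gives $F_p-G_p\le C N^{-1-p}$ termwise, hence $\sum_j(F_p-G_p)^{1/p}\le C N^{-1/p}$. The paper's route thus relies on controlling the mixed energy integrals, whereas your telescoping argument avoids this entirely and needs only the finiteness of $F_p(\varphi,\psi)$. Your approach is both shorter and yields a better decay rate; the paper's argument, on the other hand, produces an explicit termwise estimate that could in principle be reused if one needed more than just the sum.
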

\begin{proof}
	In fact, it suffices to estimate
	\[
	\sum_{j=0}^{N-1} \left(F_p(\varphi_{j/N},\varphi_{(j+1)/N})-G_p(\varphi_{j/N},\varphi_{(j+1)/N})\right)^{1/p}\,.
	\]
	We estimate each term
	\[
	\begin{split}
		F_p(\varphi_{j/N},\varphi_{(j+1)/N})-G_p(\varphi_{j/N},\varphi_{(j+1)/N})
		=&\frac{1}{N^p}\int_X (\varphi_1-\varphi_0)^p(\theta_{\varphi_{j/N}}^n-\theta_{\varphi_{(j+1)/N}}^n)\\
		=&\frac{1}{N^{p+1}}\sum_{a=0}^{n-1}\int_X (\psi-\varphi)^p\ddc(\varphi-\psi) \wedge \theta_{\varphi_{j/N}}^a\wedge \theta_{\varphi_{(j+1)/N}}^{n-1-a}\,.
	\end{split}
	\]
	Let $C_0$ be a common upper bound for terms of the form:
	\[
	\left|\int_X (\psi-\varphi)^p\ddc(\varphi-\psi) \wedge \theta_{\varphi}^a\wedge \theta_{\psi}^{n-1-a}\right|\,.
	\]
	Expand $\theta_{\varphi_{j/N}}$ as a linear combination of $\theta_{\varphi}$ and $\theta_{\psi}$, we find immediately
	\[
	F_p(\varphi_{j/N},\varphi_{(j+1)/N})-G_p(\varphi_{j/N},\varphi_{(j+1)/N})\leq C C_0 N^{-1-p}\,.
	\]
\end{proof}

\section{Rooftop structures}\label{sec:rooftop}
\begin{definition}
	Let $E$ be a set. A \emph{pre-rooftop structure} on $E$ is a binary operator $\land:E\times E\rightarrow E$, satisfying the following axioms: for $x,y,z\in E$,
	\begin{enumerate}
		\item $x\land y=y\land x$.
		\item $(x\land y)\land z=x\land (y\land z)$.
		\item $x\land x=x$.
	\end{enumerate}
	We call $(E,\land)$ a pre-rooftop space. 
	
	A morphism between rooftop spaces $(E,\land)\rightarrow (E',\land')$ is a map $f:E\rightarrow E'$ such that
	\[
	f(x\land y)=f(x)\land' f(y)\,,\quad x,y\in E\,.
	\]
\end{definition}

A pre-rooftop structure $\land$ defines a partial order $\leq$ on $E$ as follows:
\[
x\leq y\quad \text{if and only if } \quad  x\land y=x\,.
\]
Here by abuse of notation, we use $\leq$ to denote the partial order.

In particular, it makes sense to talk about an increasing and decreasing sequences in $E$.

\begin{definition}\label{def:roof}
	Let $(E,d)$ be a metric space. A \emph{pre-rooftop structure} on $(E,d)$ is a pre-rooftop structure $\land$ on $E$. We say $(E,d,\land)$ is a pre-rooftop metric space. A morphism between pre-rooftop metric spaces $(E,d,\land)\rightarrow (E',d',\land')$ is a morphism $f:(E,\land)\rightarrow (E',\land')$, which is also distance decreasing. Let $\PRTCat$ be the category of pre-rooftop metric spaces.
	
	A \emph{rooftop structure} on $(E,d)$ is a pre-rooftop structure $\land$ on $E$ such that
	\begin{equation}\label{eq:rtdef}
		d(x\land z,y\land z)\leq d(x,y),\quad \forall x,y,z\in E.
	\end{equation}
	We call $(E,d,\land)$ a \emph{rooftop metric space}. 
	Morphisms between rooftop metric spaces is the same as morphisms between underlying pre-rooftop metric spaces.
	Let $\RTCat$ be the category of rooftop metric spaces.
	
	We say the rooftop structure $\land$ is \emph{$p$-strict} ($p\in [1,\infty)$) if the following \emph{Pythagorean formula} holds:
	\begin{equation}\label{eq:generalPyt}
		d(x,y)^p=d(x,x\land y)^p+d(y,x\land y)^p,\quad \forall x,y\in E.
	\end{equation}
	In this case, we also say $(E,d,\land)$ is a $p$-strict rooftop metric space.
\end{definition}
The name rooftop comes from the K\"ahler setting, where $\varphi\land \psi$ is known as the rooftop envelop of the quasi-psh functions $\varphi,\psi$ in the literature.

\begin{lemma}\label{lma:landdis}
	Let $(E,d,\land)\in \RTCat$. Let $x,y,x',y'\in E$, then
	\begin{equation}
		d(x\land y,x'\land y')\leq d(x,x')+d(y,y').
	\end{equation}
\end{lemma}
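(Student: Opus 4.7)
The plan is to apply the triangle inequality for $d$ together with the defining distance-decreasing property \eqref{eq:rtdef} of the rooftop structure twice, changing one argument at a time. More precisely, I would first insert the intermediate element $x'\land y$ and write
\[
d(x\land y, x'\land y') \leq d(x\land y, x'\land y) + d(x'\land y, x'\land y').
\]
Then, by \eqref{eq:rtdef} applied with the third (fixed) argument equal to $y$, the first summand is bounded by $d(x,x')$; by \eqref{eq:rtdef} applied with the third argument equal to $x'$ (and using commutativity $\land$ to put the varying slot on the right), the second summand is bounded by $d(y,y')$.

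The argument is essentially one line once the right intermediate point is chosen, and there is no real obstacle: the only subtlety is remembering that the axiom \eqref{eq:rtdef} allows the third argument to sit on either side thanks to axiom (1) (commutativity) of the pre-rooftop structure. No appeal to $p$-strictness or to any ambient completeness is needed, so the statement holds in every rooftop metric space.
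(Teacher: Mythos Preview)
Your proof is correct and essentially identical to the paper's: the paper also applies the triangle inequality once and then the rooftop axiom \eqref{eq:rtdef} twice, the only cosmetic difference being that it inserts the intermediate point $x\land y'$ rather than your $x'\land y$.
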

\begin{proof}
	We compute
	\[
	d(x\land y,x'\land y')\leq d(x\land y,x\land y')+d(x\land y',x'\land y')\leq  d(x,x')+d(y,y').
	\]
\end{proof}

\begin{proposition}\label{prop:rooftopcomp}
	Let $(E,d,\land)\in \RTCat$. Then $(E,d)$ is complete if and only if  both of the followings hold:
	\begin{enumerate}
		\item Each increasing Cauchy sequence converges.
		\item Each decreasing Cauchy sequence converges.
	\end{enumerate}
\end{proposition}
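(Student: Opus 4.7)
The forward implication is immediate, since in a complete metric space every Cauchy sequence converges. For the reverse direction, the plan is the following: given an arbitrary Cauchy sequence $(x_n)$ in $E$, I will extract a rapidly Cauchy subsequence, build decreasing ``partial meet'' sequences from it, apply hypothesis~(2) to each of them, and then assemble the resulting limits into a single increasing Cauchy sequence to which hypothesis~(1) applies.

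More precisely, I will pass to a subsequence $(x_{n_k})$ satisfying $d(x_{n_k}, x_{n_{k+1}}) \leq 2^{-k}$, and for each $k \geq 0$ I set
\[
y_k^m := x_{n_k} \land x_{n_{k+1}} \land \cdots \land x_{n_{k+m}}, \qquad m \geq 0.
\]
For fixed $k$ this sequence is decreasing in the order induced by $\land$. Writing $y_k^{m+1} = y_k^m \land x_{n_{k+m+1}}$ and using that $y_k^m \leq x_{n_{k+m}}$ (so $y_k^m = y_k^m \land x_{n_{k+m}}$), the distance-decreasing axiom \eqref{eq:rtdef} yields
\[
d(y_k^m, y_k^{m+1}) = d(y_k^m \land x_{n_{k+m}}, y_k^m \land x_{n_{k+m+1}}) \leq d(x_{n_{k+m}}, x_{n_{k+m+1}}) \leq 2^{-(k+m)},
\]
so $(y_k^m)_m$ is a decreasing Cauchy sequence. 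Hypothesis~(2) produces a limit $y_k \in E$.

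Next I will check that $(y_k)_k$ is itself increasing and Cauchy. Continuity of $\land$, which follows from \cref{lma:landdis}, allows me to let $m \to \infty$ in the identity $y_k^m = x_{n_k} \land y_{k+1}^{m-1}$ to obtain $y_k = x_{n_k} \land y_{k+1}$, so $y_k \leq y_{k+1}$. The telescoping estimate $d(y_k^0, y_k^m) \leq \sum_{j=0}^{m-1} 2^{-(k+j)} \leq 2^{-k+1}$ passes to the limit to give $d(x_{n_k}, y_k) \leq 2^{-k+1}$, and the triangle inequality then yields $d(y_k, y_{k+1}) = O(2^{-k})$. Applying hypothesis~(1) produces a limit $y = \lim_k y_k$, and combining $d(x_{n_k}, y_k) \to 0$ with $y_k \to y$ gives $x_{n_k} \to y$; since $(x_n)$ is Cauchy with a convergent subsequence, $x_n \to y$.

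The only mildly delicate step is the first estimate on $d(y_k^m, y_k^{m+1})$: the point is to observe that $y_k^m \leq x_{n_{k+m}}$ so that \eqref{eq:rtdef} isolates the single increment $d(x_{n_{k+m}}, x_{n_{k+m+1}})$ rather than the accumulated distance along the whole subsequence. Everything else is routine bookkeeping via continuity of $\land$ and the triangle inequality.
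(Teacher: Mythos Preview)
Your proposal is correct and follows essentially the same route as the paper's proof: pass to a rapidly Cauchy subsequence, form the partial meets $y_k^m = x_{n_k}\land\cdots\land x_{n_{k+m}}$, use the rooftop axiom \eqref{eq:rtdef} to bound $d(y_k^m,y_k^{m+1})$ by a single increment, apply (2) to obtain $y_k$, show $(y_k)$ is increasing Cauchy, and apply (1). The only cosmetic differences are that the paper swaps the roles of the two indices in $y_j^k$, and it estimates $d(y^k,y^{k+1})$ directly as $\lim_j d(y_{j+1}^k,y_j^{k+1})\le d(x_k,x_{k+1})$ before bounding $d(y^k,x_k)$, whereas you first bound $d(x_{n_k},y_k)$ and then deduce Cauchy-ness via the triangle inequality; both orderings are fine.
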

This is essentially an abstract version of \cite[Theorem~9.2]{Dar17}.
\begin{proof}
	The direct implication is trivial.
	
	Conversely, assume that both conditions are true.
	Let $x_j\in E$ ($j\geq 1$) be a Cauchy sequence. We want to prove that $x_j$ converges. By passing to a subsequence, we may assume that 
	\[
	d(x_j,x_{j+1})\leq 2^{-j}.
	\]
	For $k,j\geq 1$, let
	\[
	y_j^k\coloneqq x_k\land \cdots \land x_{k+j}.
	\]
	Then $(y_j^k)_j$ is decreasing, and
	\[
	d(y_j^k,y_{j+1}^k)\leq d(x_{k+j},x_{k+j+1})\leq 2^{-k-j}.
	\]
	So $(y^j_k)_j$ is a decreasing Cauchy sequence. Define
	\[
	y^k\coloneqq \lim_{j\to\infty}y_j^k.
	\]
	Then
	\[
	d(y^k,y^{k+1})=\lim_{j\to\infty}(y_{j+1}^k,y_j^{k+1})\leq d(x_k,x_{k+1})\leq 2^{-k}.
	\]
	So $y^k$ is an increasing Cauchy sequence. Let
	\[
	y\coloneqq \lim_{k\to\infty}y^k.
	\]
	Then
	\[
	d(y^k,x_k)=\lim_{j\to\infty}d(y^k_j,x_k)\leq \lim_{j\to\infty} d(y_{j-1}^{k+1},x_k).
	\]
	Note that
	\[
	d(y_{j-1}^{k+1},x_k)\leq d(y_{j-1}^{k+1},x_{k+1})+d(x_{k+1},x_k)\leq d(y_{j-1}^{k+2},x_{k+1})+2^{-k}.
	\]
	Hence,
	\[
	d(y^k,x_k) \leq 2^{-k}+\lim_{j\to\infty}d(y_{j-1}^{k+2},x_{k+1})\leq \lim_{j\to\infty}\sum_{r=k}^{j+k}d(x_r,x_{r+1})\leq 2^{1-k}.
	\]
	So $x_k$ converges to $y$.
\end{proof}

\begin{definition}
    A rooftop metric space $(E,d,\land)$ is \emph{locally complete} if for each $y\in E$, the subspace $\{y\in E:x\geq y\}$ is complete.
\end{definition}

Observe that $\{y\in E:x\geq y\}$ is a rooftop metric space with respect to the metric $d$ and the pre-rooftop structure $\land$. In particular, we find that 
\begin{corollary}\label{cor:loccompl}
    Let $(E,d,\land)\in \RTCat$. Then $(E,d)$ is locally complete if and only if  both of the followings hold:
	\begin{enumerate}
		\item Each increasing Cauchy sequence converges.
		\item Each decreasing Cauchy sequence that admits a lower bound in $E$ converges.
	\end{enumerate}
\end{corollary}

\begin{proposition}\label{prop:comp}
	Let $(E,d,\land)\in \RTCat$, let $i:(E,d)\rightarrow (\bar{E},\bar{d})$ be the metric completion of $(E,d)$.
	Then there is a unique rooftop structure $\bar{\land}$ on $(\bar{E},\bar{d})$, so that $i:(E,d,\land)\rightarrow (\bar{E},\bar{d},\bar{\land})$ is a morphism in $\RTCat$. 
\end{proposition}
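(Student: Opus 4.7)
The plan is to extend $\land$ from $E$ to $\bar{E}$ by continuity, using \cref{lma:landdis} as the key tool. Given $x,y\in\bar{E}$, choose Cauchy sequences $(x_n),(y_n)$ in $E$ converging to $x,y$ respectively. By \cref{lma:landdis},
\[
\bar{d}(x_m\land y_m, x_n\land y_n)\leq d(x_m,x_n)+d(y_m,y_n)\,,
\]
so $(x_n\land y_n)$ is Cauchy in $E$, hence converges to some element of $\bar{E}$ which I define to be $x\bar\land y$. The same inequality shows the limit does not depend on the chosen Cauchy sequences, so $\bar\land$ is well-defined. Moreover, the same estimate passed to the limit shows
\[
\bar{d}(x\bar\land y, x'\bar\land y')\leq \bar{d}(x,x')+\bar{d}(y,y')\,,
\]
i.e.\ $\bar\land$ is jointly continuous on $\bar{E}\times\bar{E}$.

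Next I would verify the pre-rooftop axioms. Commutativity and idempotency are immediate from passing to the limit in the corresponding identities on $E$. For associativity, given $x,y,z\in\bar{E}$ with approximating sequences $x_n\to x$, $y_n\to y$, $z_n\to z$, the joint continuity of $\bar\land$ gives
\[
(x\bar\land y)\bar\land z = \lim_n (x_n\land y_n)\land z_n = \lim_n x_n\land (y_n\land z_n) = x\bar\land(y\bar\land z)\,.
\]
For the rooftop inequality \eqref{eq:rtdef}, pick $x,y,z\in\bar{E}$ and sequences $x_n\to x$, $y_n\to y$, $z_n\to z$ in $E$. Then $d(x_n\land z_n, y_n\land z_n)\leq d(x_n,y_n)$ by \eqref{eq:rtdef} on $E$, and passing to the limit yields $\bar{d}(x\bar\land z, y\bar\land z)\leq \bar{d}(x,y)$. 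Hence $(\bar{E},\bar{d},\bar\land)\in\RTCat$.

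Finally, uniqueness and the morphism property are easy: by construction $i(x\land y)=i(x)\bar\land i(y)$ for $x,y\in E$, and $i$ is distance-preserving (in particular distance-decreasing), so $i$ is a morphism in $\RTCat$. Conversely, any rooftop structure $\bar\land'$ on $(\bar{E},\bar{d})$ for which $i$ is a morphism must satisfy \cref{lma:landdis}, hence be jointly continuous on $\bar{E}\times\bar{E}$; since it agrees with $\bar\land$ on the dense subset $E\times E$, it coincides with $\bar\land$ everywhere. I do not foresee a serious obstacle: the entire argument is a standard extension-by-uniform-continuity, with \cref{lma:landdis} playing the role of the uniform-continuity estimate, so the only thing to check carefully is that every identity on $E$ survives passage to the limit, which is handled by the joint continuity of $\bar\land$.
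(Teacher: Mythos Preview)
Your proposal is correct and follows essentially the same approach as the paper: both extend $\land$ by continuity via \cref{lma:landdis}, verify the rooftop inequality by passing to the limit, and deduce uniqueness from joint continuity on a dense subset. Your write-up is in fact slightly more thorough, since you explicitly check the pre-rooftop axioms (commutativity, associativity, idempotency), whereas the paper simply asserts that only \eqref{eq:rtdef} needs verification.
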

\begin{proof}
	We first argue the existence. Let $(x_j), (y_j)$ be Cauchy sequences in $E$. For $j,k>0$, by \cref{lma:landdis},
	\[
	d(x_j\land y_j,x_k\land y_k)\leq d(y_j,y_k)+d(x_j,x_k)\,.
	\]
	Hence, $x_j\land y_j$ is also Cauchy. Now let $x,y\in \bar{E}$, represented by Cauchy sequences $(x_j), (y_j)$ in $E$, then we define
	\[
	x\bar{\land} y\coloneqq \lim_{j\to\infty} x_j\land y_j\,.
	\]
	We have to show that this is well-defined. Let $(x_j'), (y_j')$ be two other Cauchy sequences in $E$ representing $x,y$. By \cref{lma:landdis},
	\[
	d(x_j\land y_j,x_j'\land y_j')\leq d(y_j,y_j')+d(x_j,x_j')\,.
	\]
	Hence,
	\[
	\lim_{j\to\infty}d(x_j\land y_j,x_j'\land y_j')=0\,.
	\]
	Thus, $\bar{\land}$ is well-defined. We claim that $\bar{\land}$ is a rooftop structure. We only have to verify \eqref{eq:rtdef}. Let $x,y,z\in \bar{E}$, represented by Cauchy sequences $(x_j), (y_j), (z_j)$ in $E$. Then
	\[
	\bar{d}(x\bar{\land} z,y\bar{\land} z)=\lim_{j\to\infty} d(x_j\land z_j,y_j\land z_j)\leq \varliminf_{j\to\infty} d(x_j,y_j)=\bar{d}(x,y)\,.
	\]
	It is clear that $i$ becomes a morphism in $\RTCat$.
	
	Now we prove the uniqueness. Assume that we have a rooftop operator $\bar{\land}$ on $(\bar{E},\bar{d})$ such that $i$ becomes a morphism in $\RTCat$.
	Let $x,y\in \bar{E}$, represented by Cauchy sequences $(x_j), (y_j)$ in $E$. By \cref{lma:landdis},
	\[
	\bar{d}(x_j\bar{\land} y_j,x\bar{\land} y)\leq \bar{d}(x_j,x)+\bar{d}(y_j,y)\,.
	\]
	Thus,
	\[
	\lim_{j\to\infty} x_j\bar{\land} y_j=x\bar{\land} y\,. 
	\]
	As $i$ is a morphism in $\RTCat$, we find
	\[
	\lim_{j\to\infty} x_j\land y_j=x\bar{\land} y\,. 
	\]
\end{proof}
\begin{definition}
	Let $(E,d,\land)\in \RTCat$, we call $(\bar{E},\bar{d},\bar{\land})$ constructed in \cref{prop:comp} the \emph{completion} of $(E,d,\land)$.
\end{definition}

\begin{example}
	$\Ep$ is a $p$-strict rooftop metric space. 
\end{example}
This is \cref{thm:main1}.

\begin{example}
	Let $X$ be a compact Kähler manifold. Let $\omega$ be a Kähler form on $X$. Then $\mathcal{R}^p(X,\omega)$ is a complete $p$-strict rooftop metric space.
\end{example}
\begin{proof}
	The metric $d_p$ is constructed in \cite{DL18}:
	\[
	d_p(\ell^1,\ell^2)\coloneqq \lim_{t\to\infty}\frac{1}{t}d_p(\ell^1_t,\ell^2_t),\quad \ell^1,\ell^2\in \mathcal{R}^p\,.
	\]
	It is shown there that $(\mathcal{R}^p,d_p)$ is a complete metric space.
	
	Now we construct the rooftop structure: let $\ell^1,\ell^2\in \mathcal{R}^p$, define $\ell^1\land \ell^2$ as the maximal geodesic ray in $\mathcal{R}^p$ that lies below both $\ell^1$ and $\ell^2$. The proof of the  existence of $\ell$ is the same as \cref{lma:rooftopr1}, so we omit the details.
	In particular, $\ell^1\land\ell^2$ admits the following concrete description: for each $t>0$, let $(L^t_s)_{s\in [0,t]}$ be the geodesic from $0$ to $\ell^1_t\land\ell^2_t$. Then $(\ell^1\land\ell^2)_t$ is the limit in $\mathcal{E}^1$ of $L^t_s$ as $s\to\infty$.
	
	It is easy to verify that $\land$ is indeed a rooftop structure. That it is $p$-strict follows from the fact that $\mathcal{E}^p$ is $p$-strict.
\end{proof}

\begin{example}
	Let $X$ be a compact unibranch Kähler space of pure dimension. Let $\theta$ be a strongly closed smooth real $(1,1)$-form on $X$, representing a big class. Then $\mathcal{R}^1(X,\theta)$ is a complete $1$-strict rooftop metric space.
\end{example}
This is \cref{thm:main2}.

For the next example, we recall some related results from non-Archimedean geometry.
Let $L$ be an ample line bundle on $X$ and $\omega$ be a Kähler form in $c_1(L)$.
Let $X^{\NA}$ be the Berkovich analytification of $X$  with respect to the trivial norm on $\mathbb{C}$. There is a natural morphism of ringed spaces $X^{\NA}\rightarrow X$. Let $L^{\NA}$ be the pull-back of $L$ along this morphism.
Let $\mathcal{E}^{1,\NA}(L)$ be the space of non-Archimedean metrics of finite energy on $L^{\NA}$, see \cite[Section~5.2]{BJ18b}. Let $E:\mathcal{E}^{1,\NA}(L)\rightarrow \mathbb{R}$ be the Monge--Amp\`ere energy functional defined in \cite[Section~5.2]{BJ18b}. For $\varphi,\psi\in \mathcal{E}^{1,\NA}(L)$, $\varphi\leq \psi$, define
\[
d_1(\varphi,\psi)=E(\psi)-E(\varphi)\,.
\]
Recall that from \cite{BBJ15}, there is a canonical distance preserving embedding
\[
\iota:\mathcal{E}^{1,\NA}(L)\hookrightarrow \mathcal{R}^1(X,\omega),
\] 
It is not surjective in general (\cite[Example~6.10]{BBJ15}).
As in \cite{BBJ15}, there is a contraction $\Pi:\mathcal{R}^1(X,\omega)\rightarrow \mathcal{E}^{1,\NA}(L)$, given by $\Pi(\ell)=\ell^{\NA}$ such that
\[
\Pi\circ \iota(\phi)=\phi\,,\quad \phi\in \mathcal{E}^{1,\NA}\,.
\]
We may identify $\mathcal{E}^{1,\NA}(L)$ with a subset of $\mathcal{R}^1(X,\omega)$ through $\iota$, known as the set of \emph{maximal geodesic rays}.
Finally, recall that $\iota$ and $\Pi$ are both order preserving by definition.
\begin{example}\label{ex:E1NA}
	Let $X$ be a projective smooth scheme of finite type over $\mathbb{C}$. Let $L$ be an ample line bundle on $X$. Then $\mathcal{E}^{1,\NA}(L)$ is a complete $1$-strict rooftop metric space.
\end{example}
\begin{proof}
	\textbf{Step 1}.
	We first show that given $\phi,\psi\in\mathcal{E}^{1,\NA}(L)$, $\ell\coloneqq \iota(\phi)\land\iota(\psi)$ is maximal.
	Let $\ell'=\iota\circ \Pi(\ell)$.
	Then $\ell'\geq \ell$ by definition (\cite[Definition~6.5]{BBJ15}). Since both $\iota$ and $\Pi$ are order preserving, we have 
	\[
	\ell'\leq \iota(\phi)\,,\quad \ell'\leq \iota(\psi)\,.
	\]
	Thus, $\ell=\ell'$ and $\ell$ is maximal. 
	
	Note that the result also follows from the characterization of maximal geodesic rays in \cite{DX20}.
	
	\textbf{Step 2}. We define the rooftop structure as follows: let $\phi,\psi\in\mathcal{E}^{1,\NA}(L)$, define
	\[
	\phi\land \psi\coloneqq \iota^{-1}\left(\iota(\phi)\land\iota(\psi) \right)\,.
	\]
	It is easy to check that $\land$ is indeed a $1$-strict rooftop structure. It follows from \cite[Theorem~1.2]{DX20} that $\Pi$ is continuous, hence $\mathcal{E}^{1,\NA}(L)$ is identified with a closed subspace of $\mathcal{R}^1(X,\omega)$, hence complete.
\end{proof}

\section{Metric on \texorpdfstring{$\mathcal{E}^p$}{Ep} spaces}\label{sec:metep}

Fix $p\in [1,\infty)$. Let $X$ be a compact unibranch Kähler space of pure dimension $n$, $\alpha$ be a big $(1,1)$-cohomology class and $\theta$ be a strongly closed smooth $(1,1)$-form in the class $\alpha$. We use $V$ to denote the volume of $\alpha$. Let $\phi\in \PSH(X,\theta)$ be a model potential with mass $V_{\phi}>0$. Fix a resolution of singularities $\pi:Y\rightarrow X^{\Red}$. Recall that we may always assume that $Y$ is Kähler.

\subsection{Length elements}
\begin{definition}
	A \emph{length element} is a symmetric function $f:\Ep\times \Ep\rightarrow [0,\infty)$.
	A length element $f$ is \emph{good} if it satisfies the following conditions:
	\begin{enumerate}[label=\textbf{A.\arabic*},ref=A.\arabic*]
		\item\label{ax:a1} For $\varphi\in \Ep$, $f(\varphi,\varphi)=0$.
		\item\label{ax:a2} For $\varphi,\psi\in \Ep$, $f(\varphi,\psi)^p=f(\varphi\land\psi,\varphi)^p+f(\varphi\land\psi,\psi)^p$.
		\item\label{ax:a3} For $\varphi,\psi,\gamma\in \Ep$, $\gamma\geq \varphi\lor\psi$,
		$f(\varphi,\gamma)^p+f(\psi,\gamma)^p\geq f(\varphi,\psi)^p$.
		\item\label{ax:a4} For $\varphi,\psi,\gamma\in \Ep$, $f(\varphi\land\gamma,\psi\land\gamma)\leq f(\varphi,\psi)$.
		\item\label{ax:a5} For $\varphi_0,\varphi_1\in \Ep$, assume that $\varphi_0\leq \varphi_1$, set $\varphi_t=t\varphi_1+(1-t)\varphi_0$, then
		\[
		\varlimsup_{N\to\infty}\sum_{j=0}^{N-1} f(\varphi_{j/N},\varphi_{(j+1)/N})\leq f(\varphi_0,\varphi_1).
		\]
		\item\label{ax:a6} There is a constant $C=C(p)>0$, so that for any $\varphi,\psi\in \Ep$, $\varphi\leq \psi$, we have $C f(\varphi,\psi)\geq E_1(\varphi,\psi)$.
	\end{enumerate}
\end{definition}

\begin{theorem}
	The function $F_{p}^{1/p}$ is a good length element. 
	
	$G_p^{1/p}$ is a length element that satisfies the following condition:
	\begin{enumerate}[label=\textbf{A.\arabic*},ref=A.\arabic*]
		\setcounter{enumi}{6}
		\item \label{ax:a7} For $\varphi,\psi,\gamma\in \Ep$, $f(\varphi\lor\gamma,\psi\lor\gamma)\leq f(\varphi,\psi)$.
	\end{enumerate}
\end{theorem}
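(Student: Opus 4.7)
The plan is to verify each axiom by direct appeal to Propositions \cref{prop:e}, \cref{prop:f}, \cref{prop:g} already established in \cref{subsec:energyfun}; the theorem is essentially a repackaging of those results into the axiomatic framework introduced in \cref{sec:rooftop}.

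I would start by noting that $F_p$, $E_p$, $G_p$ are all symmetric non-negative functions on $\Ep \times \Ep$ straight from \cref{def:enr} (symmetry of $E_p$ and $G_p$ is built into the definition via the $\varphi \land \psi$ splitting, and $F_p$ is visibly symmetric). Hence $E_p^{1/p}$ and $G_p^{1/p}$ are length elements with no additional work, which handles the second sentence of the statement. It remains to verify axioms \cref{ax:a1}--\cref{ax:a6} for $f = F_p^{1/p}$ and axiom \cref{ax:a7} for $f = G_p^{1/p}$.

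For $F_p^{1/p}$: axiom \cref{ax:a1} is immediate because $\varphi \land \varphi = \varphi$ makes the integrand vanish. Axiom \cref{ax:a2} is the second equality in \cref{prop:f}(1) (after raising both sides to the $p$-th power and using symmetry of $F_p$), and axiom \cref{ax:a3} is the first inequality in \cref{prop:f}(1). Axiom \cref{ax:a4} is exactly \cref{prop:f}(2). Axiom \cref{ax:a5} follows from \cref{prop:f}(3) by taking the limsup on both sides (the inequality there holds for every $N$). For axiom \cref{ax:a6}, I would combine \cref{prop:e}(3), which gives $E_p \leq F_p$ and hence $E_p^{1/p} \leq F_p^{1/p}$, with \cref{prop:e}(4), which says $E_1 \leq C E_p^{1/p}$ when $p>1$; the case $p=1$ follows directly from $E_1 \leq F_1$ in \cref{prop:e}(3).

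For $G_p^{1/p}$, axiom \cref{ax:a7} is exactly \cref{prop:g}(1), again after raising to the $p$-th power. Since every axiom reduces to a single citation, there is no genuine obstacle in this proof; the real work was done in establishing the three energy propositions. If I were writing this proof out in full, the only place one might pause is axiom \cref{ax:a6}, where the two different regimes $p=1$ and $p>1$ need to be separated, but both cases are handled by \cref{prop:e} as indicated.
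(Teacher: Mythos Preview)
Your proposal is correct and follows essentially the same approach as the paper, which proves the theorem in two sentences by citing \cref{prop:f} for the first part and \cref{prop:e}, \cref{prop:g} for the second. Your version is in fact slightly more careful: you correctly observe that axiom \cref{ax:a6} for $F_p^{1/p}$ requires \cref{prop:e}(3)--(4) in addition to \cref{prop:f}, whereas the paper's one-line reference to \cref{prop:f} alone is a mild under-citation.
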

\begin{proof}
	The first part follows from \cref{prop:f}. The second part follows from \cref{prop:g}.
\end{proof}

\begin{definition}\label{def:Gamma}
	Let $\varphi_0, \varphi_1\in \Ep$.
	\begin{enumerate}
		\item Let $\Gamma(\varphi_0,\varphi_1)$ be the set of $\Psi=(\psi_0,\ldots,\psi_N)\in \Ep^{N+1}$ for various $N\geq 0$ (we call $N$ the \emph{length} of $\Psi$) such that
		\begin{enumerate}
			\item $\psi_0=\varphi_0$, $\psi_N=\varphi_1$.
			\item For each $j=0,\ldots,N-1$, either $\varphi_j\leq \varphi_{j+1}$ or $\varphi_j\geq \varphi_{j+1}$.
		\end{enumerate}
		\item When $\varphi_0\leq \varphi_1$, let $\Gamma_+(\varphi_0,\varphi_1)$ be the set of $\Psi=(\psi_0,\ldots,\psi_N)\in \Gamma(\varphi_0,\varphi_1)$ such that 
  \[
  \psi_0\leq \psi_1\leq \cdots\leq \psi_N.
  \]
		\item For $\Psi=(\psi_0,\ldots,\psi_N)\in \Gamma(\varphi_0,\varphi_1)$, $\gamma\in \Ep$, define
		\[
		\gamma\land \Psi\coloneqq (\gamma\land\psi_0,\ldots,\gamma\land\psi_N),\quad \gamma\lor \Psi\coloneqq(\gamma\lor\psi_0,\ldots,\gamma\lor\psi_N).
		\]
	\end{enumerate}
\end{definition}
We observe that in (iii) above when $\varphi_0\leq \varphi_1$ and $\Psi\in \Gamma_+(\varphi_0,\varphi_1)$, we have 
\[
\gamma\land \Psi\in \Gamma_+(\gamma\land \varphi_0,\gamma\land\varphi_1),\quad \gamma\lor \Psi\in \Gamma_+(\gamma\lor \varphi_0,\gamma\lor\varphi_1).
\]

\begin{definition}Let $\varphi_0,\varphi_1\in \Ep$.
	\begin{enumerate}
		\item 
		We say that $\Psi=(\psi_0,\ldots,\psi_N)\in \Gamma(\varphi_0,\varphi_1)$ is \emph{reduced} if for every $j=0,\ldots,N-1$, $\psi_j\neq \psi_{j+1}$. 
		\item 
		For a non-reduced $\Psi$, we can always delete repeating consecutive elements to get its \emph{reduction} $\tilde{\Psi}$.
		
		\item Let $\Psi=(\psi_0,\ldots,\psi_N)\in \Gamma(\varphi_0,\varphi_1)$. Assume that $\Psi$ is reduced. We say that $j\in [1,N-1]$ is a \emph{turning point} of $\Psi$ is one of the following is true
		\begin{enumerate}
			\item $\psi_j\geq \psi_{j-1}$, $\psi_{j}\geq \psi_{j+1}$.
			\item $\psi_j\leq \psi_{j-1}$, $\psi_{j}\leq \psi_{j+1}$.
		\end{enumerate}
		We say $j$ is a turning point of \emph{type 1} or \emph{type 2} respectively.
	\end{enumerate}
\end{definition}
Observe that a turning point cannot be both type 1 and type 2 as $\Psi$ is reduced.

\begin{definition}Let $f$ be a length element satisfying Condition~\ref{ax:a1}.
	Let $\varphi_0,\varphi_1\in \Ep$ and $\Psi=(\psi_0,\ldots,\psi_N)\in \Gamma(\varphi_0,\varphi_1)$.
	\begin{enumerate}
		\item When $\varphi_0\leq \varphi_1$ and $\Psi\in \Gamma_+(\varphi_0,\varphi_1)$, define
		\begin{equation}\label{eq:ellpsi0}
			\ell(\Psi)\coloneqq \sum_{j=0}^{N-1}f(\psi_j,\psi_{j+1}).
		\end{equation}
		\item When $\Psi$ is reduced, write the turning points of $\Psi$ as $i_1,\ldots,i_{S}$ for some $S\geq 0$. Let $i_0=0, i_{S+1}=N$. Then we define
		\begin{equation}\label{eq:ellpsi1}
			\ell(\Psi)\coloneqq \left(\sum_{j=0}^{S} \ell(\psi_{i_j},\psi_{i_j+1},\ldots,\psi_{i_{j+1}})^p   \right)^{1/p}.
		\end{equation}
		\item Define
		\begin{equation}\label{eq:ellpsi2}
			\ell(\Psi)\coloneqq \ell(\tilde{\Psi}).
		\end{equation}
		\item Define
		\[
		|\Psi|_{f^p}\coloneqq \sup_{j=0,\ldots,N-1} f(\psi_j,\psi_{j+1})^p.
		\]
	\end{enumerate}
\end{definition}
We write $|\Psi|=|\Psi|_{F_p}$. Note that $|\tilde{\Psi}|= |\Psi|$.

The following lemma is clear.
\begin{lemma}
	Let $\varphi_0,\varphi_1\in \Ep$, $\varphi_0\leq \varphi_1$, let $\Psi\in \Gamma_+(\varphi_0,\varphi_1)$, then the definitions of $\ell(\Psi)$ in \eqref{eq:ellpsi0} and in \eqref{eq:ellpsi2} coincide.
\end{lemma}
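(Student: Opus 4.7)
The strategy is to unravel the three-layered definition of $\ell$ by showing that the reduction $\tilde{\Psi}$ of an increasing sequence is strictly increasing and hence has no turning points, after which the two definitions reduce to the same sum up to terms that vanish by $f(\varphi,\varphi)=0$.

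First, fix $\Psi = (\psi_0,\ldots,\psi_N) \in \Gamma_+(\varphi_0,\varphi_1)$, so $\psi_j \leq \psi_{j+1}$ for all $j$. Write its reduction as $\tilde{\Psi} = (\tilde{\psi}_0,\ldots,\tilde{\psi}_{\tilde{N}})$; this is obtained by collapsing each maximal run of equal consecutive entries to a single representative. Clearly $\tilde{\Psi}$ is still monotone increasing, and by construction $\tilde{\psi}_{k} \neq \tilde{\psi}_{k+1}$ for each $k$, so $\tilde{\Psi}$ is \emph{strictly} increasing. In particular for every inner index $k \in [1,\tilde{N}-1]$ one has $\tilde{\psi}_{k-1} < \tilde{\psi}_k < \tilde{\psi}_{k+1}$, which rules out both types of turning points. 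Thus $\tilde{\Psi}$ has $S = 0$ turning points, with $i_0 = 0$ and $i_1 = \tilde{N}$.

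Next, I apply \eqref{eq:ellpsi1} to $\tilde{\Psi}$: since the only summand corresponds to the entire reduced sequence, which lies in $\Gamma_+(\varphi_0,\varphi_1)$, the definition \eqref{eq:ellpsi1} appeals back to \eqref{eq:ellpsi0}, giving
\[
\ell(\tilde{\Psi}) = \sum_{k=0}^{\tilde{N}-1} f(\tilde{\psi}_k,\tilde{\psi}_{k+1}).
\]
By \eqref{eq:ellpsi2}, this is the value of $\ell(\Psi)$ according to the second definition. On the other hand, the value of $\ell(\Psi)$ according to \eqref{eq:ellpsi0} is
\[
\sum_{j=0}^{N-1} f(\psi_j,\psi_{j+1}).
\]

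Finally, I compare the two sums. Partition $\{0,\ldots,N-1\}$ into the indices $j$ for which $\psi_j = \psi_{j+1}$ and those for which $\psi_j < \psi_{j+1}$. For indices of the first type, $f(\psi_j,\psi_{j+1}) = f(\psi_j,\psi_j) = 0$ by axiom~\ref{ax:a1}. The indices of the second type correspond bijectively, in order, to the indices $k \in \{0,\ldots,\tilde{N}-1\}$ of the reduced sequence, with matching values $f(\psi_j,\psi_{j+1}) = f(\tilde{\psi}_k,\tilde{\psi}_{k+1})$. Hence the two sums coincide.

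The only point that needs any care is to verify that axiom~\ref{ax:a1} is available; this is part of the definition of a good length element, and one should read the lemma as tacitly restricting to this case (otherwise the two definitions genuinely differ on non-reduced sequences). There is no real obstacle beyond this bookkeeping.
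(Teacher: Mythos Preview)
Your argument is correct and is precisely the unraveling the paper has in mind: the paper's own proof is simply ``This is obvious,'' so you have just written out the details that make it obvious. Your observation that axiom~\ref{ax:a1} is genuinely needed (and is only guaranteed for \emph{good} length elements) is a legitimate catch of a small imprecision in the paper's statement; in practice every length element the paper uses satisfies \ref{ax:a1}, so the author presumably intends the lemma to be read with that standing hypothesis.
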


Fix a length element $f$ satisfying Condition~\ref{ax:a1}.
\begin{definition}\label{def:dpmetrics}
	Let $\varphi_0,\varphi_1\in \Ep$.
	\begin{enumerate}
		\item When $\varphi_0\leq \varphi_1$, define
		\begin{equation}\label{eq:dp1}
			d_p(\varphi_0,\varphi_1)\coloneqq \lim_{\delta\to 0+}\inf_{\substack{\Psi\in \Gamma_+(\varphi_0,\varphi_1)\\ |\Psi|<\delta}}\ell(\Psi).
		\end{equation}
		\item In general, define
		\begin{equation}			d_p(\varphi_0,\varphi_1)\coloneqq \left( d_p(\varphi_0\land \varphi_1,\varphi_0)^p+d_p(\varphi_0\land \varphi_1,\varphi_1)^p \right)^{1/p}.
		\end{equation}
	\end{enumerate}
\end{definition}

\begin{lemma}\label{lma:tri1}
	Let $\varphi_0,\varphi_1,\varphi_2\in \Ep$ with $\varphi_0\leq\varphi_1\leq \varphi_2$. Then
	\[
	d_p(\varphi_0,\varphi_2)\leq d_p(\varphi_0,\varphi_1)+d_p(\varphi_1,\varphi_2).
	\]
\end{lemma}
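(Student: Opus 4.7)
The proof is straightforward concatenation. Given $\varphi_0 \leq \varphi_1 \leq \varphi_2$ in $\Ep$, I will simply concatenate admissible chains from $\varphi_0$ to $\varphi_1$ with admissible chains from $\varphi_1$ to $\varphi_2$ to produce admissible chains from $\varphi_0$ to $\varphi_2$ whose length is the sum.

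More precisely, fix $\delta > 0$ and take any $\Psi_1 = (\psi_0, \ldots, \psi_N) \in \Gamma_+(\varphi_0, \varphi_1)$ with $|\Psi_1| < \delta$ and any $\Psi_2 = (\psi'_0, \ldots, \psi'_M) \in \Gamma_+(\varphi_1, \varphi_2)$ with $|\Psi_2| < \delta$. Define the concatenation
\[
\Psi := (\psi_0, \ldots, \psi_N, \psi'_1, \ldots, \psi'_M).
\]
Since $\psi_N = \varphi_1 = \psi'_0$, the chain $\Psi$ is still increasing, so $\Psi \in \Gamma_+(\varphi_0, \varphi_2)$. Clearly $|\Psi| \leq \max\{|\Psi_1|, |\Psi_2|\} < \delta$. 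From the definition \eqref{eq:ellpsi0} of $\ell$ on increasing chains, one reads off at once
\[
\ell(\Psi) = \sum_{j=0}^{N-1} f(\psi_j, \psi_{j+1}) + \sum_{j=0}^{M-1} f(\psi'_j, \psi'_{j+1}) = \ell(\Psi_1) + \ell(\Psi_2).
\]

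Taking infima over all such $\Psi_1, \Psi_2$, we obtain
\[
\inf_{\substack{\Psi \in \Gamma_+(\varphi_0, \varphi_2) \\ |\Psi| < \delta}} \ell(\Psi) \;\leq\; \inf_{\substack{\Psi_1 \in \Gamma_+(\varphi_0, \varphi_1) \\ |\Psi_1| < \delta}} \ell(\Psi_1) \;+\; \inf_{\substack{\Psi_2 \in \Gamma_+(\varphi_1, \varphi_2) \\ |\Psi_2| < \delta}} \ell(\Psi_2),
\]
and passing to the limit $\delta \to 0^+$ yields the desired triangle inequality by definition \eqref{eq:dp1}.

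There is no serious obstacle here: the only subtlety is that the infimum on the left is taken over \emph{all} admissible chains while the right-hand side only controls those that pass through $\varphi_1$, but this is precisely the direction of inequality we want. The limit $\delta \to 0^+$ exists in $[0, \infty]$ because the inner infima are monotone increasing as $\delta$ decreases.
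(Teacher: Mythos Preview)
Your proof is correct and is precisely the argument the paper has in mind: the paper's own proof reads only ``This follows immediately from definition,'' and your concatenation of increasing chains is exactly how one unpacks that. There is nothing to add.
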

\begin{proof}
	This follows immediately from definition.
\end{proof}

\begin{definition}\label{def:mref}
	Let $\varphi_0,\varphi_1\in \Ep$. Assume $\varphi_0\leq \varphi_1$.
	Let $\Phi=(\psi_0,\ldots,\psi_N)\in \Gamma_+(\varphi_0,\varphi_1)$. For each $M\geq 1$, define
	\begin{equation}
		\Phi^{(M)}=(\psi_0^0,\ldots,\psi_0^{M-1},\psi_1^0,\ldots,\psi_1^{M-1},\ldots,\psi_{N-1}^0,\ldots,\psi_{N-1}^{M-1})\in \Gamma_+(\varphi_0,\varphi_1),    
	\end{equation}
	where
	\[
	\psi_j^k=\frac{M-k}{M}\psi_{j}+\frac{k}{M}\psi_{j+1},\quad j=0,\ldots,N-1;\,k=0,\ldots,M-1.
	\]
\end{definition}

\subsection{$d_p$-metrics}\label{subsec:dp}
From now on, we focus on the length element $F_p^{1/p}$. We write $\ell^F$, $\ell^G$ the function $\ell$ defined with respect to $F_p^{1/p}$, $G_p^{1/p}$ respectively. When we write $\ell$, we refer to $\ell^F$. 

Our $d_p$ will be defined relative to $F_p^{1/p}$. However, the length element $G_p^{1/p}$ helps to establish the triangle inequality through the proof of \cref{prop:twoop} below. It is of interest to know if it is possible to prove \cref{prop:twoop} only using $F_p^{1/p}$.

\begin{lemma}\label{lma:eqdef}
	Let $\varphi_0,\varphi_1\in \Ep$, $\varphi_0\leq \varphi_1$, then
	\[
	d_p(\varphi_0,\varphi_1)=\inf_{\Phi\in \Gamma_{+}(\varphi_0,\varphi_1)}\ell(\Phi).
	\]
	In particular, $d_p(\varphi_0,\varphi_1)\leq F_p(\varphi_0,\varphi_1)^{1/p}<\infty$.
\end{lemma}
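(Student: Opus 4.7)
The plan is to sandwich $d_p(\varphi_0,\varphi_1)$ between the two sides of the claimed equality, using the linear refinement operator $\Phi^{(M)}$ from \cref{def:mref} together with the Minkowski-type inequality \cref{prop:f}(3).

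First observe that the inequality $d_p(\varphi_0,\varphi_1)\geq \inf_{\Phi\in\Gamma_+(\varphi_0,\varphi_1)}\ell(\Phi)$ is immediate from \eqref{eq:dp1}, since for every $\delta>0$ the infimum inside the limit is taken over a subset of $\Gamma_+(\varphi_0,\varphi_1)$. Note also that $\delta\mapsto \inf_{|\Psi|<\delta}\ell(\Psi)$ is non-increasing in $\delta$, so the limit as $\delta\to 0+$ is in fact a supremum and $d_p(\varphi_0,\varphi_1)$ is well defined.

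For the reverse inequality, I will show that for every $\Phi=(\psi_0,\ldots,\psi_N)\in\Gamma_+(\varphi_0,\varphi_1)$ and every $\delta>0$ one can find $M\geq 1$ such that $\Phi^{(M)}\in\Gamma_+(\varphi_0,\varphi_1)$ satisfies
\[
|\Phi^{(M)}|<\delta,\qquad \ell(\Phi^{(M)})\leq \ell(\Phi).
\]
The length bound is a direct application of \cref{prop:f}(3) segment by segment: on each interval $[\psi_j,\psi_{j+1}]$ the linear interpolation $\psi_j^0,\ldots,\psi_j^M$ satisfies $\sum_{k}F_p(\psi_j^k,\psi_j^{k+1})^{1/p}\leq F_p(\psi_j,\psi_{j+1})^{1/p}$, and summing over $j$ gives $\ell(\Phi^{(M)})\leq \ell(\Phi)$. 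For the mesh bound, a direct computation using that $\psi_j^{k+1}-\psi_j^k=M^{-1}(\psi_{j+1}-\psi_j)$ gives
\[
F_p(\psi_j^k,\psi_j^{k+1})=\frac{1}{M^p}\int_X(\psi_{j+1}-\psi_j)^p\,\theta_{\psi_j^k}^n,
\]
and the auxiliary estimate $\int_X(\psi_{j+1}-\psi_j)^p\,\theta_{\psi_j^k}^n\leq \int_X(\psi_{j+1}-\psi_j)^p\,\theta_{\psi_j}^n$ used inside the proof of \cref{prop:f}(3) (which is itself an application of \cref{prop:e}) then yields
\[
|\Phi^{(M)}|\leq \frac{1}{M^p}\max_{j}F_p(\psi_j,\psi_{j+1})\xrightarrow{M\to\infty}0.
\]
Consequently $\inf_{|\Psi|<\delta}\ell(\Psi)\leq \ell(\Phi)$ for every $\delta>0$, and taking the supremum over $\delta>0$ gives $d_p(\varphi_0,\varphi_1)\leq \ell(\Phi)$; finally taking the infimum over $\Phi$ yields the reverse inequality.

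For the last assertion it is enough to test the trivial chain $\Phi=(\varphi_0,\varphi_1)\in\Gamma_+(\varphi_0,\varphi_1)$, which gives $d_p(\varphi_0,\varphi_1)\leq F_p(\varphi_0,\varphi_1)^{1/p}$, and the right-hand side is finite by \cref{prop:e}(3). The only delicate step is the uniform mesh estimate: showing that the mass $\int_X(\psi_{j+1}-\psi_j)^p\theta_{\psi_j^k}^n$ stays bounded in $k$, which is not automatic since the measure $\theta_{\psi_j^k}^n$ moves with $k$; this is precisely where the monotonicity of the mixed Monge--Ampère masses along the segment $[\psi_j,\psi_{j+1}]$ (\cref{prop:e}(1) together with the expansion of $\theta_{\psi_j^k}^n$) enters. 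The rest of the argument is purely combinatorial.
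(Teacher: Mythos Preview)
Your proof is correct and follows exactly the approach the paper has in mind: the paper's proof is the one-liner ``This follows from \ref{ax:a5}'', and your argument simply unpacks that reference by using the linear refinement $\Phi^{(M)}$ together with \cref{prop:f}(3) (which is precisely how \ref{ax:a5} is verified for $F_p^{1/p}$) to control both the length and the mesh. The mesh estimate you single out as delicate is exactly the content of the computation inside the proof of \cref{prop:f}(3), so there is no gap.
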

\begin{proof}
	This follows from Condition~\ref{ax:a5}.
\end{proof}
\begin{lemma}\label{lma:gf}
	Let $\varphi_0,\varphi_1\in \Ep$. Assume $\varphi_0\leq \varphi_1$.
	Let $\Phi\in \Gamma_+(\varphi_0,\varphi_1)$.
	\begin{enumerate}
		\item For each $M\geq 1$,
		\[
		\ell^G(\Phi)\leq \ell^G(\Phi^{(M)})\leq \ell^F(\Phi^{(M)})\leq \ell^F(\Phi).
		\]
		\item When $M\to\infty$,
		\[
		\ell^F(\Phi^{(M)})-\ell^G(\Phi^{(M)})=\mathcal{O}(M^{-1/p}).
		\]
	\end{enumerate}
\end{lemma}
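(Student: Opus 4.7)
The plan is to prove the four-term chain in (1) by handling each inequality separately, each as a direct consequence of a previously established fact about $F_p$ or $G_p$, and then to deduce (2) from the refined telescoping bound in \cref{lma:refined}.

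For the first inequality $\ell^G(\Phi)\leq \ell^G(\Phi^{(M)})$, I apply \cref{prop:g}(2) on each cell $[\psi_j,\psi_{j+1}]$ of $\Phi$ with the partition of size $M$: this gives $G_p(\psi_j,\psi_{j+1})^{1/p}\leq \sum_{k=0}^{M-1}G_p(\psi_j^k,\psi_j^{k+1})^{1/p}$, using the convention $\psi_j^M=\psi_{j+1}^0=\psi_{j+1}$. Summing over $j=0,\ldots,N-1$ yields the claim. The last inequality $\ell^F(\Phi^{(M)})\leq \ell^F(\Phi)$ is its mirror image: apply \cref{prop:f}(3) on each cell to get $\sum_{k=0}^{M-1} F_p(\psi_j^k,\psi_j^{k+1})^{1/p}\leq F_p(\psi_j,\psi_{j+1})^{1/p}$, and sum over $j$.

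The middle inequality $\ell^G(\Phi^{(M)})\leq \ell^F(\Phi^{(M)})$ is termwise: since $\Phi^{(M)}\in\Gamma_+(\varphi_0,\varphi_1)$, every consecutive pair $\psi_j^k\leq \psi_j^{k+1}$ is ordered, and then the layer-cake formula together with \cref{lma:comp} (taken with $j_0=0$) gives $G_p(\psi_j^k,\psi_j^{k+1})\leq F_p(\psi_j^k,\psi_j^{k+1})$; taking $1/p$-powers and summing yields the stated bound.

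For (2), I invoke \cref{lma:refined} on each cell $[\psi_j,\psi_{j+1}]$ with $N$ replaced by $M$. This produces a constant $C_j=C_j(\psi_j,\psi_{j+1})$ such that
\[
\sum_{k=0}^{M-1}\bigl(F_p(\psi_j^k,\psi_j^{k+1})^{1/p}-G_p(\psi_j^k,\psi_j^{k+1})^{1/p}\bigr)\leq C_j M^{-1/p}.
\]
Summing over $j=0,\ldots,N-1$ and noting that $N$ is fixed once $\Phi$ is fixed, I obtain $\ell^F(\Phi^{(M)})-\ell^G(\Phi^{(M)})\leq \bigl(\sum_{j=0}^{N-1}C_j\bigr)M^{-1/p}=\mathcal{O}(M^{-1/p})$.

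I do not expect any real obstacle here; the whole statement is a clean bookkeeping consequence of the refinement inequalities for $F_p$ and $G_p$ established in \cref{prop:f}, \cref{prop:g} and \cref{lma:refined}. The only point that requires a moment of care is the endpoint convention $\psi_j^M=\psi_{j+1}^0=\psi_{j+1}$, which ensures that the per-cell refinements stitch together into a refinement of the whole sequence $\Phi$.
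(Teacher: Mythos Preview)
Your proposal is correct and follows essentially the same approach as the paper's proof, which simply cites \cref{prop:f}, \cref{prop:g} for (1) and \cref{lma:refined} for (2); you have merely unpacked which clause of each proposition is used on each cell. One tiny simplification: the middle inequality $G_p\le F_p$ for ordered pairs is already recorded in \cref{prop:e}(3), so you could cite that directly instead of rederiving it via layer-cake and \cref{lma:comp}.
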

\begin{proof}
	(i) This follows from \cref{prop:f}, \cref{prop:g}.
	
	(ii) This follows from \cref{lma:refined}.
\end{proof}

\begin{proposition}\label{prop:twoop}
	Let $\varphi_0,\varphi_1,\gamma\in \Ep$, then
	\begin{enumerate}
		\item $d_p(\gamma\land \varphi_0,\gamma\land \varphi_1) \leq d_p(\varphi_0,\varphi_1)$.
		\item Assume that $\varphi_0\leq \varphi_1$, then $d_p(\gamma\lor \varphi_0,\gamma\lor \varphi_1) \leq d_p(\varphi_0,\varphi_1)$.
	\end{enumerate}
\end{proposition}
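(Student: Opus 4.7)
\medskip

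\textbf{Plan for part (1).} For the ordered subcase $\varphi_0\leq\varphi_1$, I would start from \cref{lma:eqdef} (which says $d_p$ on ordered pairs is the infimum of $\ell^F$ over $\Gamma_+$ with no mesh restriction). Given any $\Phi=(\psi_0,\dots,\psi_N)\in\Gamma_+(\varphi_0,\varphi_1)$, monotonicity of $\land$ shows $\gamma\land\Phi:=(\gamma\land\psi_0,\dots,\gamma\land\psi_N)\in\Gamma_+(\gamma\land\varphi_0,\gamma\land\varphi_1)$, and axiom \ref{ax:a4} applied to $F_p^{1/p}$ termwise gives $\ell^F(\gamma\land\Phi)\leq\ell^F(\Phi)$. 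Infimizing both sides yields $d_p(\gamma\land\varphi_0,\gamma\land\varphi_1)\leq d_p(\varphi_0,\varphi_1)$. For general $\varphi_0,\varphi_1$, combine the associativity/idempotence identity $(\gamma\land\varphi_0)\land(\gamma\land\varphi_1)=\gamma\land\varphi_0\land\varphi_1$ with the defining split
\[
d_p(\gamma\land\varphi_0,\gamma\land\varphi_1)^p=d_p(\gamma\land\varphi_0\land\varphi_1,\gamma\land\varphi_0)^p+d_p(\gamma\land\varphi_0\land\varphi_1,\gamma\land\varphi_1)^p,
\]
apply the ordered subcase to each of the pairs $\varphi_0\land\varphi_1\leq\varphi_0$ and $\varphi_0\land\varphi_1\leq\varphi_1$, and sum to recover $d_p(\varphi_0,\varphi_1)^p$.

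\medskip

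\textbf{Plan for part (2).} This is the delicate case because the canonical length element $F_p^{1/p}$ does \emph{not} satisfy the $\lor$-analogue \ref{ax:a7}; only $G_p^{1/p}$ does. The strategy is to pass to fine refinements, where $\ell^F$ and $\ell^G$ become nearly equal by \cref{lma:gf}(2), and then exploit the $\lor$-contractivity of $G_p^{1/p}$. Fix $\Phi=(\psi_0,\dots,\psi_N)\in\Gamma_+(\varphi_0,\varphi_1)$. For each $M\geq 1$ form the canonical refinement $\Phi^{(M)}$ of \cref{def:mref} and set $\Xi_M:=\gamma\lor\Phi^{(M)}\in\Gamma_+(\gamma\lor\varphi_0,\gamma\lor\varphi_1)$. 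Then axiom \ref{ax:a7} for $G_p^{1/p}$, \cref{prop:e}(3) ($G_p\leq F_p$ on ordered pairs), and \cref{lma:gf}(1) (refinement decreases $\ell^F$) chain together to give
\[
\ell^G(\Xi_M)\;\leq\;\ell^G(\Phi^{(M)})\;\leq\;\ell^F(\Phi^{(M)})\;\leq\;\ell^F(\Phi).
\]
The decisive quantitative step is the analogue of \cref{lma:refined} applied to $\Xi_M$: because each consecutive increment satisfies $0\leq \gamma\lor\psi_j^{k+1}-\gamma\lor\psi_j^k\leq M^{-1}(\psi_{j+1}-\psi_j)$, an integration by parts via \cref{thm:ibpA} will give
\[
\ell^F(\Xi_M)-\ell^G(\Xi_M)=O(M^{-1/p}),\qquad M\to\infty,
\]
with implicit constant depending only on $\Phi$ and $\gamma$. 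Combined with the previous display this produces $\ell^F(\Xi_M)\leq\ell^F(\Phi)+O(M^{-1/p})$, and \cref{lma:eqdef} then yields
\[
d_p(\gamma\lor\varphi_0,\gamma\lor\varphi_1)\;\leq\;\ell^F(\Xi_M)\;\leq\;\ell^F(\Phi)+O(M^{-1/p}).
\]
Letting first $M\to\infty$ and then taking infimum over $\Phi\in\Gamma_+(\varphi_0,\varphi_1)$ concludes.

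\medskip

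\textbf{Main obstacle.} Everything in part (1) and the bookkeeping in part (2) is formal; the technical heart is verifying the estimate $\ell^F(\Xi_M)-\ell^G(\Xi_M)=O(M^{-1/p})$. For each piece set $u=\gamma\lor\psi_j^k$, $v=\gamma\lor\psi_j^{k+1}$, $h=v-u$; one writes $F_p(u,v)-G_p(u,v)=\int h^p(\theta_u^n-\theta_v^n)$, telescopes this into sums of $\int h^p\,\ddc(u-v)\wedge \theta_u^a\wedge\theta_v^{n-1-a}$, and applies \cref{thm:ibpA} to rewrite the typical term as $p\int h^{p-1}\,\mathrm{d}h\wedge \dc h\wedge \theta_u^a\wedge\theta_v^{n-1-a}$. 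Using $0\leq h\leq M^{-1}(\psi_{j+1}-\psi_j)$ and the plurilocality observation that $\mathrm{d}h\wedge\dc h$ is supported on $\{\gamma\leq v\}$ and is pointwise dominated by $M^{-2}\,\mathrm{d}(\psi_{j+1}-\psi_j)\wedge\dc(\psi_{j+1}-\psi_j)$, each piece is bounded by $CM^{-p-1}$; taking $p$-th roots via the elementary inequality $a^{1/p}-b^{1/p}\leq (a-b)^{1/p}$ and summing over the $NM$ pieces yields the $O(M^{-1/p})$ bound. The subtlety is that $\theta_{\gamma\lor\psi_j^k}^n$ is a composite measure mixing $\theta_\gamma^n$ and $\theta_{\psi_j^k}^n$, so the IBP identities of \cref{thm:ibpA} must be invoked in the singular unibranch setting developed in \cref{sec:npp}.
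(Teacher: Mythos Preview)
Your part (1) is correct and matches the paper's argument (which is just stated more tersely there).

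For part (2), your overall architecture agrees with the paper's: refine, use the $\lor$-contractivity of $G_p^{1/p}$ (axiom \ref{ax:a7}), and close the gap between $\ell^F$ and $\ell^G$ on the refined chain $\Xi_M=\gamma\lor\Phi^{(M)}$. The problem is your proposed mechanism for the key estimate
\[
\ell^F(\Xi_M)-\ell^G(\Xi_M)=O(M^{-1/p}).
\]
Writing $u=\gamma\lor\psi_j^k$, $v=\gamma\lor\psi_j^{k+1}$, $h=v-u$, you claim the pointwise domination $\mathrm{d}h\wedge\dc h\leq M^{-2}\,\mathrm{d}(\psi_{j+1}-\psi_j)\wedge\dc(\psi_{j+1}-\psi_j)$. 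This is false on the transition set $\{\psi_j^k<\gamma<\psi_j^{k+1}\}$: there $h=\psi_j^{k+1}-\gamma$, so $\mathrm{d}h\wedge\dc h=\mathrm{d}(\psi_j^{k+1}-\gamma)\wedge\dc(\psi_j^{k+1}-\gamma)$, which involves $\mathrm{d}\gamma$ and carries no factor of $M^{-2}$. Consequently the integration-by-parts route does not yield the $CM^{-p-1}$ bound per piece, and the argument breaks. (There is also a secondary issue: \cref{thm:ibpA} is stated for differences of qpsh functions of the same singularity type, not for $h^p$, so the identity $\int h^p\,\ddc h\wedge T=-p\int h^{p-1}\,\mathrm{d}h\wedge\dc h\wedge T$ needs independent justification in this generality.)

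The paper obtains the estimate by an entirely different, non-IBP computation. It first reduces to the case $[\varphi_0]=[\varphi_1]$ (so $\psi^{k+1}-\psi^k\leq A/M$ uniformly and, after perturbation, $\psi^{k+1}-\psi^k\geq\delta/M$), and then compares $\ell^G(\Xi_M)$ and $\ell^F(\Xi_M)$ directly: on $\{\gamma\leq\psi^k\}$ the increments and measures coincide with those of the unmaxed chain, while the discrepancy is supported on the transition strips $\{\psi^k<\gamma<\psi^{k+2}\}$. These strips are controlled using \cref{lma:comp} and the fact that, thanks to the uniform step size, only boundedly many of them overlap; expanding $\theta_{\psi^{k+1}}$ linearly in $\theta_{\psi^0},\theta_{\psi^M}$ then gives the $O(M^{-1/p})$ bound. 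The general case $\varphi_0\leq\varphi_1$ without $[\varphi_0]=[\varphi_1]$ is handled in a separate approximation step via $\varphi_1^k=(\varphi_0+k)\land\varphi_1\nearrow\varphi_1$, using part (1) and \cref{prop:f}(4). Your sketch skips both the reduction to equal singularity type and the transition-strip analysis; without them the estimate does not go through.
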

As far as the author knows, the second part is new even for K\"ahler classes.
\begin{proof}
	(i) We may assume that $\varphi_0\leq \varphi_1$. Then the statement follows from Condition~\ref{ax:a4}.
	
	(ii)\textbf{Step 1}. We prove this under the additional assumption that $[\varphi_0]=[\varphi_1]$.
	
	In this case, we take $A>0$ so that
	\begin{equation}\label{eq:varphidiffbdd}
		\varphi_1-A\leq \varphi_0.
	\end{equation}
	
	Let $\epsilon>0$. Take $\Phi=(\psi_0,\ldots,\psi_N)\in \Gamma_+(\varphi_0,\varphi_1)$ so that
	\[
	\ell(\Phi)<d_p(\varphi_0,\varphi_1)+\epsilon.
	\]
	For $M\geq 1$, as in \cref{def:mref}, let $\Phi^{(M)}\in \Gamma_+(\varphi_0,\varphi_1)$ be the refinement of $\Phi$ obtained by inserting the following points between $\psi_j$ and $\psi_{j+1}$ ($j=0,\ldots,N-1$):
	\[
	\psi_{j}^k\coloneqq \frac{k}{M}\psi_{j+1}+\frac{M-k}{M}\psi_{j},\quad k=0,\ldots,M-1.
	\]
	We claim that for $M$ large enough,
	\begin{equation}\label{eq:lFnearlF}
		0\leq \ell^F(\gamma\lor \Phi^{(M)})-\ell^G(\gamma\lor \Phi^{(M)})<\epsilon.
	\end{equation}
	Assume this for the time being, then
	by Condition~\ref{ax:a7}, 
	\begin{equation}\label{eq:ellGlor}
		\ell^G(\gamma\lor\Phi^{(M)})\leq \ell^G(\Phi^{(M)}).
	\end{equation}
	By \eqref{eq:lFnearlF}, \eqref{eq:ellGlor} and \cref{lma:gf}, for $M$ large enough,
	\[
	\ell^F(\gamma\lor \Phi^{(M)})< \ell^F(\Phi^{(M)})+\epsilon\leq \ell^F(\Phi)+\epsilon<d_p(\varphi_0,\varphi_1)+2\epsilon.
	\]
	Let $\epsilon\to 0+$, $d_p(\gamma\lor \varphi_0,\gamma\lor \varphi_1)\leq d_p(\varphi_0,\varphi_1)$.
	
	It remains to prove \eqref{eq:lFnearlF}, which is refined version of \cref{lma:gf}(ii).
 Clearly, it suffices to treat the case $N=1$. 
	In this case, we write $\gamma\lor \psi_{0}^k$ as $\eta_k$ and write $\psi^k$ for $\psi_0^k$. Let $S$ be the polar locus of $\psi^0$.
	By perturbation, we may assume that
	$\delta\coloneqq \inf_{X\setminus S}(\psi^M-\psi^0)>0$. Note that for $k=0,1,\ldots,M-1$,
	\begin{equation}
		\psi^{k+1}\geq \psi^k+\frac{\delta}{M}.
	\end{equation}
	Write $\ell^G=\ell^G(\eta_0,\ldots,\eta_M)$, and $\ell^F=\ell^F(\eta_0,\ldots,\eta_M)$ for simplicity.
	
	Observe that on $\{\gamma\leq \psi^k\}$ ($k=0,1,\ldots,M-2$),
	\begin{equation}\label{eq:etadiff}
		\eta_{k+2}-\eta_{k+1}=\eta_{k+1}-\eta_{k}.
	\end{equation}
    This relation allows us to relate $\ell^G$ and $\ell^F$.
	So
	\[
	\begin{aligned}
		\ell^G-&\left(\int_X (\eta_{M}-\eta_{M-1})^p\,\theta_{\eta_{M}}^n\right)^{1/p}\\
		\geq& \sum_{k=0}^{M-2} \left(\int_{\{\gamma\leq \psi^k\}} (\eta_{k+1}-\eta_k)^p\,\theta_{\eta_{k+1}}^n\right)^{1/p}\\
		=&\sum_{k=0}^{M-2} \left(\int_{\{\gamma\leq \psi^k\}} (\eta_{k+2}-\eta_{k+1})^p\,\theta_{\eta_{k+1}}^n\right)^{1/p} \quad \text{By \eqref{eq:etadiff}}\\
		=&\sum_{k=0}^{M-2} \left(\int_{X} (\eta_{k+2}-\eta_{k+1})^p\,\theta_{\eta_{k+1}}^n-\int_{\{\psi^k<\gamma<\psi^{k+2}\}} (\psi^{k+2}-\eta_{k+1})^p\,\theta_{\eta_{k+1}}^n\right)^{1/p}\\
		\geq &\ell^F-\sum_{k=0}^{M-2}\left(\int_{\{\psi^k<\gamma<\psi^{k+2}\}} (\psi^{k+2}-\eta_{k+1})^p\,\theta_{\eta_{k+1}}^n\right)^{1/p}-\left(\int_X (\eta_{1}-\eta_{0})^p\,\theta_{\eta_{0}}^n\right)^{1/p} \quad \text{By \cref{lma:calc1}}\\
		\geq &\ell^F-(M-1)^{1-1/p}\left(\sum_{k=0}^{M-2} \int_{\{\psi^k<\gamma<\psi^{k+2}\}} (\psi^{k+2}-\eta_{k+1})^p\,\theta_{\eta_{k+1}}^n\right)^{1/p}-\left(\int_X (\eta_{1}-\eta_{0})^p\,\theta_{\eta_{0}}^n\right)^{1/p}.
	\end{aligned}
	\]
 The last step follows from the power mean inequality.
	We estimate the second term, we write
	\[
	\int_{\{\psi^k<\gamma<\psi^{k+2}\}} (\psi^{k+2}-\eta_{k+1})^p\,\theta_{\eta_{k+1}}^n=\int_{\{\psi^k<\gamma<\psi^{k+1}\}}+\int_{\{\psi^{k+1}\leq\gamma<\psi^{k+2}\}}=:J_{1,k}+J_{2,k}.
	\]
	Then
	\[
	\begin{aligned}
		J_{1,k} \leq& \int_{\{\psi^{k}<\gamma<\psi^{k+1}\}} (\psi^{k+2}-\psi^{k+1})^p\,\theta_{\psi^{k+1}}^n\\
		\leq & CM^{-p}\int_{\{\psi^{k}<\gamma<\psi^{k+1}\}} \,\theta_{\psi^{k+1}}^n && \text{By \eqref{eq:varphidiffbdd}}.
	\end{aligned}
	\]
	Now observe that for any $t\geq 0$,
	\begin{equation}\label{eq:psietagammaineq}
		\left(\{\psi^{k+1}\leq\gamma<\psi^{k+2}\}\cap \{\psi^{k+2}-\eta_{k+1}>t\}\right) \setminus S\subseteq \left\{ \frac{\psi^{k+2}+\eta_{k+1}+\gamma}{3}>\psi^{k+1}+\frac{t}{3} \right\}.
	\end{equation}
	Note that outside $S$,
	\[
	\psi^{k+2}-\eta_{k+1}\leq \psi^{k+2}-\psi^{k+1}\leq \frac{A}{M}.
	\]
	Also observe that
	\begin{equation}\label{eq:complicatedcontain}
		\left\{\frac{A}{3M}\geq \frac{\psi^{k+2}+\eta_{k+1}+\gamma}{3}-\psi^{k+1}> 0\right\}\setminus S\subseteq \left\{ -\frac{A}{M}< \gamma-\psi^{k+1}\leq 2\frac{A}{M}\right\}.
	\end{equation}
	In fact, when $\gamma\geq \psi^{k+1}$ and on the complement of $S$, $\frac{A}{3M}\geq \frac{\psi^{k+2}+\eta_{k+1}+\gamma}{3}-\psi^{k+1}> 0$ implies that
	$\frac{A}{M}\geq 2\gamma-2\psi^{k+1}$, so $0\leq \gamma-\psi^{k+1}\leq \frac{A}{2M}$. On the other hand, when $\gamma<\psi^{k+1}$, the same inequality implies $0<-\psi^{k+1}+\frac{A}{M}+\gamma$, so $-\frac{A}{M}<\gamma-\psi^{k+1}<0$. This proves \eqref{eq:complicatedcontain}.
	
	So
	\[
	\begin{aligned}
		J_{2,k}= & \int_0^{\infty}pt^{p-1} \int_{\{ \psi^{k+2}-\eta_{k+1}>t \}}\mathds{1}_{\{\psi^{k+1}\leq\gamma<\psi^{k+2}\}} \,\theta_{\eta_{k+1}}^n \,\mathrm{d}t\\
		\leq & 3^p \int_0^{\frac{A}{3M}} pt^{p-1} \int_{\left\{ \frac{\psi^{k+2}+\eta_{k+1}+\gamma}{3}>\psi^{k+1}+t \right\}}\mathds{1}_{\{\psi^{k+1}\leq\gamma<\psi^{k+2}\}} \,\theta_{\eta_{k+1}}^n \,\mathrm{d}t && \text{By \eqref{eq:psietagammaineq}}\\
		\leq & C\int_0^{\frac{A}{3M}} pt^{p-1} \int_{\left\{ \frac{\psi^{k+2}+\eta_{k+1}+\gamma}{3}>\psi^{k+1}+t \right\}} \,\theta_{\frac{\psi^{k+2}+\eta_{k+1}+\gamma}{3}}^n \,\mathrm{d}t\\
		\leq &C\int_0^{\frac{A}{3M}} pt^{p-1} \int_{\left\{ \frac{\psi^{k+2}+\eta_{k+1}+\gamma}{3}>\psi^{k+1}+t \right\}} \,\theta_{\psi^{k+1}}^n \,\mathrm{d}t && \text{By \cref{lma:comp}}\\
		=& C \int_{\{\frac{A}{3M}\geq \frac{\psi^{k+2}+\eta_{k+1}+\gamma}{3}-\psi^{k+1}> 0\}}\left(\frac{\psi^{k+2}+\eta_{k+1}+\gamma}{3}-\psi^{k+1} \right)^p\,\theta_{\psi^{k+1}}^n \\
		\leq &CM^{-p} \int_{\{-\frac{A}{M}< \gamma-\psi^{k+1}\leq 2\frac{A}{M}\}}\,\theta_{\psi^{k+1}}^n && \text{By \eqref{eq:complicatedcontain}}.
	\end{aligned}
	\]
	So for $[A/\delta]+2\leq k\leq M-[2A/\delta]-1$,
	\[
	J_{2,k}\leq CM^{-p}\int_{\{\psi^{k-[A/\delta]-2}\leq \gamma\leq \psi^{k+[2A/\delta]+1}\}}\,\theta_{\psi^{k+1}}^n.
	\]
	Write $\theta_{\psi^{k+1}}$ as a combination of $\theta_{\psi^{0}}$ and $\theta_{\psi^{M}}$, we find
	\[
	\sum_{k=0}^{M-1}\left(J_{1,k}+J_{2,k}\right)\leq CM^{-p}.
	\]
	Putting all estimates together, we find $\ell^G-\ell^F\geq -C M^{-1/p}$.
	Hence, \eqref{eq:lFnearlF} holds.
	
	\textbf{Step 2}. For $k\geq 0$. Let $\varphi_1^k=(\varphi_0+k)\land \varphi_1$. Note that $\varphi_1^k$ increases to $\varphi_1$ almost everywhere. In fact, this is equivalent to say $[\varphi_0]\land\varphi_1=\varphi_1$. Obviously, $[\varphi_0]\land\varphi_1\leq\varphi_1$.
	For the other inequality, by domination principle(\cite[Proposition~3.11]{DDNL18mono}), it suffices to prove
	\[
	[\varphi_0]\land\varphi_1\geq\varphi_1,\quad  \theta_{[\varphi_0]\land\varphi_1}^n-a.e..
	\]
	But by \cite[Theorem~3.8]{DDNL18mono},
	\[
	\theta_{[\varphi_0]\land\varphi_1}^n\leq \mathds{1}_{\{[\varphi_0]\land\varphi_1=\varphi_1\}}\theta_{\varphi_1}^n.
	\]
	The inequality follows.

	By Step 1 and (i),
	\[
	d_p(\gamma\lor\varphi_0,\gamma\lor\varphi_1^k)\leq d_p(\varphi_0,\varphi_1^k)\leq d_p(\varphi_0,\varphi_1).
	\]
	Now by \cref{lma:tri1},
	\[
	d_p(\gamma\lor\varphi_0,\gamma\lor\varphi_1)\leq d_p(\gamma\lor\varphi_0,\gamma\lor\varphi_1^k)+d_p(\gamma\lor\varphi_1^k,\gamma\lor\varphi_1).
	\]
	So it suffices to prove that
	\[
	\lim_{k\to\infty}d_p(\gamma\lor\varphi_1^k,\gamma\lor\varphi_1)=0.
	\]
	By \cref{lma:eqdef},
	\[
	d_p(\gamma\lor\varphi_1^k,\gamma\lor\varphi_1)^p\leq \int_X  (\gamma\lor\varphi_1-\gamma\lor\varphi_1^k)^p\,\theta_{\gamma\lor\varphi_1^k}^n.
	\]
	The right-hand side tends to $0$ by \cref{prop:f}.
\end{proof}
\begin{lemma}\label{lma:calc1}
	Let $x,y,a,b\in [0,\infty)$. Let $p\in [1,\infty)$. Then
	\[
	\left(a^p+b^p+(x+y)^p\right)^{1/p}\leq (x^p+a^p)^{1/p}+(y^p+b^p)^{1/p}.
	\]
	In particular,
	\[
	(a+b)^{1/p}\leq a^{1/p}+b^{1/p}.
	\]
\end{lemma}
\begin{proof}
	We may assume that $x,y>0$. Let 
	\[
	F(a,b)\coloneqq \left(a^p+b^p+(x+y)^p\right)^{1/p}- (x^p+a^p)^{1/p}-(y^p+b^p)^{1/p}.
	\]
	Then
	\[
	\partial_a F(a,b)=a^{p-1}\left(a^p+b^p+(x+y)^p\right)^{1/p-1}-a^{p-1}(x^p+a^p)^{1/p-1}\leq 0.
	\]
	Similarly, $\partial_b F(a,b)\leq 0$
	So in order to prove that $F(a,b)\leq 0$, it suffices to prove this when $a=0$, $b=0$. But $F(0,0)=0$, so we are done.
\end{proof}

It is hard to check the triangle inequality from the definition of $d_p$ directly, so we provide an alternative definition.

\begin{definition}Let $\varphi_0,\varphi_1\in \Ep$, define
	\begin{equation}\label{eq:dp2}
		\tilde{d}_p(\varphi_0,\varphi_1)\coloneqq \inf_{\Psi\in \Gamma(\varphi_0,\varphi_1)} \ell(\Psi).
	\end{equation}
\end{definition}

\begin{proposition}\label{prop:dequald}
	For each $\varphi_0,\varphi_1\in \Ep$, we have $d_p(\varphi_0,\varphi_1)=\tilde{d}_p(\varphi_0,\varphi_1)$.
\end{proposition}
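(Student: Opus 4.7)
The plan is to prove $d_p = \tilde d_p$ by establishing the two inequalities $\tilde d_p \leq d_p$ and $d_p \leq \tilde d_p$ separately.

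For $\tilde d_p \leq d_p$, write $\eta = \varphi_0 \wedge \varphi_1$, so by the $p$-strict decomposition $d_p(\varphi_0,\varphi_1)^p = d_p(\eta,\varphi_0)^p + d_p(\eta,\varphi_1)^p$. Fix $\epsilon > 0$. Using \cref{lma:eqdef}, choose $\Psi_0 \in \Gamma_+(\eta,\varphi_0)$ and $\Psi_1 \in \Gamma_+(\eta,\varphi_1)$ with $\ell(\Psi_j) \leq d_p(\eta,\varphi_j) + \epsilon$ for $j = 0,1$. Concatenating the reversal of $\Psi_0$ with $\Psi_1$ produces a zigzag $\Psi \in \Gamma(\varphi_0,\varphi_1)$ whose reduced form has $\eta$ as its unique interior turning point (of type 2). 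By the definition of $\ell$ on reduced zigzags, $\ell(\Psi)^p = \ell(\Psi_0)^p + \ell(\Psi_1)^p$, so $\tilde d_p(\varphi_0,\varphi_1)^p \leq (d_p(\eta,\varphi_0)+\epsilon)^p + (d_p(\eta,\varphi_1)+\epsilon)^p$. Letting $\epsilon \to 0^+$ completes this direction.

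For $d_p \leq \tilde d_p$, I need to show $d_p(\varphi_0,\varphi_1) \leq \ell(\Psi)$ for every $\Psi \in \Gamma(\varphi_0,\varphi_1)$. Writing $\Psi$ in reduced form with monotone segments $\Psi^{(0)}, \dots, \Psi^{(S)}$ joined at turning points $\psi_{i_1}, \dots, \psi_{i_S}$, each monotone segment satisfies $d_p(\psi_{i_j}, \psi_{i_{j+1}}) \leq \ell(\Psi^{(j)})$ by the monotone definition of $d_p$ combined with \cref{lma:eqdef}. It therefore suffices to establish the Pythagorean-type bound
\[
d_p(\varphi_0,\varphi_1)^p \leq \sum_{j=0}^{S} d_p(\psi_{i_j}, \psi_{i_{j+1}})^p
\]
for reduced alternating zigzags $(\psi_{i_0}, \dots, \psi_{i_{S+1}})$.

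I would prove this by induction on $S$. The case $S=0$ is trivial. For $S=1$ with turning point $\psi = \psi_{i_1}$: in the type-1 configuration $\varphi_0 \leq \psi \geq \varphi_1$, apply \cref{prop:twoop}(1) with $\gamma = \psi$ and use $\psi \wedge \varphi_j = \varphi_j$ to obtain $d_p(\varphi_0 \wedge \varphi_1, \varphi_j) \leq d_p(\varphi_{1-j}, \psi)$ for $j = 0, 1$; summing $p$-th powers and invoking the $p$-strict decomposition of $d_p(\varphi_0,\varphi_1)^p$ gives the bound. The type-2 configuration $\varphi_0 \geq \psi \leq \varphi_1$ is handled symmetrically using \cref{prop:twoop}(2) with $\gamma = \varphi_0 \wedge \varphi_1 \geq \psi$. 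For the inductive step $S \geq 2$, apply the $S=1$ bound to the first triple $(\psi_{i_0}, \psi_{i_1}, \psi_{i_2})$ and continue along the reduced sequence, using \cref{lma:calc1} to reassemble the resulting partial $p$-sums into the required total $p$-sum.

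The main obstacle in the inductive step is the combinatorial bookkeeping when reducing a triple: the new sequence obtained by deleting the turning point $\psi_{i_1}$ may or may not remain an alternating zigzag, depending on the relative order of $\psi_{i_0}$ and $\psi_{i_2}$, so a case analysis is needed. The hardest case is when the reduction collapses several turning points into a monotone run, forcing the Pythagorean-type bound to be combined with the subdivision/refinement inequality \cref{prop:f}(3) and \cref{lma:calc1} in order to compare the $p$-sum along the reduced sequence with the $p$-sum along the original zigzag without overcounting.
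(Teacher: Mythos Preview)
Your direction $\tilde d_p \leq d_p$ is fine, and your $S=1$ base cases are essentially correct (though in the type-1 case you should take $\gamma=\varphi_j$, not $\gamma=\psi$: with $\gamma=\varphi_0$ one has $d_p(\varphi_0\wedge\varphi_1,\varphi_0\wedge\psi)=d_p(\varphi_0\wedge\varphi_1,\varphi_0)\leq d_p(\varphi_1,\psi)$, and symmetrically).

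The genuine gap is in the inductive step $S\geq 2$. After you apply the $S=1$ bound to the first triple $(\psi_{i_0},\psi_{i_1},\psi_{i_2})$, the pair $\psi_{i_0},\psi_{i_2}$ need not be comparable, so to recover a zigzag you must insert $\psi_{i_0}\wedge\psi_{i_2}$. Applying the induction hypothesis to the resulting zigzag (with $S-1$ turning points) yields
\[
d_p(\varphi_0,\varphi_1)^p \leq d_p(\psi_{i_0},\psi_{i_0}\wedge\psi_{i_2})^p + d_p(\psi_{i_0}\wedge\psi_{i_2},\psi_{i_3})^p + \sum_{j\geq 3} d_p(\psi_{i_j},\psi_{i_{j+1}})^p.
\]
To compare with the original $p$-sum you would need, setting $a=d_p(\psi_{i_0},\psi_{i_0}\wedge\psi_{i_2})$, $b=d_p(\psi_{i_0}\wedge\psi_{i_2},\psi_{i_2})$, $c=d_p(\psi_{i_2},\psi_{i_3})$, $A=d_p(\psi_{i_0},\psi_{i_1})$, $B=d_p(\psi_{i_1},\psi_{i_2})$,
\[
a^p + (b+c)^p \leq A^p + B^p + c^p.
\]
You only know $a\leq B$, $b\leq A$ (from \cref{prop:twoop}(1)) and $a^p+b^p\leq A^p+B^p$; but these do not imply the displayed inequality. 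For instance with $p=2$, $a=B=\epsilon$, $b=A=1$, $c=1$, the left side is $\epsilon^2+4$ while the right side is $1+\epsilon^2+1=2+\epsilon^2$. Neither \cref{lma:calc1} nor \cref{prop:f}(3) rescues this: the obstruction is intrinsic to combining a $p$-Pythagorean equality with an ordinary triangle inequality on the merged monotone run.

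The paper avoids this by running the reduction at the level of the length element $f=F_p^{1/p}$ and $\ell$, not at the level of $d_p$. Given a type-1 peak $\psi_j$ (so $\psi_{j-1}<\psi_j>\psi_{j+1}$), one replaces $\psi_j$ by $\psi_{j-1}\wedge\psi_{j+1}$ inside $\Psi$. The new tuple is again in $\Gamma(\varphi_0,\varphi_1)$ because $\psi_{j-1}\geq\psi_{j-1}\wedge\psi_{j+1}\leq\psi_{j+1}$, and axiom \ref{ax:a4} gives $f(\psi_{j-1},\psi_{j-1}\wedge\psi_{j+1})\leq f(\psi_j,\psi_{j+1})$ and $f(\psi_{j-1}\wedge\psi_{j+1},\psi_{j+1})\leq f(\psi_{j-1},\psi_j)$, so $\ell$ does not increase. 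Iterating removes all type-1 peaks, leaving a single V-shape $(\psi_0,\dots,\psi_j,\dots,\psi_N)$ with $\psi_j\leq\varphi_0,\varphi_1$; then \cref{prop:twoop}(2) with $\gamma=\varphi_0\wedge\varphi_1$ gives $d_p(\varphi_0\wedge\varphi_1,\varphi_k)\leq d_p(\psi_j,\varphi_k)$ for $k=0,1$, which is exactly your $S=1$ type-2 step. The point is that tracking $\ell$ rather than $d_p$ keeps the combinatorics inside $\Gamma(\varphi_0,\varphi_1)$ at every stage, so no merging of monotone runs at the $d_p$ level is ever needed.
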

\begin{proof}
	By definition, $d_p(\varphi_0,\varphi_1)\geq \tilde{d}_p(\varphi_0,\varphi_1)$.
	
	For the other direction, fix some $\epsilon>0$, take $\Psi=(\psi_0,\ldots,\psi_N)\in \Gamma(\varphi_0,\varphi_1)$ such that
	\begin{equation}\label{eq:lprop}
		\ell(\Psi)< \tilde{d}_p(\varphi_0,\varphi_1)+\epsilon.
	\end{equation}
	We may take $\Psi$ with smallest $N$ so that \eqref{eq:lprop} is satisfied. In particular, $\Psi$ is reduced.
	
	\textbf{Step 1}. 
	We claim that we can always take $\Psi$ with the following additional property: there is $j\in [0,N]$, so that $\psi_k$ is decreasing for $k\leq j$ and increasing for $k\geq j$.
	
	It suffices to show that we may always assume that there are no type 1 turning points. Since then, it suffices to take $j$ to be the first turning point if there is any and take $j=0$ or $j=N$ otherwise.
	
	Assume that $j$ is the smallest type 1 turning point, let $i$ be the previous turning point if there is one, and let $i=0$ otherwise. Similarly, let $k$ be the next turning point if there is one, and $k=N$ otherwise. We claim that we may replace $\Psi$ with the reduction of
	\[
	\Psi'=(\psi_0,\ldots,\psi_{j-1},\psi_{j-1}\land \psi_{j+1},\psi_{j+1},\ldots,\psi_{N}).
	\]
	Now
	\[
	\begin{aligned}
		&\left(\sum_{a=i}^{j-2}f(\psi_a,\psi_{a+1})\right)^p+f(\psi_{j-1},\psi_{j-1}\land \psi_{j+1})^p+f(\psi_{j-1}\land \psi_{j+1},\psi_{j+1})^p+\left(\sum_{b=j+1}^{k-1}f(\psi_b,\psi_{b+1})\right)^p\\
		\leq & \left(\sum_{a=i}^{j-2}f(\psi_a,\psi_{a+1})\right)^p+f(\psi_{j-1},\psi_j)^p+f(\psi_j,\psi_{j+1})^p+\left(\sum_{b=j+1}^{k-1}f(\psi_b,\psi_{b+1})\right)^p \quad \text{By Condition~\ref{ax:a3},\ref{ax:a2}} \\
		\leq & \left(\sum_{a=i}^{j-1}f(\psi_a,\psi_{a+1})\right)^p+\left(\sum_{b=j}^{k-1}f(\psi_b,\psi_{b+1})\right)^p.
	\end{aligned}
	\]
    Here $f=F_p^{1/p}$.
	It follows that $\ell(\tilde{\Psi}')=\ell(\Psi')\leq \ell(\Psi)$.
	
	Observe that $\psi_{j-1}\land \psi_{j+1}$ is not equal to either $\psi_{j-1}$ or $\psi_{j+1}$, since otherwise, $\tilde{\Psi}'$ has length $N-1$, contrary to our choice of $\Psi$. So $j$ is a turning point of type 2 of $\Psi'$. In particular, if we modify the tuple $\Psi'$ further by decreasing some among $\psi_0,\ldots,\psi_{j-1}$, the index $j$ will never become a turning point of type 1.
	
	If some $j'\in [1,j-1]$ is a turning point of type 1 of $\Psi'$, we repeat the above procedure replacing $j$ by $j'$, and finally, we arrive at a new chain $\Psi''$ of length $N$ without any turning point of type 1 in the interval $[1,j]$, satisfying \eqref{eq:lprop}.
	If there is no turning points of type 1 in $[j+1,N-1]$, we are done, otherwise, repeat the same procedure.
	
	\textbf{Step 2}.
	Now we have
	\[
	\left(\ell(\Psi')^p+ \ell(\Psi'')^p\right)^{1/p}= \ell(\Psi)<\tilde{d}_p(\varphi_0,\varphi_1)+\epsilon,
	\]
	where $\Psi'=(\psi_j,\psi_{j-1},\ldots,\psi_0)$, $\Psi''=(\psi_j,\psi_{j+1},\ldots,\psi_{N})$.
	Hence,
	\[
	d_p(\psi_j,\varphi_0)^p+d_p(\psi_j,\varphi_1)^p\leq  \tilde{d}_p(\varphi_0,\varphi_1)^p.
	\]
	By \cref{prop:twoop}, 
	\[
	d_p(\varphi_0\land\varphi_1,\varphi_0)^p+d_p(\varphi_0\land\varphi_1,\varphi_1)^p\leq d_p(\psi_j,\varphi_0)^p+d_p(\psi_j,\varphi_1)^p.
	\]
	Hence, $d_p(\varphi_0,\varphi_1)\leq \tilde{d}_p(\varphi_0,\varphi_1)$.
\end{proof}

\begin{proposition}\label{prop:dpmetric}
	$d_p$ is a metric on $\Ep$.
\end{proposition}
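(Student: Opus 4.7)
The plan is to verify the three metric axioms; non-negativity is immediate since every $\ell$-value is non-negative, and symmetry is immediate once one observes that $F_p$ is symmetric in its arguments, that reversing a chain in $\Gamma(\varphi_0,\varphi_1)$ yields a chain in $\Gamma(\varphi_1,\varphi_0)$ with the same chunk-length data, and that the $p$-strict decomposition used to extend $d_p$ to unordered pairs is itself symmetric in $\varphi_0,\varphi_1$. This leaves non-degeneracy and the triangle inequality.

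For non-degeneracy I would first handle the ordered case $\varphi_0\leq\varphi_1$. By \cref{lma:eqdef} the metric equals $\inf_{\Phi\in\Gamma_+(\varphi_0,\varphi_1)}\ell(\Phi)$, and for a chain $\Phi=(\psi_0,\ldots,\psi_N)\in\Gamma_+$, axiom \ref{ax:a6} applied segment-by-segment together with the cocycle identity from \cref{prop:e}(5) gives
\[
C\,\ell(\Phi)=C\sum_{j=0}^{N-1}F_p(\psi_j,\psi_{j+1})^{1/p}\geq \sum_{j=0}^{N-1}E_1(\psi_j,\psi_{j+1})=E_1(\varphi_0,\varphi_1).
\]
Hence $C\,d_p(\varphi_0,\varphi_1)\geq E_1(\varphi_0,\varphi_1)$; if the left-hand side vanishes, \cref{prop:e}(3) yields $G_1(\varphi_0,\varphi_1)=\int_X(\varphi_1-\varphi_0)\,\theta_{\varphi_1}^n=0$, and the domination principle forces $\varphi_0=\varphi_1$. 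The general case follows from the $p$-strict decomposition of $d_p$, together with $\varphi_0\wedge\varphi_1\in\mathcal{E}^p$ supplied by \cref{thm:rooftopep}.

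The main obstacle is the triangle inequality, for which I plan to use the alternative formulation $d_p=\tilde d_p$ established in \cref{prop:dequald}. Fix $\epsilon>0$ and pick reduced chains $\Psi^{(1)}\in\Gamma(\varphi_0,\varphi_1)$, $\Psi^{(2)}\in\Gamma(\varphi_1,\varphi_2)$ with $\ell(\Psi^{(i)})\leq \tilde d_p(\varphi_{i-1},\varphi_i)+\epsilon$, and let $\vec a=(a_1,\dots,a_k)$, $\vec b=(b_1,\dots,b_m)$ be their chunk-length vectors, so that $\ell(\Psi^{(i)})=\|\vec a\|_p,\|\vec b\|_p$ respectively. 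The concatenation $\Psi\in\Gamma(\varphi_0,\varphi_2)$ is still reduced, and one of two things happens: either $\varphi_1$ is a turning point of $\Psi$, in which case the chunk-length vector of $\Psi$ is the juxtaposition of $\vec a$ and $\vec b$ and
\[
\ell(\Psi)=\bigl(\|\vec a\|_p^p+\|\vec b\|_p^p\bigr)^{1/p}\leq \|\vec a\|_p+\|\vec b\|_p
\]
by the elementary inequality $(A+B)^{1/p}\leq A^{1/p}+B^{1/p}$; or $\varphi_1$ is not a turning point, in which case the last chunk of $\Psi^{(1)}$ merges with the first chunk of $\Psi^{(2)}$, so the chunk-length vector of $\Psi$ has the form $\vec u+\vec v$ in $\mathbb{R}^{k+m-1}$ with $\vec u=(a_1,\dots,a_k,0,\dots,0)$ and $\vec v=(0,\dots,0,b_1,\dots,b_m)$, and Minkowski in $\ell^p$ gives
\[
\ell(\Psi)=\|\vec u+\vec v\|_p\leq \|\vec u\|_p+\|\vec v\|_p=\|\vec a\|_p+\|\vec b\|_p.
\]
In either case $\ell(\Psi)\leq \ell(\Psi^{(1)})+\ell(\Psi^{(2)})$, and letting $\epsilon\to 0$ in $\tilde d_p(\varphi_0,\varphi_2)\leq\ell(\Psi)$ concludes the argument.

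The delicate point is the merging case: the naive chunk-by-chunk bound $(a_k+b_1)^p\geq a_k^p+b_1^p$ points the wrong way, and one must instead view the two chunk-length vectors as $\ell^p$-vectors sharing a single coordinate and invoke Minkowski; without the reformulation of $d_p$ as $\tilde d_p$ one would not have access to general (not necessarily monotone) chains at all, so \cref{prop:dequald} is doing the real work. All other ingredients---axioms \ref{ax:a1}--\ref{ax:a6}, the cocycle identity for $E_1$, the rooftop closure \cref{thm:rooftopep}, and the domination principle---are already in place, so I expect no further analytic difficulty.
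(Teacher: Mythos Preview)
Your proof is correct and follows essentially the same route as the paper. Both arguments reduce the triangle inequality to the identity $d_p=\tilde d_p$ from \cref{prop:dequald}, concatenate near-optimal chains, and split into the two cases according to whether the junction point $\varphi_1$ is a turning point; the paper handles the merging case via the ad hoc inequality of \cref{lma:calc1}, which is exactly your Minkowski bound in $\ell^p(\mathbb{R}^{k+m-1})$ specialized to vectors with a single overlapping coordinate. Your non-degeneracy argument is likewise the paper's: bound $\ell(\Phi)$ from below by a constant times $E_1(\varphi_0,\varphi_1)$ via \ref{ax:a6} and the cocycle identity, then invoke the domination principle---the paper passes through $F_1$ where you pass through $G_1$, but either works.
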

\begin{proof}
	It is easy to see that $d_p$ is symmetric. It is finite by \cref{lma:tri1}. 
	
	\textbf{Step 1}. The triangle inequality. Let $\varphi,\psi,\gamma\in \Ep$. We need to prove 
    \[
        d_p(\varphi,\gamma)\leq d_p(\varphi,\psi)+d_p(\psi,\gamma).
    \]
	By \cref{prop:dequald}, it suffices to prove
	\begin{equation}\label{eq:temp1}
		\tilde{d}_p(\varphi,\gamma)\leq \tilde{d}_p(\varphi,\psi)+\tilde{d}_p(\psi,\gamma).
	\end{equation}
	
	Take $\epsilon>0$.
	Take $\Psi_1=(\psi_0,\ldots,\psi_N)\in \Gamma(\varphi,\psi)$, $\Psi_2=(\psi_N,\ldots,\psi_M)\in \Gamma(\psi,\gamma)$. We could assume that $N>0$, $M>N$.
	Let 
	\[
	\Psi=(\psi_0,\ldots,\psi_N,\psi_{N+1},\ldots,\psi_M)\in \Gamma(\varphi,\gamma).
	\]

	Let $i<N$ be last turning point of $\Psi_1$ if there is one. Let $i=0$ otherwise. Similarly, let $j>N$ be the first turning point of $\Psi_2$
	if there is one. Let $j=M$ otherwise. Let $A=\ell(\psi_0,\ldots,\psi_i)$, $B=\ell(\psi_j,\ldots,\psi_M)$.
	
	Then we claim that 
	\begin{equation}\label{eq:ellPsi}
		\ell(\Psi)\leq \ell(\Psi_1)+\ell(\Psi_2).
	\end{equation}
	Before proving this claim, let us observe that \eqref{eq:temp1} follows from this claim.
	
	In order to prove \eqref{eq:ellPsi}, we distinguish two cases.
	
	Case 1. $\psi_N$ is not a turning point of $\Psi$. Then
	\[
	\begin{aligned}
		\ell(\Psi)
		=& \left(A^p+\left(\sum_{k=i}^{N-1}f(\psi_k,\psi_{k+1})+\sum_{k=N}^{j-1}f(\psi_k,\psi_{k+1})\right)^p+B^p\right)^{1/p}\\
		\leq& \left(A^p+ \left(\sum_{k=i}^{N-1}f(\psi_k,\psi_{k+1})\right)^p\right)^{1/p}+\left(B^p+ \left(\sum_{k=N}^{j-1}f(\psi_k,\psi_{k+1})\right)^p\right)^{1/p}&& \text{By \cref{lma:calc1}}\\
		=& \ell(\Psi_1)+\ell(\Psi_2).
	\end{aligned}
	\]
 Here $f=F_p^{1/p}$.
	
	Case 2. $\Psi_N$ is a turning point of $\Psi$. Then
	\[
	\begin{aligned}
		\ell(\Psi)
		=& \left(A^p+\left(\sum_{k=i}^{N-1}f(\psi_k,\psi_{k+1})\right)^p+\left(\sum_{k=N}^{j-1}f(\psi_k,\psi_{k+1})\right)^p+B^p\right)^{1/p}\\
		\leq &\left(A^p+\left(\sum_{k=i}^{N-1}f(\psi_k,\psi_{k+1})\right)^p\right)^{1/p}+\left(\left(\sum_{k=N}^{j-1}f(\psi_k,\psi_{k+1})\right)^p+B^p\right)^{1/p}\\
		=& \ell(\Psi_1)+\ell(\Psi_2).
	\end{aligned}
	\]
	
	\textbf{Step 2}.
	We prove that $d_p$ is non-degenerate. Let $\varphi,\psi\in \Ep$, assume that $d_p(\varphi,\psi)=0$. We want to prove that $\varphi=\psi$.
	
	We may assume that $\varphi\leq \psi$.
	Let $\Psi=(\psi_0,\ldots,\psi_N)\in \Gamma_+(\varphi,\psi)$. By Condition~\ref{ax:a6},
	\[
	\ell(\Psi)
	\geq  C^{-1}E_1(\varphi,\psi)\geq C^{-1}F_1(\varphi,\psi).
	\]
	Thus, $F_1(\varphi,\psi)=0$.
	Hence, $\psi=\varphi$ by domination principle (\cite[Proposition~3.11]{DDNL18mono}).
\end{proof}

\begin{theorem}\label{thm:Eprooftop}
	$(\Ep,d_p,\land)$ is a $p$-strict rooftop metric space.
\end{theorem}
\begin{proof}
	By \cref{prop:dpmetric}, $d_p$ is a metric. 
	The fact that $\land$ is a rooftop structure follows from \cref{prop:twoop}.
	It follows from definition that $d_p$ is $p$-strict.
\end{proof}

\subsection{Properties of $d_1$}
\begin{proposition}\label{prop:d1energy}
	Let $\varphi,\psi\in \mathcal{E}^1(X,\theta;[\phi])$. Then
	\[
	d_1(\varphi,\psi)=E^{\phi}(\varphi)+E^{\phi}(\psi)-2E^{\phi}(\varphi\land\psi)\,.
	\]
\end{proposition}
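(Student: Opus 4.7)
The plan is to first prove, for $\varphi \leq \psi$ in $\mathcal{E}^1(X,\theta;[\phi])$, the identity
\[
d_1(\varphi,\psi) = E_1(\varphi,\psi),
\]
and then combine it with the cocycle formula \eqref{eq:coc} and the $1$-strictness of the rooftop structure $(\mathcal{E}^1, d_1, \land)$. Once the comparable case is in hand, since $\varphi \land \psi \in \mathcal{E}^1(X,\theta;[\phi])$ by \cref{thm:rooftopep}, \cref{thm:main1} yields
\[
d_1(\varphi,\psi) = d_1(\varphi\land\psi,\varphi) + d_1(\varphi\land\psi,\psi) = \bigl(E^\phi(\varphi) - E^\phi(\varphi\land\psi)\bigr) + \bigl(E^\phi(\psi) - E^\phi(\varphi\land\psi)\bigr),
\]
which is precisely the desired formula.

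For the lower bound $d_1(\varphi,\psi) \geq E_1(\varphi,\psi)$ in the comparable case, fix any $\Phi = (\psi_0,\ldots,\psi_N) \in \Gamma_+(\varphi,\psi)$. The cocycle identity for $E_1$ in \cref{prop:e}(5) gives $E_1(\varphi,\psi) = \sum_j E_1(\psi_j,\psi_{j+1})$, while the sandwich $E_1 \leq F_1$ of \cref{prop:e}(3) gives $E_1(\psi_j,\psi_{j+1}) \leq F_1(\psi_j,\psi_{j+1})$ for each summand. Summing and invoking \cref{lma:eqdef} to take the infimum over $\Phi$ yields the bound.

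For the upper bound $d_1(\varphi,\psi) \leq E_1(\varphi,\psi)$, I take the affine refinement $\Phi^{(M)}$ of the trivial chain $(\varphi,\psi) \in \Gamma_+(\varphi,\psi)$, namely the chain with nodes $\psi_k^M := \frac{k}{M}\psi + \frac{M-k}{M}\varphi$ for $k=0,\ldots,M$. Applying $G_1 \leq E_1 \leq F_1$ termwise together with the cocycle identity for $E_1$ produces the two-sided estimate
\[
\ell^G(\Phi^{(M)}) \leq E_1(\varphi,\psi) \leq \ell^F(\Phi^{(M)}),
\]
and \cref{lma:gf}(2) (specialized to $p=1$) gives $\ell^F(\Phi^{(M)}) - \ell^G(\Phi^{(M)}) = O(M^{-1})$. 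Hence $\ell^F(\Phi^{(M)}) \to E_1(\varphi,\psi)$ as $M \to \infty$, and the upper bound follows from the definition of $d_1$.

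The only delicate point is that the cocycle identity \cref{prop:e}(5) must be applied to chains $(\psi_j)$ whose members need not share a common singularity type; this is however covered by the general statement of that proposition, whose proof proceeds via canonical approximations and the unrestricted integration by parts formula \cref{thm:ibpA}. Everything else is a direct assembly of already established inequalities.
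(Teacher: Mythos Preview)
Your proof is correct and follows the same overall strategy as the paper's: reduce via the Pythagorean identity to the comparable case $\varphi\le\psi$, then identify $d_1(\varphi,\psi)$ with $E_1(\varphi,\psi)=E^\phi(\psi)-E^\phi(\varphi)$ using the cocycle property of $E_1$. You are more explicit than the paper in separating the two inequalities --- the lower bound from $E_1\le F_1$ together with the cocycle, and the upper bound from the refinement estimate in \cref{lma:gf}(2); the paper's displayed line $\ell(\Psi)=\sum_j E_1(\psi_j,\psi_{j+1})$ is terse to the point of appearing to conflate $\ell^F$ with $\ell^E$, so your version is in fact the cleaner justification. You also bypass the paper's preliminary reduction to $[\varphi]=[\psi]=[\phi]$ (via \cref{prop:dem} and \cite[Lemma~4.14]{DDNL18mono}) by invoking the general form of \cref{prop:e}(5) directly, which is a mild simplification.
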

\begin{proof}
	We may assume that $X$ is a compact Kähler manifold.
	
	By Pythagorean formula \eqref{eq:generalPyt}, we may assume that $\varphi\leq \psi$. 
 Take $\epsilon>0$.
 Let $\Psi=(\psi_0,\ldots,\psi_N)\in \Gamma_+(\varphi,\psi)$, so that
 \[
 \ell(\Psi)\leq d_1(\varphi,\psi)+\epsilon.
 \]
 By \cref{lma:gf}, we may assume that $\ell(\Psi)-\ell^G(\Psi)<\epsilon$.
 Then by \eqref{eq:coc},
	\[
	d_1(\varphi,\psi)+\epsilon\geq \ell(\Psi)
	\geq \sum_{j=0}^{N-1} E_1(\psi_j,\psi_{j+1})
	= \sum_{j=0}^{N-1} \left(E^{\phi}(\psi_{j+1})-E^{\phi}(\psi_j)\right)
	= E^{\phi}(\psi)-E^{\phi}(\varphi).
	\]
 On the other hand, 
 \[
 d_1(\varphi,\psi)-\epsilon\leq \ell^G(\Psi)\leq \sum_{j=0}^{N-1} E_1(\psi_j,\psi_{j+1})
	= E^{\phi}(\psi)-E^{\phi}(\varphi).
 \]
 As $\epsilon>0$ is arbitrary, we conclude that $d_1(\varphi,\psi)=E^{\phi}(\psi)-E^{\phi}(\varphi)$.
\end{proof}

\begin{lemma}
	Let $\varphi,\psi\in \mathcal{E}^1(X,\theta;[\phi])$. Then
	\[
	d_1\left(\varphi,\frac{\varphi+\psi}{2}\right)\leq \frac{3n+3}{2}d_1(\varphi,\psi)\,.
	\]
\end{lemma}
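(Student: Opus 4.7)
Let me set $\eta := (\varphi+\psi)/2$ and work with the fact that $\varphi\land\psi\leq\eta$ (clear from $\varphi,\psi\geq\varphi\land\psi$). My plan is to split $d_1(\varphi,\eta)$ along the path $\varphi\to\varphi\land\psi\to\eta$ via the triangle inequality, control the decreasing leg by $1$-strictness, and control the increasing leg by pushing through the chain $E_1\leq F_1\leq (n+1)E_1$ of \cref{prop:e}(3). Since the bound $\frac{3n+3}{2}$ is not tight, I expect a clean argument to yield an estimate like $\frac{n+3}{2}$, which in particular implies the claim.

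First, the triangle inequality gives
\[
d_1(\varphi,\eta)\leq d_1(\varphi,\varphi\land\psi)+d_1(\varphi\land\psi,\eta).
\]
The $1$-strictness of $(\mathcal{E}^1,d_1,\land)$ (already established via \cref{thm:main1}) yields
\[
d_1(\varphi,\varphi\land\psi)+d_1(\varphi\land\psi,\psi)=d_1(\varphi,\psi),
\]
so the first summand is bounded above by $d_1(\varphi,\psi)$.

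For the second summand, since $\varphi\land\psi\leq\eta$, \cref{prop:d1energy} identifies $d_1(\varphi\land\psi,\eta)$ with $E_1(\varphi\land\psi,\eta)$, and \cref{prop:e}(3) gives $E_1\leq F_1$. By definition, $(\varphi\land\psi)\land\eta=\varphi\land\psi$, so
\[
F_1(\varphi\land\psi,\eta)=\int_X(\eta-\varphi\land\psi)\,\theta_{\varphi\land\psi}^n.
\]
The key linearity $\eta-\varphi\land\psi=\tfrac{1}{2}(\varphi-\varphi\land\psi)+\tfrac{1}{2}(\psi-\varphi\land\psi)$ then splits this into
\[
F_1(\varphi\land\psi,\eta)=\tfrac{1}{2}F_1(\varphi\land\psi,\varphi)+\tfrac{1}{2}F_1(\varphi\land\psi,\psi).
\]
Applying the other half of \cref{prop:e}(3), namely $F_1\leq(n+1)E_1$, together with $E_1(\varphi\land\psi,\varphi)=d_1(\varphi\land\psi,\varphi)$ and the analogue for $\psi$ (both via \cref{prop:d1energy}), and then invoking $1$-strictness once more, I obtain
\[
d_1(\varphi\land\psi,\eta)\leq\tfrac{n+1}{2}\bigl(d_1(\varphi\land\psi,\varphi)+d_1(\varphi\land\psi,\psi)\bigr)=\tfrac{n+1}{2}d_1(\varphi,\psi).
\]

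Assembling the two estimates,
\[
d_1(\varphi,\eta)\leq d_1(\varphi,\psi)+\tfrac{n+1}{2}d_1(\varphi,\psi)=\tfrac{n+3}{2}d_1(\varphi,\psi)\leq\tfrac{3n+3}{2}d_1(\varphi,\psi),
\]
which is the desired inequality. The only genuine verification is the linearity identity for $F_1(\varphi\land\psi,\eta)$, which is immediate once one notes that the integrating measure $\theta_{\varphi\land\psi}^n$ is the same in all three terms; everything else is a packaging of results already in hand.
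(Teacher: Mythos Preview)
Your proof is correct and in fact yields the sharper constant $\tfrac{n+3}{2}$ in place of $\tfrac{3n+3}{2}$. The paper does not give its own argument here but simply refers to \cite[Lemma~3.8]{DDNL18big}, so your route is a genuine, self-contained alternative within the paper's framework. The idea of passing through $\varphi\land\psi$, using the linearity of the $F_1$-integrand against the fixed measure $\theta_{\varphi\land\psi}^n$, and then closing with the chain $E_1\leq F_1\leq (n+1)E_1$ from \cref{prop:e}(3) together with \cref{prop:d1energy} is clean and efficient; the cited DDNL argument, which produces the looser constant $\tfrac{3(n+1)}{2}$, evidently takes a somewhat different path.

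One small remark on references: you invoke \cref{thm:main1} for $1$-strictness, but the Pythagorean identity $d_1(\varphi,\psi)=d_1(\varphi,\varphi\land\psi)+d_1(\psi,\varphi\land\psi)$ is already part of the \emph{definition} of $d_p$ in \cref{subsec:dp} (the general case is defined precisely so that this holds). Citing the definition directly, rather than the main theorem, makes it transparent that there is no circularity, since this lemma sits in the middle of Section~6 before the completeness statement is assembled.
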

The argument is the same as \cite[Lemma~3.8]{DDNL18big}.
\begin{proposition}
	There is a constant $C>0$ such that for any $\varphi,\psi\in \mathcal{E}^1(X,\theta;[\phi])$,
	\[
	d_1(\varphi,\psi)\leq F_1(\varphi,\psi)\leq Cd_1(\varphi,\psi)\,.
	\]
\end{proposition}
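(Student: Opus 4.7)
The plan is to derive both inequalities by combining the $1$-strict Pythagorean formula for $d_1$, the Pythagorean equality for $F_1$ coming from the second identity in \eqref{eq:fp}, and the sandwich $E_1 \leq F_1 \leq (n+1) E_1$ of \cref{prop:e}(3), together with the energy representation \cref{prop:d1energy}.

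For the lower bound $d_1(\varphi,\psi) \leq F_1(\varphi,\psi)$, I would first invoke the $p$-strictness of the rooftop metric structure at $p=1$, giving
\[
d_1(\varphi,\psi) = d_1(\varphi \land \psi,\varphi) + d_1(\varphi \land \psi,\psi).
\]
On each monotonic piece, \cref{lma:eqdef} bounds $d_1$ from above by $\ell$ of the trivial length-one chain in $\Gamma_+$, which is exactly $F_1(\varphi \land \psi,\varphi)$, respectively $F_1(\varphi \land \psi,\psi)$. Summing and applying the second equality of \eqref{eq:fp} (the Pythagorean identity for $F_1$, which rewrites the sum as $F_1(\varphi,\psi)$) yields the claim.

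For the upper bound, I would apply \cref{prop:d1energy} to get
\[
d_1(\varphi,\psi) = E^{\phi}(\varphi) + E^{\phi}(\psi) - 2 E^{\phi}(\varphi \land \psi),
\]
and then use the cocycle identity \eqref{eq:coc} twice to rewrite the right-hand side as $E_1(\varphi \land \psi,\varphi) + E_1(\varphi \land \psi,\psi)$. Each of these two quantities dominates $(n+1)^{-1} F_1$ of the corresponding pair by \cref{prop:e}(3). Summing and again invoking the Pythagorean equality of $F_1$ gives $F_1(\varphi,\psi) \leq (n+1)\, d_1(\varphi,\psi)$, so $C = n+1$ works.

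The main point of caution is purely bookkeeping: one must check that \cref{prop:d1energy} is already available at this point and does not itself rely on the proposition we are proving (its proof in the excerpt invokes only \cref{prop:dem}, which comes afterwards but independently). Once that is verified, the entire argument is formal and does not require any pluripotential-theoretic input beyond what is already collected in \cref{prop:e} and \cref{prop:f}.
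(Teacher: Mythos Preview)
Your argument is correct. For the lower bound, the paper proceeds via \cref{prop:d1energy} and the bound $E_1\le F_1$ from \cref{prop:e}(3) on each monotonic piece, which is logically equivalent to your use of \cref{lma:eqdef}; the two routes differ only in whether one writes $d_1(\varphi\land\psi,\varphi)$ as $E_1(\varphi\land\psi,\varphi)$ first or bounds it by $F_1$ directly.

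For the upper bound, your approach is genuinely more direct than the paper's. The paper defers to the argument of \cite[Theorem~3.7]{DDNL18big}, which is more involved (and in the paper's own text is preceded by the auxiliary ``$3n+3$'' lemma, suggesting a less elementary route). Your observation that \cref{prop:e}(3) already gives $F_1(\varphi\land\psi,\varphi)\le (n+1)E_1(\varphi\land\psi,\varphi)$ on each piece, combined with the cocycle identity \eqref{eq:coc} and the Pythagorean equality for $F_1$ in \eqref{eq:fp}, yields the explicit constant $C=n+1$ with no further work. This is a cleaner argument and stays entirely within the energy-functional framework of \cref{subsec:energyfun}. Your caution about the logical ordering of \cref{prop:d1energy} and \cref{prop:dem} is well placed but, as you note, harmless: \cref{prop:dem} depends only on \cref{prop:fundam}, \cref{prop:twoop}, and \cref{prop:f}(4), none of which invoke the present proposition.
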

\begin{proof}
	We may assume that $X$ is a compact Kähler manifold.
	
	We prove the left-hand inequality at first.
	\[
	d_1(\varphi,\psi)=\left(E^{\phi}(\varphi)-E^{\phi}(\varphi\land\psi)\right)+\left(E^{\phi}(\psi)-E^{\phi}(\varphi\land\psi)\right)\,.
	\]
	By symmetry, it suffices to deal with the first bracket.
	\[
	E^{\phi}(\varphi)-E^{\phi}(\varphi\land\psi)
	\leq  \int_X (\varphi-\varphi\land\psi)\,\theta_{\varphi\land\psi}^n
	\leq  \int_{\{\varphi\land\psi=\psi\}} (\varphi-\psi)\,\theta_{\psi}^n
	\leq  \int_X |\varphi-\psi|\,\theta_{\psi}^n\,.
	\]
	For the right-hand inequality, the argument is exactly the same as in \cite[Theorem~3.7]{DDNL18big}.
\end{proof}

\begin{proposition}\label{prop:sup}
	There is a constant $C>0$ such that for any $\varphi\in \mathcal{E}^1(X,\theta;[\phi])$,
	\[
	|\sup_X (\varphi-\phi)|\leq Cd_1(\varphi,\phi)+C\,.
	\]
\end{proposition}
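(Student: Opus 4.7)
The strategy is to reduce to the smooth K\"ahler case on the resolution $\pi:Y\to X^{\Red}$, and then invoke Proposition~\ref{prop:d1energy} together with a Green-function–type control of $\sup_Y$ on a smooth compact K\"ahler manifold.

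First I would check that both sides are compatible with $\pi^*$. Since $\pi$ is surjective and the potentials are pulled back, $\sup_X(\varphi-\phi)=\sup_Y(\pi^*\varphi-\pi^*\phi)$. On the other hand, \cref{prop:corr} gives a bijection $\pi^*\colon\mathcal{E}^1(X,\theta;[\phi])\to \mathcal{E}^1(Y,\pi^*\theta;[\pi^*\phi])$, and by the projection formula (\cref{prop:projnp}) together with \cref{lma:roofpro}, for all $\varphi,\psi$ in the relative $\mathcal{E}^1$-space one has $F_1(\pi^*\varphi,\pi^*\psi)=F_1(\varphi,\psi)$. Since $d_1$ is defined in \cref{subsec:dp} purely from the length element $F_1$ by the chain-infimum procedure, this identity of length elements forces $d_1(\pi^*\varphi,\pi^*\phi)=d_1(\varphi,\phi)$. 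The proposition therefore reduces to the case in which $X$ is a smooth compact K\"ahler manifold.

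On such $X$, I would combine \cref{prop:d1energy} with the monotonicity of the relative energy functional $E^{\phi}$ on $\mathcal{E}^{1}(X,\theta;[\phi])$: since $E^{\phi}(\phi)=0$ and $\varphi\land\phi\leq \phi,\varphi$, one has $E^{\phi}(\varphi\land\phi)\leq \min\{E^{\phi}(\varphi),0\}$, so
\[
d_1(\varphi,\phi)\;=\;E^{\phi}(\varphi)-2E^{\phi}(\varphi\land\phi)\;\geq\;|E^{\phi}(\varphi)|.
\]
It is therefore enough to show $|\sup_X(\varphi-\phi)|\leq C|E^{\phi}(\varphi)|+C$ in the smooth prescribed singularities setting. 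Writing $s=\sup_X(\varphi-\phi)$, the upper bound $\sup\geq E^{\phi}(\varphi)/V_{\phi}$ is immediate from $\varphi-\phi\leq s$ plugged into the defining formula for $E^{\phi}$ (each term is $\leq sV_{\phi}$). For the opposite direction, one applies after the standard normalization $\psi:=\varphi-s$, which satisfies $\psi\leq \phi$, the Hartogs/Green-function estimate $\sup_X(\psi-\phi)\leq V_{\phi}^{-1}\int_X(\psi-\phi)\,\theta_{\phi}^n+C$, and then uses that $\int_X(\psi-\phi)\,\theta_{\phi}^n$ dominates $E^{\phi}(\psi)$ (the last term of the $E^{\phi}$-sum dominates the others when $\psi\leq\phi$, by the argument recalled in the proof of \cref{prop:e}).

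The main obstacle I expect is the Hartogs/Green's function estimate in the relative prescribed singularities setting: this is an instance of the uniform integrability theory for potentials of fixed singularity type, and has been treated in the smooth case in \cite{DDNL18mono} and \cite{Tru20}; a careful quotation of those results (or, failing that, a direct compactness argument based on \cref{prop:relcpt} applied to $\psi=\varphi-s$) will complete the proof.
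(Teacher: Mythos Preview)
Your overall structure---reduce to the smooth manifold $Y$, split on the sign of $s:=\sup_X(\varphi-\phi)$, and invoke a Hartogs-type compactness bound for the normalized potential $\psi=\varphi-s$---is the same as the paper's. But the specific reduction you propose, ``it is therefore enough to show $|s|\leq C|E^{\phi}(\varphi)|+C$,'' is false, and this is where the argument breaks.

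The quantity $|E^{\phi}(\varphi)|$ is too weak a lower bound on $d_1(\varphi,\phi)$ to control $s$. Concretely, one can have $E^{\phi}(\varphi)=0$ while $s$ is arbitrarily large: take any $\varphi$ with a large positive spike above $\phi$ balanced by enough negativity elsewhere. In that situation $d_1(\varphi,\phi)=-2E^{\phi}(\varphi\land\phi)$ is large (as it must be, by the proposition), but $|E^{\phi}(\varphi)|=0$, so no inequality of the form $s\leq C|E^{\phi}(\varphi)|+C$ can hold. Your subsequent chain---Hartogs gives $\int_X(\psi-\phi)\theta_\phi^n\geq -CV_\phi$, and $\int_X(\psi-\phi)\theta_\phi^n\geq E^{\phi}(\psi)$---does not close the gap: both inequalities go the wrong way to bound $-E^{\phi}(\psi)$ (hence $s$) from above.

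The paper's fix is exactly the fallback you mention at the end, but used \emph{directly} rather than through $|E^{\phi}(\varphi)|$. One uses the equivalence $d_1(\varphi,\phi)\geq C^{-1}\int_X|\varphi-\phi|\,\theta_\phi^n$ (coming from the $d_1\sim F_1$ comparison together with the structure of $\theta_{\varphi\land\phi}^n$), then the triangle inequality
\[
\int_X|\varphi-\phi|\,\theta_\phi^n \;\geq\; sV_\phi - \int_X|\varphi-s-\phi|\,\theta_\phi^n,
\]
and finally the uniform bound $\int_X|\varphi-s-\phi|\,\theta_\phi^n\leq C$ from \cref{prop:relcpt} (after observing $\theta_\phi^n\leq \mathds{1}_{\{\phi=V_\theta=0\}}\theta^n$). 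This yields $s\leq Cd_1(\varphi,\phi)+C$ directly. The case $s\leq 0$ is handled as you say, via $-d_1(\varphi,\phi)=E^\phi(\varphi)\leq sV_\phi$.
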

\begin{proof}
	We may assume that $X$ is a compact Kähler manifold.
	
	In fact, we shall prove a stronger result
	\[
	-d_1(\varphi,\phi)\leq \sup_X (\varphi-\phi)\leq Cd_1(\varphi,\phi)+C\,.
	\]
	If $\sup_X(\varphi-\phi)\leq 0$, the right-hand inequality is trivial and
	\[
	-d_1(\varphi,\phi)=E^{\phi}(\varphi)\leq \sup_X (\varphi-\phi)\,.
	\]
	So we may assume that $\sup_X(\varphi-\phi)\geq 0$. In this case, the left-hand inequality is trivial. Recall that
	\[
	\theta_{\phi}^n\leq \mathds{1}_{\{\phi=V_{\theta}\}}\theta_{V_{\theta}}^n\leq \mathds{1}_{\{\phi=V_{\theta}=0\}}\theta^n\,.
	\]
	For a proof, see \cite[Theorem~3.8]{DDNL18mono}, \cite[Theorem~2.6]{DDNL18fullmass}.
	
	Take a Kähler form $\omega$ such that $\theta\leq \omega$
	Then by \cite[Lemma~2.2]{DDNL19log},
	\[
	\int_X \left|\varphi-\sup_X (\varphi-\phi)-\phi\right|\,\theta_{\phi}^n\leq C\,,
	\]
	where $C>0$ in independent of the choice of $\varphi$.

	Then
	\[
	\begin{aligned}
		d_1(\varphi,\phi)
		\geq & C^{-1}\int_X |\varphi-\phi|\,\theta_{\phi}^n\\
		\geq & C^{-1}\sup_X (\varphi-\phi)-C^{-1}\int_X \left|\varphi-\sup_X (\varphi-\phi)-\phi\right|\,\theta_{\phi}^n\\
		\geq & C^{-1}\sup_X (\varphi-\phi)-C\,.
	\end{aligned}
	\]
\end{proof}

\subsection{Properties of $d_p$}

\begin{proposition}\label{prop:dem} Let $\varphi^k$ be a sequence in $\Ep$, let $\varphi\in \Ep$, 
	\begin{enumerate}
		\item Assume that $\varphi^k$ is decreasing with pointwise limit $\varphi$. Let $\psi\in \Ep$, $\psi\leq \varphi$, then $\varphi\in \Ep$, $d_p(\psi,\varphi)=\lim_{k\to\infty}d_p(\psi,\varphi^k)$.
		\item Assume that $\varphi^k$ increases with $\varphi\coloneqq \sups \varphi^k\in \PSH(X,\theta)$. Let $\psi\in \Ep$, $\psi\geq \varphi$, then $\varphi\in \Ep$, $d_p(\psi,\varphi)=\lim_{k\to\infty}d_p(\psi,\varphi^k)$.
	\end{enumerate}
\end{proposition}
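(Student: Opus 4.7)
The plan is to handle both parts by the same template: first establish $\varphi \in \mathcal{E}^p$ via the stability result \cref{prop:stab}, then reduce the convergence $d_p(\psi,\varphi^k) \to d_p(\psi,\varphi)$ to the statement $d_p(\varphi,\varphi^k) \to 0$ through the triangle inequality \cref{prop:dpmetric}, and finally bound $d_p(\varphi,\varphi^k)$ by an $F_p$-quantity that is already known to tend to zero.

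For the membership claim, I would apply \cref{prop:fundam} to exploit the sandwich available in each case. In part (1) we have $\psi \leq \varphi^k \leq \varphi^1$, so the proposition gives the uniform bound $\int_X (\varphi^1 - \varphi^k)^p\,\theta_{\varphi^k}^n \leq 2^{n+p} F_p(\psi,\varphi^1) < \infty$, where finiteness is \cref{prop:e}(3). Combined with $\varphi^k \to \varphi$ in $L^1$ (dominated convergence for a bounded monotone family tested against $\omega^n$), \cref{prop:stab} with $\gamma = \varphi^1$ yields $\varphi \in \mathcal{E}^p$. In part (2), reversing roles, $\varphi^1 \leq \varphi^k \leq \psi$, so \cref{prop:fundam} gives $\int_X (\psi - \varphi^k)^p\,\theta_{\varphi^k}^n \leq 2^{n+p}\int_X (\psi - \varphi^1)^p\,\theta_{\varphi^1}^n$; the right-hand side is finite because $\psi \leq \phi + C$ together with $\varphi^1 \in \mathcal{E}^p$ yields a bound of the form $(\psi - \varphi^1)^p \leq C(1 + |\phi - \varphi^1|^p)$, which is integrable against $\theta_{\varphi^1}^n$. \cref{prop:stab} again concludes $\varphi \in \mathcal{E}^p$.

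For the distance convergence, the triangle inequality reduces matters to showing $d_p(\varphi,\varphi^k) \to 0$, and since the two functions are comparable, \cref{lma:eqdef} gives $d_p(\varphi,\varphi^k) \leq F_p(\varphi,\varphi^k)^{1/p}$. In part (2), where $\varphi^k$ increases a.e. to $\varphi \in \mathcal{E}^p$, this is exactly \cref{prop:f}(4). In part (1), $F_p(\varphi,\varphi^k) = \int_X (\varphi^k - \varphi)^p\,\theta_\varphi^n$, where the integrand decreases pointwise to $0$ $\theta_\varphi^n$-a.e.\ and is dominated by $(\varphi^1 - \varphi)^p \in L^1(\theta_\varphi^n)$ (\cref{prop:e}(3)), so dominated convergence closes the argument. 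The only subtlety worth flagging is orientation: one must apply \cref{prop:fundam} so that the uniform bound is anchored at a fixed endpoint of the monotone family, and one must invoke \cref{prop:f}(4) only in the increasing case, the decreasing case being handled instead by classical dominated convergence against the fixed measure $\theta_\varphi^n$.
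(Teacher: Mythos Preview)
Your proof is correct and follows essentially the same route as the paper: reduce to $d_p(\varphi,\varphi^k)\to 0$ via the triangle inequality, bound $d_p$ by $F_p^{1/p}$ using \cref{lma:eqdef}, and conclude by dominated convergence in the decreasing case and \cref{prop:f}(4) in the increasing case. The only cosmetic differences are that the paper uses \cref{prop:twoop} to obtain one direction of the convergence (monotonicity of $d_p(\psi,\varphi^k)$) rather than the two-sided triangle inequality, and it handles the membership $\varphi\in\Ep$ in one line via \cref{prop:fundam} instead of invoking \cref{prop:stab}; your more careful treatment of membership via \cref{prop:stab} is not wrong, just heavier than necessary (note that in part~(1) the hypothesis already places $\varphi\in\Ep$).
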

\begin{proof}
	(i) By \cref{prop:fundam}, $\varphi\in \Ep$.
	By \cref{prop:twoop}, $d_p(\psi,\varphi^k)$ is decreasing and is greater than $d_p(\psi,\varphi)$. So the limit exists and $d_p(\psi,\varphi)\leq\lim_{k\to\infty}d_p(\psi,\varphi^k)$.
	On the other hand, $d_p(\psi,\varphi^k)\leq d_p(\psi,\varphi)+d_p(\varphi,\varphi^k)$.
	So we need to prove $\lim_{k\to\infty} d_p(\varphi,\varphi^k)=0$.
	By \cref{lma:eqdef}, it suffices to show
	\[
	\left(\int_X (\varphi^k-\varphi)^p\,\theta_{\varphi}^n\right)^{1/p}\to 0,
	\]
	which follows from the dominated convergence theorem.
	
	(ii) As in (i), we only have to prove $\lim_{k\to\infty} d_p(\varphi,\varphi^k)=0$.
	Again, by \cref{lma:eqdef}, it suffices to prove $\lim_{k\to\infty}F_p(\varphi^k,\varphi)=0$.
	This follows from \cref{prop:f}.
\end{proof}

\begin{proposition}\label{prop:inc}
	Let $\varphi_j\in \Ep$ be a $d_p$-bounded increasing sequence. Then $\varphi_j$ converges to some $\varphi\in \Ep$ with respect to $d_p$.
\end{proposition}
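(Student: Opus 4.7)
The strategy is to take $\varphi := \sups_j \varphi_j$ as the candidate limit, show it lies in $\Ep$ via the stability result \cref{prop:stab}, and then deduce $d_p$-convergence from the $F_p$-continuity \cref{prop:f}(4).

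The first task is to control the $\varphi_j$ uniformly from above. For any $\Psi = (\psi_0,\ldots,\psi_N) \in \Gamma_+(\varphi_1,\varphi_j)$, telescoping $E_1$ (using the cocycle property \cref{prop:e}(5)) and the axiom \ref{ax:a6} applied to $f = F_p^{1/p}$ give
\[
\ell(\Psi) = \sum_{i=0}^{N-1} F_p(\psi_i,\psi_{i+1})^{1/p} \geq C^{-1} \sum_{i=0}^{N-1} E_1(\psi_i,\psi_{i+1}) = C^{-1} E_1(\varphi_1,\varphi_j).
\]
Taking the infimum over $\Psi$ yields $d_p(\varphi_1,\varphi_j) \geq C^{-1} E_1(\varphi_1,\varphi_j)$. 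Since $\varphi_1 \leq \varphi_j$, \cref{prop:d1energy} identifies $E_1(\varphi_1,\varphi_j) = d_1(\varphi_1,\varphi_j)$, so the $d_p$-bounded sequence is $d_1$-bounded; combining with the triangle inequality and \cref{prop:sup} (applied with reference $\phi \in \Ei \subseteq \Ep$) produces a uniform bound $\sup_X(\varphi_j - \phi) \leq C_0$.

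With this bound in hand, set $\gamma := \phi + C_0$, which lies in $\mathcal{E}^\infty(X,\theta;[\phi]) \subseteq \Ep$, and by construction $\varphi_1 \leq \varphi_j \leq \gamma$ for every $j$. By \cref{prop:fundam} applied to the triple $\varphi_1 \leq \varphi_j \leq \gamma$,
\[
\int_X (\gamma-\varphi_j)^p\,\theta_{\varphi_j}^n \leq 2^{n+p} \int_X (\gamma-\varphi_1)^p\,\theta_{\varphi_1}^n,
\]
and the right-hand side is finite because $\gamma - \varphi_1 \leq C_0 + (\phi - \varphi_1)$ together with $\varphi_1 \in \Ep$. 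The limit $\varphi = \sups_j \varphi_j$ is $\theta$-psh and $\varphi_j \to \varphi$ in $L^1$ by monotone convergence, so \cref{prop:stab} gives $\varphi \in \Ep$.

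Finally, \cref{prop:f}(4) yields $F_p(\varphi_j,\varphi) \to 0$, and \cref{lma:eqdef} (applied to $\varphi_j \leq \varphi$) bounds $d_p(\varphi_j,\varphi) \leq F_p(\varphi_j,\varphi)^{1/p}$, giving $d_p(\varphi_j,\varphi) \to 0$. The main conceptual step is the first paragraph: converting the axiomatic $d_p$-boundedness into a pointwise envelope control via the comparison with the classical energy $E_1$ and \cref{prop:sup}; once the envelope is bounded above by a potential with relatively minimal singularities, the stability machinery of \cref{sec:pps} handles the rest.
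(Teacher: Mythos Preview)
Your proof is correct and follows essentially the same route as the paper: bound $\sup_X(\varphi_j-\phi)$ via \cref{prop:sup}, identify the limit as $\sups_j\varphi_j$, show it lies in $\Ep$, and conclude $d_p$-convergence from $F_p\to 0$. The only differences are that you make the implication ``$d_p$-bounded $\Rightarrow$ $d_1$-bounded'' explicit (the paper cites \cref{prop:sup} directly, leaving this step implicit), and you reach $\varphi\in\Ep$ via \cref{prop:stab} whereas the paper applies \cref{prop:fundam} directly to the limit (legitimate since $\varphi_1\leq\varphi\leq\phi+C_0$ already forces $\varphi\in\mathcal{E}(X,\theta;[\phi])$ by monotonicity of masses) and then invokes \cref{prop:dem}(2) for the final convergence.
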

\begin{proof}
	By \cref{prop:sup} and Condition~\ref{ax:a6}, $\varphi_j$ converges in $L^1$-topology to $\varphi\in \PSH(X,\theta)$. By Choquet's lemma and the fact that $\varphi_j$ is increasing, we conclude that $\varphi_j\to \varphi$ almost everywhere. Hence, $\varphi=\sups_{\!\!\! j} \varphi_j$.
	By \cref{prop:fundam}, $\varphi\in \Ep$. By \cref{prop:dem}, $d_p(\varphi_j,\varphi)\to 0$.
\end{proof}
Similarly, for decreasing sequences, we have
\begin{proposition}\label{prop:dec3}
    Let $\varphi_j\in \Ep$ be a $d_p$-bounded decreasing sequence. Assume that there is $\psi\in \Ep$ such that $\varphi_j\geq \psi$ for each $j$.
    Then $\varphi_j$ converges to some $\varphi\in \Ep$ with respect to $d_p$.
\end{proposition}

\begin{corollary}\label{cor:loccomp}
    $\Ep$ is a $p$-strict locally complete rooftop metric space.
\end{corollary}
\begin{proof}
    This follows from \cref{thm:Eprooftop}, \cref{prop:inc}, \cref{prop:dec3} and \cref{cor:loccompl}.
\end{proof}

Next we consider the problem of completeness. We do not expect $\Ep$ to be complete in general. However, in some useful cases, we show that $\Ep$ is indeed complete.

\begin{proposition}\label{prop:dec}
	Let $\varphi_j\in \mathcal{E}^1(X,\theta;[\phi])$ be a $d_1$-bounded decreasing sequence.  Then $\varphi_j$ converges to some $\varphi\in \mathcal{E}^1(X,\theta;[\phi])$ with respect to $d_1$.
\end{proposition}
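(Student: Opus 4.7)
The plan is to reduce to the situation of \cref{prop:dem}(1) by showing that the pointwise decreasing limit lies in $\mathcal{E}^1(X,\theta;[\phi])$. First I would bootstrap $d_p$-boundedness to $d_1$-boundedness. Since $\varphi_j \leq \varphi_1$, \cref{prop:gpcl} yields $F_p(\varphi_j,\varphi_1) \leq C\, d_p(\varphi_j,\varphi_1)^p$, uniformly bounded by hypothesis. Each $\varphi_j \in \mathcal{E}(X,\theta;[\phi])$ has total non-pluripolar mass $V_\phi$, so H\"older's inequality gives
\[
F_1(\varphi_j,\varphi_1) = \int_X (\varphi_1 - \varphi_j)\,\theta_{\varphi_j}^n \leq F_p(\varphi_j,\varphi_1)^{1/p}\, V_\phi^{(p-1)/p},
\]
uniformly bounded as well. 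Combined with the equivalence $d_1 \sim F_1$ established just before \cref{prop:sup} and the triangle inequality, $d_1(\varphi_j,\phi)$ is uniformly bounded, and thus by \cref{prop:sup} so is $\sup_X(\varphi_j - \phi)$.

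Let $\varphi := \lim_j \varphi_j$ be the decreasing pointwise limit. The uniform upper bound on $\sup_X(\varphi_j - \phi)$ together with \cref{prop:relcpt} forces $\varphi \not\equiv -\infty$, so \cref{prop:incdec}(2) gives $\varphi \in \PSH(X,\theta)$. Since $\varphi_j$ is monotone and bounded in $L^1$, the convergence $\varphi_j \to \varphi$ holds in $L^1$ by monotone convergence. The uniform bound on $F_1(\varphi_j,\varphi_1)$ then permits an application of \cref{prop:stab} with $\gamma = \varphi_1$ and exponent $p = 1$, yielding $\varphi \in \mathcal{E}^1(X,\theta;[\phi])$.

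Finally, I would apply \cref{prop:dem}(1) with the choice $\psi = \varphi$: the hypotheses hold since $\varphi \in \mathcal{E}^1(X,\theta;[\phi])$, $\varphi \leq \varphi$ trivially, and $\varphi_j$ decreases pointwise to $\varphi$. The conclusion is
\[
\lim_{j \to \infty} d_1(\varphi,\varphi_j) = d_1(\varphi,\varphi) = 0,
\]
which is the desired convergence in $d_1$. The main subtlety is the first step: the passage from $d_p$-control to $d_1$-control, which succeeds only because all potentials share the common mass $V_\phi$, converting a $F_p^{1/p}$-bound into an $F_1$-bound via H\"older's inequality in a rather rigid way; the remaining steps are essentially book-keeping with the stability and monotone convergence results already established.
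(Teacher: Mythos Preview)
Your proof is correct and follows the same outline as the paper's: bound $\sup_X(\varphi_j-\phi)$ via \cref{prop:sup}, deduce that the decreasing limit $\varphi$ lies in $\PSH(X,\theta)$, invoke \cref{prop:stab} (you take $\gamma=\varphi_1$, the paper takes $\gamma=\phi$) to obtain $\varphi\in\mathcal{E}^1(X,\theta;[\phi])$, and finish with \cref{prop:dem}. Your explicit H\"older step converting the $d_p$-bound into a $d_1$-bound is a point the paper's proof leaves implicit.
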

\begin{proof}
	Let $\varphi=\infs_{j}\varphi_j$.
	By \cref{prop:sup}, $\varphi\in \PSH(X,\theta)$. Moreover, $\varphi_j\to\varphi$ in $L^1$-topology. 
	Since $\varphi_j$ is bounded in $\mathcal{E}^1(X,\theta;[\phi])$, we know that $E^{\phi}(\varphi_j)\geq -C$
	for some constant $C>0$. Hence, $\int_X |\varphi_j-V_{\theta}|\,\theta_{\varphi_j}^n\leq C$.
	So by \cref{prop:stab}, $\varphi\in \mathcal{E}^1(X,\theta;[\phi])$.
	Hence, by \cref{prop:dem}, $\varphi_j\to\varphi$ in $\mathcal{E}^1(X,\theta;[\phi])$.
\end{proof}
\begin{corollary}\label{cor:E1compl}
	$(\mathcal{E}^1(X,\theta;[\phi]),d_1)$ is complete.
\end{corollary}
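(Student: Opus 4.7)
The plan is to invoke the abstract completeness criterion \cref{prop:rooftopcomp} for rooftop metric spaces, which reduces completeness to the convergence of monotone Cauchy sequences. We already know from the theorem in \cref{subsec:dp} that $(\mathcal{E}^1(X,\theta;[\phi]),d_1,\land)$ is a rooftop metric space, so \cref{prop:rooftopcomp} applies.

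It remains to verify the two monotone conditions. Let $(\varphi_j)$ be a $d_1$-Cauchy sequence; in particular it is $d_1$-bounded. If $(\varphi_j)$ is increasing, then \cref{prop:inc} (with $p=1$) immediately produces a limit $\varphi\in \mathcal{E}^1(X,\theta;[\phi])$ with $d_1(\varphi_j,\varphi)\to 0$. If $(\varphi_j)$ is decreasing, then \cref{prop:dec} gives exactly the same conclusion. Thus both hypotheses of \cref{prop:rooftopcomp} are satisfied, and completeness follows.

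The main substance of this corollary is therefore already contained in \cref{prop:inc} and \cref{prop:dec}; the only remaining work is to point out that the rooftop structure $\land$ on $\mathcal{E}^1(X,\theta;[\phi])$ (well-defined by \cref{thm:rooftopep}) makes the abstract machinery applicable. No further estimate is needed, so the proof is essentially a one-line invocation of the preceding results.
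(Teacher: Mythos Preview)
Your proof is correct and follows exactly the same approach as the paper: the paper's own proof is the single sentence ``This follows from \cref{prop:rooftopcomp}, \cref{prop:inc} and \cref{prop:dec}.'' You have simply unpacked this invocation, noting that a Cauchy sequence is $d_1$-bounded so that \cref{prop:inc} and \cref{prop:dec} apply.
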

\begin{proof}
	This follows from \cref{prop:rooftopcomp}, \cref{prop:inc} and \cref{prop:dec}.
\end{proof}
\cref{cor:E1compl} is also proved in \cite{DDNL18big} and \cite{Tru20}.

In order to proceed, we need 
\begin{condition}\label{conj:Gp_dom_by_dp}
Let $\varphi_0,\varphi_1\in \Ep$, $\varphi_0\leq \varphi_1$. Then
\[
G_p(\varphi_0,\varphi_1)^{1/p}\leq d_p(\varphi_0,\varphi_1).
\]
Equivalently, given $\psi_0\leq \psi_1\leq \cdots \leq \psi_N$ in $\Ep$, then
\[
\left(\int_X (\psi_N-\psi_0)^p\,\theta_{\psi_N}^n\right)^{1/p}\leq \sum_{j=0}^{N-1} \left(\int_X (\psi_{j+1}-\psi_j)^p\,\theta_{\psi_j}^n\right)^{1/p}.
\]
\end{condition}
This condition holds when $\phi=0$ and the cohomology class of $\theta$ is K\"ahler, as a consequence of \cite[Lemma~4.1]{Dar15}. As an immediate consequence, this condition also holds if $\theta$ is semi-positive or when the cohomology class of $\theta$ is nef if $\phi=0$.
This condition also holds if $p=1$ by \cref{prop:e}.

\begin{proposition}\label{prop:gpcl}
Assume that $\Ep$ satisfies \cref{conj:Gp_dom_by_dp}.
	Let $\varphi_0,\varphi_1\in \Ep$, $\varphi_0\leq \varphi_1$. Then
	\[
	F_p(\varphi_0,\varphi_1)^{1/p}\leq 2^{1+n/p} d_p(\varphi_0,\varphi_1). 
	\]
\end{proposition}
\begin{proof}	
	Let $\psi=(\varphi_0+\varphi_1)/2$. Then
	\[
	\begin{split}
		\left(\int_X (\varphi_1-\varphi_0)^p\,\theta_{\varphi_0}^n\right)^{1/p}\leq 2^{1+n/p}\left(\int_X (\psi-\varphi_0)^p\,\theta_{\psi}^n\right)^{1/p}\leq 2^{1+n/p} d_p(\varphi_0,\psi)\\
		\leq 2^{1+n/p} d_p(\varphi_0,\varphi_1),
	\end{split}
	\]
	where the last step follows from \cref{prop:twoop}.
\end{proof}
\begin{proposition}\label{prop:dec1}
Assume that $\Ep$ satisfies \cref{conj:Gp_dom_by_dp}.
	Let $\varphi_j\in \Ep$ be a $d_p$-bounded decreasing sequence.  Then $\varphi_j$ converges to some $\varphi\in \Ep$ with respect to $d_p$.
\end{proposition}
\begin{proof}
	We may assume that $\varphi_1\leq \phi$.
	
	By \cref{prop:dec}, we know that $\varphi\coloneqq \inf_j \varphi_j\in \mathcal{E}^{1}(X,\theta;[\phi])$ such that $\varphi_j\to \varphi$ in $\mathcal{E}^{1}(X,\theta;[\phi])$.
	
	Now by \cref{prop:gpcl} and the fact that $\varphi_j$ is $d_p$-bounded, we know that $\int_X (V_{\theta}-\varphi_j)^p\,\theta_{\varphi_j}^n\leq C$.
	So by \cref{prop:stab}, $\varphi\in \Ep$.
	The result follows from \cref{prop:dec3}.
\end{proof}

\begin{corollary}\label{cor:epc}

Assume that $\Ep$ satisfies \cref{conj:Gp_dom_by_dp}, then $(\Ep,d_p)$ is complete.
\end{corollary}
\begin{proof}
	This follows from \cref{prop:rooftopcomp}, \cref{prop:inc} and \cref{prop:dec1}. 
\end{proof}

Hence, we have proved the following:
\begin{theorem}\label{thm:main}
Assume that $\Ep$ satisfies \cref{conj:Gp_dom_by_dp}, then the space $(\Ep,d_p,\land)$ is a complete $p$-strict rooftop metric space.
\end{theorem}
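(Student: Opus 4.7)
The plan is to recognize that this theorem is a clean assembly of results that have already been proved in \cref{sec:metep}, so the proof reduces to invoking each of them in the right order. First, I would cite the preceding (unnumbered in this excerpt) theorem of \cref{subsec:dp} that $(\Ep,d_p,\land)$ is a $p$-strict rooftop metric space. Its three constituents are already in place: $d_p$ is a genuine metric by \cref{prop:dpmetric}, whose proof rests on the equivalent chain-based description \cref{prop:dequald} and on the two-variable contraction \cref{prop:twoop}; the contraction property $d_p(\gamma\land\varphi,\gamma\land\psi)\leq d_p(\varphi,\psi)$ required for a rooftop structure is exactly \cref{prop:twoop}(1); and $p$-strictness is baked into the definition $d_p(\varphi_0,\varphi_1)^p=d_p(\varphi_0\land\varphi_1,\varphi_0)^p+d_p(\varphi_0\land\varphi_1,\varphi_1)^p$ in the non-comparable case.

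For completeness, I would apply the abstract criterion \cref{prop:rooftopcomp}, which reduces completeness of a rooftop metric space to two separate verifications: every increasing Cauchy sequence converges and every decreasing Cauchy sequence converges. The increasing half is \cref{prop:inc}: a $d_p$-bounded increasing sequence is uniformly bounded in sup-norm modulo $\phi$ via \cref{prop:sup}, hence relatively compact in $L^1$ by \cref{prop:relcpt}; Hartogs together with monotonicity forces the $L^1$-limit to coincide with $\sups\varphi_j\in\Ep$ (finite $p$-energy by \cref{prop:fundam}), and \cref{prop:dem} upgrades the convergence to $d_p$. The decreasing half is \cref{prop:dec1}: a $d_p$-bounded decreasing sequence first converges in $\mathcal{E}^1(X,\theta;[\phi])$ by \cref{prop:dec}, the limit is then shown to belong to $\Ep$ by combining the $G_p$-versus-$d_p$ comparison \cref{prop:gpcl} (to extract a uniform bound on $\int_X(\phi-\varphi_j)^p\theta_{\varphi_j}^n$) with the stability theorem \cref{prop:stab}, and again \cref{prop:dem} produces $d_p$-convergence. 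This is exactly the content of \cref{cor:epc}.

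The main obstacle in the overall argument—already overcome in the earlier propositions—is the decreasing case: one must upgrade convergence in $\mathcal{E}^1$ to convergence in $\Ep$, which requires quantitatively controlling the $p$-th moment of $\phi-\varphi_j$ against $\theta_{\varphi_j}^n$ purely from a $d_p$-bound. This is the role of \cref{prop:gpcl}, whose proof in turn hinges on the inequality $G_p\leq E_p\leq F_p$ from \cref{prop:e} and on the contraction properties of $\land$. Given these inputs, the proof of \cref{thm:main} itself is a two-line assembly: the $p$-strict rooftop metric structure is the preceding theorem, and completeness is \cref{cor:epc}, which in turn is \cref{prop:rooftopcomp} applied to \cref{prop:inc} and \cref{prop:dec1}.
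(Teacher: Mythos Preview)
Your proposal is correct and follows exactly the paper's approach: the theorem is stated as a summary (``Hence we have proved the following'') immediately after \cref{cor:epc}, assembling the earlier unnumbered theorem that $(\Ep,d_p,\land)$ is a $p$-strict rooftop metric space (via \cref{prop:dpmetric}, \cref{prop:twoop}, and the definition) with completeness from \cref{cor:epc} (which is \cref{prop:rooftopcomp} applied to \cref{prop:inc} and \cref{prop:dec1}). Your additional unpacking of the inputs to \cref{prop:inc} and \cref{prop:dec1} is accurate and matches the paper's own proofs of those propositions.
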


\begin{proposition}
Assume that $\Ep$ satisfies \cref{conj:Gp_dom_by_dp}.
	There is a constant $C=C(p,n)>0$ such that for any $\varphi_0,\varphi_1\in \Ep$,
	\[
	d_p(\varphi_0,\varphi_1)\leq  F_p(\varphi_0,\varphi_1)^{1/p}\leq C d_p(\varphi_0,\varphi_1). 
	\]
\end{proposition}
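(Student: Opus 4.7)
The statement bundles together two inequalities, and both are essentially already available once one exploits the matching Pythagorean-type decompositions of $d_p$ and $F_p$.

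The plan is to first reduce both inequalities to the ordered case $\varphi_0\leq\varphi_1$. On the left-hand side this is immediate from the $p$-strictness of $(\mathcal{E}^p,d_p,\land)$ (which is built into the definition of $d_p$ for a general pair):
\[
d_p(\varphi_0,\varphi_1)^p=d_p(\varphi_0\land\varphi_1,\varphi_0)^p+d_p(\varphi_0\land\varphi_1,\varphi_1)^p.
\]
On the right-hand side the same decomposition holds because $F_p^{1/p}$ satisfies axiom \ref{ax:a2}, i.e.\
\[
F_p(\varphi_0,\varphi_1)=F_p(\varphi_0\land\varphi_1,\varphi_0)+F_p(\varphi_0\land\varphi_1,\varphi_1),
\]
which is just the elementary observation that $\theta_{\varphi_0\land\varphi_1}^n$ is carried by $\{\varphi_0\land\varphi_1=\varphi_0\}\cup\{\varphi_0\land\varphi_1=\varphi_1\}$ (\cref{cor:rooftop}). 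Consequently, if the two inequalities are known in the ordered case with constants $1$ and $C$ respectively, they propagate to the general case with the same constants.

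Next I handle the ordered case. Assume $\varphi_0\leq\varphi_1$. For the left inequality, \cref{lma:eqdef} gives
\[
d_p(\varphi_0,\varphi_1)=\inf_{\Phi\in\Gamma_+(\varphi_0,\varphi_1)}\ell(\Phi),
\]
and the trivial chain $\Phi=(\varphi_0,\varphi_1)\in\Gamma_+(\varphi_0,\varphi_1)$ has length exactly $F_p(\varphi_0,\varphi_1)^{1/p}$, which yields $d_p(\varphi_0,\varphi_1)\leq F_p(\varphi_0,\varphi_1)^{1/p}$. For the right inequality, this is precisely the second assertion of \cref{prop:gpcl}, which states $F_p(\varphi_0,\varphi_1)^{1/p}\leq 2^{1+n/p}d_p(\varphi_0,\varphi_1)$ for any ordered pair. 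So one may take $C=2^{1+n/p}$.

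Since both reductions use nothing more than the bare definitions and previously established facts, there is no real obstacle here; the only subtle point is verifying that the Pythagorean identity for $F_p$ is already available, which is exactly the content of axiom \ref{ax:a2} proved when $F_p^{1/p}$ was shown to be a good length element. Thus the proof is just: invoke the two Pythagorean identities to reduce to the ordered case, apply \cref{lma:eqdef} for the easy direction, and quote \cref{prop:gpcl} for the hard direction.
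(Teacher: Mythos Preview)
Your proof is correct and essentially identical to the paper's own argument: the paper likewise reduces to the ordered case via the Pythagorean decompositions (citing \cref{prop:f} and \ref{ax:a2}), then invokes \cref{lma:eqdef} for the first inequality and \cref{prop:gpcl} for the second.
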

\begin{proof}
	By \cref{prop:f} and Condition~\ref{ax:a2}, we may assume that $\varphi_0\leq \varphi_1$.
	
	The first inequality follows from \cref{lma:eqdef}.
	
	The second inequality follows from \cref{prop:gpcl}.
\end{proof}

\subsection{Relation with definitions in the literature}\label{subsec:lit}
In this section, we assume that $X$ is a Kähler manifold.

\subsubsection{Ample/big and nef classes without prescribed singularities}\label{subsubsec:relamp}
Assume that $\alpha$ is an ample class and $X$ is smooth. In this case, we may always take $\alpha$ to be a K\"ahler form $\omega$. Then the space $\Ep$ is usually written as $\mathcal{E}^p(X,\omega)$. The $d_p$-metric is defined in \cite{Dar15}. 

We recall the definition now. Let $\varphi_0,\varphi_1\in \mathcal{H}$. There is a unique weak geodesic $\varphi_t$ from $\varphi_0$ to $\varphi_1$. The geodesic has $C^{1,1}$-regularity(\cite{CTW18}). Then 
\begin{equation}\label{eq:olddp1}
	d_p(\varphi_0,\varphi_1)\coloneqq \left(\int_X |\dot{\varphi}_t|^p\,\omega_{\varphi_t}^n\right)^{1/p},
\end{equation}
for any $t\in [0,1]$. In particular, the right-hand side does not depend on the choice of $t$.

In general, let $\varphi_0,\varphi_1\in \mathcal{E}^p(X,\omega)$, take  decreasing sequences $(\varphi_j^k)_{k=1,2,\ldots}$ in $\mathcal{H}(X,\omega)$ with limits $\varphi_j$ for $j=0,1$. Then
\begin{equation}\label{eq:olddp2}
	d_p(\varphi_0,\varphi_1)\coloneqq \lim_{k\to\infty}d_p(\varphi_0^k,\varphi_1^k).
\end{equation}
The limit exists and is independent of the choice of $\varphi_j^k$.

\begin{proposition}\label{prop:coin1}
	The definition of $d_p$ in \eqref{eq:olddp1} and \eqref{eq:olddp2} coincides with $d_p$ defined in \cref{subsec:dp}.
\end{proposition}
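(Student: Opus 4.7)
\emph{The plan is to reduce to smooth potentials and then compare the two definitions via Finsler geometry.} Both metrics satisfy the Pythagorean identity---the old one by \cite{Dar15} and the new one by \cref{thm:main}---so it suffices to treat the case $\varphi_0 \leq \varphi_1$. Using the defining decreasing approximation \eqref{eq:olddp2} for $d_p^{\mathrm{old}}$, together with \cref{prop:dem} and the triangle inequality for $d_p^{\mathrm{new}}$, we further reduce to $\varphi_0, \varphi_1 \in \mathcal{H}(X,\omega)$ with $\varphi_0 \leq \varphi_1$.

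For the inequality $d_p^{\mathrm{new}} \leq d_p^{\mathrm{old}}$, let $(\varphi_t)_{t \in [0,1]}$ denote the weak $C^{1,1}$-geodesic of \cite{Chen00,CTW17} and set $\Psi_N := (\varphi_0, \varphi_{1/N}, \ldots, \varphi_1) \in \Gamma_+(\varphi_0, \varphi_1)$. Joint $C^{1,1}$-regularity of $(x,t) \mapsto \varphi_t(x)$ combined with the constant-speed property $\bigl(\int_X |\dot\varphi_t|^p\, \omega_{\varphi_t}^n\bigr)^{1/p} = d_p^{\mathrm{old}}(\varphi_0,\varphi_1)$ of the geodesic gives the uniform Taylor expansion
\[ F_p(\varphi_{j/N}, \varphi_{(j+1)/N})^{1/p} = N^{-1} d_p^{\mathrm{old}}(\varphi_0, \varphi_1) + o(N^{-1}), \]
so $\ell(\Psi_N) \to d_p^{\mathrm{old}}(\varphi_0,\varphi_1)$, from which the upper bound on $d_p^{\mathrm{new}}$ follows.

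For the reverse inequality, given any chain $\Psi = (\psi_0, \ldots, \psi_N) \in \Gamma_+(\varphi_0,\varphi_1)$, associate to it the piecewise affine path $\mu:[0,1]\to\mathcal{E}^p$ obtained by linearly interpolating between consecutive $\psi_j$'s. Expanding $\omega_{\mu_t}^n$ binomially in $\omega_{\psi_j}$ and $\omega_{\psi_{j+1}}$ and invoking \cref{prop:e}(1) yields the pointwise bound $\int_X (\psi_{j+1}-\psi_j)^p\,\omega_{\mu_t}^n \leq F_p(\psi_j,\psi_{j+1})$ for $t \in [j/N,(j+1)/N]$, which after integration shows that the $L^p$-Finsler length of $\mu$ is bounded by $\ell(\Psi)$. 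Since $d_p^{\mathrm{old}}$ is dominated by the Finsler length of any admissible path, we obtain $d_p^{\mathrm{old}}(\varphi_0,\varphi_1) \leq \ell(\Psi)$, and taking the infimum over $\Psi$ concludes the proof.

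The main obstacle lies in rigorously justifying this last domination step: the intermediate potentials $\psi_j$ for $0<j<N$ need not lie in $\mathcal{H}$, so $\mu$ is a priori only a Lipschitz path in $\mathcal{E}^p$ rather than a smooth $\omega$-psh path to which Darvas's Finsler-length interpretation of $d_p^{\mathrm{old}}$ applies directly. Resolving this requires a smoothing argument, replacing each $\psi_j$ by a decreasing sequence $\psi_j^{(k)} \in \mathcal{H}$ that preserves the monotonicity along the chain, applying the Finsler bound in $\mathcal{H}$ to the corresponding smooth piecewise linear approximants, and passing to the limit using the $d_p^{\mathrm{new}}$-continuity of \cref{prop:dem}, the length-element continuity of \cref{prop:f}, and the decreasing-approximation property of $d_p^{\mathrm{old}}$.
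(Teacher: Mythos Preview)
Your reductions (Pythagorean formula, then decreasing approximation to $\mathcal{H}$) and your argument for $d_p^{\mathrm{new}}\leq d_p^{\mathrm{old}}$ via discretizing the $C^{1,1}$ geodesic match the paper's proof essentially verbatim.

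For the reverse inequality, however, you take a detour through the Finsler-length interpretation and then correctly flag the resulting smoothing obstacle. The paper avoids this entirely with a much shorter argument: since $D_p:=d_p^{\mathrm{old}}$ is already known to be a genuine metric on all of $\mathcal{E}^p(X,\omega)$ (not just on $\mathcal{H}$), one simply applies its triangle inequality along the chain $\Psi=(\psi_0,\ldots,\psi_N)\in\Gamma_+(\varphi_0,\varphi_1)$ to get
\[
D_p(\varphi_0,\varphi_1)\leq \sum_{j=0}^{N-1} D_p(\psi_j,\psi_{j+1}),
\]
and then invokes the single-step bound $D_p(\psi_j,\psi_{j+1})\leq F_p(\psi_j,\psi_{j+1})^{1/p}$ for $\psi_j\leq\psi_{j+1}$ in $\mathcal{E}^p$, which is exactly \cite[Lemma~4.1]{Dar15}. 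This yields $D_p(\varphi_0,\varphi_1)\leq \ell(\Psi)$ directly, with no need to interpret $D_p$ as a Finsler length on non-smooth paths and hence no smoothing step. Your approach can be made to work, but the obstacle you identify is an artifact of the route chosen rather than an intrinsic difficulty.
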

\begin{proof}
	Let us write the metric defined in \cref{subsec:dp} as $D_p$ for the time being.
	
	Let $\varphi_0,\varphi_1\in \mathcal{E}^p(X,\omega)$. We want to prove
	\[
	d_p(\varphi_0,\varphi_1)=D_p(\varphi_0,\varphi_1).
	\]
	Since both sides satisfy the Pythagorean formula (\cref{cor:loccomp}, \cite[Theorem~3.26]{Dar19}), we may assume that $\varphi_0\leq \varphi_1$.
 Recall that order preserving simultaneous Demailly approximations of two potentials exist by the explicit construction in \cite{BK07}.
 Since both sides are continuous along Demailly approximations (\cref{prop:dem}, \cite[Proposition~4.9]{Dar15}), we may assume that $\varphi_0,\varphi_1\in \mathcal{H}$.

	Let $\varphi_t$ be the weak geodesic from $\varphi_0$ to $\varphi_1$.
	Now take $\Psi=(\varphi_{0/N},\ldots,\varphi_{N/N})\in \Gamma_+(\varphi_0,\varphi_1)$.
	Then
	\[
	\begin{aligned}
		\ell(\Psi)
		= & \sum_{j=0}^{N-1}\left(\int_X(\varphi_{(j+1)/N}-\varphi_{j/N})^{p}\,\omega_{\varphi_{j/N}}^n\right)^{1/p}\\
		\leq & \sum_{j=0}^{N-1}\left(\int_X(\dot{\varphi}_{j/N}N^{-1} +CN^{-2})^{p}\,\omega_{\varphi_{j/N}}^n\right)^{1/p} && \text{By } C^{1,1} \text{ regularity}\\
		\leq & \sum_{j=0}^{N-1}N^{-1}\left(\int_X(\dot{\varphi}_{j/N})^{p}\,\omega_{\varphi_{j/N}}^n\right)^{1/p}+CN^{-1}\\
		= & d_p(\varphi_0,\varphi_1)+CN^{-1},
	\end{aligned}
	\]
	Let $N\to\infty$, we find $D_p(\varphi_0,\varphi_1)\leq d_p(\varphi_0,\varphi_1)$.
	
	For the other inequality, let $\Psi=(\psi_0,\ldots,\psi_N)\in \Gamma_{+}(\varphi_0,\varphi_1)$. 
	We want to show that $d_p(\varphi_0,\varphi_1)\leq \ell(\Psi)$.
	By the triangle inequality of $d_p$. It suffices to prove that
	\[
	d_p(\psi_j,\psi_{j+1})\leq F_p(\psi_j,\psi_{j+1})^{1/p},\quad j=0,\ldots,N-1.
	\]
	This is \cite[Lemma~4.1]{Dar15}.
\end{proof}

The construction of $d_p$ has been extended to big and nef classes in \cite{DNL20}. A similar argument shows that our $d_p$ metric coincides with the definition in \cite{DNL20} as well.

\subsubsection{The case of $\mathcal{E}^1$}
The metric on $\mathcal{E}^1(X,\theta;[\phi])$ is defined in \cite{DDNL18big}, \cite{Tru20} We recall the definition. Let $\varphi,\psi\in \mathcal{E}^1(X,\theta;[\phi])$. Then
\begin{equation}\label{eq:d11}
	d_1(\varphi,\psi)\coloneqq E^{\phi}(\varphi)+E^{\phi}(\psi)-2E^{\phi}(\varphi\land\psi).
\end{equation}

\begin{proposition}
	The definition of $d_1$ in \eqref{eq:d11} coincides with the definition of $d_1$ in \cref{subsec:dp}.
\end{proposition}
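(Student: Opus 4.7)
The plan is that this proposition is essentially already established by \cref{prop:d1energy}. Indeed, \cref{prop:d1energy} asserts precisely that for any $\varphi,\psi\in\mathcal{E}^1(X,\theta;[\phi])$, the metric $d_1$ defined in \cref{subsec:dp} satisfies
\[
d_1(\varphi,\psi)=E^{\phi}(\varphi)+E^{\phi}(\psi)-2E^{\phi}(\varphi\land\psi),
\]
which is exactly the defining formula \eqref{eq:d12} of Trusiani. So the proof for \eqref{eq:d12} consists of a single reference to \cref{prop:d1energy}.

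It remains to deal with \eqref{eq:d11}. Here one works in $\mathcal{E}^1(X,\theta)$, which is the special case $\phi=V_{\theta}$ of the relative setting, since $\mathcal{E}^1(X,\theta)=\mathcal{E}^1(X,\theta;[V_{\theta}])$. Under this identification, the functional $E$ of \cite{DDNL18big} coincides with $E^{V_{\theta}}$ as defined in \cref{subsec:energyfun}: both are normalized by vanishing at $V_{\theta}$ and agree with the standard Monge--Amp\`ere energy on $\mathcal{E}^{\infty}(X,\theta)$, and they are extended to $\mathcal{E}^1$ by the same infimum procedure (again using the integration by parts formula \cref{thm:ibpA}, c.f.\ \cref{rmk:ibp}). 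Hence \eqref{eq:d11} is just the particular case $\phi=V_{\theta}$ of \eqref{eq:d12}, and the result follows from \cref{prop:d1energy} again.

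No genuine obstacle arises: the only thing to check carefully is the identification $E=E^{V_{\theta}}$ in the absolute case, which is an immediate consequence of the definitions, together with the fact that pull-back along a resolution $\pi:Y\to X^{\Red}$ (\cref{prop:corr}) preserves all the relevant energy functionals, so that the general unibranch case reduces tautologically to the smooth setting in which the formulas of \cite{DDNL18big,Tru20} were originally written down.
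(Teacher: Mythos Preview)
Your proposal is correct and takes essentially the same approach as the paper: the paper's proof is the single sentence ``This is nothing but \cref{prop:d1energy}.'' You have simply unpacked this reference, spelling out that \eqref{eq:d11} is the special case $\phi=V_{\theta}$ of \eqref{eq:d12} via the identification $E=E^{V_{\theta}}$, which is a reasonable elaboration but not a different argument.
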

This is nothing but \cref{prop:d1energy}.

\section{Spaces of geodesic rays}\label{sec:geod}
Let $X$ be a compact unibranch Kähler space of pure dimension $n$ and $\alpha$ be a big $(1,1)$-cohomology class. Let $\theta$ be a strongly closed smooth $(1,1)$-form in the class $\alpha$. Fix a resolution of singularities $\pi:Y\rightarrow X^{\Red}$. Recall that we always assume that $Y$ is Kähler.

\subsection{Definition of the metric}
Let $\mathcal{R}^1(X,\theta)$ be the space of geodesic rays in $\mathcal{E}^1(X,\theta)$ emanating from $V_{\theta}$.

We begin with a lemma:
\begin{lemma}
Let $(\ell_t)_{t\in [0,1]}$, $(\ell'_t)_{t\in [0,1]}$ be two geodesics in $\mathcal{E}^1(X,\theta)$. Then $d_1(\ell_t,\ell'_t)$ is a convex function in $t\in [0,1]$.
\end{lemma}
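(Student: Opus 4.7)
The plan is to reduce to the smooth setting via the resolution $\pi: Y \to X^{\Red}$ and then to expand $d_1(\ell_t, \ell'_t)$ using the energy formula from \cref{prop:d1energy}. By \cref{prop:corr}, pull-back induces a bijection $\mathcal{R}^1(X,\theta) \cn \mathcal{R}^1(Y,\pi^*\theta)$ that preserves the rooftop structure and the $d_1$-metric (the latter because rooftops and integrals of non-pluripolar products are preserved under pull-back by \cref{prop:projnp} and \cref{lma:roofpro}). So I may assume that $X$ is a compact Kähler manifold.

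By \cref{prop:d1energy},
\[
d_1(\ell_t,\ell'_t) = E^{\phi}(\ell_t) + E^{\phi}(\ell'_t) - 2 E^{\phi}(\ell_t \land \ell'_t).
\]
Recall that $E^{\phi}$ is affine along geodesics in $\mathcal{E}^1(X,\theta;[\phi])$ and concave along subgeodesics (a standard fact in the smooth prescribed singularities setting, following from the first and second variation formulas and the integration by parts machinery recorded in \cref{thm:ibpA}). Hence the first two terms on the right are affine in $t$, and the entire expression will be convex in $t$ once I show that the family $\psi_t := \ell_t \land \ell'_t$ is a subgeodesic, i.e.\ that
\[
\Psi(x,z) := \psi_{-\log|z|}(x), \qquad e^{-1} < |z| < 1,
\]
is $\pi_1^*\theta$-psh on $X \times A$, where $A$ is the corresponding annulus.

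To verify that $\psi_t$ is a subgeodesic, let $\Phi(x,z) = \ell_{-\log|z|}(x)$ and $\Phi'(x,z) = \ell'_{-\log|z|}(x)$, both of which are $\pi_1^*\theta$-psh on $X \times A$ by the definition of a geodesic. Let $\Xi$ be the joint rooftop of $\Phi$ and $\Phi'$ in $\PSH(X \times A, \pi_1^*\theta)$; this is $\pi_1^*\theta$-psh by \cref{prop:incdec}. Since $\Phi, \Phi'$ are $S^1$-invariant in $z$, so is $\Xi$ (by uniqueness of the upper envelope), and therefore $\Xi(x,z) = \xi_{-\log|z|}(x)$ for some family $\xi_t \in \PSH(X,\theta)$. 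For each fixed $t$, $\xi_t$ is $\theta$-psh and lies below both $\ell_t$ and $\ell'_t$, hence $\xi_t \leq \psi_t$, giving $\Xi \leq \Psi$. Conversely, the pointwise rooftop $\Psi$ lies below $\Phi$ and $\Phi'$, and a direct check using that $\Psi$ is the decreasing limit of its own regularizations $\sups_{t}\{\ldots\}$ (computed slicewise and then upper regularized jointly) shows $\Psi$ is $\pi_1^*\theta$-psh, whence $\Psi \leq \Xi$. So $\Psi = \Xi$ is $\pi_1^*\theta$-psh, and $\psi_t$ is a subgeodesic.

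The main obstacle is the last step—rigorously establishing the $\pi_1^*\theta$-plurisubharmonicity of the pointwise rooftop. The cleanest path uses the $S^1$-invariance sandwich above, but one must be careful about upper semi-continuity along the $z$-direction; this is where the fact that both $\ell$ and $\ell'$ are geodesics (hence automatically $t$-continuous in, say, $L^1$-norm) is used. Once this is settled, convexity of $d_1(\ell_t, \ell'_t)$ is immediate from the affine/concave decomposition above.
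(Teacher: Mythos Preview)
Your reduction to the smooth case via $\pi$ is correct and matches the paper, which at that point simply cites \cite[Proposition~2.10]{DDNL19}. Your attempt to supply a direct argument in the smooth case, however, has a genuine gap: the claim that $\psi_t:=\ell_t\land\ell'_t$ is a subgeodesic is \emph{false} in general.

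Take $\ell_t=V_\theta+t$ and $\ell'_t=V_\theta+(1-t)$; both are geodesics (translations of a fixed potential by an affine function of $t$). Then $\ell_t\land\ell'_t=V_\theta+\min(t,1-t)$, and $t\mapsto\min(t,1-t)$ is concave, so the associated potential $\Psi$ on $X\times A$ is \emph{not} $\pi_1^*\theta$-psh. In your sandwich, one genuinely has $\Xi\leq\Psi$ with strict inequality here, so the step ``$\Psi\leq\Xi$ because $\Psi$ is psh on the product'' cannot be completed. Your own hedging (``the main obstacle is the last step'') is on point: that step fails.

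The fix does not require $\psi_t$ to be a subgeodesic. Fix $t_0<t_1$, let $(\eta_s)_{s\in[0,1]}$ be the geodesic from $\ell_{t_0}\land\ell'_{t_0}$ to $\ell_{t_1}\land\ell'_{t_1}$, and set $t=(1-s)t_0+st_1$. Since $\eta_0\leq\ell_{t_0}$, $\eta_1\leq\ell_{t_1}$ and geodesics are maximal among subgeodesics with dominated boundary data, $\eta_s\leq\ell_t$; similarly $\eta_s\leq\ell'_t$, hence $\eta_s\leq\ell_t\land\ell'_t$. Monotonicity of $E^{\phi}$ and its affinity along $\eta$ then give
\[
E^{\phi}(\ell_t\land\ell'_t)\geq E^{\phi}(\eta_s)=(1-s)E^{\phi}(\ell_{t_0}\land\ell'_{t_0})+sE^{\phi}(\ell_{t_1}\land\ell'_{t_1}),
\]
which is the concavity you need. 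Combined with your (correct) observation that $E^{\phi}(\ell_t)$ and $E^{\phi}(\ell'_t)$ are affine, this yields the convexity of $d_1(\ell_t,\ell'_t)$ via \cref{prop:d1energy}.
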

\begin{proof}
One immediately reduces to the smooth case, where it is proved in  \cite[Proposition~2.10]{DDNL19}.
\end{proof}

In particular, we can define
\begin{definition}
Let $\ell^1,\ell^2\in \mathcal{R}^1(X,\theta)$, set
\[
d_1(\ell^1,\ell^2)=\lim_{t\to\infty}\frac{1}{t}d_1(\ell^1_t,\ell^2_t)\,.
\]
\end{definition}
Note that $d_1(\ell^1,\ell^2)=d_1(\pi^*\ell^1,\pi^*\ell^2)$. In particular, $d_1$ is indeed a metric.
\begin{proposition}\label{prop:pullbackgeodray}
The pull-back induces an isometric isomorphism:
\[
\pi^*:(\mathcal{R}^1(X,\theta),d_1)\cn (\mathcal{R}^1(Y,\pi^*\theta),d_1)\,.
\]
\end{proposition}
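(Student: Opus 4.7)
The plan is to reduce everything to the already-established Proposition \ref{prop:corr}, which identifies geodesic rays on $X$ with those on $Y$ as a bijection of sets, combined with the isometry of the underlying $d_1$-metrics on $\mathcal{E}^1$.

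First I would verify the key identity
\[
d_1(\varphi,\psi)=d_1(\pi^*\varphi,\pi^*\psi)\qquad \text{for all }\varphi,\psi\in\mathcal{E}^1(X,\theta).
\]
By \cref{prop:d1energy} applied with $\phi=V_\theta$, both sides can be expressed via the Monge--Amp\`ere energy $E^{V_\theta}$ and the rooftop $\varphi\land\psi$. By \cref{prop:projnp} and the projection formula, non-pluripolar products are preserved under $\pi_*$ (equivalently, integrals of the relevant measures are preserved under pullback), so $E^{V_\theta}(\varphi)=E^{\pi^*V_\theta}(\pi^*\varphi)$. By \cref{prop:corr}, $\pi^*(\varphi\land\psi)=\pi^*\varphi\land \pi^*\psi$. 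Combining these gives the claimed equality.

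Next I would use \cref{prop:corr}(5), which identifies geodesics in $\mathcal{E}^1(X,\theta)$ with geodesics in $\mathcal{E}^1(Y,\pi^*\theta)$: for any geodesic $(\ell_t)_{t\in[0,\infty)}$ emanating from $V_\theta$, the curve $(\pi^*\ell_t)_{t\in[0,\infty)}$ is the geodesic emanating from $\pi^*V_\theta=V_{\pi^*\theta}$, and this assignment is bijective on $\mathcal{R}^1$ by \cref{prop:corr}(6). Then for any $\ell^1,\ell^2\in\mathcal{R}^1(X,\theta)$, applying the isometry statement at each time $t$ gives
\[
\tfrac{1}{t}d_1(\ell^1_t,\ell^2_t)=\tfrac{1}{t}d_1(\pi^*\ell^1_t,\pi^*\ell^2_t)=\tfrac{1}{t}d_1((\pi^*\ell^1)_t,(\pi^*\ell^2)_t),
\]
and passing to the limit $t\to\infty$ (which exists by the convexity lemma preceding the statement) yields $d_1(\ell^1,\ell^2)=d_1(\pi^*\ell^1,\pi^*\ell^2)$.

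There is essentially no obstacle: every nontrivial ingredient has already been proved. The only thing to double-check is that the representative $\phi=V_\theta$ is a legitimate model potential with positive mass, so that \cref{prop:d1energy} is genuinely applicable on $\mathcal{E}^1(X,\theta)=\mathcal{E}^1(X,\theta;[V_\theta])$; this is immediate from bigness of $\alpha$ and the definition of $V_\theta$ in \eqref{eq:Vtheta}.
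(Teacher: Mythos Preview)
Your proposal is correct and takes essentially the same approach as the paper: the paper's proof is the one-liner ``This follows immediately from \cref{prop:corr},'' relying on the remark just before the proposition that $d_1(\ell^1,\ell^2)=d_1(\pi^*\ell^1,\pi^*\ell^2)$. You have simply unpacked why that remark holds (via \cref{prop:d1energy}, the projection formula, and preservation of $\land$ under $\pi^*$), which is exactly the content the paper leaves implicit.
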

\begin{proof}
This follows immediately from \cref{prop:corr}.
\end{proof}

\subsection{Construction of the rooftop structure}
Now we construct the rooftop structure on $\mathcal{R}^1(X,\theta)$.

\begin{lemma}\label{lma:rooftopr1}
Let $\ell^1,\ell^2\in \mathcal{R}^1(X,\theta)$, then there is a geodesic ray $\ell\in \mathcal{R}^1(X,\theta)$ such that $\ell\leq \ell^1$, $\ell\leq \ell^2$ that is maximal among all such rays. 
\end{lemma}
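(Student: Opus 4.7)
The first move is to reduce to the smooth case. By \cref{prop:corr} and \cref{prop:pullbackgeodray}, the pullback $\pi^{*}$ gives an isometric identification of $\mathcal{R}^{1}(X,\theta)$ with $\mathcal{R}^{1}(Y,\pi^{*}\theta)$ preserving the pointwise order; hence if one can construct a maximal geodesic ray below $\pi^{*}\ell^{1}$ and $\pi^{*}\ell^{2}$ on $Y$, its psh pushforward will do the job on $X$. So from now on I assume that $Y=X$ is a compact K\"ahler manifold.

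Next I build finite-time approximations. For each $T>0$, the potentials $\ell^{1}_{T},\ell^{2}_{T}$ lie in $\mathcal{E}^{1}(X,\theta)$, so by \cref{thm:rooftopep} the rooftop $\ell^{1}_{T}\land\ell^{2}_{T}$ also lies in $\mathcal{E}^{1}(X,\theta)$, and there is a (unique, maximal) geodesic segment $(L^{T}_{t})_{t\in[0,T]}$ in $\mathcal{E}^{1}$ connecting $V_{\theta}$ to $\ell^{1}_{T}\land\ell^{2}_{T}$. Because $\ell^{i}|_{[0,T]}$ is itself a subgeodesic from $V_{\theta}$ to $\ell^{i}_{T}\geq \ell^{1}_{T}\land\ell^{2}_{T}$, the maximality of geodesic segments (a standard consequence of the maximum principle for the associated Monge--Amp\`ere equation, which carries over to $\mathcal{E}^{1}$) gives $L^{T}_{t}\leq\ell^{i}_{t}$ on $[0,T]$. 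Applying the same principle to $L^{T'}|_{[0,T]}$ for $T'>T$ (whose endpoint $L^{T'}_{T}$ satisfies $L^{T'}_{T}\leq\ell^{1}_{T}\land\ell^{2}_{T}=L^{T}_{T}$) shows that $T\mapsto L^{T}_{t}$ is decreasing in $T$ for each fixed $t\geq 0$.

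The candidate is then
\[
\ell_{t}:=\lim_{T\to\infty}L^{T}_{t}\,,
\]
and my task splits into three pieces. (i) \emph{Monotone limit is $\pi_{1}^{*}\theta$-psh and is a subgeodesic.} Each $L^{T}$ is $\pi_{1}^{*}\theta$-psh on $X\times\{e^{-T}<|z|<1\}$, so the decreasing limit $\Phi(x,z):=\ell_{-\log|z|}(x)$ is $\pi_{1}^{*}\theta$-psh on the whole $X\times\{|z|<1\}$ by \cref{prop:incdec}(2), once one knows it is not identically $-\infty$. (ii) \emph{Finite-energy lower bound.} To rule out that $\ell\equiv-\infty$ and to get $\ell\in\mathcal{R}^{1}$, I exhibit a competitor. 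Any subgeodesic ray $u$ from $V_{\theta}$ with $u\leq\ell^{1},\ell^{2}$ automatically satisfies $u|_{[0,T]}\leq L^{T}$ by the same maximality argument, and so $u_{t}\leq \ell_{t}$ for every $t$; in particular the (finite-energy) ray $u_{t}\equiv V_{\theta}\land\ell^{1}_{1}\land\ell^{2}_{1}$, suitably geodesically extended, provides a uniform lower bound. The uniform linear bound $|E^{\pi^{*}\phi}(L^{T}_{t})|\leq Ct$ (coming from affinity of $E^{\pi^{*}\phi}$ along geodesic segments and from \cref{prop:sup}) plus \cref{prop:stab} then upgrades the limit to a geodesic in $\mathcal{E}^{1}$. (iii) \emph{Maximality.} The argument in (ii) applied to an arbitrary ray $u\in\mathcal{R}^{1}$ with $u\leq\ell^{1},\ell^{2}$ gives $u_{t}\leq\ell_{t}$ for all $t$, which is exactly the asserted maximality.

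The main technical obstacle is (ii): showing that the decreasing limit $\ell$ is genuinely a finite-energy geodesic and not merely a subgeodesic. The two-sided control $E^{\pi^{*}\phi}(L^{T}_{t})$ is affine in $t\in[0,T]$ with slope bounded independently of $T$ by the triangle inequality for $d_{1}$ and the finiteness of $d_{1}(\ell^{1}_{t},\ell^{2}_{t})\leq t(d_{1}(\ell^{1},V_{\theta})+d_{1}(\ell^{2},V_{\theta}))$, so the limit segments $(\ell_{t})_{t\in[0,T]}$ have equi-bounded energy and hence (by \cref{prop:stab} and the monotonicity theorem from \cite{DDNL19}) converge to a bona fide geodesic segment. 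Gluing these segments over $T\to\infty$ then produces the desired geodesic ray $\ell\in\mathcal{R}^{1}(X,\theta)$, which is the maximal element of the pointed set $\{u\in\mathcal{R}^{1}:u\leq\ell^{1},\ u\leq\ell^{2}\}$.
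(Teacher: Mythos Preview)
Your construction is essentially the same as the paper's: both take the finite-time geodesic segments $L^{T}$ from $V_{\theta}$ to $\ell^{1}_{T}\land\ell^{2}_{T}$, establish monotonicity in $T$ via the comparison principle, pass to the decreasing limit, and control it via the linear bound $d_{1}(V_{\theta},\ell^{1}_{t}\land\ell^{2}_{t})\leq Ct$ (the paper derives this from the triangle inequality for $d_{1}$ and then quotes \cite[Lemma~4.16]{Dar15} on a resolution to conclude $L_{s}\in\mathcal{E}^{1}$, which is your linear energy bound in different clothing); maximality is then immediate from the same comparison.

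One small slip: your ``competitor'' ray $u_{t}\equiv V_{\theta}\land\ell^{1}_{1}\land\ell^{2}_{1}$ need not satisfy $u_{t}\leq\ell^{i}_{t}$ for all $t$ (the $\ell^{i}_{t}$ can become very negative as $t\to\infty$), so it does not furnish the lower bound you assert, and ``suitably geodesically extended'' does not rescue it. This detour is unnecessary anyway, since the linear bound $|E(L^{T}_{t})|\leq Ct$ that you give immediately afterward already rules out $\ell\equiv-\infty$ and places the limit in $\mathcal{E}^{1}$; that is exactly the mechanism the paper uses.
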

\begin{proof}
We may assume that $\ell^j_t\leq 0$ for all $t$.

\textbf{Step 1}. There is a geodesic ray $L\in \mathcal{R}^1(X,\theta)$, $L\leq \ell^1$, $L\leq \ell^2$.

For each $t>0$, let $L^t_s$ $(s\in [0,t])$ be the finite energy geodesic from $V_{\theta}$ to $\ell^1_t\land \ell^2_t$. Then for $0\leq t_1\leq t_2$,  $L^{t_2}_{t_1}\leq L^{t_1}_{t_1}$.
By symmetry, it suffices to prove $L^{t_2}_{t_1}\leq \ell^1_{t_1}$.
This follows from $\ell^1_{t_2}\land \ell^2_{t_2}\leq \ell^1_{t_2}$.
So for each $s\geq 0$, $L^t_{s}$ is decreasing in $t\geq s$. 
Now we claim that
\[
d_1(V_{\theta},\ell^1_s\land \ell^2_s)\leq Cs,\quad s\geq 0
\]
for a constant $C\geq 0$. In fact,
\[
d_1(V_{\theta},\ell^1_s\land \ell^2_s)\leq d_1(V_{\theta},\ell^1_s)+d_1(\ell^1_s,\ell^1_s\land \ell^2_s)\leq d_1(V_{\theta},\ell^1_s)+d_1(V_{\theta},\ell^2_s)\leq Cs\,.
\]

So for each fixed $s\geq 0$, $L^t_s$ $(t\geq s)$ is bounded in $\mathcal{E}^1(X,\theta)$. Let $L_s=\inf_{t\geq s} L^t_s$.
By \cite[Lemma~4.16]{Dar15} (applied to a resolution), $L_s\in \mathcal{E}^1(X,\theta)$ and $L_s$ is the $d_1$-limit of $L^t_s$ as $t\to \infty$. It is easy to see that $L\in \mathcal{R}^1(X,\theta)$. Then $L$ solves our problem.

\textbf{Step 2}. We show $L$ is maximal. Let $\ell\in \mathcal{R}^1(X,\theta)$. Assume that $\ell\leq \ell^1$, $\ell\leq \ell^2$. We claim that $\ell_s\leq L_s$ for any $s\geq 0$.
In fact, for each $t\geq s$, we have
\[
\ell_s\leq \ell^1_s\land \ell^2_s= L^s_s\leq L^t_s\,.
\]
Let $t\to \infty$, we find $\ell_s\leq L_s$.
\end{proof}
\begin{definition}
Let $\ell^1,\ell^2\in \mathcal{R}^1(X,\theta)$, we define
\begin{equation}\label{eq:roofray}
\ell^1\land \ell^2\coloneqq \sup\left\{\,\ell\in \mathcal{R}^1(X,\theta):\ell\leq \ell^1,\ell\leq \ell^2 \,\right\}\,.
\end{equation}
\end{definition}
\begin{theorem}\label{thm:R1comproof}
The space $(\mathcal{R}^1(X,\theta),d_1,\land)$ is a $1$-strict complete rooftop metric space. Moreover, $\pi^*:(\mathcal{R}^1(X,\theta),d_1)\rightarrow (\mathcal{R}^1(Y,\pi^*\theta),d_1)$ is an isomorphism of rooftop metric spaces.
\end{theorem}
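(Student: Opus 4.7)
The plan is to reduce to the case where $X$ is a compact Kähler manifold by pulling back along the resolution $\pi$, and then invoke the corresponding results from \cite{DDNL19} together with the abstract completeness criterion \cref{prop:rooftopcomp}.

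First I would establish the ``moreover'' clause. The isometry property of $\pi^*:(\mathcal{R}^1(X,\theta),d_1)\to(\mathcal{R}^1(Y,\pi^*\theta),d_1)$ is already \cref{prop:pullbackgeodray}, so it suffices to verify that $\pi^*$ intertwines the rooftop operators. By \cref{prop:corr}(1) and (6), $\pi^*$ is an order-preserving bijection on rays, where the order on rays is inherited pointwise from the pre-rooftop structure on $\mathcal{E}^1$. Applied to the defining set in \eqref{eq:roofray}, this bijection maps $\{\ell\in\mathcal{R}^1(X,\theta):\ell\leq\ell^1,\ \ell\leq\ell^2\}$ onto $\{\ell'\in\mathcal{R}^1(Y,\pi^*\theta):\ell'\leq\pi^*\ell^1,\ \ell'\leq\pi^*\ell^2\}$; taking suprema gives $\pi^*(\ell^1\land\ell^2)=\pi^*\ell^1\land\pi^*\ell^2$. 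This makes $\pi^*$ an isomorphism of pre-rooftop metric spaces, and reduces the first assertion of the theorem to the case where $X$ itself is a compact Kähler manifold.

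In that smooth case, the three pre-rooftop axioms (commutativity, associativity, idempotence) are immediate from \eqref{eq:roofray}. For the rooftop distance-decreasing inequality $d_1(\ell\land\ell^1,\ell\land\ell^2)\leq d_1(\ell^1,\ell^2)$, I would use the approximation from \cref{lma:rooftopr1}: write $(\ell\land\ell^j)_t$ as the decreasing limit as $r\to\infty$ of the finite-time geodesics $L^{j,r}_s$ from $V_\theta$ to $\ell_r\land\ell^j_r$ evaluated at $s=t$. At each fixed $r$, applying \cref{prop:twoop} and the fact that $d_1$ is convex along geodesic segments, one controls $d_1(L^{1,r}_t,L^{2,r}_t)$ by $d_1(\ell^1_r\land\ell_r,\ell^2_r\land\ell_r)\leq d_1(\ell^1_r,\ell^2_r)$; passing first to $r\to\infty$ via \cref{prop:dem} and then dividing by $t$ and letting $t\to\infty$ yields the claim. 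The 1-strict Pythagorean equality for rays is the content of \cite[Theorem~4.17]{DDNL19} in the big setting, and completeness then follows from \cref{prop:rooftopcomp} together with convergence of monotone $d_1$-Cauchy sequences of rays, which is handled in \cite[Section~4]{DDNL19} via the level-by-level completeness of $(\mathcal{E}^1(X,\theta),d_1)$ (our \cref{cor:E1compl}).

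I expect the main obstacle to be the rooftop distance-decrease step, since $(\ell^1\land\ell^2)_t$ is only controlled as the decreasing limit $L_t = \inf_{r\geq t}L^r_t$ rather than being itself a finite-time geodesic between fixed endpoints; carefully interchanging the limits $r\to\infty$ and $t\to\infty$ while keeping the estimates normalized by $1/t$ is the delicate point. Everything else should follow routinely from the analogous smooth statements together with \cref{prop:corr}, \cref{prop:pullbackgeodray}, and \cref{prop:rooftopcomp}.
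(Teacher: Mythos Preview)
Your proposal is correct and follows essentially the same approach as the paper: reduce to the smooth case via the pullback isometry $\pi^*$ and invoke the known results there. The paper's own proof is extremely terse (it declares the rooftop axioms ``clear'' and the compatibility of $\pi^*$ with $\land$ ``obvious''), so you are in fact supplying more detail than the paper does, particularly for the distance-decreasing inequality. Your anticipated obstacle about interchanging the limits $r\to\infty$ and $t\to\infty$ is not a real difficulty: convexity gives $d_1(L^{1,r}_t,L^{2,r}_t)\leq (t/r)\,d_1(\ell^1_r,\ell^2_r)$, and for fixed $t$ one first lets $r\to\infty$ (using that $L^{j,r}_t$ decreases to $(\ell\land\ell^j)_t$ in $\mathcal{E}^1$, hence converges in $d_1$ by \cref{prop:dem}) to obtain $d_1((\ell\land\ell^1)_t,(\ell\land\ell^2)_t)\leq t\,d_1(\ell^1,\ell^2)$, then divides by $t$ and lets $t\to\infty$.
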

\begin{proof}
It is clear that $\land$ is a rooftop structure. It follows from \cref{prop:pullbackgeodray} and \cite[Theorem~2.14]{DDNL19} that $(\mathcal{R}^1(X,\theta),d_1,\land)$ is a $1$-strict complete rooftop metric space. The second claim is obvious.
\end{proof}

\printbibliography

\bigskip
  \footnotesize

  Mingchen Xia, \textsc{Department of Mathematics, Institut de Mathématiques de Jussieu-Paris Rive Gauche}\par\nopagebreak
  \textit{Email address}, \texttt{mingchen@imj-prg.fr}\par\nopagebreak
  \textit{Homepage}, \url{https://mingchenxia.github.io/home/}.
  
\end{document}